\newcommand*{\newoperator}[3][]{%
  \newglossaryentry{#2}{type=Symbols,%
    name={$#3$},text={#3},description={},#1}%
}
\renewcommand\subsubsection{\@startsection{subsubsection}{3}{\z@}%
                                     {-3.25ex\@plus -1ex \@minus -.2ex}%
                                     {-1em}
                                     {\normalfont\normalsize\bfseries}}
\newcounter{ite}
\newtheorem{theorem}{Théorème}[subsection]
\newtheorem{lemm}[theorem]{Lemme}
\newtheorem{definition}[theorem]{Définition}
\newtheorem{remark}[theorem]{Remarque}
\newtheorem{coro}[theorem]{Corollaire}
\newtheorem{prop}[theorem]{Proposition}
\newtheorem{lemma}{Lemme}
\newtheorem{thm}{Théorème}[section]
\newtheorem{lem}[thm]{Lemme}
\newtheorem{cor}[thm]{Corollaire}
\newtheorem*{rmk}{Remarque}
\newtheorem*{ex}{Exemple}
\numberwithin{equation}{subsection}
\newcommand{\Hom}{\mathrm{Hom}}
\renewcommand{\bar}{\overline}
\renewcommand{\d}{\mathrm{d}}
\newcommand{\leftquotient}[2]{{\raisebox{-.2em}{$#1$}}\backslash\raisebox{.1em}{$#2$}}
\renewcommand{\Re}{\mathrm{Re}}
\renewcommand{\Im}{\mathrm{Im}}
\newcommand{\Fix}{\mathrm{Fix}}
\renewcommand{\hat}{\widehat}
\newcommand{\vol}{\mathrm{vol}}
\newcommand{\bbb}{\mathbb}
\newcommand{\Aut}{\mathrm{Aut}}
\newcommand{\rg}{\mathrm{rg } }
\newcommand{\Gal}{\mathrm{Gal}}
\newcommand{\Ind}{\mathrm{Ind}}
\newcommand{\M}{\mathbf{M}}
\newcommand{\E}{\mathbf{E}}
\newcommand{\val}{v}
\newcommand{\HE}{\mathrm{H}}
\DeclareMathOperator{\Id}{Id}
\DeclareMathOperator{\stab}{\mathfrak{stab}}
\newcommand{\ZZZ}{\mathbb{Z}}
\newcommand{\AAA}{\mathbb{A}}
\newcommand{\ago}{\mathfrak{a}}
\renewcommand{\leq}{\leqslant}
\renewcommand{\geq}{\geqslant}
\newcommand{\htau}{\hat \tau}
\newcommand{\Fr}{\mathbf{F}}
\newcommand*{\rom}[1]{\expandafter\@slowromancap\romannumeral #1@}
\renewcommand{\cal}{\mathcal}
\begin{document}

\selectlanguage{french}

\setcounter{secnumdepth}{3}
\setcounter{tocdepth}{2}

\hypersetup{							
pdfauthor = {HONGJIE YU},			
pdftitle = {counting local system},			
}					
\title{COMPTAGE  DES SYSTÈMES LOCAUX $\ell$-ADIQUES SUR UNE COURBE}
\author{HONGJIE YU \footnote{Université Paris Diderot-Paris 7, Institut de Mathématiques de Jussieu - Paris Rive Gauche, Paris, France; Current address: hongjie.yu@weizmann.ac.il,  Weizmann Institute of Science, Herzl St 234, Rehovot, Israel}
}
\date{\vspace{-2ex}}
\maketitle

        \begin{abstract}
        Soit $X_{1}$  une courbe projective lisse et géométriquement connexe sur un corps fini $\mathbb{F}_{q}$ avec $q=p^{n}$ éléments où $p$ est un nombre premier. Soit $X$ le changement de base de $X_{1}$ à  une clôture algébrique  de $\mathbb{F}_{q}$. 
Nous donnons une formule pour le nombre des systèmes locaux $\ell$-adiques ($\ell\neq p$) irréductibles de rang donné sur $X$ fixé par l'endomorphisme de Frobenius. Nous montrons que ce nombre est semblable à une formule de point fixe de Lefschetz pour une variété sur $\mathbb{F}_q$, ce qui généralise un résultat de Drinfeld en rang 2 et prouve une conjecture de Deligne. Pour ce faire, nous passerons du côté automorphe, utiliserons la formule des traces d'Arthur non-invariante, et relierons le nombre cherché avec le nombre $\mathbb{F}_q$-points de l'espace des modules des fibrés de Higgs stables. 

        \end{abstract}

        \renewcommand{\abstractname}{Abstract}
\begin{abstract}
Let $X_{1}$ be a projective, smooth and geometrically connected curve over $\mathbb{F}_{q}$ with $q=p^{n}$ elements where $p$ is a prime number, and let $X$ be its base change to an algebraic closure of  $\mathbb{F}_{q}$. We give a formula for the number of irreducible $\ell$-adic local systems ($\ell\neq p$) with a fixed rank over $X$ fixed by the Frobenius endomorphism. We prove that this number behaves like a Lefschetz fixed point formula for a variety over $\mathbb{F}_q$, which generalises a result of Drinfeld in rank $2$ and proves a conjecture of Deligne. To do this, we pass to the automorphic side by Langlands correspondance, then use Arthur's non-invariant trace formula and link this number to the number of $\mathbb{F}_q$-points of the moduli space of stable Higgs bundles.

\end{abstract}

\tableofcontents

\section{Introduction}

\subsection{Principaux résultats}
Soit $X_{1}$ une courbe définie sur un corps fini $\mathbb{F}_{q}$ de cardinal $q$, qui est projective, lisse et géométriquement connexe de genre $g$. Soient $\gls{F}$ une clôture algébrique de $\mathbb{F}_{q}$ et ${X}$ le produit fibré   de $X_{1}$ avec $\mathbb{F}$. L'endomorphisme de $X_{1}$ qui est l'identité sur l'espace topologique sous-jacent, et $f\mapsto f^{q}$ sur le faisceau structural, est un endomorphisme de $\mathbb{F}_{q}$-schéma. Nous noterons $\gls{Fr}$ l'endomorphisme du ${\mathbb{F}}$-schéma X qui s'en déduit par extension des scalaires. C'est l'endomorphisme de Frobenius qu'on utilisera. Fixons un nombre premier $\ell\nmid q$, une clôture algébrique $\bar{\mathbb{Q}}_{\ell}$ de ${\mathbb{Q}}_{\ell}$, et soit $E_n^{(\ell)}$ l'ensemble des classes d'isomorphie de $\bar{\mathbb{Q}}_{\ell}$-systèmes locaux  irréductibles  de rang $n$ sur X.

L'image inverse par $\Fr_{X}$ induit une permutation de $E_n^{(\ell)}$, notée par $\Fr_{X}^{*}$. Drinfeld \cite{Drinfeld} a montré à l'aide de la correspondance de Langlands qu'il a lui-même prouvée et la formule des traces pour $GL_{2}$ établie par Jacquet-Langlands \cite{JL}, que l'on a, en genre $g\geq 2$,  
\begin{equation}\label{points}  |(E_2^{(\ell)})^{\Fr_{X}^{*k}}|=q^{k(4g-3)}+\sum_{i=1}^{u}m_{i}\alpha_{i}^{k},\quad \forall\ k \geq 1, \end{equation}
où les $\alpha_{i}$ sont des $q$-entiers de Weil de poids strictement inférieurs à ${8g-6}$, $m_i$ sont des entiers rationnels et $(E_2^{(\ell)})^{\Fr_{X}^{*k}}$ désigne l'ensemble des éléments de $E_2^{\ell}$ fixés par le puissance $k$$^{i\text{è}me}$ de $\Fr_X^{*}$. Les nombres $m_i$ et $\alpha_i$ sont  indépendants de $\ell$.
Rappelons qu'un $q$-entier de Weil $\alpha$ de poids $i\in \mathbb{N}$ est un entier algébrique tel que pour tout plongement $\varsigma: {\mathbb{Q}}(\alpha)\rightarrow \mathbb{C}$, la valeur absolue $|\varsigma(\alpha)|$ est égale à $q^{\frac{i}{2}}$. 
Par la formule des traces de Grothendieck-Lefschetz, le théorème d'intégralité et le théorème de pureté de Deligne (cf. théorème 5.2.2 \cite{Deligne1} et théorème 1 \cite{Weil2}), pour toute variété $V$ (un schéma séparé de type fini) sur $\mathbb{F}_q$, il existe des $q$-entiers de Weil $\alpha_i$ (unique à permutation près) qui sont les valeurs propres de l'endomorphisme de Frobenius   agissant  sur les groupes de cohomologies $\ell$-adiques à support compact et des entiers $m_i$ qui sont les multiplicités à un signe près de $\alpha_i$ tels que $$|V(\mathbb{F}_{q^k})|=\sum_{i}m_i\alpha_i^k, \quad \forall\ k\geq 1. $$
Ainsi, l'expression \eqref{points} issue du résultat de Drinfeld ressemble aux nombres de $\mathbb{F}_{q^k}$-points d'une variété. Nous renvoyons à \cite{Deligne} pour plus de discussion.

Dans l'article \cite{Kon}  de Kontsevich et \cite{Deligne} de Deligne, les auteurs ont fait des observations sur  le résultat de Drinfeld et ont suggéré de le généraliser aux cas de rang supérieur. En particulier,  Deligne a donné des conjectures précises (cf. conjecture 2.15,  conjecture 2.18, conjecture 6.3 et conjecture 6.7 de \cite{Deligne}). 
Concernant ces conjectures, dans l'article \cite{Deligne-Flicker}, Deligne et Flicker se consacrent au cas où toutes les monodromies locales sont unipotentes avec un seul bloc de Jordan et ces monodromies sont fixées en un ensemble fini $S$ de points de $X$ au-dessus d'au moins $2$ places de $X_1$. 
En rang $2$, Flicker \cite{Flicker2} a calculé le nombre désiré quand la monodromie   apparaît  en une place de $X_1$ avec un seul bloc de Jordan. Arinkin a obtenu aussi des résultats intéressants quand les monodromies sont des sommes directes de caractères modérés de position générale (cf. section 3 \cite{Deligne}). 
Pour tous ces cas, des résultats similaires à ceux de Drinfeld sont montrés. Mais il manque encore une généralisation fidèle du résultat de Drinfeld quand le rang est supérieur à  $2$.  
C'est ce que nous nous proposons de faire dans cet article.

Avant d'énoncer nos résultats, nous introduisons encore quelques notations.
L'espace $\mathrm{H}^1(X, \mathbb{Q}_{\ell})$ muni de la dualité de Poincaré est un espace symplectique. L'action de $\Fr_X$ sur $\mathrm{H}^1(X, \mathbb{Q}_{\ell})$ définit alors une classe de conjugaison semi-simple dans $GSp_{2g}(\bar{\mathbb{Q}}_\ell)$. Soient $\sigma_{1},\ldots, \sigma_{2g }$ les $q$-entiers de Weil de la courbe $X_{1}$, c'est-à-dire les valeurs propres de $\Fr_X$. On peut les indexer de telle façon que $\sigma_{i}\sigma_{i+g}=q$ pour tout $g \geq i\geq 1$. Soit  $\mathbb{Z}[t^{\pm1}, z_{1}^{\pm1}, \ldots, z_{g}^{\pm1}]^{\mathcal{S}_g\ltimes(\mathcal{S}_2)^{g}}$ l'anneau des polynômes de Laurent qui sont symétriques en les $z_i$ et invariants par les substitutions $z_i\mapsto tz_i^{-1}$ $(1\leq i\leq g)$. Soit $\gls{ZSg}$ (cf.  \ref{641} pour plus de détails) le sous-anneau formé des polynômes $P$ tels que chaque monôme $t^{m}z_1^{n_1}\cdots z_g^{n_g}$ qui  apparaît dans $P$ vérifie \begin{equation*}\label{posit} m+\sum_{i=1}^g \min\{n_i, 0\}\geq 0 . \end{equation*}
Notons que cette dernière condition est suffisante et nécessaire pour que les nombres de Weil apparaissant dans l'expression de $|(E_{n}^{(\ell)})^{\Fr_{X}^{*k} }|$ du théorème \ref{P} soient entiers algébriques (cf.  \ref{641}).

Voici le principal théorème: 

\begin{thm}\label{P}\label{Final}
\begin{enumerate}
\item
Pour tous entiers $g\geq 2$ et $n\geq 1$, il existe un unique élément $$P_{g, n}(t, z_{1},\ldots, z_{g})\in \mathbb{Z}[z_{1}, \ldots, z_{g}, tz_1^{-1}, \ldots, tz_g^{-1}]^{\mathcal{S}_g\ltimes(\mathcal{S}_2)^{g}}$$  tel que  pour tout $k\geq 1$, tout corps fini $\mathbb{F}_q$ de cardinal $q$, toute courbe $X_1$ projective lisse et géométriquement connexe sur $\mathbb{F}_q$ de genre $g$ et tout nombre premier $\ell\nmid q$, on ait
 $$|(E_{n}^{(\ell)})^{\Fr_{X}^{*k} }|=P_{g, n}(q^k,\sigma_{1}^k, \ldots, \sigma_{g}^k).$$

De plus, si on pose $\deg t=2$ et $\deg z_i=1$, le terme de poids dominant de $P_{g, n}$ est $t^{(g-1)n^2+1 }$, c'est-à-dire $\deg P_{g, n} =2((g-1)n^2+1)$ et $\deg(P_{g, n} -t^{(g-1)n^2+1 })< 2((g-1)n^2+1)$. 

\item
Il existe un élément \[Q_{g,n}(t, z_1, \cdots, z_g)\in \mathbb{Z}[z_{1}, \ldots, z_{g}, tz_1^{-1}, \ldots, tz_g^{-1}]^{\mathcal{S}_g\ltimes(\mathcal{S}_2)^{g}}\] tel que 
$$P_{g,n}(t, z_1, \ldots, z_g)=Q_{g,n}(t,z_1,\ldots, z_g)\prod_{i=1}^{g}\left((1-z_i)(1-tz_i^{-1})\right).$$
En particulier, pour tout $k\geq 1$, $|\mathrm{Pic}_{X_1}^0(\mathbb{F}_{q^k})|$ divise $|(E_n^{(\ell)})^{\Fr_X^{*k}}|$.
De plus, quand $g\geq 2$, on a $$Q_{g,n}(1,1,\ldots,1)=\sum_{l\mid n}\mu(l)\mu(n/l)l^{2g-3}, $$
où $\gls{mu}$ est la fonction de M\"obius. 
\end{enumerate}

\end{thm}
 
\begin{rmk}
\textup{
 Par la théorie du corps de classes, on a $\forall\ k\geq 1$,  $|(E_{1}^{(\ell)})^{\Fr_{X}^{*k} }|=| \mathrm{Pic}_{X_1}^0(\mathbb{F}_{q^k})|, $  
de sorte qu'on a $Q_{g,1}=1$. Comme le groupe fondamental étale de l'espace projectif sur $\mathbb{F}$ est trivial et celui d'une courbe elliptique sur $\mathbb{F}$ est abélien,   on a aussi $Q_{g,n} = 0$ pour $n\geq 2$ et $g = 0, 1$.} 
\end{rmk}

Ce théorème résout positivement la conjecture 2.15  de Deligne dans \cite{Deligne}, et la conjecture 6.3 de $loc.$ $cit.$ dans le cas partout non-ramifié. Pour une interprétation conjecturale de la valeur de $Q_{g,n}(1, . . . , 1)$ on renvoie à la conjecture 6.7 de $loc.$ $cit.$.

\subsection{Approche utilisée}\label{automorphe}
Soit $X_k:=X_1\otimes_{\mathbb{F}_q} \mathbb{F}_{q^k}$. 
Soit $\glslink{AAAk}{\AAA_k}$ l'anneau des adèles de $F_k := \mathbb{F}_{q^k}(X_k)$ et soit $C_n(X_k)$ le nombre de classes d'équivalence inertielle (cf.  définition \ref{inert}) de représentations automorphes irréductibles, absolument cuspidales et partout non-ramifiées de $GL_n(\AAA_k)$. Une représentation automorphe irréductible et cuspidale $\pi$ de $GL_n(\AAA_k)$ est appelée absolument cuspidale si elle reste irréductible cuspidale après tout changement de base, au sens de la fonctorialité de Langlands, du corps de base $F_k$  à $F_{kd}$,  $\forall d\geq 1$.

On montre dans le corollaire \ref{PASS} et la proposition \ref{GEO}  à l'aide de la correspondance de Langlands qu'on a \begin{equation}\gls{CN}= |(E_n^{(\ell)})^{\Fr_{X}^{*k}}|.\end{equation} 
En particulier, $|(E_n^{(\ell)})^{\Fr_{X}^{*k}}|$ est finie, puisque par un  théorème de Harder (\cite[1.2.1]{Harder}), les supports de toutes les fonctions automorphes cuspidales partout non ramifiées sont contenus dans un sous-ensemble commun de $G(F)\backslash GL_n(\AAA)$ qui est compact modulo le centre.

Posons $\glslink{FF}{F}=F_1$ et $\glslink{AAAk}{\AAA}=\AAA_1$. 
Soit $\glslink{O}{\mathcal{O}}$ le sous-anneau égal au produit des anneaux locaux $\glslink{O}{\mathcal{O}_{x}}$ qui sont les complétés des anneaux locaux du faisceau structural de $X_1$ pour tous $x\in |X_1|$. 
Pour $e\in\mathbb{Z}$, soit $\glslink{GLnAAA}{GL_n(\AAA)^{e}}$ le sous-ensemble de $GL_n(\AAA)$ des éléments dont le degré du déterminant est $e$.
Soit $\mathbbm{1}_{GL_n(\mathcal{O})}$ la fonction caractéristique de ${GL_n(\mathcal{O})}$ et soit $\mathbf{R}$ l'opérateur qui agit sur l'espace
\begin{equation}\label{L2e}L^2(GL_n(F)\backslash GL_n (\AAA)^e)\end{equation}
par la convolution à droite par la fonction $\mathbbm{1}_{GL_n(\mathcal{O})}$. Cet opérateur n'est pas traçable si $n\geq 2$. Cependant Arthur a défini une trace tronquée que l'on note $J_e$ dans la suite. Il en donne une première expression reposant sur la décomposition spectrale de Langlands de l'espace (\ref{L2e}). Cette décomposition exprime tout en terme de briques élémentaires qui sont les représentations automorphes cuspidales des groupes généraux linéaires de rang $m\leq n$. De même, dans l'article, nous calculons toute la trace tronquée $J_e$ en terme des quantités $C_m(X_k)$ ($m\leq n$). 

Pour énoncer notre théorème, nous introduisons la série formelle pour $s\in \mathbb{C}$ 
 $$\mathrm{aut}_{X_{1}}(z)^s=\mathrm{exp}(s\sum_{m\geq 1}\sum_{k\geq 1}\frac{mC_{m}(X_{k})}{k}z^{mk})\  . $$
Plus généralement, pour $l\geq 1$,  on définit $\mathrm{aut}_{X_l}(z)^s$ en  remplaçant $X_k$ par $X_{kl}$ dans la formule ci-dessus.
Pour tout $v\geq 1$, et $f(z)\in \mathbb{C}[[z]]$, soit $\gls{zv}$ le coefficient de $f(z)$ de degré $v$.  

\begin{thm}\label{b}
Supposons $g\neq 1$. Soit   $e\in \mathbb{Z}$ tel que $(e,n)=1$, on a
\begin{equation}\label{J_e}
J_{e}= \sum_{l| n}  \frac{ {\mu(l)} }{n(2g-2)}\sum_{\lambda\vdash n}\frac{1}{\sum_{{j}}a_{j}}  \prod_{j\geq 1 } [z^{a_{j}}]\mathrm{aut}_{X_{l}} (z^{l})^{\frac{(2g-2)S_{j}(\lambda)}{l}}  ,       \end{equation}
où $\sum_{\lambda\vdash n}$ porte sur les partitions non-ordonnées $\lambda=(1^{a_1}, 2^{a_2}, \ldots)$ de $n$ et  $S_{j}(\lambda):=  \sum_{\nu\geq 1}  a_{\nu}  \mathrm{min}\{\nu, j\}$. 
\end{thm}
\begin{rmk}
Dans le produit sur $j$, seul un nombre fini de facteur est distinct de 1. 
En outre, si $l\nmid a_j$ pour un certain $j$, on a 
$[z^{a_j}]\mathrm{aut}_{X_{l}} (z^{l})^{\frac{(2g-2)S_{j}(\lambda)}{l}}    =0$. 
Dans (\ref{J_e}), les termes ne contiennent que des 
expressions $C_m(X_k)$ pour $mk\leq n$. Le terme $C_n(X_k)$ n'apparaît que si $k = 1$ et il n'apparaît que pour $l = 1$ et la partition $(1^n)$. En conséquence, (\ref{J_e}) donne une formule de récurrence pour calculer $C_n(X_1)$ en terme de $J_e$ et des $C_m(X_k)$ pour $m < n$ et $k\leq \frac{n}{m}$.
\end{rmk}

Pour que le théorème ci-dessus soit utile, il nous faut disposer d'une autre expression pour $J_e$. Celle-ci va être fournie par le développement géométrique de la trace tronquée $J_e$. On montre le résultat suivant :
\begin{thm}[cf. théorème \ref{Maing}]\label{a}
Soit $e\in \mathbb{Z}$, on a $$J^{ }_e=\mathcal{P}_n^e(X_1),$$
où $\gls{Pne}$ est le nombre de classes d'isomorphie de fibrés vectoriels isoclines de degré $e$ et de rang $n$ (pour la notion d'isocline voir la définition \ref{isocline}).
\end{thm}

\begin{ex}\textup{
En conséquence, on a par exemple \begin{equation*}
\mathcal{P}_{1}^{1}(X_1)=C_1(X_1),
\end{equation*}
\begin{equation*} \mathcal{P}_{2}^{1}(X_1)=C_2(X_1) +(g-1)C_1(X_1)^2+C_1(X_1),
\end{equation*}
et \begin{multline*}\mathcal{P}_{3}^{1}(X_1)=C_3(X_1)+4(g-1)C_1(X_1)C_2(X_1)+ \\ (g-1) C_1(X_1)C_1(X_2)+2(g-1)^2 C_1(X_1)^3+2(g-1)C_1(X_1)^2+C_1(X_1). \end{multline*}}
\end{ex}

\sloppy
Quand on a $(n,e)=1$, le nombre $J_e$ est le nombre de classes d'isomorphie de fibrés vectoriels géométriquement indécomposables de degré $e$ et de rang $n$ sur $X_1$. En utilisant des résultats de Mellit (\cite{Mellit}) et Schiffmann (\cite{Schiffmann})  on sait que ce nombre est donné par un polynôme dans $\mathbb{Z}[z_{1}, \ldots, z_{g}, tz_1^{-1}, \ldots, tz_g^{-1}]^{\mathcal{S}_g\ltimes(\mathcal{S}_2)^{g}}$. En résolvant la récurrence ci-dessus, on montre l'assertion 1 du théorème \ref{Final} pour un polynôme à coefficient rationnel. Avec un peu plus de travail on montre qu'il est bien dans $\mathbb{Z}[z_{1}, \ldots, z_{g}, tz_1^{-1}, \ldots, tz_g^{-1}]^{\mathcal{S}_g\ltimes(\mathcal{S}_2)^{g}}$. Notons que notre méthode donne une nouvelle démonstration du résultat suivant (dû à Mellit) :

\begin{thm}[Groechenig, Wyss, Ziegler (quand $(n,e)=1$) \cite{GWZ}; Mellit \cite{Mellit}]
Le nombre $\mathcal{P}_n^e(X_1)$ ne dépend que de l'ordre de $e$ dans $\mathbb{Z}/n\mathbb{Z}$. En particulier, quand $(n,e)=1$, le nombre de classes d'isomorphie de fibrés vectoriels géométriquement indécomposables de degré $e$ et de rang $n$ sur $X_1$ est indépendant de $e$. 
\end{thm}
Combinant avec les études sur la formule des traces d'algèbres de Lie de Chaudouard sur un corps de fonctions (\cite{Chau}), nous donnons aussi une nouvelle preuve du fait (dû à Schiffmann) que sous la même condition $(n,e)=1$ on a
 $$A_{n,e}(X_1)=q^{-n^2(g-1)-1}|\mathrm{Higgs}_{n,e}^{st}(X_1)(\mathbb{F}_q)| . $$
où  $\gls{Ane}$ est le nombre des fibrés vectoriels géométriquement indécomposables de degré $e$ et de rang $n$ sur $X_1$, et $\mathrm{Higgs}_{n,e}^{st}(X_1)$ est le schéma de modules des fibrés de Higgs stables de rang $n$ et de degré $e$ (cf.   \ref{Higgs}).

\subsection{Remerciements }
Ce travail est le produit de mon projet de thèse préparée à l'Université Denis-Diderot, Paris 7. 
Il s'est accompli sous la direction de Pierre-Henri Chaudouard. Je voudrais le remercier profondément pour les nombreuses discussions, ainsi que pour ses encouragements, et pour son aide généreuse à de multiples reprise.

Je remercie Pierre Deligne pour ses commentaires très utiles et pour m'avoir signalé une erreur dans une preuve. Je remercie également Anton Mellit pour m'avoir communiqué un résultat sur le nombre de fibrés de Higgs qui me permet de corriger l'erreur. Je remercie Bertrand Lemaire et Wee Teck Gan pour leurs pré-rapports sur ma thèse. Je remercie les rapporteurs anonymes pour les suggestions très utiles sur la rédaction et pour m'avoir aidé à corriger des fautes de français.

\section{Réduction du problème}\label{auto-gal}
Dans ce chapitre, on explique comment ramener via la correspondance de Langlands le comptage de systèmes locaux de rang $n$ à un comptage de certaines représentations automorphes du groupe $GL_n$ (cf. le corollaire \ref{PASS}).

\subsection{Systèmes locaux \texorpdfstring{$\ell$}{l}-adiques et représentations $\ell$-adiques des groupes de Weil}\label{SYSTEM}
\label{syslo}

\subsubsection{} 

Dans l'introduction, on a introduit une courbe projective, lisse et géométriquement irréductible $X_1$ défini sur un corps fini $\mathbb{F}_q$ et son corps de fonctions $F=F_1$. On a aussi défini $X_d=X_1\otimes\mathbb{F}_{q^d}$ pour $d\in \mathbb{N}^*$ et $X$ comme le produit fibré de $X_1$ avec la clôture algébrique $\mathbb{F}$ de $\mathbb{F}_q$.

\subsubsection{}
On suit l'article \cite{Deligne-Flicker} pour la présentation.  
Soit $\bar{F}$ une clôture  séparable  de $F$. On fixe un point géométrique $o: \mathrm{spec}(\bar{F})\rightarrow X$ au-dessus du point générique de $X$. 
Par abus de notation, on notera encore $o$ tout le composé   $\mathrm{spec}(\bar{F})\rightarrow X\rightarrow X_{d}$ pour $d\geq 1$.

Soit $\gls{pi1Xo}$ (resp. $\pi_{1}(X_{1},o)$) le groupe fondamental étale  de $X$ (resp.  $X_{1}$) en $o$. On a une suite exacte courte $$ 0\longrightarrow \pi_{1}(X, o)\longrightarrow \pi_{1}(X_{1},o)\longrightarrow \Gal(\mathbb{F}|\mathbb{F}_{q})\longrightarrow 0 .$$
Soit $W(\mathbb{F}|\mathbb{F}_{q})$ le sous-groupe de $\Gal(\mathbb{F}|\mathbb{F}_{q})$ engendré algébriquement par l'élément de Frobenius arithmétique. Ce groupe est isomorphe à $\mathbb{Z}$ par l'isomorphisme qui envoie l'inverse de l'élément de Frobenius arithmétique, appelé élément de Frobenius géométrique, sur $1$. 
L'image inverse de $W(\mathbb{F}|\mathbb{F}_{q})$ par le morphisme $\pi_{1}(X_{1},o)\longrightarrow \Gal(\mathbb{F}|\mathbb{F}_{q})$ est appelé le groupe de Weil, qu'on note $\gls{WX1o}$. On a $W(X_{1},o)\cong\pi_1(X, o)\rtimes \mathbb{Z}$. On munit $W(X_{1},o)$ de la topologie produit de $\pi_1(X, o)$ et $\mathbb{Z}$,  où la topologie sur $\mathbb{Z}$ est la topologie discrète. 

Une représentation $\ell$-adique de rang $n\geq 1$ d'un groupe localement profini $H$ est un $\bar{\mathbb{Q}}_{\ell}$-espace linéaire de dimension $n\geq 1$ avec une action linéaire continue de $H$. Un système local $\ell$-adique de rang $n$ sur  $X_1$ (resp. $X$)  est un faisceau lisse $\ell$-adique de rang $n$ sur $X_1$ (resp. $X$). On identifie les systèmes locaux $\ell$-adiques sur $X_1$ (resp. $X$) et les représentations $\ell$-adiques de $\pi_1(X_1, o)$ (resp. $\pi_1(X, o)$).

Nous noterons $\gls{GnX}$ l'ensemble des classes d'isomorphie des représentations irréductibles $\ell$-adiques de rang $n$ de $W(X_{1},o)$.
Un caractère $\ell$-adique est une représentation $\ell$-adique de dimension $1$. 
On dit qu'un caractère $\ell$-adique de $W(X_{1}, o)$ est inertiel s'il se factorise à travers $W(\mathbb{F}|\mathbb{F}_{q})$. Les caractères $\ell$-adiques inertiels forment un groupe abélien. 
Pour une représentation $\sigma\in \mathcal{G}_{n}(X_{1})$, on note $\Fix(\sigma)$ le sous-groupe formé par des caractères $\ell$-adiques inertiels ${\lambda}$ tels que $$\sigma\otimes \lambda\cong \sigma.$$
Le groupe $\Fix(\sigma)$ est cyclique d'ordre divisant $n$. On définit une relation d'équivalence  sur $\mathcal{G}_{n}(X_{1})$:

\begin{definition}\label{OK!} Pour $\sigma_{1}, \sigma_{2} \in \mathcal{G}_{n}(X_{1})$, on dit que $\sigma_{1}$ et $\sigma_{2}$ sont inertiellement équivalentes s'il existe un caractère $\ell$-adique inertiel  $\lambda$  tel  que $\sigma_{1}\otimes \lambda\cong \sigma_{2}$.

On dit qu'une représentation $\ell$-adique de $W(X_{1}, o)$ est absolument irréductible si sa restriction à $\pi_{1}(X,o)$ est irréductible. 
\end{definition}

On fixe une section $j: \mathbb{Z}\rightarrow W(X_{1}, o)$. Soit $\sigma$ une représentation $\ell$-adique de $\pi_1(X,o)$. On définit une représentation $\sigma^{j(1)}$ qui agit sur l'espace sous-jacent de $\sigma$ par $$\sigma^{j(1)}(g)=\sigma( j(1)g j(1)^{-1})\quad \forall g\in \pi_1(X,o), $$ 
clairement la classe d'isomorphie de $\sigma^{j(1)}$ ne dépend pas de la section choisie. La conjugaison par $j(1)$ sur l'ensemble des classes d'isomorphie de représentations $\ell$-adiques de $\pi_{1}(X,o)$ coïncide avec l'action de $\Fr_{X}^{*}$ sur $E_n^{(\ell)}$, l'ensemble des systèmes locaux $\ell$-adiques sur $X$ (Lemma 1.3 et le paragraphe avant la section 1.5 de \cite{Deligne-Flicker}). Par Lemma 1.6 et Lemma 1.9 (ii) de $loc.$ $cit.$, on a le lemme suivant.

\begin{lemm}\label{Descente} 
Soit $\sigma$ une représentation $\ell$-adique irréductible de dimension $n$ de $\pi_{1}(X, o)$. Elle peut s'étendre à une représentation de $W(X_{1},o)$ si et seulement si la représentation  $\sigma^{j(1)}$ est isomorphe à $\sigma$. Toutes les extensions sont inertiellement équivalentes. 

Le foncteur de restriction de la catégorie des représentations $\ell$-adiques de $W(X_1, o)$ vers la catégorie des représentations $\ell$-adiques de  $\pi_1(X,o)$ induit  une bijection entre l'ensemble des classes d'équivalence inertielle des représentations absolument irréductibles  de  rang $n$ de $W(X_1, o)$ et l'ensemble $(E_n^{(\ell)})^{\Fr_X^{*}}$.
\end{lemm}

\begin{prop}\label{GEO}\sloppy
Soit $\sigma: W(X_{1},o)\rightarrow GL_{n}(\bar{\mathbb{Q}}_{\ell})$  une représentation $\ell$-adique de dimension $n$ et $d\mid n$. Alors on a l'équivalence entre:
\begin{enumerate}
\item $\sigma$ est irréductible et $|\Fix(\sigma)|=d$.
\item Il existe une représentation $\ell$-adique absolument irréductible $\sigma'$ de $W(X_{d},o )$ de rang $n/d$ telle que 
$\sigma\cong\Ind_{W(X_{d},o )}^{W(X_{1},o )}(\sigma')$. De plus, les conjugués de $\sigma'|_{\pi_{1}(X, o)}$ par $j(0), j(1), \ldots, j(d-1)$ sont  deux à deux non-isomorphes.   
\end{enumerate}
Les représentations $\sigma'$ satisfaisant  la condition 2 se déduisent les unes des autres par conjugaison par $j(i)$, $i\in \mathbb{Z}$. Il y en a exactement $d$. 

En particulier, $\sigma$ est absolument irréductible si et seulement si $|\Fix(\sigma)|=1$.  
\end{prop}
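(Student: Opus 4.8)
The proof will be an application of Clifford theory for the normal subgroup $\pi_1(X,o) \trianglelefteq W(X_1,o)$ with quotient $W(X_1,o)/\pi_1(X,o) \cong \mathbb{Z}$, combined with the characterisations of $|\Fix(\sigma)|$ obtained in Lemma \ref{2.1.4} and the descent statement of Lemma \ref{Descente}. The key point throughout is that the cyclic group $\mathbb{Z}$ is abelian, so that an irreducible $\sigma$ induced from $W(X_d,o)$ corresponds, after restriction to $\pi_1(X,o)$, to a sum of $\mathbb{Z}$-conjugates of an irreducible $\pi_1(X,o)$-representation, and that the inertial characters trivial on $W(X_d,o)$ form exactly the group of order $d$ detecting this induction.

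\emph{Proof that (1) $\Rightarrow$ (2).} Assume $\sigma$ is irreducible with $|\Fix(\sigma)| = d$. Since $\Fix(\sigma)$ is cyclic of order $d$ (as recalled before Definition before Lemma \ref{Descente}), let $\lambda_0$ generate it. Choose an inertial character $\chi_0$ of order $d$ whose restriction... more precisely: the inertial characters of $W(X_1,o)$ factoring through $W(\mathbb{F}|\mathbb{F}_q)\cong\mathbb{Z}$ that are trivial on $W(X_d,o)$ form a cyclic group of order $d$, and one checks $\Fix(\sigma)$ is precisely that group by comparing orders and using that $\lambda_0$ has order $d$. Then for $g \in W(X_d,o)$ one has $\sigma \otimes \lambda_0 \cong \sigma$ with $\lambda_0|_{W(X_d,o)}$ trivial, so by Schur the intertwiner $\Psi$ with $\sigma(g)\lambda_0(g) = \Psi^{-1}\sigma(g)\Psi$ (existing by irreducibility of $\sigma$) acts on $\sigma|_{W(X_d,o)}$ as an automorphism; the classical Clifford-theoretic argument (the fixed group of the $\mathbb{Z}/d$-action on $\mathrm{Irr}(W(X_d,o))$ containing the constituents of $\sigma|_{W(X_d,o)}$ being trivial) forces $\sigma|_{W(X_d,o)}$ to be irreducible — call it $\sigma'$ — and forces $\sigma \cong \Ind_{W(X_d,o)}^{W(X_1,o)}(\sigma')$ by Frobenius reciprocity and a dimension count ($\dim\Ind = d\cdot\dim\sigma' = d\cdot(n/d) = n$). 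Applying Lemma \ref{2.1.4} to $\sigma$ (with $d = |\Fix(\sigma)|$, replacing the roles appropriately, i.e. working with $X_d$ in place of $X_1$) gives $|\Fix(\sigma')| = 1$. Finally, $\sigma|_{\pi_1(X,o)} = (\Ind\,\sigma')|_{\pi_1(X,o)} = \bigoplus_{i=0}^{d-1} (\sigma'|_{\pi_1(X,o)})^{j(i)}$ by Mackey; irreducibility of this restriction is not claimed, but the summands must be pairwise non-isomorphic, for otherwise $\sigma|_{\pi_1(X,o)}$ would contain an isotypic component of multiplicity $> 1$, contradicting that $\sigma \otimes \chi \not\cong \sigma$ for nontrivial inertial $\chi$ via the usual argument (an inertial $\chi$ permuting the $j(i)$-orbit would fix $\sigma$).

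\emph{Proof that (2) $\Rightarrow$ (1), and the counting statement.} Conversely, given $\sigma'$ irreducible over $W(X_d,o)$ of rank $n/d$ with $|\Fix(\sigma')| = 1$ and pairwise non-isomorphic $j(i)$-conjugates of $\sigma'|_{\pi_1(X,o)}$, Clifford theory shows $\sigma = \Ind_{W(X_d,o)}^{W(X_1,o)}(\sigma')$ is irreducible (the inertia subgroup in $W(X_1,o)$ of the $W(X_d,o)$-representation $\sigma'$ is exactly $W(X_d,o)$, precisely because the $j(i)$-conjugates are distinct — here one must be slightly careful and also use $|\Fix(\sigma')|=1$ to rule out twists, combining the two into the statement that no element of $W(X_1,o)\setminus W(X_d,o)$ stabilises the iso-class of $\sigma'$ up to the trivial relation). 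Then $\sigma \otimes \chi \cong \sigma$ for the $d$ inertial characters $\chi$ trivial on $W(X_d,o)$ (projection formula $\Ind(\sigma')\otimes\chi \cong \Ind(\sigma'\otimes\chi|_{W(X_d,o)}) = \Ind(\sigma')$), and no others, giving $|\Fix(\sigma)| = d$. For the last sentence: if $\Ind\,\sigma'_1 \cong \Ind\,\sigma'_2$ with both satisfying (2), Mackey's criterion forces $\sigma'_2 \cong (\sigma'_1)^{j(i)}$ for some $i$, and the $d$ conjugates $(\sigma')^{j(0)}, \ldots, (\sigma')^{j(d-1)}$ are pairwise non-isomorphic as $W(X_d,o)$-representations — which follows from their restrictions to $\pi_1(X,o)$ being pairwise non-isomorphic, already part of hypothesis (2) — so there are exactly $d$ such $\sigma'$.

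\emph{Expected main obstacle.} The delicate point is the clean interplay, in the Clifford-theoretic analysis, between the two distinct phenomena measured by $|\Fix(\sigma)|$: twisting by inertial characters on one hand, and the orbit structure of the $\mathbb{Z}$-action on $\mathrm{Irr}(\pi_1(X,o))$ on the other. One must verify that the inertia group of $\sigma'$ inside $W(X_1,o)$ — in the strong sense relevant to irreducibility of the induced representation — is exactly $W(X_d,o)$, and this requires simultaneously that $\sigma'$ admits no nontrivial inertial self-twist ($|\Fix(\sigma')|=1$, equivalently by Lemma \ref{2.1.4} that $\sigma'$ is absolutely irreducible) and that its $\pi_1(X,o)$-restriction has distinct $j(i)$-conjugates. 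Making this equivalence precise — and checking that $\Fix(\sigma)$ is identified with the group of inertial characters trivial on $W(X_d,o)$ rather than merely having the same order — is where the argument needs care; everything else is bookkeeping with Frobenius reciprocity, Mackey's formula, and Schur's lemma, together with direct appeals to Lemmas \ref{Descente} and \ref{2.1.4}.
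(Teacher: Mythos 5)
Your overall strategy --- Clifford theory for the normal subgroup $\pi_1(X,o)$ of $W(X_1,o)$ combined with Mackey, Frobenius reciprocity and the projection formula --- is the same as the paper's, and your $(2)\Rightarrow(1)$ direction and the final count of the $\sigma'$ are essentially sound (the identification of $\Fix(\sigma)$ with the unique order-$d$ subgroup of roots of unity, hence with the inertial characters trivial on $W(X_d,o)$, is correct and useful). But the $(1)\Rightarrow(2)$ direction as written contains a genuine error and leaves the main step unproved. You assert that the Clifford-theoretic argument ``forces $\sigma|_{W(X_d,o)}$ to be irreducible --- call it $\sigma'$''. This is false for $d>1$: once $\sigma\cong\Ind_{W(X_d,o)}^{W(X_1,o)}(\sigma')$, Mackey gives $\sigma|_{W(X_d,o)}\cong\bigoplus_{i=0}^{d-1}\sigma'^{j(i)}$, a sum of $d$ pairwise non-isomorphic irreducibles of dimension $n/d$; your own dimension count $d\cdot(n/d)=n$ is incompatible with $\sigma'=\sigma|_{W(X_d,o)}$, which has dimension $n$. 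What you actually need --- and only assert in a parenthesis --- is that each constituent of $\sigma|_{W(X_d,o)}$ has dimension exactly $n/d$, i.e.\ that the inertia group in $W(X_1,o)$ of a constituent of $\sigma|_{\pi_1(X,o)}$ is exactly $W(X_d,o)$. This does not follow formally from ``the fixed group being trivial'': the relation between $|\Fix(\sigma)|$ and the orbit structure is $|\Fix(\sigma)|=[W(X_1,o):I]\cdot|\Fix(\hat{\tau})|$ for the inertia group $I$ of a constituent $\tau$ and an extension $\hat{\tau}$ of $\tau$ to $I$, so one must also control the second factor.

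The paper closes this gap with a concrete computation that you should reproduce or replace: pick a generator $\chi$ of $\Fix(\sigma)$ (of order $d$, hence trivial on $W(X_d,o)$) and an intertwiner $\Psi$ with $\chi(g)\sigma(g)\Psi=\Psi\sigma(g)$; then $\sigma(g)$ carries the $\Psi$-eigenspace $V_{\lambda}$ into $V_{\chi(g)\lambda}$, so the eigenspaces are $W(X_d,o)$-stable and are permuted transitively by $j(1)$ (irreducibility of $\sigma$ forces a single $\mu_d$-orbit of eigenvalues), hence all have dimension $n/d$, and $\sigma'=(V_{\lambda},\sigma|_{W(X_d,o)})$ realises the induction; irreducibility of $\sigma'$ and $|\Fix(\sigma')|=1$ then follow as in the paper. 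If you prefer the abstract route, you must prove the displayed product formula, using that $I/\pi_1(X,o)\cong\mathbb{Z}$ is free so that $\tau$ extends to $I$ with multiplicity one, and Lemma \ref{2.1.4} to see $|\Fix(\hat{\tau})|=1$. You correctly flag this interplay as the delicate point in your last paragraph, but the proof itself never carries it out, and the one sentence that is supposed to do the work is the incorrect one.
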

\begin{proof}
On démontre tout d'abord qu'une représentation $\ell$-adique irréductible $\sigma$ de $W(X_1, o)$ est absolument irréductible si $|\Fix(\sigma)|=1$. 

Comme $\sigma$ est irréductible, pour tout  $d\geq 1$,  $\sigma|_{W(X_{d}, o)}$ est semi-simple, donc on peut supposer 
$$ \sigma|_{W(X_{d}  , o)}= \bigoplus \sigma_{i}\ , $$
où les représentations $\sigma_{i}$ de $W(X_{d}, o)$ sont irréductibles. Par la réciprocité de Frobenius, $\sigma$ est un quotient irréductible de $\Ind^{W(X_{1},o)}_{W(X_{d}, o)}(\sigma_{i})$, représentation induite  par $\sigma_{i}$. 
De même pour tout caractère inertiel $\chi$ trivial sur $W(X_d,o)$: $\sigma\otimes \chi$ est un quotient irréductible de $\Ind^{W(X_{1},o)}_{W(X_{d}, o)}(\sigma_{i})$. Par l'hypothèse $\Ind^{W(X_{1},o)}_{W(X_{d}, o)}(\sigma_{i})$ admet $d$ quotients non-isomorphes de dimension $\dim \sigma$. Donc la dimension de $\Ind^{W(X_{1},o)}_{W(X_{d}, o)}(\sigma_{i})$, qui est égale à $d\dim \sigma_{i}$, est supérieure  à $d \dim \sigma$. Ce n'est possible que si $\sigma_{i}=\sigma$.

Il suffit de montrer que si $\sigma|_{W(X_{d}, o)}$ est irréductible pour tout $d\geq 1$ alors $\sigma$ est absolument irréductible.

Soit $G_{g\text{é}om}$ le groupe de monodromie géométrique, c'est-à-dire la clôture de Zariski de $\sigma(\pi_1(X,o))$ dans $GL_{n}(\bar{\mathbb{Q}}_{\ell})$. Comme $\sigma$ est irréductible, sa restriction à $\pi_1(X, o)$ est semi-simple.  Cela montre que $G_{g\text{é}om}$ est un groupe réductif défini sur $\bar{\mathbb{Q}}_{\ell}$. De plus, l'élément $\sigma(j(1))$ normalise $G_{g\text{é}om}$. On utilise un théorème de Grothendieck (cf. théorème 3.3(2) de \cite{KW}) selon lequel il existe un élément $u\in G_{g\text{é}om}(\bar{\mathbb{Q}}_{\ell})$ et un entier $d_{0}\geq 1$ tel que $u\sigma(j(d_0))$ commute avec $G_{g\text{é}om}$ et $\sigma(j(1))$.

Supposons que $\sigma$ est $W(X_{d_{0}},o)$-irréductible. L'action de $u\sigma(j(d_0))$ sur $\bar{\mathbb{Q}}_{\ell}^{n}$ commute avec $W(X_{d_{0}},o)$ donc  induit un $W(X_{d_{0}},o)$-isomorphisme de $\sigma$. Par le lemme de Schur, $u\sigma(j(d_0))$ est un scalaire. Cela implique alors que tout sous-espace de $\sigma$ qui est $G_{g\text{é}om}$ stable est  aussi $W(X_{d_{0}}, o)$ stable. Donc la restriction de $\sigma$ à $\pi_{1}(X, o)$ est irréductible.

$(1.\implies 2.) $ Soit $\sigma$ une représentation $\ell$-adique de $W(X_1, o)$ irréductible tel que $|\Fix(\sigma)|=d$. Soit $\chi$ un caractère inertiel d'ordre $d$ tel qu'il existe une matrice $\Psi\in GL_n(\bar{\mathbb{Q}}_{\ell})$, $$ \chi(g)\sigma(g)\Psi=\Psi\sigma(g)\quad \forall\ g\in W(X_{1},o).$$
Les espaces propres de $\Psi$ sont $W(X_d, o)$-invariant et
leur somme est $W(X_1, o)$-invariante donc est égale à l'espace sous-jacent de $\sigma$. Les valeurs propres de $\Psi$ sont permutées par le groupe des racines $d$-ièmes d'unité de $\bar{\mathbb{Q}}_{\ell}$ donc il y en a au moins $d$. On sait alors que chaque espace propre est irréductible de dimension $n/d$, et $\sigma\cong\Ind_{W(X_{d},o)}^{W(X_{1},o)}(\sigma') $  où $\sigma'$ est l'un des espaces propres. 
L'irréductibilité de $\sigma$ implique celle de $\sigma'$. 
Clairement $|\Fix(\sigma)|=d|\Fix(\sigma')|$, donc $|\Fix(\sigma')|=1$.

\sloppy
Si pour un entier $1\leq d_0\leq d-1$, ${\sigma^{\prime j(d_0)}|_{\pi_1(X,o)}}\cong \sigma'|_{\pi_1(X,o)}$, alors par le lemme \ref{Descente} on peut supposer qu'il existe un caractère inertiel 
 $\chi_0$ de $W(X_d, o)$ tel que $$\sigma'^{j(d_0)}\cong \sigma'\otimes\chi_0 .$$
Par le théorème de Clifford, l'irréductibilité de $\sigma$ implique que $\sigma', \sigma'^{j(1)}, \ldots, \sigma'^{j(d-1)}$  sont deux à deux différentes donc $\chi_0$ est non-trivial.
\color{black}
On le prolonge en un caractère $\tilde{\chi}$ de $W(X_{1}, o)$ qui est d'ordre strictement plus grand que $d$ et similairement  $$\sigma\otimes \tilde{\chi}\cong \Ind_{W(X_{d},o)}^{W(X_{1},o)}(\sigma'\otimes \chi_{0})\cong \Ind_{W(X_{d},o)}^{W(X_{1},o)}(\sigma'^{j(d_0)})\cong\sigma .$$ 
Cela implique $\tilde{\chi}\in \Fix(\sigma)$ donc d'ordre inférieur à $d$ mais c'est une contradiction. Donc les conjugués de $\sigma'|_{\pi_1(X,o)}$ par $j(i)$, $i=0, 1, \ldots, d-1$,   sont deux à deux non-isomorphes.

$(2.\implies 1.)$
Par la théorie de Clifford, $\sigma$ est irréductible. De plus, $|\Fix(\sigma)|=d|\Fix(\sigma')|=d$.

Par la formule de Mackey, les représentations $\sigma'$ satisfaisant  la condition 2 se déduisent des unes des autres par conjugaison par $j(i)$, $i\in \mathbb{Z}$. 
\end{proof}

\subsection{Notations pour les formes automorphes}

\subsubsection{}

	Les valuations discrètes normalisées de $F$ s'identifient aux points fermés de $X_{1}$. Soit $\gls{X1p}$ leur ensemble. 
	Pour $x\in |X_1|$, soit $\mathcal{O}_x$ le complété d'anneau local du faisceau structural de $X_1$ en $x$ et $F_x$ son corps des fractions.  On définit pour $g=(g_x)_{x\in |X_{1}|} \in \AAA^{\times}$: $$\gls{deg} g=-\sum_{x\in|X_1|}[\kappa(x):\mathbb{F}_{q}] x(g_x)$$ où  $\kappa(x)$ est le corps résiduel de $\mathcal{O}_{x}$ et $x(g_x)$ est la valuation normalisée de $g_x$.

\subsubsection{Groupes algébriques}\label{partition}

Soit $\gls{G}=GL_n$ le groupe linéaire général de rang $n$ défini sur $\mathbb{Z}$. Si le contexte indique clairement quel est le rang de $G_{n}$, nous le désignerons par $G$ simplement. Soit $\gls{B}$ le sous-groupe de Borel des matrices triangulaires supérieures défini sur $\mathbb{Z}$. Soit $T$ le sous-tore maximal déployé sur $\mathbb{Z}$ de $G_{n}$ formé des matrices diagonales.

 On désigne par $\gls{P}$ l'ensemble des sous-groupes paraboliques semi-standards (c'est-à-dire ceux qui contiennent $T$) définis sur $F$ de $G$, et $\gls{PB}\subseteq \mathcal{P}$ le sous-ensemble des sous-groupes paraboliques standards.  Pour chaque $P\in\mathcal{P}$, soit $N_{P}$ le radical unipotent de $P$. Il existe un unique sous-groupe de Levi $M_{P}$ de $P$ qui contient $T$. Soit $M$ un sous-groupe de Levi (d'un sous-groupe parabolique)  de $G$ défini sur $F$, on désigne par $\gls{LM}$ l'ensemble des sous-groupes de Levi (des sous-groupe paraboliques de $G$)   contenant $M$. 
Pour deux sous-groupes de Levi $M\subseteq L$ semi-standards, on désigne par $\glslink{PLM}{\mathcal{P}^L(M)}$ l'ensemble des sous-groupes paraboliques $Q$ de $L$ tels que $M_{Q}=M$. Quand $L={G}$ on le note  simplement $\glslink{PLM}{\mathcal{P}(M)}$.

\subsubsection{Groupes topologiques}

Pour tout $e\in\mathbb{Z}$, soit  
 $$\gls{GAe}=\{ g\in G(\mathbb{A}) | \deg \det g =e    \}.$$

Soit $M$ un sous-groupe de Levi de $G$ sur $F$, on a $M\cong G_{n_1}\times\cdots\times G_{n_r}$ avec $\sum_{i}n_i=n$. Soit $\gls{ZM}$ le centre de $M$ et $\gls{XM}$ le groupe des caractères rationnels de $M$ définis sur $F$. On a $Z_M\cong \mathbb{G}_m^r$ et on a l'identification $X^{*}(M)\cong \mathbb{Z}^r$ donnée par le caractère déterminant de chaque facteur $G_{n_i}$.  
Soit 
         $$M(\AAA)^0=\bigcap_{\chi \in X^*(M)} \ker \deg\circ\chi.$$
Alors $M(\AAA)^{0}$ est un sous-groupe distingué de $M(\AAA)$ qui s'identifie à $G_{n_1}(\AAA)^{0}\times\cdots\times G_{n_r}(\AAA)^{0}$. 
   
On fixe un idèle $a$ de degré $1$.   
Soit  $\glslink{XiM}{\Xi_M}$ le sous-groupe discret  de $Z_{M}(\mathbb{A})$ dont les éléments sont de la forme $(a^{j_{1}},\ldots, a^{j_{r}})$ avec $j_{1}, \ldots, j_{r}\in \mathbb{Z}$, où $a$ est vue comme une matrice scalaire dans $G_{n_i}(\AAA)$.

\label{XPG}
Soit $X_M$ le groupe des homomorphismes du groupe $M(\mathbb{A})$ vers $\mathbb{C}^{\times}$ triviaux sur $M(\mathbb{A})^{0}$. Dans le texte, on l'appelle \textit{ le groupe des caractères inertiels} de $M(\AAA)$. 
On a l'identification $X_M\cong (\mathbb{C}^{\times})^r$. 	 
Pour tout sous-groupe de Levi $L$ contenant $M$, soit $X_{M}^{L}\subseteq X_M$ le groupe des homomorphismes triviaux sur $Z_{L}(\mathbb{A})$. On a $Z_{L}(\mathbb{A})M(\mathbb{A})^{0}=\Xi_L M(\mathbb{A})^{0}$. On a  donc 
	  $$\gls{XML}=\Hom(M(\bbb{A})^{0}\backslash M(\bbb{A})/\Xi_{L}, \mathbb{C}^{\times}). $$
Par définition, $X_M^L$ est une variété affine complexe (pas connexe en général).

Soit $\gls{Kx}:=G(\mathcal{O}_{x})$ pour $x\in |X_1|$, et  $\gls{K}=G(\mathcal{O})=\prod_{x\in |X_1|} K_x$.  Le groupe $K$  est un sous-groupe compact maximal de $G(\mathbb{A})$.

\subsubsection{Mesures de Haar}	
Soit $P$ un sous-groupe parabolique sur $F$ avec sous-groupe de Levi $M_P$ et le radical nilpotent $N_P$. 
On fixe des mesures de Haar (unimodulaires) sur les groupes $\Xi_M$, $M_P(F)$, $N_{P}(F)$, $G(F)$, $N_P(\AAA)$, $M_P(\AAA)$ et $G(\AAA)$ de sorte que les conditions suivantes sont satisfaites:\\
-  les mesures sur $\Xi_M, M_P(F), N_{P}(F)$, $G(F)$ soient les mesures de comptage; \\
- $\vol(M_P(\mathcal{O}))=\vol(G(\mathcal{O}))=1$;\\
- $\vol(N_P(F)\backslash N_P(\AAA))=1$.

\subsubsection{Algèbres de Hecke}

Soient $M$ un sous-groupe de Levi de $G$ défini sur $F$ et $x\in |X_1|$.  L'espace $C_c^\infty(M(\mathcal{O}_x)\backslash M(F_x)/M(\mathcal{O}_x))$ des fonctions sur $M(F_x)$ à valeurs dans $\mathbb{C}$ localement constantes, à support compacts, et $M(\mathcal{O}_x)$ bi-invariantes est muni  d'une multiplication définie comme la convolution des fonctions (avec la mesure de Haar locale telle que $\vol(M(\mathcal{O}_x))=1$). La fonction caractéristique de $\mathbbm{1}_{M(\mathcal{O}_x)}$ est l'élément neutre. On l'appelle algèbre de Hecke sphérique locale en $x$, notée par $\mathcal{H}_{M, x}$. Le produit tensoriel restreint des $\gls{HMx}$ est appelé algèbre de Hecke sphérique globale, noté par $\gls{HM}$.

\subsection{Passage du côté automorphe}\label{CLanglands}

\subsubsection{}
On dit qu'une fonction $\varphi$ définie sur $G(F)\backslash G(\AAA)$ à valeurs dans $\mathbb{C}$ est cuspidale partout non-ramifiée si elle est invariante par translation à droite par $K$ et si pour tout sous-groupe parabolique standard $P\in\mathcal{P}(B)$, on a $$\int_{N_P(F)\backslash N_P(\AAA)} \varphi(ng)\d n=0 \quad \forall\ g\in G(\AAA)/K.$$
C'est un théorème de Harder (\cite[1.2.1]{Harder}) que pour toute telle $\varphi$,  son support est compact modulo le centre $Z_G(\AAA)$. 

Soit  $$\theta\in\Hom(Z_{G}(F)\backslash Z_{G}(\mathbb{A})/Z_{G}(\mathcal{O}), \mathbb{C}^{\times})$$ un caractère de $Z_{G}(F)\backslash Z_{G}(\mathbb{A})/Z_{G}(\mathcal{O})$.
Soit ${\mathcal{A}}_{G, \theta, cusp}$ l'espace des fonctions automorphes cuspidales  (à valeur dans $\mathbb{C}$) partout non-ramifiées de caractère central $\theta$, i.e. celles satisfaisant $\varphi(zm)=\theta(z)\varphi(m)$, pour tout $z\in Z_{M_P}(\AAA)$ et $m\in M_P(\AAA)$. 

La convolution à droite fait de l'espace ${\mathcal{A}}_{G, \theta,cusp}$  un $\mathcal{H}_G$-module qui est semi-simple (voir par example \cite[Theorem 9.2.14]{Laumon2}). De plus, le théorème de multiplicité un de Jacquet, Langlands, Piatetski-Shapiro et Shalika (voir \cite[Theorem 5.5]{Shalika})  indique que ces facteurs simples sont deux à deux non-isomorphes.  
Pour tout caractère $\theta$, un sous-$\mathcal{H}_{G}$-module simple de $ {\mathcal{A}}_{G,\theta,cusp}$  est  appelé \textit{une représentation cuspidale automorphe irréductible partout non-ramifiée} de $G(\AAA)$.

Soit $\gls{AnX}$ l'ensemble des représentations cuspidales irréductibles partout non-ramifiées de $G(\AAA)$.

Pour une représentation $\pi\in \mathcal{A}_{n}(X_{1})$, soit $\Fix(\pi)\subseteq X_G$ le sous-groupe formé par des caractères inertiels ${\lambda}$ tels que $$\pi\otimes \lambda= \pi,$$
où $\pi\otimes \lambda$ est le $\mathcal{H}_{G}$-module formé des fonctions $\varphi_\lambda: g\in G(\AAA)\mapsto \varphi(g)\lambda(g)$ avec $\varphi\in \pi$. 
 En regardant l'identité entre le caractère central de $\pi\otimes \lambda$ et celui de $\pi$, on trouve que le groupe $\Fix(\pi)$ est cyclique et son ordre divise $n$.

\begin{remark}\label{multi1}
Comme $\lambda$ est un caractère de $G(\AAA)$, il engendre un $\mathcal{H}_{G}$-module. 
La structure du $\mathcal{H}_G$-module sur l'espace $\pi\otimes\lambda$ est isomorphe au produit tensoriel des deux $\mathcal{H}_{G}$-modules $\pi$ et $\lambda$. 
Si comme $\mathcal{H}_G$-module, $\pi\otimes \lambda$ est isomorphe à $\pi$, les espaces sous-jacents coïncident par le théorème de multiplicité un de Jacquet, Langlands, Piatetski-Shapiro, et Shalika. On peut même vérifier que les  deux structures de $\mathcal{H}_G$-modules $\pi$ et $\pi\otimes \lambda$ sont les mêmes (pas seulement isomorphes). Donc la définition de $\Fix(\pi)$ coïncide avec la définition usuelle (cf. \cite[II.1, p.71]{Wald-Moe}). Néanmoins, il faut comparer avec la remarque  \ref{multi2}.
 \end{remark}

On définit une relation d'équivalence  sur $\mathcal{A}_{n}(X_{1})$:
\begin{definition}\label{inert} Soient $\pi_{1}, \pi_{2} \in \mathcal{A}_{n}(X_{1})$. On dit que $\pi_{1}$ et $\pi_{2}$ sont inertiellement équivalentes s'il existe un caractère inertiel  $\lambda$  tel  que $\pi_{1}\otimes \lambda= \pi_{2}$.
\end{definition}

\subsubsection{}\label{PASSS}
La correspondance de Langlands prouvée par L.Lafforgue  (\cite[Théorème VI.9]{Lafforgue}, voir aussi Section IV.3.5 de \cite{Hen-Le}) implique qu'il existe une bijection canonique (après qu'on fixe un isomorphisme entre $\mathbb{C}$ et $\bar{\mathbb{Q}}_{\ell}$) entre $\mathcal{A}_{n}(X_{1})$ et $\mathcal{G}_{n}(X_{1})$ qui envoie les classes inertielles sur les classes inertielles et qui préserve l'ordre des sous-groupes fixateurs.

 Par le lemme \ref{Descente} et la proposition \ref{GEO}, l'ensemble $(E_n^{(\ell)})^{\Fr_X^{*k}}$ est en bijection avec l'ensemble des classes d'équivalence inertielle des $\sigma\in \mathcal{G}_{n}(X_k)$ telles que $|\Fix(\sigma)|=1$. Donc on a le corollaire suivant. 

\begin{coro}[de la correspondance de Langlands]\label{PASS}
L'ensemble $(E_n^{(\ell)})^{\Fr_X^{*k}}$ est en bijection avec l'ensemble des classes d'équivalence inertielle des $\pi\in \mathcal{A}_{n}(X_k)$ telles que $|\Fix(\pi)|=1$. 
 
\end{coro}

On est donc ramené à compter les classes d'équivalence inertielle des représentations automorphes cuspidales de $G(\mathbb{A}_{F\otimes\mathbb{F}_{q^{k}}})$. 

\subsubsection{Paires discrètes} \label{234}
Cette partie, ainsi que \ref{Pa2} ne seront utilisées que dans la section \ref{S5} et \ref{technique}. 
Le développement spectral de la formule des traces d'Arthur-Lafforgue contient non seulement les termes associés aux représentations automorphes cuspidales de $G$ mais aussi à des représentations automorphes discrètes (représentations automorphes cuspidales et représentations automorphes résiduelles) sur les sous-groupes de Levi de $G$.

Soit $P$ un sous-groupe parabolique de $G$. 
Soit  $\theta$ un caractère de  \[ Z_{M_P}(F) \backslash Z_{M_P}(\mathbb{A})/Z_{M_P}(\mathcal{O}).\]  On considère l'espace des fonctions  $\varphi$ définies sur $M_P(\AAA)$ à valeurs dans $\mathbb{C}$ qui sont  invariantes par translation à droite par $M_P(\mathcal{O})=M_P(\AAA)\cap K$  et  par translation  à gauche par $M_P(F)$, telles que:\\ $(1)$ $\varphi(zm)=\theta(z)\varphi(m)$, $\forall\ z\in Z_{M_P}(\AAA)$ et $m\in M_P(\AAA)$; \\ $(2)$ pour $|\theta|: M_P(F)\backslash M_P(\AAA)\rightarrow \mathbb{R}^\times_+$ l'unique caractère continu  qui prolonge le module de $\theta$,  $$\int_{M_{P}(F)\backslash M_P(\mathbb{A})/Z_{M_P}(\mathbb{A})}\frac{ |\varphi(m)|^2}{|\theta(m)|^2 }\d m<\infty .$$ 
On désigne par $L^2(M_P(F)\backslash M_P(\AAA))_\theta^K$ cet espace. C'est  un $ \mathcal{H}_{M_P}$-module. 

On introduit les représentations automorphes discrètes (irréductibles) partout non-ramifiées ci-dessous.  Pour plus de détails, nous renvoyons le lecteur à \cite[p.25-p.26]{Laumon2}. 

\begin{definition}\label{PD}
Soit $P$ un sous-groupe  parabolique. Une représentation automorphe discrète  (irréductible) partout non-ramifiée $\pi$ de $M_P(\AAA)$ est un sous-$\mathcal{H}_{M_P}$-module simple de $L^2(M_P(F)\backslash M_P(\AAA))_\theta^K$. Le caractère $\theta$ déterminé par $\pi$ est appelé le caractère central de $\pi$. \end{definition}

\begin{rmk}
\textup{1. Soit $\pi$ une représentation automorphe discrète partout non-ramifiée de $G(\AAA)$. Par la décomposition à la Flath, $\pi\cong\otimes_{v\in |X_1|} \pi_v$, où $\pi_v$ est un $\mathcal{H}_{G, v}$-module simple. La commutativité ainsi que la finitude du nombre de générateurs de $\mathcal{H}_{G, v}$ (par l'isomorphisme de Satake) implique que toute $\pi_v$, donc $\pi$, est de dimension $1$. 
\\
2. Tout sous-$\mathcal{H}_{M_P}$-module simple $\pi$ de $L^2(M_P(F)\backslash M_P(\AAA))_\theta^K$ est un facteur simple, parce que le produit scalaire nous permet de définir le supplémentaire orthogonal $\pi^{\perp}$ et $\pi$ est fermé.   
 \\
 3. Par le théorème de Harder qu'on a mentionné (\cite[Theorem 1.2.1]{Harder}) qui implique que toute fonction automorphe cuspidale sur $M_P(\mathbb{A})$ est à support compact modulo le centre $Z_{M_P}(\AAA)$, donc toute représentation automorphe cuspidale est discrète. Mais il existe des représentations discrètes qui ne sont pas cuspidales, par exemple, la représentation engendrée par une fonction constante. 
\\
4. Pour les groupes généraux linéaires, les représentations automorphes discrètes 
sont classées par Moeglin-Waldspurger en termes d'induction parabolique de représentations automorphes cuspidales. Cette classification est un résultat essentiel pour notre calcul.
On rappellera leur théorème plus tard (le théorème \ref{MWres}). }\end{rmk}

Soient $P, P'$ deux sous-groupes paraboliques semi-standards. Soit $W^G$ le groupe de Weyl de $G$ par rapport à $T$. 
Soient $w\in W^G$ tel que $wM_Pw^{-1}=M_{P'}$, et ${\pi} $ une représentation discrète irréductible partout non-ramifiée de $M_P(\AAA)$. On définit $w(\pi)$ par $$w(\pi)=\{\varphi(w^{-1}[\cdot ]w )|\ \varphi\in \pi\}$$
alors $w(\pi)$ est un sous-$\mathcal{H}_{M_{P'}}$-module simple de $L^2(M_{P'}(F)\backslash M_{P'}(\AAA))_{w(\theta)}^K$. Notons que $w(\pi)$  ne dépend que de la double classe représentée par $w$ dans $W^{P'}\backslash W/W^P$ où $\glslink{WM}{W^P}$ est le groupe de Weyl de $M_P$ par rapport à $T$. 

\begin{definition}\label{pairedi}
Une paire discrète partout non-ramifiée $\gls{Ppi}$ de $G$ est la donnée d'un sous-groupe parabolique standard $P\in\mathcal{P}(B)$, et d'une représentation discrète irréductible partout non-ramifiée ${\pi}$ de $M_P(\AAA)$ dont le caractère central $\chi_{{\pi}}$ est  trivial sur $\Xi_{M}$. On dit que deux paires discrètes $(P,{\pi})$ et $(P',{\pi}')$ sont inertiellement  équivalentes s'il existe $w\in W^G$ tel que $wM_Pw^{-1}=M_{P'}$ et $\lambda\in X_{M_P}^{{G}}$ tels que 
$$\pi'= w(\pi\otimes\lambda) .$$
\end{definition}

Soit ${\pi}$ une représentation discrète partout non-ramifiée de $M(\AAA)$. On désigne par $$\gls{Fix}$$ le sous-groupe de $X_M$ des $\lambda\in X_M$ tels que ${\pi}\otimes\lambda={\pi}$. 
En comparant les caractères centraux de ${\pi}$ et ${\pi}\otimes\lambda$, on voit que le groupe $\Fix({\pi})$  est un sous-groupe de $X_M^M$, donc il est fini. 
Soit $(P,{\pi})$ une paire discrète.
On désigne par
$$\gls{stab}$$ l'ensemble des couples $(w,\lambda)$  telles que $w$ appartient dans $W^{P}\backslash W^G/W^{P}$ et $\lambda\in X_P^{G}$ pour lesquels  $wM_P w^{-1}=M_P$ et $w({\pi}\otimes\lambda) = {\pi}$. 
Le cardinal de $\Fix({\pi})$ et celui  de $\stab(P, {\pi})$ ne dépendent que de la classe d'équivalence inertielle de $(P,{\pi})$.

Notons que pour une paire discrète $(P,\pi)$, après avoir pris l'équivalence inertielle, on peut demander que les facteurs de $\pi$ qui sont inertiellement équivalentes sont isomorphes, et on peut prendre des facteurs isomorphes les uns à côté des autres. C'est-à-dire dans toute classe d'équivalence inertielle de paires discrètes partout non-ramifiées, il existe un représentant  $(P, {\pi})$ tel que \[ M_P\cong \underbrace{ G_{n_1}\times\cdots\times  G_{n_1}}_{m_{1}}\times \underbrace{ G_{n_2}\times\cdots\times  G_{n_2}}_{m_{2}}\times\cdots \times\underbrace{ G_{n_k}\times\cdots\times G_{n_k}}_{m_{k}}, \] (on n'exige pas que $n_1, \cdots, n_k$ soient distincts) et
 comme $\mathcal{H}_{M_P}$-module 
\[ {\pi}=\underbrace{ \Pi_1\otimes\cdots\otimes  \Pi_{1}}_{m_{1}}\otimes \underbrace{ \Pi_{2}\otimes\cdots\otimes  \Pi_2 }_{m_{2}}\otimes\cdots \otimes\underbrace{ \Pi_{k}\otimes\cdots\otimes \Pi_{k}}_{m_{k}}, \]
où chaque $\Pi_i$ est une  représentation automorphe discrète irréductible partout non-ramifiée de $G_{n_i}(\AAA)$ 
et les $\Pi_i$ sont deux-à-deux non  inertiellement équivalentes. On dit qu'un tel représentant est \textit{un bon représentant}. L'intérêt d'introduire un bon représentant qui va faciliter les calculs est le fait suivant: 
pour tout bon représentant $(P, {\pi})$, un couple $(w, \lambda)\in \stab(P,{\pi})$ si et seulement si $(w,1)\in \stab(P, \pi)$ et $\lambda\in \Fix({\pi})$.

\subsubsection{Classification du spectre discret}\label{Pa2} 
Nous rappelons la structure du spectre discret dans cette sous-section. Brièvement, le théorème de Moeglin-Waldspurger détermine le support cuspidal de toute représentation discrète automorphe de $G(\AAA)$.

Soient $P$ un sous-groupe parabolique standard de $G$  et $x\in |X_1|$ un point fermé. Pour tout $f\in \mathcal{H}_{{G},x}$, soit $t_P(f)$ le terme constant de $f$ le long de  $P$ qui est défini par  
$$ t_P(f)(m)=\rho_P(m)\int_{N_P(F_x)} f(mn)\d n  \ ,$$  pour tout $m\in M_P(F_x)$, où $\rho_P$ est la 
racine carrée positive du caractère modulaire de $M_P(\AAA)$ par lequel $M_P(\AAA)$ agit sur les mesures de Haar $\d n$ de $N_P(\AAA):$ $ m.\d n.m^{-1} = \rho_P^{2}(m) \d n$, $\forall m\in M_P(\AAA)$. 
 Alors $t_P(f)\in \mathcal{H}_{M_P,x}$. Cela définit en effet un morphisme d'algèbres $t_P: \mathcal{H}_{{G},x}\rightarrow \mathcal{H}_{M_P,x}$, qui munit $\mathcal{H}_{M_P, x}$ d'une structure de $\mathcal{H}_{{G},x}$-algèbre. 

Soit $|\cdot|: {G}(\AAA)\rightarrow \mathbb{C}^{\times}$ le caractère défini par $|g|=q^{\deg(\det g)}$.
Le théorème suivant caractérise toute représentation discrète irréductible partout non-ramifiée en terme de représentation cuspidale  irréductible partout non-ramifiée.

\begin{theorem} \cite[Théorème p. 606, Moeglin-Waldspurger]{Wald-Moe2}\label{MWres}
Soit $ \Pi$ une  représentation automorphe discrète irréductible partout non-ramifiée de $G_{n}(\mathbb{A})$. Alors il existe un entier $d\mid n$, un sous-groupe parabolique standard $P$ de $G$ dont le sous-groupe de Levi $M_P$ s'identifie à $ G_{d}\times\cdots\times G_{d}$
 et une représentation automorphe cuspidale irréductible partout non-ramifiée 
$\pi$ de $G_{d}(\mathbb{A})$  
tels que si on note $\tilde{\pi}=\pi|\cdot|^{\frac{{n}/{d}-1}{2}} \otimes  \pi|\cdot|^{\frac{{n}/{d}-3}{2}} \cdots \otimes\pi|\cdot|^{-\frac{{n}/{d}-1}{2}}  $ la représentation cuspidale irréductible  de $M_P(\AAA)$, on a
$$\Pi\cong \tilde{\pi} ,$$
comme $\mathcal{H}_{{G}}$-module via le morphisme $t_P$. De plus la donnée $(\pi, \frac{n}{d})$ associée à $\Pi$ est unique.

$Vice$-$versa$, si $\pi$ est une représentation cuspidale irréductible partout non-ramifiée de ${G}_{d}(\mathbb{A})$, $\nu$ un entier positif alors la représentation cuspidale irréductible de $M_P(\AAA)$ $$  \pi|\cdot|^{\frac{\nu-1}{2}} \cdots \otimes\pi|\cdot|^{-\frac{\nu-1}{2}}, $$ vue comme $\mathcal{H}_{{G}}$-module est isomorphe au $\mathcal{H}_{G}$-module sous-jacent  à une représentation discrète irréductible de partout non-ramifiée de $G_{\nu d}(\AAA)$. 

 Soit  $\pi$ une représentation cuspidale irréductible partout non-ramifiée de $G_{d}(\AAA)$ et $\nu\in \mathbb{N}^*$. 
On va noter  $\gls{piboxnu}$ la représentation automorphe discrète unique de $G_{d\nu}(\AAA)$ déterminée par la paire $(\pi, \nu)$.
\end{theorem}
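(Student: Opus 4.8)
Le plan est de ramener l'énoncé à la description du spectre discret $L^2_{\mathrm{disc}}(GL_n(F)\backslash GL_n(\AAA))$ et d'en extraire la partie partout non-ramifiée, en suivant l'argument de Moeglin--Waldspurger \cite{Wald-Moe2} dont tous les ingrédients analytiques restent valables sur le corps de fonctions $F$ : prolongement méromorphe et équations fonctionnelles des séries d'Eisenstein, calcul des termes constants via les opérateurs d'entrelacement normalisés $\M(w,\lambda)$, décomposition spectrale à la Langlands (cf. \cite{Wald-Moe}), et position des pôles des fonctions $L$ de Rankin--Selberg : pour $GL$, par Jacquet--Shalika, $L(s,\pi_i\times\pi_j^{\vee})$ n'a dans la bande pertinente qu'un pôle simple en $s=1$, présent si et seulement si $\pi_i$ et $\pi_j$ sont égales à torsion non-ramifiée près. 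Le cas $d=n$ (longueur $1$) est celui des représentations cuspidales, déjà traité ; il reste à décrire le spectre résiduel.

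\textbf{Sens direct et identification du $\mathcal{H}_G$-module.} Pour $\Pi$ discrète irréductible partout non-ramifiée, je commencerais par déterminer son support cuspidal $(M_0,\pi_1\otimes\cdots\otimes\pi_k)$ avec $M_0\cong GL_{n_1}\times\cdots\times GL_{n_k}$ et chaque $\pi_i$ cuspidale partout non-ramifiée — le caractère partout non-ramifié passe au support par $K$-équivariance des termes constants. La représentation $\Pi$ étant de carré intégrable, elle est un résidu (éventuellement itéré) de la série d'Eisenstein attachée à ce support ; l'analyse de la combinatoire des pôles des $\M(w,\lambda)$, jointe au critère de carré-intégrabilité de Langlands (les exposants du terme constant dans le cône obtus négatif), doit forcer successivement : (i) $n_1=\cdots=n_k=d$ avec $d\mid n$ et $k=n/d$ ; (ii) à torsion non-ramifiée près, les $\pi_i$ coïncident avec une unique cuspidale $\pi$ de $GL_d(\AAA)$ ; (iii) les torsions sont exactement $|\cdot|^{(k-1)/2},|\cdot|^{(k-3)/2},\ldots,|\cdot|^{-(k-1)/2}$. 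Ainsi $\Pi$ et $\tilde\pi=\pi|\cdot|^{(k-1)/2}\otimes\cdots\otimes\pi|\cdot|^{-(k-1)/2}$ ont même support cuspidal sur $M_P$ ; comme tous deux sont partout non-ramifiés, chacun est l'unique constituant non-ramifié de l'induite parabolique depuis ce support, donc ils sont isomorphes comme $\mathcal{H}_G$-modules via $t_P$. Concrètement, en chaque place $x$, $c_x(\Pi)$ est la réunion avec multiplicité des translatés de $c_x(\pi)$ par les valeurs en $x$ des caractères $|\cdot|^{(k-1)/2},\ldots,|\cdot|^{-(k-1)/2}$, ce qui est précisément l'image de $c_x(\pi)$ par l'inclusion du diagramme (\ref{consa}). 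L'unicité de $(\pi,n/d)$ résulte de l'unicité du support cuspidal à association près : $n/d$ est le nombre de blocs et $\pi$ provient de l'un quelconque d'entre eux.

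\textbf{Réciproque.} Partant de $\pi$ cuspidale sur $GL_d(\AAA)$ et de $\nu\geq 1$, je formerais le résidu itéré, en le point convenable, de la série d'Eisenstein attachée à $(\pi|\cdot|^{(\nu-1)/2}\otimes\cdots\otimes\pi|\cdot|^{-(\nu-1)/2},P)$ sur $GL_{\nu d}$ : le calcul de Moeglin--Waldspurger montre que ce résidu (la représentation de Speh) est non nul et de carré intégrable, et son constituant non-ramifié a la structure de $\mathcal{H}_G$-module annoncée.

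\textbf{Obstacle principal.} Le cœur de la preuve est le point (ii) : la rigidité forçant tous les constituants du support à être des torsions d'une même cuspidale et le support à former un seul segment. Elle repose sur le calcul exact des résidus itérés des opérateurs d'entrelacement, qui pour $GL$ se réduit à la description des pôles des $L(s,\pi_i\times\pi_j^{\vee})$ et à la combinatoire des segments de Zelevinsky ; comme cette analyse ne fait intervenir que des faits disponibles sur $F$, elle se transpose. Je veillerais seulement à ce que la non-surjectivité du morphisme de Harish-Chandra $H_P$ sur un corps de fonctions — le groupe $X_{M_P}$ des caractères inertiels est un réseau, non un espace réel — soit correctement prise en compte dans le décompte des torsions, sans que cela n'altère la structure de segment.
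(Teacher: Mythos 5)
The paper does not actually prove this statement: it is quoted with attribution from Moeglin--Waldspurger's classification of the residual spectrum of $GL(n)$ (\cite{Wald-Moe2}, the spectral theory over function fields being supplied by Morris \cite{Morris}), and your outline is precisely a summary of that cited proof — cuspidal support, residues of Eisenstein series governed by the poles of Rankin--Selberg $L$-functions, the Speh-residue construction for the converse, and identification of the everywhere unramified constituents through their Hecke/Satake eigenvalues via $t_P$. So your approach coincides with the one the paper relies on; the only caveat is that your step (ii), the rigidity forcing a single segment of twists of one cuspidal $\pi$, is exactly the hard core of Moeglin--Waldspurger's argument, which both you and the paper leave as a black box.
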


Soient $\nu, d\in \mathbb{N}^*$ tels que $\nu d=n$.  Soit $M$ le sous-groupe de Levi standard de $G$ correspondant à la partition $( \underbrace{d, \ldots, d}_{\nu\ \text{fois}})$. L'application $(m_1, \cdots, m_{\nu})\mapsto m_1\cdots m_\nu$ de $M$ dans $G_d$ induit une inclusion $X_{G_d}^{G_d} \hookrightarrow X_M^M\subseteq X_M^{G}$. De plus l'image de $X_{G_d}^{G_d}$ est contenu dans le sous-groupe  $X_{G}^{G}$ de $X_M^{G}$, cela nous permet d'identifier $X_{G_d}^{G_d}$ avec un sous-groupe de  $X_{G}^{G}$. 

\begin{prop}\label{res e}
 Soit $\pi\boxtimes\nu$  
  une représentation discrète automorphe partout non-ramifiée de $G_{n}(\AAA)$. 
Avec l'identification $X_{G_d}^{G_d}\subseteq X_{G}^{G}$ ci-dessus, on a 
$$\Fix(\pi)=\Fix(\pi\boxtimes \nu)$$ 
et $({G},\pi\boxtimes\nu)$ est   inertiellement équivalente à $({G},\pi'\boxtimes\nu')$ si et seulement $\nu=\nu'$ et si $\pi$ et $\pi'$ sont inertiellement équivalentes. 
\end{prop}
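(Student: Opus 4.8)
The plan is to translate the statement about the discrete representation $\pi\boxtimes\nu$ into statements about the cuspidal data $(\pi,\nu)$, using the uniqueness clause of Theorem \ref{MWres} (Moeglin--Waldspurger) together with the explicit form of the Langlands quotient construction. First I would establish the equality $\Fix(\pi)=\Fix(\pi\boxtimes\nu)$. For one inclusion, suppose $\lambda\in X_{GL_d}^{GL_d}$ (seen inside $X_G^G$) satisfies $\pi\otimes\lambda\cong\pi$. Since $\pi\boxtimes\nu$ is by definition built as $\pi|\cdot|^{(\nu-1)/2}\otimes\cdots\otimes\pi|\cdot|^{-(\nu-1)/2}$ viewed as an $\mathcal H_G$-module via $t_P$, twisting by $\lambda$ (which is an inertial character of $G$, hence restricts to the ``diagonal'' inertial character $\lambda\circ(m_1\cdots m_\nu)$ on each block of $M$) commutes with the individual twists by $|\cdot|^{(\nu-1)/2-j}$, so $(\pi\boxtimes\nu)\otimes\lambda$ is the discrete representation attached to $(\pi\otimes\lambda,\nu)\cong(\pi,\nu)$; by the uniqueness of the pair in Theorem \ref{MWres} we get $(\pi\boxtimes\nu)\otimes\lambda\cong\pi\boxtimes\nu$, i.e.\ $\lambda\in\Fix(\pi\boxtimes\nu)$. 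Conversely, if $\lambda\in\Fix(\pi\boxtimes\nu)$, then $(\pi\otimes\lambda,\nu)$ and $(\pi,\nu)$ both give rise to the same discrete representation, so again by the uniqueness clause $\pi\otimes\lambda\cong\pi$, whence $\lambda\in\Fix(\pi)$.

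Next I would treat the inertial equivalence statement. The ``if'' direction is immediate: if $\nu=\nu'$ and $\pi'\cong\pi\otimes\mu$ for some inertial character $\mu\in X_{GL_d}$, then applying the block-diagonal twist as above shows $\pi'\boxtimes\nu'\cong(\pi\boxtimes\nu)\otimes\mu$ as $\mathcal H_G$-modules, and $\mu$, extended suitably, lies in $X_P^G$; taking $w=\mathrm{id}$ in Definition \ref{pairedi} gives inertial equivalence of $(G,\pi\boxtimes\nu)$ and $(G,\pi'\boxtimes\nu')$. For the ``only if'' direction, suppose $(G,\pi\boxtimes\nu)$ and $(G,\pi'\boxtimes\nu')$ are inertially equivalent; since both parabolics are $G$ itself, the element $w$ of Definition \ref{pairedi} must normalise $M_G=G$, so after replacing $w$ by its class we may take $w=\mathrm{id}$, and there is $\lambda\in X_G^G$ with $\pi'\boxtimes\nu'\cong(\pi\boxtimes\nu)\otimes\lambda$. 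Because $\lambda$ is an inertial character of $G$ it restricts to a block-diagonal twist, so $(\pi\boxtimes\nu)\otimes\lambda$ is the discrete representation attached to $(\pi\otimes\lambda|_{GL_d},\nu)$; invoking the uniqueness of the pair $(\pi,\nu/d)\mapsto(\pi,\nu)$ once more forces $\nu=\nu'$ and $\pi\otimes\lambda|_{GL_d}\cong\pi'$, so $\pi$ and $\pi'$ are inertially equivalent.

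The one genuinely delicate point — and the place where I would slow down — is the bookkeeping between the various character groups: one must check that an inertial character $\lambda\in X_G^G$ of $GL_n$ really does pull back, under $(m_1,\dots,m_\nu)\mapsto m_1\cdots m_\nu$ composed with $t_P$, to the ``same'' twist on each $GL_d$ block, and conversely that an inertial character of $GL_d$ extends to an element of $X_P^G$ landing in the identified copy of $X_{GL_d}^{GL_d}$. The preceding paragraph of the excerpt (the inclusion $X_{GL_d}^{GL_d}\hookrightarrow X_M^M\subseteq X_M^G$ with image inside $X_G^G$) is exactly what makes this compatible, since the degree-of-determinant map $g\mapsto\deg\det g$ on $GL_n$ restricts on $M$ to the sum of the block degrees, and the $|\cdot|$-shifts $|\cdot|^{(\nu-1)/2-j}$ are by construction trivial on this image. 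Once that compatibility is spelled out, everything else is a formal consequence of Theorem \ref{MWres}; I do not expect any serious obstacle beyond keeping the identifications straight.
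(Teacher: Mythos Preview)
Your proposal is correct and follows essentially the same route as the paper: both directions of $\Fix(\pi)=\Fix(\pi\boxtimes\nu)$ and the inertial-equivalence statement are reduced to the uniqueness clause of Theorem~\ref{MWres}, after tracking how a character in $X_G^G$ restricts to the blocks of $M$. The paper's proof is simply terser (it dismisses the inclusion $\Fix(\pi)\subseteq\Fix(\pi\boxtimes\nu)$ as ``clairement'' and handles the rest in two sentences), whereas you spell out the character-group bookkeeping that the paper takes for granted; there is no substantive difference in strategy.
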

\begin{proof}\sloppy
Clairement, l'inclusion $X_{G_d}^{G_d}\subseteq X_{G}^{G}$ induit une inclusion $\Fix(\pi) \subseteq \Fix(\pi\boxtimes \nu)$. Soit $\lambda\in \Fix(\pi\boxtimes \nu)$, on a $\lambda\in X_{G}^{G}$.
 Par l'inclusion $X_{G}^{G}\subseteq X_M^{G}$ il définit un caractère de $M(\AAA)$. Par la partie d'unicité du théorème \ref{MWres}, on a l'autre inclusion, d'où le dernier énoncé. 
\end{proof}

\section{La partie géométrique de la formule des traces}\label{geometric}

On introduit en \ref{3.1} l'intégrale adélique d'un certain noyau tronqué défini par Arthur. On montre que cette intégrale exprime le nombre de classes d'isomorphie de fibrés isoclines de rang et degré fixés sur $X_1$ (cf. théorème \ref{Maing}). En utilisant des résultats de Schiffmann et Mellit, on en déduit que sous une hypothèse de coprimalité l'intégrale est donnée par un polynôme universel en les valeurs propres du Frobenius agissant sur le $H^1(X, \mathbb{Q}_{\ell})$ (cf. corollaire \ref{geom}).

\subsection{Notations}
\subsubsection{}\label{coracine}

Soit $M$ un sous-groupe de Levi semi-standard défini sur $F$ de $G$. 
Soit $\glslink{ago}{\ago_M^{*}}=X^*(M)\otimes_{\mathbb{Z}} \mathbb{R}$. Soit $\glslink{ago}{\ago_M}=\Hom_\mathbb{Z}(X^*(M), \mathbb{R})$ son espace dual. Pour tout sous-groupe parabolique $P\in\mathcal{P}(M)$, on utilise souvent $\glslink{ago}{\ago_P}$ (resp.  $\glslink{ago}{\ago_P^*}$) au lieu de $\ago_{M}$ (resp.  $\ago_{M}^*$).

L'application de restriction de $M$ à $Z_{M}$ donne un morphisme injectif de $X^{*}(M)$ à $X^{*}(Z_{M})$ dont le conoyau est fini. Cela fournit un isomorphisme $\ago_{M}^{*}=X^{*}(M)\otimes_{\mathbb{Z}}\mathbb{R}\cong  X^{*}(Z_M)\otimes_{\mathbb{Z}}\mathbb{R}$. 
 Soient $P\subseteq Q$ deux sous-groupes paraboliques définis sur $F$. On a un morphisme $X^{*}(Z_{M_{P}})\rightarrow X^{*}(Z_{M_{Q}})$ induit par l'inclusion $Z_{M_{Q}}\subseteq Z_{M_{P}}$ et un morphisme $X^{*}(M_{Q})\rightarrow X^{*}(M_{P})$ induit par l'inclusion $M_{P}\subseteq M_{Q}$. Ils induisent  une surjection $\ago_{P}^{*}\rightarrow \ago_{Q}^{*}$ et une section  $\ago_{Q}^{*}\rightarrow \ago_{P}^{*}$ de la surjection. Cela permet d'identifier $\ago_{Q}^{*}$ à un sous-espace de $\ago_{P}^{*}$ et on a une décomposition $$\ago_{P}^{*}=\ago_{Q}^{*}\oplus\ago_{P}^{Q,*},$$
où $\ago_{P}^{Q,*}$ est le noyau de la projection de $\ago_{P}^{*}\rightarrow\ago_{Q}^{*}$.   
Duellement, on a une décomposition $$\ago_{P}=\ago_{Q}\oplus\ago_{P}^{Q}.  $$

Il y a une dualité canonique à valeurs dans $\mathbb{Z}$ entre le groupe des caractères rationnels de $T$ et celui du groupe à un paramètre $X_{*}(T):=\Hom(\mathbb{G}_{m}, T)$. Cela nous permet d'identifier $\ago_{B}$ avec $X_{*}(T)\otimes_{\mathbb{Z}} \mathbb{R}$.

\subsubsection{}\label{raciness}
Soit $M$ un sous-groupe de Levi semi-standard défini sur $F$  de $G$. 
Soit $\glslink{PhiP}{\Phi(Z_M, {G})}$ l'ensemble des racines de $G$ relatives à $Z_{M}$. Il engendre l'espace $\ago_M^{{G},*}$. Tout sous-groupe parabolique $P\in \mathcal{P}(M)$ détermine un sous-ensemble des racines $\glslink{PhiP}{\Phi_P}$, qui sont les poids de l'action adjointe de $Z_M$ sur l'algèbre de Lie de radical nilpotent $N_P$ de $P$, 
 et un sous-ensemble de racines simples $\gls{DeltaP}$. 
L'ensemble $\Delta_{P}$ est caractérisé par le fait qu'il est une base de $\ago_P^{{G},*}$ telle que tout l'élément de $\Phi_{P}$ est une combinaison à coefficients entiers positifs des éléments de $\Delta_{P}$. Il consiste des éléments non-nuls de la projection de $\Delta_{B'}$ à $\ago_P^{*}$ pour un quelconque sous-groupe de Borel $B'$ semi-standard défini sur $F$ contenu dans $P$.

Pour les sous-groupes paraboliques $P\subseteq Q$ définis sur $F$, on dispose de l'ensemble $\Delta_P^Q\subseteq \Delta_P$ où $\gls{DeltaPQ}=\Delta_{P}\cap \Phi(Z_{M_{P}}, M_{Q})$. 
On dispose  de la base des coracines $(\Delta_{B'}^{P})^{\vee}$ dans $\ago_{B'}^{P}$ pour tout sous-groupe de Borel $B'$ et tout sous-groupe parabolique $P$ tels que  $B'\subseteq P$. {Pour deux paraboliques $P\subseteq Q$ soit $\gls{Deltavee}$ la projection de $(\Delta_{B'}^{Q})^{\vee}-(\Delta_{B'}^{P})^{\vee}$ sur $\ago_{P}^{Q}$ pour un sous-groupe de Borel $B'\subseteq P$, cela ne dépend pas de $B'$. } 
On note $\gls{Deltahat}$ la base de $\ago_{P}^{Q, *}$ qui est duale de $(\Delta_{P}^{Q})^{\vee}$, et  $(\hat{\Delta}_{P}^{Q})^{\vee}$  la base de $\ago_{P}^{Q}$ qui est duale de $\Delta_{P}^{Q}$.

\subsubsection{}\label{hattau}
Soit $\gls{aBp}$ le cône dans $\ago_B$ défini par $\{H\in \ago_B|\ \alpha(H)>0, \ \forall\ \alpha \in  \Delta_{B}  \}$.
Soit $\gls{baBp}$ l'adhérence  du cône positif $\ago_{B}^{+}$.

Soit $\gls{tauP}$ et $\gls{htauP}$ les fonctions caractéristiques respectives  des cônes
           $$\{H\in \ago_{B} \mid \alpha(H) > 0, \  \forall\  \alpha \in  \Delta_{P}\},$$
et
            $$\{H\in \ago_{B} \mid \varpi(H) > 0, \  \forall\  \varpi \in  \hat{\Delta}_P \}.$$

\subsubsection{}\label{HC}

 Pour tout sous-groupe parabolique $P$ de $G$, on dispose de l'application de Harish-Chandra
            $$\gls{HP} : G(\AAA)\to \ago_P $$
qui vérifie
            $$\chi(H_{P}(g))=\deg \chi (p)$$
pour tout $p\in P(\AAA)$, $g\in pK$, et $\chi\in X^*(P)$. On peut voir \ref{Pa3} pour une présentation explicite.

\subsubsection{Bases spécifiques}\label{syst}\label{coor}
Soit $P$ un sous-groupe parabolique standard tel que $M_{P}\cong G_{n_{1}}\times \cdots\times G_{n_{r}} $, les caractères 
\begin{align*}
\gls{detM}:\quad  M_{P}\quad  &\longrightarrow \mathbb{G}_{m} \\
(m_{1},\ldots, m_{r})&\longmapsto \det m_{i}
\end{align*}
forment une base de $X^*(M_P)$ donc de $\ago_{P}^{*}$. Les éléments de $ \Phi(Z_{M_{P}}, {G})$ sont des morphismes \begin{align*}
\alpha_{ij}:  \quad Z_{M_{P}}\quad&\longrightarrow \mathbb{G}_{m}  \\
(t_{1},\ldots, t_{r}) &\longmapsto t_{i}t_{j}^{-1} \end{align*}
pour  $i\neq j$, ces éléments peuvent s'écrire dans la base $(\textrm{det}_{M_{P}, i})$  comme $$\alpha_{ij}= \frac{1}{n_{i}}\textrm{det}_{M_{P},i}-\frac{1}{n_{j}}\textrm{det}_{M_{P}, j}  .$$
On utilise les $\textrm{det}_{M_{P}, i}$ pour identifier $\ago_{P}^{*}$ avec $\mathbb{R}^{r}$, et on munit $\ago_{P}$ de la base duale que nous noterons $\gls{psiM}$.

\subsubsection{}\label{Pa3}
Cette sous-section n'est pas indispensable  pour ce qui suit. Pour la commodité du lecteur,  on explicite certaines constructions précédentes.  

Soit $P$ un sous-groupe parabolique dont le sous-groupe de Levi $M_{P}\cong G_{n_{1}}\times \cdots\times G_{n_{r}} $.
Le morphisme de Harish-Chandra $\gls{HP}$ est donné par $$H_P(nmk)=(\deg\det m_1, \cdots, \deg\det m_r),$$ pour $m=(m_1, \ldots, m_r)\in M_P(\AAA)$, $n\in N_P(\AAA)$ et $k\in K=G(\mathcal{O})$. La projection $\ago_{B}^{*}\longrightarrow \ago_{P}^{*}$ est donnée par 
$$(x_{1},\ldots, x_{n}) \longmapsto 
 (\frac{x_{1}+\cdots+x_{n_{1}}}{n_{1}}, \ldots , \frac{x_{n_{1}+\cdots+n_{r-1}+1}+\cdots+x_{n}}{n_{r}}  ). $$
La projection $\ago_{B}\longrightarrow \ago_{P}$ est donnée par 
$$ (x_{1},\ldots, x_{n})\longmapsto (x_{1}+ \cdots+ x_{n_{1}}, \ldots, x_{n_{1}+\cdots+n_{r-1}+1}+\cdots+x_{n} ).$$
L'inclusion $\ago^{*}_{P}\rightarrow \ago_{B}^{*}$ est $$(x_{1},\ldots, x_{r})\longmapsto ( \underbrace{x_{1}, \ldots, x_{1}}_{n_{1} \text{ fois}}, \underbrace{x_{2},  \ldots }_{n_{2} \text{ fois}}, \ldots, \underbrace{\ldots, x_{r}}_{n_{r} \text{ fois}}), $$ et l'inclusion
$\ago_{P}\rightarrow \ago_{B}$ est $$(x_{1},\ldots, x_{r})\longmapsto ( \underbrace{\frac{x_{1}}{n_{1}}, \ldots, \frac{x_{1}}{n_{1}}}_{n_{1} \text{ fois}}, \underbrace{\frac{x_{2}}{n_{2}},  \ldots }_{n_{2} \text{ fois}}, \ldots, \underbrace{\ldots, \frac{x_{r}}{n_{r}}}_{n_{r} \text{ fois}}).$$
Pour l'espaces $\ago_{P}^{{G},*}$ et $\ago_{P}^{{G}}$, on a 
$$  \ago_{P}^{{G},*}= \{ (x_{1},\ldots, x_{r})\in \ago_{P}^{*} |\ n_{1}x_{1}+\cdots+n_{r}x_{r}=0  \}  ,       $$
$$   \ago_{P}^{{G}}= \{ (x_{1},\ldots, x_{r})\in \ago_{P} |\ x_{1}+\cdots+ x_{r}=0  \}  .$$
On a aussi
$$\Delta_{P}^{\vee}=\{ \alpha_{i}^{\vee}=e_{M_{P}, i}-e_{M_{P}, i+1}\mid 1\leq i\leq r-1     \},$$
et  $$\hat{\Delta}_{P}=\{ \varpi_{i} |  1\leq i\leq r-1  \},$$ 
où $\varpi_{i} $ est  $$\frac{n_{i+1}+\cdots+n_{r}}{n}(\mathrm{det}_{M_{P}, 1}+\cdots+\mathrm{det}_{M_{P}, i}) -   \frac{n_{1}+\cdots+n_{i}}{n}(\mathrm{det}_{M_{P}, i+1}+\cdots+\mathrm{det}_{M_{P}, r}) .    $$

\subsection{Intégrale d'Arthur et fibrés indécomposables}\label{3.1}

\subsubsection{} \label{tracet}
Soit $P$ un sous-groupe parabolique standard. L'action de $\mathbbm{1}_{K}\in \mathcal{H}_{G}$ sur 
$$L^2(M_P(F)N_P(\AAA)\backslash G(\mathbb{A})/\Xi_GK)$$ est un opérateur intégral de noyau $$\gls{kPxy}=  \sum_{a\in \Xi_{{G}}}\sum_{\gamma\in M_{P}(F)}  \int_{N_{P}(\mathbb{A})}  \mathbbm{1}_{K}(y^{-1}\gamma n xa)\d n, $$
où $x, y\in M_P(F)N_P(\AAA) \backslash G(\AAA)/\Xi_G$. 
Lorsque $x=y$, seul $a=1$ contribue, donc on a $$k_{P}(x,x)=  \sum_{\gamma\in M_{P}(F)}  \int_{N_{P}(\mathbb{A})}  \mathbbm{1}_{K}(x^{-1}\gamma n x)\d n  . $$
Arthur a défini un noyau tronqué, pour 
$x\in G(F)\backslash G(\mathbb{A})/\Xi_{G} K$, et $T\in \ago_B$ par
\begin{equation}\label{noyauf}
k^{T}(x,x)= \sum_{P\in \mathcal{P}(B)}\sum_{\delta\in P(F) \backslash G(F)}(-1)^{\dim\ago_P^G}\hat{\tau}_{P}(H_{B}(\delta x)-T)k_P(\delta x,\delta x)  . \end{equation}
Notons que la somme sur $\delta$ peut être prise sur un ensemble fini (qui dépend de $x$ et $T$, cf. \cite[Lemma 5.1]{A1}). 

On pose
\begin{equation}\label{J e}
\gls{JeT}:=\int_{G(F)\backslash G(\mathbb{A})^{e}} k^{T}(x,x)\d x .
\end{equation}
et $$J_e= J_e^{T=0}. $$
L'intégrale dans (\ref{J e}) est absolument convergente (proposition 11 page 227, de \cite{Laff}).

\subsubsection{}
Pour nous, un fibré vectoriel sur $X_{1}$ est un $\mathcal{O}_{X_{1}}$-module localement libre de rang fini. Soit $\mathcal{E}$ un fibré vectoriel, on note  $\deg(\mathcal{E})$ pour le degré de $\mathcal{E}$, $\rg(\mathcal{E})$ pour le rang et $\gls{muE}$ pour la pente $$\mu(\mathcal{E})=\frac{\deg(\mathcal{E})}{\rg(\mathcal{E}})\quad \text{si $\mathcal{E}\neq 0$}.$$

Un fibré est \textit{indécomposable} s'il ne peut pas être écrit comme une somme directe de deux sous-fibrés vectoriels. On a le résultat suivant:
\begin{lemm} \label{ind}
Tout fibré vectoriel est une somme directe de fibrés indécomposables, et cette décomposition est unique à isomorphisme près.
 Un fibré vectoriel indécomposable de degré $e$ premier avec son rang $n$ est géométriquement indécomposable au sens où son produit tensoriel avec $\mathbb{F}$ est indécomposable comme fibré vectoriel sur $X$. 
\end{lemm}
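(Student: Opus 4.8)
\emph{Sketch of proof.} The first assertion is the Krull--Schmidt theorem for coherent sheaves on a projective scheme over a field, due to Atiyah. Existence of a decomposition into indecomposables is immediate: a rank $n$ bundle cannot have more than $n$ nonzero direct summands, so the splitting process terminates. For uniqueness, the key point I would use is that $\End_{\mathcal{O}_{X_1}}(\mathcal{E})$ is a finite-dimensional $\mathbb{F}_q$-algebra ($X_1$ being projective), and that a nonzero finite-dimensional algebra over a field with no idempotents other than $0$ and $1$ is local: its quotient by the Jacobson radical is semisimple with no nontrivial idempotent, hence a division ring. Thus an indecomposable bundle has local endomorphism ring, and the classical exchange argument yields uniqueness.

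For the second assertion, the plan is to compute the number of indecomposable summands of $\mathcal{E}\otimes\mathbb{F}$ through its endomorphism ring. Set $A=\End_{\mathcal{O}_{X_1}}(\mathcal{E})$; since $\mathcal{E}$ is indecomposable, $A$ is local and $D:=A/\mathrm{rad}(A)$ is a finite-dimensional division algebra over the finite field $\mathbb{F}_q$, hence, by Wedderburn's little theorem, a field, say $D\cong\mathbb{F}_{q^{s}}$. By flat base change for $\End$ (the $\mathcal{H}om$ sheaf is coherent and $H^0$ commutes with flat base change), $\End_{\mathcal{O}_X}(\mathcal{E}\otimes\mathbb{F})=A\otimes_{\mathbb{F}_q}\mathbb{F}$. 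As $\mathbb{F}/\mathbb{F}_q$ is separable ($\mathbb{F}_q$ being perfect), $\mathrm{rad}(A\otimes_{\mathbb{F}_q}\mathbb{F})=\mathrm{rad}(A)\otimes_{\mathbb{F}_q}\mathbb{F}$, so the semisimple quotient of $\End_{\mathcal{O}_X}(\mathcal{E}\otimes\mathbb{F})$ is $D\otimes_{\mathbb{F}_q}\mathbb{F}=\mathbb{F}_{q^{s}}\otimes_{\mathbb{F}_q}\mathbb{F}\cong\mathbb{F}^{s}$. Lifting a complete system of $s$ orthogonal primitive idempotents then gives a decomposition $\mathcal{E}\otimes\mathbb{F}=\bigoplus_{i=1}^{s}\mathcal{F}_i$ into indecomposables, pairwise non-isomorphic (otherwise the semisimple quotient would contain a factor $M_2(\mathbb{F})$, contradicting its commutativity).

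It then remains to bring in $\Gal(\mathbb{F}|\mathbb{F}_q)$. This group acts $\mathbb{F}$-semilinearly on $A\otimes_{\mathbb{F}_q}\mathbb{F}$ through the second factor, hence permutes the primitive idempotents $e_1,\dots,e_s$; these correspond to the points of $\Spec(D\otimes_{\mathbb{F}_q}\mathbb{F})=\Hom_{\mathbb{F}_q}(\mathbb{F}_{q^s},\mathbb{F})$, on which $\Gal(\mathbb{F}|\mathbb{F}_q)$ acts transitively because $\mathbb{F}_{q^s}$ is a field. Through the base-change isomorphism, $\sigma$ carries the summand $\mathcal{F}_i$ to $\mathcal{F}_{\sigma(i)}\cong\sigma^{*}\mathcal{F}_i$, so the $\mathcal{F}_i$ are pairwise Galois-conjugate and therefore all have the same rank $n'$ and the same degree $e'$. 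Hence $n=sn'$ and $e=se'$, so $s$ divides $\mathrm{p.g.c.d.}(n,e)=1$, forcing $s=1$: $\mathcal{E}\otimes\mathbb{F}$ is indecomposable.

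The part I expect to require the most care is the algebra bookkeeping underlying these steps: that the Jacobson radical commutes with the separable scalar extension $\mathbb{F}/\mathbb{F}_q$, and that the Galois action on the isomorphism classes of the summands of $\mathcal{E}\otimes\mathbb{F}$ genuinely coincides with the transitive action on primitive idempotents; once these points are pinned down, everything else is formal.
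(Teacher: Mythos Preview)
Your argument is correct and is precisely the one the paper defers to: the paper's proof is a bare reference to lemma~2.6 of \cite{Schiffmann}, and you have faithfully reconstructed Schiffmann's argument (Krull--Schmidt via Atiyah, the local endomorphism ring with residue field $\mathbb{F}_{q^s}$, base change of the radical under the separable extension $\mathbb{F}/\mathbb{F}_q$, and the transitive Galois action on the $s$ summands forcing $s\mid(n,e)$). Nothing to add.
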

\begin{proof}
On peut voir la preuve du lemme 2.6 de \cite{Schiffmann}.
\end{proof}

\begin{definition}\label{isocline}
Soit $\mathcal{E}$ un fibré vectoriel  sur $X_1$. On dit que $\mathcal{E}$ est isocline si pour toute décomposition $\mathcal{E}\cong \mathcal{F}\oplus \mathcal{G}$, on a  $\mu(\mathcal{F})=\mu(\mathcal{E})$. 
\end{definition}
\begin{remark}\label{rmk-iso}
Il résulte du lemme \ref{ind} que tout fibré vectoriel se décompose, uniquement à isomorphisme près, comme une somme directe de fibrés isoclines de pentes distinctes.
 \end{remark}

Soit $\mathcal{P}_{n}^{e}(X_1)$ le nombre des classes d'isomorphie de fibrés vectoriels isoclines de rang $n$ et de degré $e$. On peut énoncer:

\begin{theorem}\label{Maing}
Soit $e\in \mathbb{Z}$, on a $$J_e=\mathcal{P}_n^e(X_1). $$
\end{theorem}

La démonstration se trouve en la section \ref{KKK}.

\subsubsection{Conséquence du théorème \ref{Maing}}\label{Higgs}
Soit $\textbf{Higgs}_{n,e}(X_{1})$ le $\mathbb{F}_q$-champ des fibrés de Higgs sur $X_{1}$, qui paramètre les couples $(\mathcal{E}, \theta)$ où \\
$\bullet$ $ \mathcal{E}$ est un fibré vectoriel  sur $X_1$ de rang $n$ de degré $e$;\\
$\bullet$ $\theta: \mathcal{E}\rightarrow \mathcal{E}\otimes \omega_{X_{1}}$ est un morphisme de $\mathcal{O}_{X_{1}}$-module, où $\omega_{X_{1}}$ est le fibré en droite canonique. \\
On sait que $\textbf{Higgs}_{n,e}(X_{1})$ est un champ algébrique localement de type fini (on peut le prouver en considérant le morphisme d'oubli de $\textbf{Higgs}_{n,e}(X_{1})$ à $\textbf{Bun}_{n}^{e}$ qui est représentable de type fini, où $\textbf{Bun}_{n}^{e}$ est le champ algébrique des fibrés vectoriels de rang $n$ et de degré $e$ sur $X_1$; pour plus de détails, voir \cite[Theorem 7.18]{CMW}). 

Un couple $(\mathcal{E},\theta)$ est appelé stable, si tout sous-fibré $0\neq \mathcal{F}\subsetneq \mathcal{E}$ qui est $\theta$-stable (i.e. $\theta(\mathcal{F})\subseteq \mathcal{F}\otimes \omega_{X_{1}}$)  satisfait $$\mu(\mathcal{F})< \mu(\mathcal{E}). $$
Soit $\mathbf{Higgs}_{n,e}^{st}(X_{1})$ le sous-champ ouvert de $\mathbf{Higgs}_{n,e}(X_{1})$ des fibrés de Higgs stables. Soit $\mathrm{Higgs}_{n,e}^{st}(X_{1})$ le schéma des modules grossiers des fibrés de Higgs stables. On sait que $\mathrm{Higgs}^{st}_{n,e}(X_1)$ est un schéma lisse, quasi-projectif et de dimension $2(g-1)n^{2}+2$ (cf. proposition 7.4. \cite{Nitsure}).

Dans l'introduction, on a introduit $\sigma_{1},\ldots, \sigma_{2g }$ les $q$-entiers de Weil de la courbe $X_{1}$ indexés de telle façon que $\sigma_{i}\sigma_{i+g}=q$ pour tout $i$ tel que $g \geq i\geq 1$.

\begin{theorem}[Schiffmann, Mellit]\label{Mel}
Soit $e$ un nombre premier avec $n$, on a
\begin{equation}\label{Number}\mathcal{P}_{n}^{e}(X_{1})=q^{-1-(g-1)n^{2}}| \mathrm{Higgs}^{st}_{n,e}(X_{1})(\mathbb{F}_{q})|. \end{equation}
De plus il existe un polynôme de Laurent $A_{g,n}$ dans $\mathbb{Z}[q, z_{1}^{\pm 1}, \ldots, z_{g}^{\pm 1}]$ (ici $q$ est une variable)  qui  ne dépend que des nombres $g$ et $n$ tel que 
$$\mathcal{P}_{n}^{e}(X_{1})=A_{g,n}(q, \sigma_{1}, \cdots, \sigma_{g}  ). $$ 
De plus, pour chaque monôme  $q^{m}z_1^{a_1}\cdots z_g^{a_g} $ dans $ A_{g,n} $ on a $$m+\sum_{i=1}^g\min\{a_i, 0\} \geq 0 .  $$
 \end{theorem}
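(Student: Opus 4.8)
The plan is to prove both assertions by reducing them to the quantity $J_e^{T=0}$ of the truncated, non-invariant trace formula for $GL_n$ with the simple test function $\mathbbm{1}_{G(\mathcal{O})}$, and then to make the dependence of $J_e^{T=0}$ on the curve explicit. First I would establish that $J_e^{T=0}=\mathcal{P}_n^e(X_1)$, which is Theorem~\ref{a}. Starting from the elementary identity $k(g,g)=\#\{\gamma\in G(F):g^{-1}\gamma g\in K\}=|\Aut(\mathcal{E}^g)|$, one analyses the truncated kernel $k^{T}(g,g)$ at $T=0$: using the dictionary $F^G(g,0)=1\iff\mathcal{E}^g$ semistable (Lemma~\ref{Tse}), the combinatorial identities of Langlands and Arthur recalled above (Corollary~\ref{combF}, Lemmas~\ref{combtau} and~\ref{egal}) and Harder--Narasimhan theory (Propositions~\ref{HN}, \ref{key} and Lemma~\ref{scin}), the alternating sum over parabolic subgroups telescopes; the Eisenstein-type corrections absorb the automorphism weights, so that the only contributions that survive the truncation at $T=0$ come from the strata of isocline bundles and one is left with $\sum_{\mathcal{E}\text{ isocline of degree }e}1=\mathcal{P}_n^e(X_1)$. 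When $(n,e)=1$, any indecomposable direct summand of an isocline bundle of rank $n$ and degree $e$ has slope $e/n$, hence rank divisible by $n$, hence rank $n$; so such a bundle consists of a single summand and ``isocline'' here simply means ``indecomposable'', which by Lemma~\ref{ind} is automatically geometrically indecomposable. Thus $\mathcal{P}_n^e(X_1)=A_{n,e}(X_1)$, and in particular $J_e^{T=0}$ depends only on the class of $e$ in $\mathbb{Z}/n\mathbb{Z}$, hence is the same for all $e$ prime to $n$ (independence of the degree).

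Next I would match $q^{-1-(g-1)n^2}|\mathrm{Higgs}^{st}_{n,e}(X_1)(\mathbb{F}_q)|$ with the same quantity. Fibering the stack $\mathbf{Higgs}_{n,e}^{st}$ over $\mathrm{Bun}_{n,e}$ by $(\mathcal{E},\theta)\mapsto\mathcal{E}$, the fibre over $\mathcal{E}$ is the set of $\theta\in H^0(X_1,\End(\mathcal{E})\otimes\omega_{X_1})$ for which $(\mathcal{E},\theta)$ is stable; Riemann--Roch and Serre duality give $\dim H^0(\End(\mathcal{E})\otimes\omega_{X_1})=n^2(g-1)+\dim\End(\mathcal{E})$, and a stable Higgs bundle has automorphism group $\mathbb{G}_m$, so $|\mathrm{Higgs}^{st}_{n,e}(X_1)(\mathbb{F}_q)|=(q-1)\sum_{(\mathcal{E},\theta)\text{ stable}/\cong}|\Aut(\mathcal{E},\theta)|^{-1}=(q-1)\sum_{\mathcal{E}/\cong}\frac{\#\{\theta:(\mathcal{E},\theta)\text{ stable}\}}{|\Aut(\mathcal{E})|}$. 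The substance of~\eqref{Number} is then that this groupoid sum equals $q^{1+(g-1)n^2}A_{n,e}(X_1)$; the contribution of the bundles $\mathcal{E}$ that are themselves stable is immediately $q^{n^2(g-1)+1}\#\{\mathcal{E}\text{ stable}\}/(q-1)$ (here $\dim\End(\mathcal{E})=1$), so everything comes down to showing that the unstable yet indecomposable bundles $\mathcal{E}$, each weighted by the Higgs fields that stabilize it, account exactly for the difference $A_{n,e}(X_1)-\#\{\mathcal{E}\text{ stable}\}$. I would obtain this by rewriting the groupoid sum itself as a truncated trace-formula integral, now for a Lie-algebra (Higgs) analogue of the test function in the spirit of the study of the Hitchin fibration by the trace formula, and matching it stratum by stratum with the geometric expansion of $J_e^{T=0}$ found above; together with the first step this yields~\eqref{Number}.

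For the second assertion, the geometric side of the trace formula sees $X_1$ only through the point counts $|X_1(\mathbb{F}_{q^m})|=q^m+1-\sum_{i=1}^{g}\big(\sigma_i^m+q^m\sigma_i^{-m}\big)$, so $A_{n,e}(X_1)=J_e^{T=0}$ is a universal expression in $q$ and $\sigma_1,\dots,\sigma_g$, depending only on $g$ and $n$, symmetric under $\mathcal{S}_g\ltimes(\mathcal{S}_2)^g$ by construction; \emph{a priori} it is only a rational function, since the zeta values entering the geometric side produce denominators that are products of factors $1-q^j$. It remains to show that all of these poles cancel, so that $A_{n,e}(X_1)\in\mathbb{Z}[q,z_1^{\pm1},\dots,z_g^{\pm1}]$ with every monomial $q^m z_1^{a_1}\cdots z_g^{a_g}$ satisfying $m+\sum_i\min\{a_i,0\}\ge0$ --- equivalently $A_{n,e}(X_1)\in\mathbb{Z}[z_1,\dots,z_g,qz_1^{-1},\dots,qz_g^{-1}]$, because a monomial of that ring is $q^{\sum_i c_i}\prod_i z_i^{b_i-c_i}$ with all $b_i,c_i\ge0$. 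The cancellation of poles, the integrality of the coefficients and the effectivity (positivity) of the monomials I would deduce from the inductive structure of the trace formula --- parabolic descent reduces $J_e^{T=0}$ for $GL_n$ to the analogous quantities for proper Levi subgroups, while the functional equations and the poles of the $L$-functions appearing in the constant terms control the cancellations --- combined with a purely combinatorial integrality statement of the kind occurring in \cite{Reineke} and \cite{WZ}, here re-derived from the trace formula.

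The main obstacle is exactly this last point: showing that the a priori rational output of the geometric expansion of $J_e^{T=0}$ is in fact a Laurent polynomial lying in $\mathbb{Z}[z_1,\dots,z_g,qz_1^{-1},\dots,qz_g^{-1}]$. The cancellation of the spurious poles at $q^j=1$, the integrality of the resulting coefficients, and the positivity of the monomials are the genuinely hard part; this is the combinatorial core of the theorems of Schiffmann and of Mozgovoy--Schiffmann, and the role of the trace formula in the present proof is precisely to reduce it to a combinatorial identity that can be attacked by induction on the rank $n$.
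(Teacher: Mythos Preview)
Your approach to the first equality~(\ref{Number}) is essentially the one the paper gives in Appendix~B, though the actual execution there is more specific than your sketch. One decomposes both the group-theoretic kernel $k^T$ and its Lie-algebra analogue $\tilde{k}^T$ according to the characteristic polynomial $p\in\mathbb{F}_q[X]$, observes that $J_{p,e}^T=\tilde{J}_{p,e}^T$ when $p(0)\neq 0$ (an endomorphism of a bundle is an automorphism exactly when its characteristic polynomial has nonzero constant term) and $J_{p,e}^T=0$ when $p(0)=0$, and then invokes results of Chaudouard~\cite{Chau} (his Th\'eor\`eme~6.2.1 and Corollaires~5.2.2--5.2.3) which identify the Lie-algebra side with the nilpotent contribution and thence with the volume of $\mathbf{Higgs}_{n,e}^{st}$. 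Your direct fibering over $\mathrm{Bun}_{n,e}$ with the Riemann--Roch count does not by itself settle the stable-versus-unstable accounting; the paper does not attempt it that way, and the passage through the characteristic-polynomial decomposition plus~\cite{Chau} is what actually closes the argument.

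For the second assertion --- the existence and shape of the Laurent polynomial $H_{g,n}$ --- there is a genuine misconception about the paper's logical structure. The paper does \emph{not} prove this via the trace formula; it is cited directly from Schiffmann~\cite{Schiffmann} and Mellit~\cite{Mellit}, and the short argument following the statement merely extracts the positivity condition $m+\sum_i\min\{a_i,0\}\ge 0$ from the structure of Mellit's explicit formula (writing $H_{g,n}=L_{g,n}/(1+qR(q))$ and examining lowest-degree terms). More importantly, the flow of the paper runs opposite to what you propose: Theorem~\ref{Mel} is used as an \emph{input} to the proof of the main Theorem~\ref{Final}, and the combinatorial integrality lemma (Th\'eor\`eme~\ref{wala}) is applied there to the polynomial $P_{g,n}$ encoding $C_n(X_k)$, not to $H_{g,n}$. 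Your proposed induction would need the polynomiality of $C_s(X_k)$ for $s\le n$ (including $s=n$) in order to deduce that of $A_{n,e}$ from the spectral expansion, whereas the paper establishes exactly the reverse implication; so within this framework your route is circular. Finally, your appeal to ``functional equations and poles of the $L$-functions appearing in the constant terms'' belongs to the spectral side and does not control the denominators $1-q^j$ that arise on the geometric side; those cancellations are handled in~\cite{Schiffmann} by Hall-algebra methods, not by the trace formula.
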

 
 L'hypothèse que $e$ est premier avec $n$ implique qu'un fibré vectoriel est isocline de rang $n$ et de degré $e$ si et seulement s'il est géométriquement indécomposable.  
L'égalité (\ref{Number}) ci-dessus est montrée par Mozgovoy et Schiffmann (cf. \cite[Theorem 1.2]{Schiffmann} \cite[Theorem 1.2]{MS}). 
Dans l'appendice $B$, on observe que la trace tronquée est liée à une ``trace" tronquée sur l'algèbre de Lie de $G$. Par des résultats de Chaudouard \cite{Chau}, cela donne une autre preuve de cette égalité.  
 
Pour l'existence du polynôme de Laurent $A_{g,n}$, on renvoie au théorème 1.1 de \cite{Mellit}.  Notons que l'existence d'une fraction rationnelle qui joue le même rôle était auparavant connue par les travaux de Garcia-Prada, Heinloth et Schmitt (cf. corollaire 2.2 \cite{GPHS} et le théorème B \cite{GPH}).

Pour la commodité du lecteur, nous rappelons le résultat de Mellit ci-dessous. Mellit a introduit une série $\Omega_g\in \mathbb{Q}(q,z)[ z_1^{\pm 1}, \ldots, z_{g}^{\pm 1}][[T]]$, qui est définie par \[    \Omega_g=\sum_{\mu\in \mathcal{P}  } T^{|\mu|}\prod_{\square\in \mu} \frac{\prod_{i=1}^{g}  (z^{a(\square)+1   }     - z_i  q^{l(\square)} ) (   z^{a(\square)   }    -z_i^{ -1 } q^{l(\square) +1 } ) }{  (z^{a(\square)+1   } -q^{l(\square)}    ) (z^{a(\square)}   -q^{l(\square)  +1 } )  } ,  \]
où $\mathcal{P}=\coprod_{k}\mathcal{P}_k$ et $\mathcal{P}_k$ est l'ensemble des partitions (non ordonnées) de $k$ ou, du point de vue de cette formule,  $\mathcal{P}_k$ est aussi l'ensemble des diagrammes de Young de taille $k$; soit $\mu\in \mathcal{P}_k$ un diagramme de Young, on a $|\mu|=k$; soit $\square\in \mu$, $a(\square)$ et $l(\square)$ sont respectivement la longueur de ``arm" et de ``leg" de $\square$. Rappelons que $a(\square), l(\square)\in \mathbb{N}$ et pour tout $\mu\in \mathcal{P}$ il existe une case $\square\in \mu$ telle que $a(\square)=l(\square)=0$. 

Le polynôme de Laurent $H_{g,n}\in \mathbb{Q}[q^{\pm 1},z^{\pm 1}, z_1^{\pm 1}, \ldots, z_{g}^{\pm 1}]$ de Mellit est donné par la formule: \[  -(1-g)(1-z) \mathrm{Log}   \Omega_g = \sum_{n=1}^{\infty} H_{g,n}T^n,  \]
où $\mathrm{Log}$ est la fonction logarithmique pléthystique définie par 
\[ \mathrm{Log}(f) = \sum_{k\geq 1} \frac{\mu(k)}{k} \psi_k(log f) , \]
avec $\mu(\cdot)$ la fonction de Möbius, $\psi_k$ l'opérateur d'Adam qui envoie respectivement toute variable $T$, $q$, $z$, $z_1$, $\ldots,$ $z_g$ en $T^k$, $q^k$, $z^k$, $z_1^k$, $\ldots,$ $z_g^k$, et $log$ est la fonction logarithmique usuelle. Le polynôme de Laurent $A_{g,n}$ est donné par 
\[A_{g,n}(q, z_1, \ldots, z_g)=H_{g,n}(q,1,z_1, \ldots, z_g).\]

La dernière assertion du théorème \ref{Mel} est un corollaire du théorème \ref{ordinary}, elle peut aussi se déduire directement  de la formule de Mellit. 
En effet, en utilisant la formulation de Mellit, on vérifie aisément que, \textit{a priori}, il existe un polynôme de Laurent $L_{g,n}$ dans $\mathbb{Q}[q, z_{1}^{\pm 1}, \ldots, z_{g}^{\pm 1}]$ tel que pour chaque monôme  $q^{m}z_1^{a_1}\cdots z_g^{a_g} $ dans $L_{g,n} $ on a $$m+\sum_{i=1}^g\min\{a_i, 0\} \geq 0,  $$
et un polynôme $R(q)\in \mathbb{Q}[q]$ tel que  $$A_{g,n}(q, z_1, \ldots, z_g)=\frac{L_{g,n}(q, z_1, \ldots, z_g)}{1+qR(q)}. $$
Si $A_{g,n}$ ne satisfaisait  pas l'hypothèse, on obtiendrait une contradiction en examinant les termes dans $A_{g,n}$ de plus bas degré en $q$ qui ne vérifient  pas l'hypothèse. 

Rappelons que $\mathcal{P}_{n}^{e}(X_1)$ et $| \mathrm{Higgs}^{st}_{n,e}(X_{1})(\mathbb{F}_{q})|$ dépendent a priori de $e$. Il se trouve qu'ils n'en dépendent pas si $e$ est premier avec $n$.  C'est montré par Mellit et indépendamment par Groechenig, Wyss et Ziegler \cite{GWZ}. Nous en donnerons une preuve différente dans l'appendice A.

\begin{coro}\label{geom}
Quand $(e,n)=1$, on a
$$J^{ }_{e}=A_{g,n}(q, \sigma_{1}, \cdots, \sigma_{g}  ) .$$
\end{coro}

\subsection{Preuve du théorème \ref{Maing}}\label{KKK}

\subsubsection{}

Le théorème suivant est démontré par Arthur sur un corps de nombres (cf. théorème 9.1 de \cite{Arthur}). La même méthode marche pour un corps de fonctions aussi, on renvoie le lecteur au théorème 5.2.1 de \cite{Chau} pour une preuve. 

\begin{theorem}\label{QP}
En tant que fonction en $T\in \mathfrak{a}_{B}$, l'application $T\mapsto J_{e}^{T}$ est quasi-polynomiale au sens de la définition \cite[Définition 4.5.3]{Chau}.
\end{theorem}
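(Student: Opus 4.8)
The plan is to follow Arthur's argument for the analogue over a number field (theorem~9.1 of \cite{Arthur}), in the function-field form of Chaudouard (theorem~5.2.1 of \cite{Chau}). First, by theorem~\ref{Main} the function $x\mapsto k^{T}(x,x)$ of (\ref{noyauf}) has compact support on $G(F)\backslash G(\mathbb{A})^{e}$ for every $T\in\mathfrak{a}_{B}$, so $J_{e}^{T}$ is defined for all $T$ and all the manipulations below converge absolutely; the assertion is that the restriction of $T\mapsto J_{e}^{T}$ to the lattice $\mathfrak{a}_{B,\mathbb{Z}}$ is quasi-polynomial in the sense of definition~\ref{quasi}. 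I would fix once and for all a base point $T_{0}\in\mathfrak{a}_{B,\mathbb{Z}}\cap\bar{\mathfrak{a}_{B}^{+}}$ with $d(T_{0})\geq\max\{2g-2,0\}$ and let $T\in\mathfrak{a}_{B,\mathbb{Z}}$ vary.

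The first and main step is a change-of-truncation-parameter identity of Arthur's type, expressing $k^{T}$ through the kernel attached to $T_{0}$: for all $x\in G(\mathbb{A})$,
\begin{equation*}
k^{T}(x,x)=\sum_{P\in\mathcal{P}(B)}\ \sum_{\delta\in P(F)\backslash G(F)}\Gamma_{P}^{\prime}\big(H_{P}(\delta x)-T_{0},\ T-T_{0}\big)\,k_{P}^{T_{0}}(\delta x,\delta x),
\end{equation*}
where $\Gamma_{P}^{\prime}$ is the function of paragraph~\ref{combina} and $k_{P}^{T_{0}}$ is the truncated kernel attached to the Levi factor $M_{P}$ for the truncation parameter $T_{0}$. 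I would derive it from the definition (\ref{noyauf}) by substituting the inversion of $\hat\tau_{P}$ in terms of the $\Gamma_{Q}^{\prime}$ (lemma~1.8.2 of \cite{Labe}) and reorganising the resulting double sum by means of lemmas~\ref{combtau} and~\ref{combF}; the compact-support statement of theorem~\ref{Main}, used for $G$ and for each of its Levi subgroups, legitimises the rearrangements. This is the step I expect to be the real obstacle: the identity is purely combinatorial, but carrying out the parabolic bookkeeping (signs, and absolute convergence at each stage) is the technical heart, and it is essentially Arthur's computation, so here I would rely on \cite{Arthur} and \cite{Chau}.

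Next I would integrate this identity over $G(F)\backslash G(\mathbb{A})^{e}$. Unfolding $\sum_{\delta\in P(F)\backslash G(F)}$ and using the Iwasawa decomposition $G(\mathbb{A})=N_{P}(\mathbb{A})M_{P}(\mathbb{A})K$ (with $\vol(N_{P}(F)\backslash N_{P}(\mathbb{A}))=\vol(K)=1$) turns each term into an integral over $m\in M_{P}(F)\backslash M_{P}(\mathbb{A})$ with $\deg\det m=e$, which I then break up according to the value $H_{P}(m)=(d_{1},\dots,d_{r})$; since $H_{P}$ is surjective onto $\mathfrak{a}_{P,\mathbb{Z}}$, this value ranges over $\{(d_{i})\in\mathbb{Z}^{r}:\ \sum_{i}d_{i}=e\}=\bigsqcup_{(e_{i})}\mathfrak{h}^{e}_{(e_{i})}$, a disjoint union of sets of the kind appearing in lemma~\ref{qp}. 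On each residue class $\mathfrak{h}^{e}_{(e_{i})}$ the fibre integral of $k_{P}^{T_{0}}$ is a constant $c_{P,(e_{i})}$ independent of $T$: translating the $i$-th block of $m$ by a central scalar element of $GL_{n_{i}}(\mathbb{A})$ of degree $n_{i}$ commutes with everything, leaves $\mathbbm{1}_{K}$ invariant, and shifts $H_{B}$ by a vector of $\mathfrak{a}_{M_{P}}$, hence leaves unchanged all the functions $\hat\tau$ occurring in $k_{P}^{T_{0}}$, so the integrand is invariant under $d_{i}\mapsto d_{i}+n_{i}$ — whereas $\Gamma_{P}^{\prime}(H_{P}(m)-T_{0},T-T_{0})$ depends on $m$ only through $H_{P}(m)$. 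Collecting terms, this yields
\begin{equation*}
J_{e}^{T}=\sum_{P\in\mathcal{P}(B)}\ \sum_{(e_{i})}c_{P,(e_{i})}\,\widehat{\Gamma}^{\prime}_{P,\ \mathfrak{h}^{e}_{(e_{i})}-T_{0}}\big(1,\ T-T_{0}\big),
\end{equation*}
a finite sum in which the $c_{P,(e_{i})}$ are constants independent of $T$.

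Finally I would observe that $\mathfrak{h}^{e}_{(e_{i})}-T_{0}$ is again a set of the shape required in lemma~\ref{qp}, so each coefficient $\widehat{\Gamma}^{\prime}_{P,\ \mathfrak{h}^{e}_{(e_{i})}-T_{0}}(1,\ T-T_{0})$ is quasi-polynomial in $T-T_{0}$, hence in $T$; being a finite $\mathbb{C}$-linear combination of such, $T\mapsto J_{e}^{T}$ is quasi-polynomial, as claimed. The only ingredient genuinely specific to the function-field case is precisely this passage through Fourier series over the lattice $\mathfrak{a}_{P,\mathbb{Z}}$ (lemma~\ref{qp}), replacing the continuous integrals over $\mathfrak{a}_{P}$ available to Arthur; this replacement is forced by the non-surjectivity of $H_{P}$ onto $\mathfrak{a}_{P}$, and everything else follows Arthur's method. (For $T$ with $d(T)\geq\max\{2g-2,0\}$ one can, alternatively, identify $J_{e}^{T}$ geometrically with the number of $T$-semistable vector bundles of rank $n$ and degree $e$, by theorem~\ref{Main} and lemma~\ref{Tse}, and control its variation via Harder-Narasimhan theory and proposition~\ref{key}; but passing from that cone to all of $\mathfrak{a}_{B,\mathbb{Z}}$ still needs the argument above.)
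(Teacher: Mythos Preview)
Your proposal is correct and follows exactly the route the paper indicates: the paper does not give its own proof but simply cites Arthur's th\'eor\`eme~9.1 and Chaudouard's th\'eor\`eme~5.2.1, and what you have written is a faithful sketch of that argument (Arthur's change-of-truncation identity via $\Gamma'_P$, unfolding and fibring over $H_P$, then invoking the quasi-polynomiality of $\hat\Gamma'_{P,\mathfrak h}$ from lemme~\ref{qp}). You actually supply more detail than the paper does; the parenthetical geometric remark at the end is precisely how the paper uses the theorem in the proof of th\'eor\`eme~\ref{Maing}.
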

 D'après Lafforgue (proposition 11, page 227, de \cite{Laff}) pour $T\in \ago_B$ tel que  $\gls{dT}:=  \min_{\alpha\in \Delta_B}\{\alpha(T)\}  \geq \max\{   2g-2,0\} $ on a \begin{equation}\label{LafTr}J_e^{T}=\int_{G(F)\backslash G(\AAA)^{e}} F^{{G}}(g,T)\sum_{\gamma \in G(F)}\mathbbm{1}_{K}(g^{-1}\gamma g) \d g \end{equation}
où $\gls{FG}$ est la fonction caractéristique  des $g\in G(F)\backslash G(\AAA)/K$ tels que pour tout sous-groupe parabolique standard $P$ et pour tout $ \delta \in P(F) \backslash G(F)$, on a
\begin{equation}\label{htauHP}  \hat{\tau}_P(H_P(\delta g)-T)=0. \end{equation}
\begin{remark}\label{332.}
La fonction $F^{G}(\cdot, T)$ est aussi la fonction caractéristique des $g\in G(F)\backslash G(\AAA)/K$ tels que pour tout sous-groupe parabolique standard $P$ avec $M_P\cong G_{r}\times G_{n-r}$ et pour tout $ \delta \in P(F) \backslash G(F)$, la condition \eqref{htauHP} est satisfaite. En effet, soit $g\in G(\AAA)$, si \begin{equation} \hat{\tau}_P(H_P(\delta g)-T)=1, \end{equation} 
pour un certain couple $(P, \delta)$, alors pour tout sous-groupe parabolique $Q$ contenant $P$, en particulier on peut supposer que $Q$ est maximal, on a par définition (comme $\hat{\Delta}_Q\subseteq \hat{\Delta}_{P}$) \begin{equation} \hat{\tau}_Q(H_Q(\delta g)-T)=1.  \end{equation} 
\end{remark}

Rappelons qu'on a une équivalence (cf. Appendice E \cite{Laumon2}, voir aussi \cite[2.6]{Chau}) entre $\mathbf{Bun}_{n}(\mathbb{F}_{q})$ le groupoïde des fibrés vectoriels sur $X_{1}$  et le groupoïde quotient  $$[\leftquotient{G(F)}{\left(G(\mathbb{A})/K\right)}] .$$
Précisément, un élément $g\in G(\mathbb{A})/K$ donne naissance à un fibré vectoriel $\mathcal{E}^{g}$ muni d'une trivialisation générique de $\mathcal{E}_{\eta}^{g}\rightarrow F^{n}$, où $\eta$ est le point générique de $X_1$. Un drapeau $\mathcal{F}_{\bullet}=(0=\mathcal{F}_0\subsetneq\mathcal{F}_1\subsetneq\ldots \subsetneq\mathcal{F}_{k-1}\subsetneq \mathcal{F}_k=\mathcal{E}^g)$ de $\mathcal{E}^g$ correspond à un couple $(P, \delta)$ avec $P$ un sous-groupe parabolique standard de $G$ et $\delta\in  P(F)\backslash G(F)$. 

\sloppy
Identifions $\ago_P$ avec $\mathbb{R}^k$ par la base fixée dans \ref{coor}. Alors $H_P(\delta g)=(\deg(\mathcal{F}_{1}/\mathcal{F}_0  ), \ldots, \deg(\mathcal{F}_{k}/\mathcal{F}_{k-1}  ) )$.
Soit $T=(T_1, \ldots, T_n)$. 
On suppose $\min_{i=1, \ldots, n-1}\{T_i-T_{i+1}\}\geq 0$. 
Selon la définition de Chaudouard \cite[4.1]{Chau}, on dit qu'un fibré vectoriel $\mathcal{E}$ de rang $n$ est \textbf{$T$-semi-stable} si pour tout sous-fibré $\mathcal{F}$ de $\mathcal{E}$ de rang $r$, on a l'inégalité de pente
\begin{equation}\label{tauPs} \frac{\deg(\mathcal{F})}{r}-\frac{T_1+\cdots +T_r}{r}\leq \frac{\deg(\mathcal{E})}{n}-\frac{T_1+\cdots +T_n}{n}. \end{equation}
De plus,  \[\htau_P(H_P(\delta g)-T)=1,\] si et seulement si pour tout $0< i< k$, le sous-fibré vectoriel $\mathcal{F}_i$ ne satisfait pas la condition \eqref{tauPs}. Donc la condition \eqref{tauPs} pour tout sous-fibré de $\mathcal{E}^{g}$ est équivalent à la condition \eqref{htauHP} pour tout couple $(P, \delta)$ tel que $M_P\cong G_{r}\times G_{n-r}$ (i.e. $k=2$) et $\delta\in P(F) \backslash  G(F)$. {En vertu de la remarque \ref{332.}, $F^{G}(g, T)$ est la fonction caractéristique des $g\in G(\AAA)$ tels que $\mathcal{E}^{g}$ est $T$-semi-stable.}

Avec cette interprétation, $\gamma \in G(F)$ satisfait  $\gamma g\in gK$ si et seulement si $\gamma$ induit un automorphisme de $\mathcal{E}^g$. Donc on a
\begin{equation}\begin{cases} \vol(G(F)\backslash G(F)gK)=\vol (\frac{K}{K\cap g^{-{1}}G(F)g})=|\Aut(\mathcal{E}^{g})|^{-1};\\
\sum_{\gamma \in G(F)}\mathbbm{1}_{K}(g^{-1}\gamma g)=|\Aut(\mathcal{E}^{g})|. \end{cases}\end{equation}
Comme la fonction $g\mapsto F^{{G}}(g,T)\sum_{\gamma \in G(F)}\mathbbm{1}_{K}(g^{-1}\gamma g)$ est $K$-invariante à droite, en décomposant l'intégrale dans la formule \ref{LafTr} de Lafforgue comme une somme suivant le double quotient $G(F)\backslash G(\AAA)/K$, 
on obtient alors lorsque $\gls{dT}\geq \max\{0,2g-2\}$, la formule \ref{LafTr} s'interprète comme: 
\begin{align}\label{JeTs}
J_{e}^{T}
& = \sum_{\cal{E} \  T\text{-semi-stable de degré} \ e}\frac{|Aut(\mathcal{E})|}{|Aut(\mathcal{E})|}\\
& = \#\text{classes d'isomorphie de fibrés $T$-semi-stables de degré}\ e.  \nonumber  \end{align}

Soit $I=(n_1, \cdots, n_r)$ une partition de $n$. 
Soit $T\in \mathbb{R}^n$, nous introduisons la fonction caractéristique $\gls{GammaP} $ des $H\in \mathbb{R}^r$, 
tels que \\
$\bullet$ $\frac{H_1}{n_1}>\frac{H_2}{n_2}>\ldots>\frac{H_r}{n_r}$;\\
$\bullet$ $\frac{H_1+\cdots +H_i}{n_1+\cdots+n_i} -   \frac{H_1+\cdots +H_r}{n_1+\cdots+n_r}\leq \frac{T_{1}+\cdots+T_{n_{1}+\cdots+n_{i}}}{n_{1}+\cdots+n_i}-\frac{T_{1}+\cdots+T_{n}}{n}$ pour $i=1,2,\ldots, r-1$. \\
\begin{remark}\textup{
Lorsque $r=1$, $\Gamma_{I}(\cdot, T)$ est la fonction constante égale à $1$. Si $T=0$, $\Gamma_{I}(\cdot, 0)\equiv 0$ pour tout $r>1$. }
\end{remark}
Pour tout fibré vectoriel $\mathcal{E}$ de rang $n$, on a une décomposition à isomorphisme près  $$\mathcal{E}=\bigoplus_{i=1}^{k}\mathcal{I}_i\ ,$$ où $\mathcal{I}_{i}$  est un fibré vectoriel isocline de pente $\mu_{i}$, tel que  $\mu_{1}>\cdots>\mu_{k}$. Soit $n_{i}$ le rang de $ \mathcal{I}_i$.
La partition $I_{\mathcal{E}}=(n_1, \ldots, n_r)$ de $n$ ne dépend que de la classe d'isomorphie de $\mathcal{E}$. Soit $H_{\mathcal{E}}=(n_{1}\mu_{1},\ldots,n_{k}\mu_{k})$. 

\begin{prop}\label{key}
Soit $T\in \mathbb{R}^n$ tel que $\gls{dT}= \min_{\alpha\in \Delta_B}\{\alpha(T)\}\geq  \max\{2g-2, 0\}$. Le fibré vectoriel  $\mathcal{E}$ est $T$-semi-stable si et seulement si ${\Gamma}_{I_{\mathcal{E}} }(H_{\mathcal{E}}, T)=1$. 
\end{prop}

On reporte la démonstration jusqu'à \ref{pdp}. Par la remarque  \ref{rmk-iso}, le comptage des fibrés vectoriels $T$-semi-stables peut être fait selon leur décomposition en somme directe de fibrés isoclines, donc quand $d(T)\geq \max\{0,2g-2\}$, par la proposition \ref{key}, l'égalité \ref{JeTs} devient
  \begin{align*}
J^{T}_{e}&= \sum_{\mathcal{E}\  \text{fibrés vectoriels de degré}\  e} {\Gamma}_{I_{\mathcal{E}}}(H_{\mathcal{E}}, T)\\
&= \sum_{n_{1}+\cdots+n_{m}=n} \sum_{ \substack{ \frac{e_{1}}{n_{1}}> \cdots> \frac{e_{m}}{n_{m}}  \\ e_{1}+\cdots+e_{m}=e } }{\Gamma}_{(\underline{n})}((\underline{e}), T)\prod_{i=1}^{m}\mathcal{P}_{n_{i}}^{e_{i}}(X_1) .
\end{align*}
Comme quand ${\Gamma}_{(\underline{n})}((\underline{e}), T)=1$, on a automatiquement $\frac{e_{1}}{n_{1}}> \cdots> \frac{e_{m}}{n_{m}}$, donc
$$J^{T}_{e} =  \sum_{n_{1}+\cdots+n_{m}=n}  \sum_{ e_{1}+\cdots+e_{m}=e  }{\Gamma}_{(\underline{n})}((\underline{e}), T)\prod_{i=1}^{m}\mathcal{P}_{n_{i}}^{e_{i}}(X_1).$$

Puisque le nombre $\mathcal{P}_n^e(X_1)$ ne dépend que de la classe de $e$ dans  $\mathbb{Z}/n\mathbb{Z}$, on a le droit d'écrire $\mathcal{P}_n^{e}(X_1)$ pour $e\in \mathbb{Z}/n\mathbb{Z}$. 
Soit $e\in \mathbb{Z}$ et des éléments $e_{i}\in \mathbb{Z}/n_{i}\mathbb{Z}$ pour $1\leq i \leq r$, 
on associe à ces données l'ensemble $\mathfrak{h}_{(e)}^{e}\subseteq \mathbb{Z}^{r}$ défini par 
\begin{equation*}\glslink{he}{ {\mathfrak{h}_{(\underline{e}  )}^{e}   }   }=\{(d_{1},\ldots, d_{r})\in \mathbb{Z}^{r}| \ d_{1}+\cdots+ d_{r}=e\quad \text{et $\quad d_{i}\equiv e_{i}$ mod $n_{i}$}  \} .\end{equation*}
On peut alors regrouper la somme par les types modulo les rangs: 
\begin{equation*}
J^{T}_{e}=  \sum_{ n_{1}+\cdots+n_{m}=n}\sum_{\substack{ e_{i}\in\bbb{Z}/n_{i}\mathbb{Z}\\ i=1,\ldots,m}}  \sum_{H\in \mathfrak{h}^{e}_{(\underline{e})}  }{\Gamma}_{(\underline{n})}(H, T)\prod_{i=1}^{m}\mathcal{P}_{n_{i}}^{e_{i}}(X_1). \end{equation*}
Par le théorème \ref{QP}, le côté gauche est quasi-polynomial en $T$. Par la proposition 4.5.5 de \cite{Chau}, le côté droit est quasi-polynomial dans la région où $T_1\geq T_2\geq \cdots\geq T_n$. Donc cette égalité est vraie pour tout $T$ tel que $T_1\geq T_2\geq \cdots\geq T_n$. 
Posons $T=0$, seul le terme $(n_{1},\ldots,n_{m})=(n)$ contribuera, on obtient donc $$J_{e}=\mathcal{P}_{n}^{e}(X_1).$$

\subsubsection{Preuve de la proposition \ref{key}}\label{pdp}

Il faut démontrer l'assertion suivante:  
\textit{
Soit $ \mathcal{E}$ un fibré vectoriel de rang $n$ donné par: $$\mathcal{E}=\bigoplus_{i=1}^{k}(\mathcal{I}_{i,1}\oplus\cdots\oplus\mathcal{I}_{i,s_{i}})$$ où $\cal{I}_{i,j}$  sont des fibrés indécomposables de pentes $\mu_{i}$ tels que  $\mu_{1}>\cdots>\mu_{k}$. Soit $n_{i}$ le rang de $\mathcal{I}_{i,1}\oplus\cdots\oplus\mathcal{I}_{i,s_{i}}$. Soit $T\in \mathbb{R}^n$  tel que $\min_{i=1, \ldots, n-1}\{T_i-T_{i+1}\}\geq  \max\{2g-2, 0\}$.
Pour que $\mathcal{E}$ soit $T$-semi-stable, il faut et il suffit qu'on ait 
$$\frac{n_{1}\mu_{1}+\cdots+n_{i}\mu_{i}}{n_{1}+\cdots+n_{i}}-\mu(\mathcal{E})\leq \frac{T_{1}+\cdots+T_{n_{1}+\cdots+n_{i}}}{n_{1}+\cdots+n_i}-\frac{T_{1}+\cdots+T_{n}}{n} , $$
pour $i=1,\ldots, k-1$.}

Dans le cadre de la $T$-semi-stabilité, on a aussi l'existence et l'unicité de la filtration de Harder-Narasimhan (cf. \cite[Théorème 4.1.2]{Chau}).

La nécessité est claire par la définition de la $T$-semi-stabilité. 

Réciproquement, posons $H_{1}=\cdots=H_{n_{1}}=\mu_{1}$, $ \ldots $, $H_{n_{1}+\cdots+ n_{k-1}+1}=\cdots=H_{n}=\mu_{k}$, notons qu'on a $H_{1}\geq H_{2}\cdots\geq H_{n}$. 
On va montrer l'inégalité suivante: 
\begin{equation}\label{ine}
\frac{H_{1}+\cdots+H_{l}}{l}-\mu({\mathcal{E}})\leq  \frac{T_{1}+\cdots+ T_{l}}{l}-\frac{T_{1}+\cdots+T_{n}}{n}  \end{equation}
pour tout $1\leq l\leq n$. En admettant cela, soit $J$ un sous-ensemble de $\{1,\ldots, n\}$ de cardinal $l$, on  a:
\begin{equation}\label{INE}\frac{\sum_{i\in J}H_{i}}{l}\leq \frac{\sum_{i=1}^{l}H_{i}}{l}\leq \mu(\mathcal{E})+\frac{T_{1}+\cdots+ T_{l}}{l}-\frac{T_{1}+\cdots+T_{n}}{n} . 
\end{equation}
Si $\mathcal{E}$ n'est pas $T$-semi-stable, il existe une $T$-filtration de Harder-Narasimhan non-triviale  $0=\mathcal{F}_{0}\subsetneq \mathcal{F}_{1}\subsetneq \cdots \mathcal{F}_{r}=\mathcal{E}$. Puisque $\min_{i=1, \ldots, n-1}\{T_i-T_{i+1}\}\geq \max\{0,2g-2\}$, cette filtration est scindée par le lemme 4.2.1 \cite{Chau} et donc $\mathcal{F}_{1}$ est isomorphe à certaine somme de $\mathcal{I}_{i,j}$ en vertu du  lemme \ref{ind}. Mais cela contredit l'inégalité (\ref{INE}).

Pour prouver l'inégalité (\ref{ine}), on définit les formes linéaires pour $i=1,\ldots, n-1$
\begin{align*}\varpi_{i}:\quad \mathbb{R}^{n}&\longrightarrow \mathbb{R}  ,\\
(T_{1},\ldots, T_{n})&\longmapsto \frac{(n-i)}{n}(T_{1}+\cdots+T_{i})- \frac{i}{n}(T_{i+1}+\cdots+T_{n})
 \end{align*}
et de plus
\begin{align*}\alpha_{i}:\quad \mathbb{R}^{n}&\longrightarrow \mathbb{R}   . \\
(T_{1},\ldots, T_{n})&\longmapsto T_{i}-T_{i+1}
 \end{align*}
On note $H=(H_{1},\ldots,H_{n})\in \mathbb{R}^{n}$, soit $N_{i}:=n_{1}+\cdots+n_{i}\leq l\leq n_{1}+\cdots+n_{i+1}-1:=N_{i+1}-1$,  l'hypothèse  implique l'inégalité:
$$\varpi_{N_{i}}(H-T)\leq 0 ,  $$
pour $1\leq i\leq k$. 
Après un calcul direct, 
\[ \begin{split} & (N_{i+1}-l)\varpi_{N_{i}}+(l-N_{i})\varpi_{N_{i+1}}-n_{i+1}\varpi_{l}  \\ =&-(N_{i+1}-l)\sum_{t=N_i}^{l-1} (t-N_i+1)\alpha_{t}  -(l-N_{i})\sum_{t=l}^{ N_{i+1}-1 }(N_{i+1}-t)\alpha_{t} . \end{split} \]
Mais par la définition $\alpha_{t}(H)=0$ pour $N_{i+1}-1\geq t\geq N_{i}$, et $\alpha_{t}(T)\geq 0$ pour tout $t$. Donc 
$$0\geq (N_{i}-l)\varpi_{N_{i}}(H-T)+(l-N_{i})\varpi_{N_{i+1}}(H-T)\geq n_{i+1}\varpi_{l}(H-T) . $$
Ceci implique l'inégalité (\ref{ine}).

\section{Quelques résultats sur des $(G,M)$-familles d'Arthur}
Cette section peut être lue indépendamment. Les résultats de cette section servent  aux calculs des aspects spectraux de la formule des traces. 
\subsection{Notations}  \label{tracespectrale}

\subsubsection{Groupes de Weyl} Rappelons que $G=G_n=GL_n$. 
Pour tout sous-groupe de Levi $M$,  soit $\glslink{WM}{W^M}$  le groupe de Weyl de $M$. On identifie un élément $w\in W^{M}$ avec une matrice de  permutation dans $M(F)$.  Quand $M=G$, on note  simplement $\glslink{WM}{W_n}$ au lieu de $W^{G}$. Pour un sous-groupe parabolique $P$ de sous-groupe de Levi $M$, on utilise aussi $\glslink{WM}{W^{P}}$ pour $W^{M}$.

\subsubsection{}\label{4.1.2}

Soit $L$ un sous-groupe de Levi semi-standard (i.e. contenant $T$) de $G$. 
L'ensemble des chambres dans $\ago_L^{G}$ est en bijection naturelle  avec $\mathcal{P}(L)$,  l'ensemble des sous-groupes paraboliques dont le sous-groupe de Levi semi-standard est $L$, de telle façon que chaque $P\in\mathcal{P}(L)$ détermine une chambre par $$\{H\in \ago_L^{G}|\ \alpha(H)>0, \ \forall\ \alpha\in \Delta_P\}.$$

Pour $P\in \mathcal{P}(L)$, on désigne par $\gls{bP} \in \mathcal{P}(L)$ le sous-groupe parabolique opposé: $\Phi_{\bar{P}}=-\Phi_P$. 
On dit que deux sous-groupes paraboliques $P,Q\in\mathcal{P}(L)$ sont adjacents si les chambres associées sont adjacentes, i.e. on a $|\Phi_{\bar{P}}\cap \Phi_{Q}|=1$.

 \subsubsection{}\label{Lw}

Soit $M$ un sous-groupe de Levi semi-standard et $w\in W_n$. On notera $\glslink{sM}{w(M)}$ pour $wMw^{-1}$. On a $w(\ago_M)=\ago_{w(M)}$.  
Chaque $w\in W_n$ induit un isomorphisme 
$$w:X_M^{G}\rightarrow X_{w(M)}^{G}, $$
qui est défini par $w(\lambda)(m):=\lambda(w^{-1}m w)$, $\forall\ \lambda\in X_M^{G}$ et $m\in w(M)(\AAA)$.

Soit $w\in W_n$ tel que $w(M)=M$.  Il existe un sous-groupe de Levi minimal $\gls{Lw}$ contenant $M$ et $w$. Ce sous-groupe de Levi est caractérisé par le fait que $\ago_{L^w}=\ker((w-id) |_{\ago_M})$.  En particulier,  $w$ agit trivialement sur $X_{L^w}^{G}$.

Soit $M\subseteq L$ deux sous-groupes de Levi de $G$, l'inclusion $M(\AAA)\subseteq L(\AAA)$ induite par restriction une inclusion $$X_L^G\subseteq X_M^G. $$

\subsubsection{}\label{ReIminc}

Soit $M\subseteq L$ deux sous-groupes de Levi semi-standards définis sur $F$. Soit $\glslink{ImX}{\Im X_M^L}\subseteq X_M^L$ le sous-groupe constitué des caractères unitaires. Soit  $\glslink{ImX}{\Re X_M^L}\subseteq X_M^L$ le sous-groupe constitué des caractères réels.

\subsubsection{} \label{SYS}

Soit $L$ un sous-groupe de Levi semi-standard muni d'un isomorphisme  $L\cong G_{n_1}\times G_{n_2}\times\cdots\times G_{n_r}$.
Un $\lambda\in X_L$ se factorise à travers le morphisme de multi-degré $\deg_L: L(\AAA)\rightarrow \mathbb{Z}^{r}$. Soit $\lambda_i$ l'image par $\lambda$ de l'élément $(0, \ldots, 0, 1, 0, \ldots, 0)$ où l'unique $1$ est placé en $i$-ème position. Alors $X_L$ s'identifie  avec $(\mathbb{C}^{\times})^{r}$. Sous cette identification
$$X_{L}^{{G}}\cong \{(\lambda_{1},\ldots, \lambda_{r})\in  \bbb{C}^{r}  \ {|}\ \lambda_{1}^{n_{1}}\lambda_{2}^{n_{2}}\cdots \lambda_{r}^{n_{r}}=1\},  $$
et  $$\Im X_{L}^{{G}}\cong \{(\lambda_{1}, \ldots, \lambda_{r})\in \bbb{C}^{r} \ {|}\ |\lambda_{1}|=\cdots=|\lambda_{r}|=1;\quad \lambda_{1}^{n_{1}}\lambda_{2}^{n_{2}}\cdots \lambda_{r}^{n_{r}}=1\}  .$$

\subsubsection{  $\lambda^H$ et $\langle\lambda, H\rangle$.}\label{Ctimes}
Un caractère $\alpha\in X^{*}(L)$ définit un élément dans $X_L$ comme le composé $$L(\AAA)\xrightarrow{ \alpha } \mathbb{G}_{m}(\AAA)\xrightarrow{q^{\deg(\cdot)}} \mathbb{C}^{\times}.$$
On a donc un isomorphisme canonique $X_L \cong X^{*}(L)\otimes_{\mathbb{Z}}\mathbb{C}^{\times}$.

Soit $\glslink{aL}{\ago_{L,\mathbb{Z}}}:=\Hom(X^{*}(L), \mathbb{Z})$.
Comme $X_L \cong X^{*}(L)\otimes_{\mathbb{Z}}\mathbb{C}^{\times}$, l'appariement $(\lambda, n)\mapsto \lambda^n$  sur $\mathbb{C}^{\times}\times \mathbb{Z}$ à $\mathbb{C}^{\times}$ induit un appariement  sur $X_L\times \ago_{L,\mathbb{Z}}$ qu'on note par $\glslink{LambdaH}{\lambda^{H}}$ pour $\lambda\in X_L$  et $H\in  \ago_{L,\mathbb{Z}}$. On a $$(\lambda_1, \ldots, \lambda_r)^{(H_1, \cdots, H_r)}=\lambda_1^{H_1}\cdots\lambda_r^{H_r} .$$

Soit $\glslink{aL}{\ago_{L,\mathbb{C}}^{*}}:=\ago_{L}^{*}\otimes_{\mathbb{R}}\mathbb{C}\cong X^{*}(L)\otimes_{\mathbb{Z}}\mathbb{C}$.  
On a un plongement $X_L\hookrightarrow \ago_{L,\mathbb{C}}^{*}$ qui envoie $(\lambda_1, \lambda_2, \ldots, \lambda_r)\in X_L$ vers $(\lambda_1, \lambda_2, \ldots, \lambda_r)\in \ago_{L,\mathbb{C}}^{*}$ (sous les systèmes de coordonnées dans \ref{coor} et \ref{SYS}).
La forme bilinéaire canonique sur $\ago_{L,\mathbb{C}}^{*}\times \ago_{L,\mathbb{C}}$ définit une fonction $(\lambda,H)\mapsto\glslink{LH}{\langle\lambda,H\rangle}$ sur $X_L \times \ago_{L,\mathbb{C}}$ par restriction. Pour $\lambda=(\lambda_1, \ldots, \lambda_r)\in X_L$  et $H= {(H_1, \cdots, H_r)} \in  \ago_{L,\mathbb{C}}$, on a 
$$\langle (\lambda_1, \ldots, \lambda_r), {(H_1, \cdots, H_r)}\rangle=\lambda_1 H_1+\cdots+\lambda_r H_r .$$

\subsection{$({G},M)$-familles}\label{spGM}\label{4.4}
Soit $M$ un sous-groupe de Levi semi-standard défini sur $F$. Dans cette section, nous introduisons la notion de $({G},M)$-familles. Un lecteur qui ne s'intéresse qu'aux  principaux résultats de cet article 
est invité à lire seulement \ref{GMfam}, et à revenir sur les résultats plus techniques s'il y est fait référence.  

Les notations ici sont standardes. Ce sont celles utilisées sur un corps de nombres, sauf qu'on considère, suivant Lafforgue, les $({G},M)$-familles comme des fonctions méromorphes sur $X_M^{G}$ ou $X_M$ au lieu de fonctions méromorphes sur $\ago_{M,\mathbb{C}}^{*}$ pour éviter la notion de $({G},M)$-familles périodiques.

\subsubsection{$({G},M)$-familles}\label{GMfam}

Pour tout $Q\in \mathcal{P}(M)$ et $\lambda\in X_{M}$, soit $$\gls{thetaQ}(\lambda)=\prod_{\alpha\in\Delta_{Q}}\langle\lambda,\alpha^{\vee}\rangle,$$
voir \ref{Ctimes} pour la notation $\langle\lambda,\alpha^{\vee}\rangle$. Remarquons que notre $\theta_Q$ 
diffère de celui d'Arthur par un facteur de volume. 

\begin{definition}
Soit $\Omega\subseteq X_{M}^{{G}}$ ou $X_M$ un domaine.  Soit $(c_{P}(\lambda))_{P\in \mathcal{P}(M)}$ une famille  de fonctions holomorphes sur $\Omega$. On dit que c'est  une $({G},M)$-famille si $c_{P}(\lambda)=c_{P'}(\lambda)$ pour toute paire de sous-groupes paraboliques adjacents $P, P'\in\mathcal{P}(M)$, i.e. $|\Phi_{P}\cap\Phi_{\bar{P}'}|=1$ (cf. \ref{4.1.2}),  et pour tout $\lambda\in \Omega$ tel que $\lambda^{\alpha^{\vee}}=1$ où $\alpha$ est l'unique racine dans $\Phi_{P}\cap\Phi_{\bar{P}'}$. \end{definition}

Soit $(c_Q)_{Q\in\mathcal{P}(M)}$ une $({G},M)$-famille sur $\Omega\subseteq X_M^{G}$, on définit une fonction méromorphe sur $\Omega$ par \begin{equation}c_{M}(\lambda)=  \sum_{Q\in\mathcal{P}(M)}{c_{Q}(\lambda)}{\theta_{Q}(\lambda)}^{-1}.   \end{equation} 

\begin{theorem}[lemme 6.2, \cite{Arthur inv}]
Soit $(c_Q)_{Q\in\mathcal{P}(M)}$ une famille des fonctions méromorphes sur $X_M^{G}$ ou $X_M$.  Si c'est une $({G},M)$-famille sur un voisinage de $\lambda_0\in X_M^{G}$ (ou $\lambda_0\in X_M$), alors $c_M(\lambda)$
 est une fonction régulière en $\lambda_0$.  
\end{theorem}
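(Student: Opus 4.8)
The plan is to follow Arthur's argument: instead of evaluating $c_M$ anywhere, one shows directly that it extends holomorphically by proving it has no pole along any of the hyperplanes on which the denominators $\theta_Q$ vanish, the $(G,M)$-family relations being exactly what makes the residues cancel in pairs. Working on a neighbourhood $U$ of $\lambda_0$ on which each $c_Q$ is holomorphic and the family relations hold, the sum $c_M=\sum_{Q\in\mathcal{P}(M)}c_Q\,\theta_Q^{-1}$ is meromorphic on $U$, and since $\theta_Q=\prod_{\alpha\in\Delta_Q}\langle\lambda,\alpha^\vee\rangle$ vanishes only on $\bigcup_{\beta\in\Phi(Z_M,G)}H_\beta$, where $H_\beta=\{\lambda\in X_M^G\mid\langle\lambda,\beta^\vee\rangle=0\}$, the polar divisor of $c_M$ on $U$ is a union of (some of the) smooth irreducible hypersurfaces $H_\beta$. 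Hence it suffices to prove that $c_M$ is regular at a generic point of each $H_\beta$ meeting $U$; the polar divisor is then empty and $c_M$ is holomorphic on $U$, in particular at $\lambda_0$, by the Riemann extension theorem.

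So I would fix $\beta$ and a point $\mu\in H_\beta\cap U$ lying on no other $H_\gamma$. The parabolics $Q\in\mathcal{P}(M)$ with $\{\beta,-\beta\}\cap\Delta_Q=\emptyset$ contribute terms already holomorphic near $\mu$ (there $\theta_Q$ is a unit), so only the $Q$ with $\beta\in\Delta_Q$ or $-\beta\in\Delta_Q$ matter; using the bijection between $\mathcal{P}(M)$ and the chambers of $\ago_M^G$, these group into pairs $\{Q,Q'\}$ where $Q'$ is obtained from $Q$ by the reflection across the wall $\{\langle\lambda,\beta^\vee\rangle=0\}$ (so $\beta\in\Delta_Q$, $-\beta\in\Delta_{Q'}$, and every relevant pair arises exactly once). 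Writing $\theta_Q=\langle\lambda,\beta^\vee\rangle\,P_Q$ and $\theta_{Q'}=-\langle\lambda,\beta^\vee\rangle\,P_{Q'}$ with $P_Q=\prod_{\gamma\in\Delta_Q\setminus\{\beta\}}\langle\lambda,\gamma^\vee\rangle$ and $P_{Q'}=\prod_{\gamma'\in\Delta_{Q'}\setminus\{-\beta\}}\langle\lambda,{\gamma'}^\vee\rangle$, the adjacency property recalled above (for two adjacent parabolics and their separating root, the remaining simple coroots match up after restriction to the common wall) says that $P_Q$ and $P_{Q'}$ agree on $H_\beta$, while the $(G,M)$-family condition says $c_Q$ and $c_{Q'}$ agree on $H_\beta$. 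Near $\mu$, $P_Q$ and $P_{Q'}$ are units and $\langle\lambda,\beta^\vee\rangle$ is a uniformizer, so $P_Q-P_{Q'}$ and $c_Q-c_{Q'}$ are divisible by $\langle\lambda,\beta^\vee\rangle$, and the identity
\[
\frac{c_Q}{\theta_Q}+\frac{c_{Q'}}{\theta_{Q'}}=\frac{c_Q\,(P_{Q'}-P_Q)+(c_Q-c_{Q'})\,P_Q}{\langle\lambda,\beta^\vee\rangle\,P_Q\,P_{Q'}}
\]
exhibits the pair's contribution as holomorphic near $\mu$. Summing over the pairs and the remaining parabolics shows $c_M$ is holomorphic near $\mu$, hence regular along $H_\beta$; since $\beta$ was arbitrary, this finishes the argument.

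The routine inputs — the identification of $\mathcal{P}(M)$ with the chambers of $\ago_M^G$, the description of adjacent parabolic subgroups, and the formula for $\theta_Q$ — are all available earlier in the text. The step needing the most care is the last sentence of the first paragraph: passing from "no pole along each individual hypersurface $H_\beta$" to "holomorphic on $U$", which is precisely what lets $\lambda_0$ lie on several of the $H_\beta$ simultaneously; it rests on the fact that the polar set of a meromorphic function is a hypersurface and that each of its irreducible components, being contained in $\bigcup_\beta H_\beta$, must coincide with one of the smooth irreducible $H_\beta$. A secondary point to get right is purely combinatorial: that the grouping into pairs exhausts exactly the parabolics with $\pm\beta\in\Delta_Q$, and that the bijection $\gamma\mapsto\gamma'$ it induces on the non-separating simple roots is precisely the one making $P_Q\equiv P_{Q'}$ on $H_\beta$.
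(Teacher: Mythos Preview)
The paper does not prove this statement; it records it with a citation to Arthur's Lemma~6.2 and moves on, so there is nothing in the paper to compare against. Your argument is correct and is the standard residue-cancellation proof: the possible poles of $c_M$ lie on the root hyperplanes $H_\beta$, and along each one the $(G,M)$-family relation makes the contributions from each adjacent pair $(Q,Q')$ cancel to first order, exactly as your displayed identity shows. The two points you flag for care are both fine in the $GL_n$ setting of the paper: the parabolics with $\pm\beta\in\Delta_Q$ do pair up bijectively via adjacency across the $\beta$-wall, and the non-separating simple coroots of $Q$ and $Q'$ really do match on $H_\beta$ (the paper itself invokes and checks this identity later, in the proof that $(d_Q)_{Q\in\mathcal P(M)}$ is a $(G,M)$-family). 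The passage from ``regular at a generic point of each $H_\beta$'' to ``holomorphic on $U$'' is the routine fact that the polar set of a meromorphic function is a hypersurface whose components must each equal some $H_\beta$.
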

La preuve de cette théorème est identique à celle d'Arthur: $c_M(\lambda)$ n'a pas de singularité en codimension $1$ et $\lambda^{\alpha^{\vee}}=1$ implique alors $\langle\lambda, \alpha^{\vee}\rangle=0$.

\subsubsection{Des familles associées à une $(G,M)$-famille}\label{GLfa} 

Soit $(c_Q(\lambda))_{Q\in\mathcal{P}(M)}$ une $({G},M)$-famille sur un voisinage de $1\in X_M^{G}$. On notera  $c_{M}$ pour la valeur $c_{M}(1)$, i.e.
\begin{equation}\gls{cM}=\lim_{\lambda\rightarrow 1}\sum_{Q\in\mathcal{P}(M)}{c_{Q}(\lambda)}{\theta_{Q}(\lambda)}^{-1}.\end{equation}
Soit $L\in\mathcal{L}(M)$ un sous-groupe de Levi contenant $M$. Soit $\lambda\in X_M^{G}$, pour tout $R\in\mathcal{P}(L)$ et $Q\in {\mathcal{P}^{L}({M})}$ (sous-groupes paraboliques de $L$ qui admettent $M$ comme sous-groupe de Levi),  
on pose
 \begin{equation}\glslink{cQR}{c_Q^R(\lambda)}=c_{QN_{R}}(\lambda),  \end{equation} 
où $QN_{R}$ est l'unique sous-groupe parabolique dans $\mathcal{P}(M)$ qui est contenu dans $R$ dont l'intersection avec $L$ est $Q$. Pour chaque $R$, cela donne une $(L,M)$-famille sur un voisinage de $1$. 
On pose \begin{equation}c_M^R=\lim_{\lambda\rightarrow 1}\sum_{Q\in \mathcal{P}^L(M)}  c_{Q}^{R}(\lambda) \theta_Q^L(\lambda)^{-1}, \end{equation} 
où $\theta_Q^L(\lambda)=\prod_{\alpha\in \Delta_Q^L}{\langle\lambda, \alpha^{\vee} \rangle}.$
De plus, pour tout $Q\in \mathcal{P}(L)$, et $\lambda\in X_{L}^{{G}}$, on pose \begin{equation}c_{Q}(\lambda)=c_{P}(\lambda),\end{equation} pour n'importe quel groupe $P\in \mathcal{P}(M)$ contenu dans $Q$, cette construction ne dépend pas du choix du groupe $P$, et on obtient une $({G},L)$-famille sur un voisinage de $1$.

\begin{prop}[Corollary 6.5, \cite{Arthur inv}]\label{SCIN}
Soient $(c_{Q})_{Q\in\mathcal{P}(M)}$ et $(d_{Q})_{Q\in\mathcal{P}(M)}$ deux $({G},M)$-familles sur un voisinage de $1$ telles que pour chaque $L\in \mathcal{L}(M)$, les valeurs $c_{M}^{Q}$ soient indépendantes  de $Q\in\mathcal{P}(L)$, on notera $c_{M}^{L}$ cette valeur.
Nous avons la formule de scindage du produit de deux $({G},M)$-familles $$(cd)_{M}=\sum_{L\in\mathcal{L}(M)}c_{M}^{L}d_{L}    .$$ 
\end{prop}
\begin{proof}
C'est une variante de \cite[Corollary 6.5]{Arthur inv}. La preuve d'Arthur ne marche pas directement ici, mais notre résultat peut être obtenu en utilisant le sien. 

On a donné dans \ref{Ctimes} un plongement $X_{M}\hookrightarrow \ago_{M}^{*}\otimes_{\mathbb{R}} \mathbb{C}$. Notons que la restriction à $X_M$ d'une $({G},M)$-famille définie par Arthur est une $({G},M)$-famille suivant notre définition.  
Soient $P\subseteq Q$ deux sous-groupes paraboliques semi-standards. Comme le volume $\vol( \ago_{P}^{Q}/\mathbb{Z}(\hat{\Delta}_{P}^{Q})^{\vee} )$ ne dépend que des sous-groupes de Levi $M_{P}$ et $M_{Q}$, on note
$$v_{M_{P}}^{M_{Q}}:=\vol( \ago_{P}^{Q}/\mathbb{Z}(\hat{\Delta}_{P}^{Q})^{\vee} ). $$

Par le Corollary 6.5 de \cite{Arthur inv}, on a
\begin{align*}
(cd)_{M}&=\sum_{L\in\mathcal{L}(M)}     \frac{v_{M}^{L}v_{L}^{{G}} }{  v_{M}^{{G}}   }  c_{M}^{L}d_{L}. 
\end{align*}
Pour tout $P\subseteq Q$,  l'espace $\ago_{P}^{Q}$ est orthogonal à l'espace $\ago_{Q}^{{G}}$, et l'ensemble $\Delta_{Q}^{\vee}$ est la projection de $\Delta_{P}^{\vee}-(\Delta_{P}^{Q})^{\vee}$ en $\ago_{Q}^{{G}}$, on obtient alors $v_{M}^{L}v_{L}^{{G}}=v_{M}^{{G}}$, et donc 
 $$(cd)_{M}=  \sum_{L\in\mathcal{L}(M)}      c_{M}^{L}d_{L}   . $$
 \end{proof}

\subsubsection{Des résultats concernant des $({G},M)$-familles spéciales.}\label{sp{G}M}
Nous nous intéressons particulièrement aux $(G,M)$-familles d'une forme spéciale qui interviennent des aspects spectraux de la formule des traces. 

\begin{theorem}[Arthur, Lemma 7.1. \cite{Arthur Eis}] \label{GMM}
Soit $(c_{\beta})_{\beta\in \Phi(Z_{M}, {G})}$ une famille des fonctions méromorphes sur $\mathbb{C}^{\times}$ indexées par les racines avec $c_{\beta}(1)=1$. Soit  
$(c_{Q})_{Q\in\mathcal{P}(M)}$ la famille de fonctions définies par: $$c_{Q}(\lambda)=\prod_{\beta\in\Phi_{Q}}c_{\beta}(\lambda^{\beta^{\vee}}).$$
 
Alors $(c_{Q})_{Q\in \mathcal{P}(M)}$ est une $({G},M)$-famille sur le domaine où chaque $c_Q$ est holomorphe (en  particulier, ce domaine contient $1$). 
On a
$$c_M=\lim_{\mu\rightarrow 1} \sum_{Q\in \mathcal{P}(M)} \theta_{Q}(\mu)^{-1}{c_{Q}( \mu)}=\sum_{F} \prod_{\beta\in F}{c'_{\beta}(1)}, $$
où dans la première somme, $F$ parcourt toutes les parties de $\Phi(Z_M,{G})$ qui forment des bases de $\ago_{M}^{{G},*}$ et $c'_\beta$ est la dérivée de $c_\beta$.
\end{theorem}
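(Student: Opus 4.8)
The plan is to verify first that $(c_{Q})_{Q\in\mathcal{P}(M)}$ is a $(G,M)$-famille in the sense of the preceding definition, which makes $c_{M}$ well defined, and then to compute $c_{M}$ by expanding the products $\prod_{\beta\in\Phi_{Q}}c_{\beta}$ and interchanging the summations, thereby reducing everything to a purely combinatorial statement about the rational functions $\theta_{Q}^{-1}$. For the family property, let $P,P'\in\mathcal{P}(M)$ be adjacent and let $\alpha$ be the unique root of $\Phi_{P}\cap\Phi_{\bar{P}'}$. Since $G=GL_{n}$, the only roots of $\Phi(Z_{M},G)$ proportional to $\alpha$ are $\pm\alpha$, so crossing the wall $\ker\alpha$ flips exactly $\alpha\leftrightarrow-\alpha$, i.e. $\Phi_{P'}=(\Phi_{P}\setminus\{\alpha\})\cup\{-\alpha\}$, whence
\[
\frac{c_{P}(\lambda)}{c_{P'}(\lambda)}=\frac{c_{\alpha}(\lambda^{\alpha^{\vee}})}{c_{-\alpha}(\lambda^{-\alpha^{\vee}})}.
\]
When $\lambda^{\alpha^{\vee}}=1$ (hence $\lambda^{-\alpha^{\vee}}=1$) the right-hand side is $c_{\alpha}(1)/c_{-\alpha}(1)=1$. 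So $(c_{Q})$ is a $(G,M)$-famille on the open set where every $c_{Q}$ is holomorphic, which contains $1$ because $c_{\beta}(1)=1$ for all $\beta$; by the theorem of Arthur quoted just above, $c_{M}=\lim_{\lambda\to1}\sum_{Q\in\mathcal{P}(M)}\theta_{Q}(\lambda)^{-1}c_{Q}(\lambda)$ exists.

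Next I would use the exact identity $c_{Q}(\lambda)=\prod_{\beta\in\Phi_{Q}}c_{\beta}(\lambda^{\beta^{\vee}})=\sum_{S\subseteq\Phi_{Q}}\prod_{\beta\in S}\bigl(c_{\beta}(\lambda^{\beta^{\vee}})-1\bigr)$ and regroup by $S$:
\[
\sum_{Q\in\mathcal{P}(M)}\theta_{Q}(\lambda)^{-1}c_{Q}(\lambda)=\sum_{S\subseteq\Phi(Z_{M},G)}\Sigma_{S}(\lambda)\prod_{\beta\in S}\bigl(c_{\beta}(\lambda^{\beta^{\vee}})-1\bigr),\qquad\Sigma_{S}(\lambda):=\sum_{\substack{Q\in\mathcal{P}(M)\\ S\subseteq\Phi_{Q}}}\theta_{Q}(\lambda)^{-1}.
\]
Near $\lambda=1$ one has $\lambda^{\beta^{\vee}}-1=\langle\lambda,\beta^{\vee}\rangle\,(1+O(\lambda-1))$, so $c_{\beta}(\lambda^{\beta^{\vee}})-1=\langle\lambda,\beta^{\vee}\rangle\,(c'_{\beta}(1)+O(\lambda-1))$, and thus $\prod_{\beta\in S}(c_{\beta}(\lambda^{\beta^{\vee}})-1)=\bigl(\prod_{\beta\in S}\langle\lambda,\beta^{\vee}\rangle\bigr)\bigl(\prod_{\beta\in S}c'_{\beta}(1)+O(\lambda-1)\bigr)$. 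The whole computation then reduces to the combinatorial lemma
\[
\lim_{\lambda\to1}\ \Sigma_{S}(\lambda)\prod_{\beta\in S}\langle\lambda,\beta^{\vee}\rangle=\begin{cases}1&\text{if $S$ is a basis of $\ago_{M}^{G,*}$,}\\[2pt]0&\text{otherwise.}\end{cases}
\]
Granting this: for $|S|>d:=\dim\ago_{M}^{G}$ the limit is $0$ since $\Sigma_{S}$ has a pole of order at most $d$ while $\prod_{\beta\in S}\langle\lambda,\beta^{\vee}\rangle=O((\lambda-1)^{|S|})$; for $|S|\le d$ the contribution of $S$ equals $\bigl(\lim\Sigma_{S}\prod\langle\lambda,\beta^{\vee}\rangle\bigr)\cdot\prod_{\beta\in S}c'_{\beta}(1)$, which is nonzero only when $S$ is a basis, in which case it is $\prod_{\beta\in S}c'_{\beta}(1)$. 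Summing over $S$ gives $c_{M}=\sum_{F}\prod_{\beta\in F}c'_{\beta}(1)$ with $F$ ranging over the bases of $\ago_{M}^{G,*}$ contained in $\Phi(Z_{M},G)$, as claimed.

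The main obstacle is the combinatorial lemma; I would prove it by induction on $d=\dim\ago_{M}^{G}$. The base case $d=1$ is immediate: $\mathcal{P}(M)=\{Q,\bar{Q}\}$, $\theta_{Q}(\lambda)=\langle\lambda,\alpha^{\vee}\rangle=-\theta_{\bar{Q}}(\lambda)$, so $\Sigma_{\emptyset}\equiv0$ and $\Sigma_{\{\alpha\}}(\lambda)\langle\lambda,\alpha^{\vee}\rangle=\Sigma_{\{-\alpha\}}(\lambda)\langle\lambda,-\alpha^{\vee}\rangle\equiv1$, with $\{\pm\alpha\}$ being bases. For the inductive step, the case $S=\emptyset$ is the classical vanishing $\sum_{Q\in\mathcal{P}(M)}\theta_{Q}(\lambda)^{-1}\to0$ for $M\ne G$ (this is $c_{M}=0$ for the constant family); for $S\ne\emptyset$ one fixes a root $\gamma\in S$ and regroups the parabolics $Q$ with $S\subseteq\Phi_{Q}$ according to the smallest $L\in\mathcal{L}(M)$ with $L\ne G$ such that $Q$ is contained in an element of $\mathcal{P}(L)$ meeting $L$ along $\ker\gamma$; the inner sums then become $(L,M)$- and $(G,L)$-famille sums of the same multiplicative type, to which the descent/splitting formalism of Theorem~\ref{SCIN} applies and lowers $d$. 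The laborious and delicate part is the bookkeeping when $|S|<d$: one must check that the leading singular part of $\Sigma_{S}$ is carried by hyperplanes different from the $\langle\lambda,\beta^{\vee}\rangle=0$, $\beta\in S$ — so that the product with $\prod_{\beta\in S}\langle\lambda,\beta^{\vee}\rangle$ still tends to $0$ — and likewise that the $|S|=d$, non-basis terms vanish; both follow from the inductive hypothesis once the grouping above is set up carefully, and this is where I expect the bulk of the work to lie.
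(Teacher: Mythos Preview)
Your reduction via the product expansion $c_Q=\prod_{\beta\in\Phi_Q}c_\beta=\sum_{S\subseteq\Phi_Q}\prod_{\beta\in S}(c_\beta-1)$ is correct and lands on exactly the same combinatorial quantity $\Sigma_S(\lambda)\prod_{\beta\in S}\langle\lambda,\beta^\vee\rangle$ as the paper, but by a shorter route. The paper instead parametrises $\mu=1+\xi t$, Taylor-expands in $t$, and first argues---by letting the direction $\xi$ degenerate towards the wall $\ker\beta_1$---that $c_M$, viewed as a polynomial in the jets $X_{\beta,j}:=c_\beta^{(j)}(1)$, is independent of $X_{\beta,j}$ for $j\ge2$; only after this does it extract the coefficient of a monomial $X_{\beta_1}\cdots X_{\beta_k}$, which for $k=|S|$ is precisely your $\Sigma_S(\xi)\prod_{\beta\in S}\langle\xi,\beta^\vee\rangle$ evaluated at a fixed generic $\xi$. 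Your expansion bypasses the whole ``higher derivatives don't matter'' step, because each factor $c_\beta(\lambda^{\beta^\vee})-1$ already vanishes at $\lambda=1$. Both arguments then defer the $|S|=d$ basis case of the combinatorial lemma to Arthur \cite[p.~1319--1320]{Arthur Eis}. The one place your write-up is shaky is the inductive proof of the lemma for the remaining $S$: the phrase ``the smallest $L$ such that $Q$ is contained in an element of $\mathcal{P}(L)$ meeting $L$ along $\ker\gamma$'' does not parse as written, and I cannot check that induction. A cheaper route to the non-basis cases: by induction on $|S|$ (take the $(G,M)$-family with $c_\beta(z)=z$ on $S$ and $c_\beta\equiv1$ off $S$, then subtract the $S'\subsetneq S$ terms) one sees that $\Sigma_S(\lambda)\prod_{\beta\in S}(\lambda^{\beta^\vee}-1)$ extends holomorphically to $\lambda=1$; along any ray $\lambda=1+t\xi$ this function equals $t^{|S|-d}$ times a function of $\xi$ alone, so holomorphy forces the value at $1$ to be $0$ for $|S|\ne d$ and reduces $|S|=d$ to Arthur's computation at a fixed generic $\xi$. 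The paper obtains the same vanishing for $|S|<d$ by reading it off the first line of~(\ref{GMc}).
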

\begin{proof}
C'est une variante du  \cite[Lemma 7.1.]{Arthur Eis}. La preuve de \textit{loc. cit.} ne marche pas directement dans notre cas, mais nous suivons sa stratégie.

Tout d'abord, on montre que $(c_{Q})_{Q\in\mathcal{P}(M)}$ est une $({G},M)$-famille sur un domaine contenant $1$.
Soient $Q$ et $Q'$ deux sous-groupes paraboliques adjacents dans $\mathcal{P}(M)$. Alors il existe un seul élément $\alpha$ dans $\Phi_{Q}\cap-\Phi_{{Q'}}$, tel que pour tout $\lambda\in X_{M}^{{G}}$ avec $\lambda^{\alpha^{\vee}}=1$, on ait
\begin{align*}
c_{Q}(\lambda)&=\prod_{\beta\in\Phi_{Q}}c_{\beta}(\lambda^{\beta^\vee})\\
                        &=c_{\alpha}(1)\prod_{ \beta\in\Phi_{Q}\cap \Phi_{Q'}}c_{\beta}(\lambda^{\beta^\vee})  \\
                        &=\prod_{ \beta\in\Phi_{Q}\cap \Phi_{Q'}}c_{\beta}(\lambda^{\beta^\vee})\\
                        &= c_{Q'}(\lambda) .
 \end{align*}

Maintenant on calcule $c_M$.  On suit la méthode d'Arthur (cf. page 37 \cite{Arthur inv} et page 1317 \cite{Arthur Eis}).  On pose $\mu=1+\xi t$,  avec $\xi\in {X_P^{G}}\subseteq (\mathbb{C}^{\times})^{\dim\ago_{P}}$. Quand $t\in\mathbb{R}$ est assez petit, par le développement de Taylor on a: 
$$c_M(1+\xi t)=\sum_{P\in\mathcal{P}(M)}\theta_{P}(\xi)^{-1}\sum_{k\geq 0}    \frac{1}{k!}\biggr((\frac{\d}{\d t})^{k}\prod_{\beta\in\Phi_{P}}c_{\beta}((1+\xi t)^{\beta^{\vee}})\vert_{t=0} \biggr) t^{k-m}, $$ 
où $m=\dim \mathfrak{a}_{M}^{{G}}$. Comme $(c_Q)_{Q\in\mathcal{P}(M)}$ est une $({G}, M)$-famille, $c_M(1+\xi t)$ est holomorphe en $t$ donc
\begin{equation}\label{GMc}\begin{cases}0= \sum_{P\in\mathcal{P}(M)}\theta_{P}(\xi)^{-1}    \frac{1}{k!}\biggr((\frac{\d}{\d t})^{k}\prod_{\beta\in\Phi_{P}}c_{\beta}((1+\xi t)^{\beta^{\vee}}) \vert_{t=0} \biggr),  \quad 1\leq k\leq m-1 ;\\
\\
c_M=\sum_{P\in\mathcal{P}(M)}\theta_{P}(\xi)^{-1}\frac{1}{m!}\biggr((\frac{\d}{\d t})^{m}\prod_{\beta\in\Phi_{P}}c_{\beta}((1+\xi t)^{\beta^{\vee}})\vert_{t=0} \biggr) . \end{cases}\end{equation}

Il s'ensuit que $c_M$ ne dépend que des valeurs $c_{\beta}^{(j)}(1)$ pour tout $\beta\in\Phi(Z_{M}, G)$, $1\leq j\leq m$ et ne dépend pas de $\xi$. On observe de plus que si on suppose $$c_{\beta}(z)=1+\sum_{1\leq j\leq m}\frac{X_{\beta,j}}{j!}(z-1)^{j}\in \mathbb{C}[z, X_{\beta,1},\ldots , X_{\beta,m}],$$ alors $c_M$ sera un polynôme bien défini dans $\mathbb{C}[X_{\beta, j}:1\leq j\leq m, \beta\in \Phi(Z_{M}, {G})]$
et si on évalue ce polynôme en posant $X_{\beta,j}=c^{(j)}_{\beta}(1)$, on retrouvera la valeur désirée. Nous travaillerons donc avec ces polynômes dans la suite de la démonstration.

Si $\beta$ est une racine telle que $\beta^{\vee}=e_{M,u}-e_{M,v}\in \mathfrak{a}_{M}$, $1\leq u\neq v\leq \dim\ago_{{M}}$ (la base $(e_{M, i})$ de $\ago_M$ est définie dans  \ref{syst}):

\begin{align}\label{equation0}
& c_{\beta}((1+\xi t)^{\beta^{\vee}}) \\ =&1+\sum_{1\leq j\leq m}\frac{X_{\beta,j}}{j!}(\frac{1+\xi_{u}t}{1+\xi_{v}t}-1)^{j} \nonumber\\ 
=&1+\sum_{m\geq j\geq 1}\sum_{n\geq 0}\frac{X_{\beta,j}}{j!} (\xi_u-\xi_v)^{j} \binom{-j}{n} \xi_v^n t^{n+j}  \nonumber \\
=&1+\sum_{N\geq 1}  \left(   \sum_{1\leq j\leq \textrm{min}\{m, N\}}\frac{X_{\beta,j}}{j!} \binom{N-1}{j-1}(-1)^{N-j}(\xi_{u}-\xi_{v})^{j}\xi_{v}^{N-j}\right)     t^{N}   \nonumber .
\end{align}

Notons que pour le polynôme $c_{M}\in \mathbb{C}[X_{\beta, j}:1\leq j\leq m, \beta\in \Phi(Z_{M}, {G})]$ , il n'existe que des monômes dont chaque variable est de degré au plus $1$.
Soient $\beta_1, \ldots, \beta_k \in \Phi(Z_M, {G})$ et $m\geq j_1, \ldots, j_k \geq 1$. 
Considérons le coefficient de monôme $X_{\beta_{1},j_{1}}\cdots X_{\beta_{k},j_{k}}$  dans $c_{M}$, pour lequel on peut supposer que $\beta_{i}\neq \beta_{j}$ si $i\neq j$. 

Supposons tout d'abord qu'il existe un $i$ tel que $j_{i}\geq 2$ par exemple c'est $j_{1}\geq 2$.  Posons $$\xi=z\beta_{1}+\xi_{1},$$ avec $\beta_{1}$ orthogonale à $\xi_{1}$. Nous pouvons certainement choisir $\xi_{1}$ tel que $$\{\alpha\in\Phi(Z_M, {G})| \langle\alpha^{\vee},\xi_{1}\rangle=0  \}=\{\beta_{1}, -\beta_{1}\}. $$
Observons que le coefficient de $X_{\beta_{1},j_{1}}$ est d'ordre $j_{1}$ en $z$ quand $z$ approche $0$, i.e. est $O(z^{j_{1}})$. D'autre part, on sait que $$\theta_{Q}(z\beta_{1}+\xi_{1})=O(z^{-1}),$$ pour tout sous-groupe parabolique $Q$. Donc le coefficient considéré approche $0$ quand $z$ approche $0$, mais ce nombre $a\ priori$ ne dépend pas de $z$. Cela implique alors que les coefficients de $X_{\beta_{1},j_{1}}\cdots X_{\beta_{k},j_{k}}$ s'annulent dès qu'il existe un $i$ tel que $j_{i}\geq 2$.  On conclut que $c_M$ est indépendant de $X_{\beta, j}$ pour $j\geq 2$. 

Donc on peut supposer que $X_{\beta,j}=0$ pour $j\geq 2$, 
 et par simplicité on note $X_{\beta,1}=X_{\beta}$, i.e. $c_\beta(z)=1+X_\beta (z-1)$. Soient $\beta_1, \ldots, \beta_k\in \Phi(Z_M, {G})$ deux-à-deux distincts. 
Considérons le coefficient du monôme  $X_{\beta_{1}}\cdots X_{\beta_{k}}$ dans $c_{M}$. 
Il est égal à
$$(\sum_{Q}'\theta_{Q}(\xi)^{-1})\prod_{j=1}^{k}\langle\xi, \beta_{j}^{\vee}\rangle(\sum_{\{ a_{j}\geq 0|\    \sum a_{j}+k=m \}}  (-1)^{m-k}   \prod_{j=1}^{k}  \xi^{a_{j}}_{t(\beta_{j})}),$$
où la somme porte sur tous les $Q\in \mathcal{P}(M)$ tels que $\Phi_{Q}\supseteq \{\beta_{1},\ldots,\beta_{k}\}$ et on suppose $\beta^{\vee}=e_{M, s(\beta)}-e_{M, t(\beta)}$ (cf.   \ref{coor}).  

Si $k>m$, le coefficient est donc $0$. 

Si $m> k$. On considère le coefficient du monôme  $X_{\beta_{1}}\cdots X_{\beta_{k}}$ dans la première équation de (\ref{GMc}) qui est zéro mais peut s'écrire aussi comme 
$$(\sum_{Q}'\theta_{Q}(\xi)^{-1})\prod_{j=1}^{k}\langle\xi, \beta_{j}^{\vee}\rangle ,$$ 
où la somme porte sur tous les $Q\in\mathcal{P}(M)$ tels que $\Phi_{Q}\supseteq \{\beta_{1},\ldots,\beta_{k}\}$. Donc on a $$0=(\sum_{Q}'\theta_{Q}(\xi)^{-1})\prod_{j=1}^{k}\langle\xi, \beta_{j}^{\vee}\rangle(\sum_{\{ a_{j}\geq 0|\    \sum a_{j}+k=m \}}  (-1)^{m-k}   \prod_{j=1}^{k}  \xi^{a_{j}}_{t(\beta_{j})})  .$$

Il ne reste qu'à considérer le cas où $k=m$. Dans ce cas le coefficient de $X_{\beta_{1}}\cdots X_{\beta_{m}}$ est égal à $$(\sum_{Q}'\theta_{Q}(\xi)^{-1})\prod_{j=1}^{k}\langle\xi, \beta_{j}^{\vee}\rangle ,$$
où la somme porte sur tous les $Q\in\mathcal{P}(M)$ tels que $\Phi_{Q}\supseteq \{\beta_{1},\ldots,\beta_{k}\}$. Cela a été considéré par Arthur dans \cite{Arthur Eis} (p.1319-p.1320), où il a montré que ce coefficient est égal à $0$ si les $\beta_{i}$ sont linéairement dépendants, 
et est égal à $1$ si les $\beta_{i}$ forment une base.

En conclusion, $$c_{M}=\sum_{F} \prod_{ \beta\in F} X_{\beta} ,   $$
où $F\subseteq \Phi(M,{G})$ parcourt les bases de $\ago_{P}^{{G},*}$. La preuve est complète si on pose $X_{\beta}=c_{\beta}'(1)$.
\end{proof}

\begin{lemm}\label{Rus}
Soit $\mathbb{S}^{1}$ le cercle unité dans $\mathbb{C}^{\times}$ orienté dans le sens antihoraire et $\d z$ la mesure complexe usuelle. Pour des fonctions $f_{\beta}$ complexes continues définies sur $\mathbb{S}^1$ et une partie $F$  de $\Phi(Z_M,{G})$ qui forme une base de $\ago_{M}^{{G},*}$, 
$$\int_{\Im{X_M^{G}}}\prod_{\beta\in F}f_{\beta}(\lambda^{\beta^{\vee}}){\lambda}^{\beta^{\vee}}\d \lambda=\prod_{\beta\in F}\frac{1}{2\pi i}\int_{\mathbb{S}^{1}}f_{\beta}(z)\d z .$$
\end{lemm}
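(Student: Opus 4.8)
The plan is to realise the integral over $\Im X_M^G$ as a product of one–variable contour integrals by means of an explicit finite covering of tori. Since $(\beta^\vee)_{\beta\in F}$ is a basis of $\ago_M^G$ (the coroots of the roots in the basis $F$ of $\ago_M^{G,*}$) and each $\beta^\vee$ lies in the cocharacter lattice $\ago_{M,\mathbb{Z}}^G$, the subgroup $\Lambda=\bigoplus_{\beta\in F}\mathbb{Z}\,\beta^\vee$ has finite index in $\ago_{M,\mathbb{Z}}^G$. Identifying $\Im X_M^G$ with $\Hom(\ago_{M,\mathbb{Z}}^G,\mathbb{S}^1)$ — the identification for which the normalisation $\vol(\Im X_M^G)=1$ of \S\ref{XPG} holds — the map
\[
\phi:\Im X_M^G\longrightarrow (\mathbb{S}^1)^F,\qquad \lambda\longmapsto (\lambda^{\beta^\vee})_{\beta\in F}
\]
is simply restriction of characters to $\Lambda$. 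Because $\mathbb{S}^1$ is divisible, $\phi$ is surjective, and its kernel $\Hom(\ago_{M,\mathbb{Z}}^G/\Lambda,\mathbb{S}^1)$ is finite; hence $\phi$ is a finite covering homomorphism of compact abelian Lie groups (the possible disconnectedness of $\Im X_M^G$ noted in \S\ref{SYS} is irrelevant to what follows).

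Next I would use the standard fact that a surjective continuous homomorphism of compact groups pushes the normalised Haar measure of the source forward to the normalised Haar measure of the target: the pushforward $\phi_*(d\lambda)$ is translation–invariant by surjectivity, and it is a probability measure since pushforward preserves total mass, so by uniqueness it equals the normalised Haar measure of $(\mathbb{S}^1)^F$. (In particular the degree of the covering plays no role here.) That measure factors, again by uniqueness, as the product over $\beta\in F$ of the normalised Haar measure $\tfrac{1}{2\pi i}\tfrac{dz}{z}$ on each circle factor.

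The computation is then immediate: applying the change of variables along $\phi$ together with Fubini's theorem (legitimate since each $f_\beta$ is integrable on $\mathbb{S}^1$),
\[
\int_{\Im X_M^G}\prod_{\beta\in F}f_\beta(\lambda^{\beta^\vee})\,\lambda^{\beta^\vee}\,d\lambda
=\int_{(\mathbb{S}^1)^F}\prod_{\beta\in F}f_\beta(z_\beta)\,z_\beta\,\prod_{\beta\in F}\frac{dz_\beta}{2\pi i\,z_\beta}
=\prod_{\beta\in F}\frac{1}{2\pi i}\int_{\mathbb{S}^1}f_\beta(z)\,dz,
\]
where the factor $z_\beta$ produced by $\lambda^{\beta^\vee}$ cancels the $z_\beta^{-1}$ in $\tfrac{dz_\beta}{z_\beta}$, leaving the bare $dz$ on the right-hand side. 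The only delicate point — the ``main obstacle'' — is entirely contained in the first step: one must make sure that the description of $\Im X_M^G$ as the character group of $\ago_{M,\mathbb{Z}}^G$ is precisely the one normalised so that $\vol(\Im X_M^G)=1$, so that the total masses on the two sides of the asserted identity genuinely agree; everything after that is routine Haar-measure bookkeeping.
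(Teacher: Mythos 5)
Your argument is correct and takes essentially the same route as the paper: the paper also integrates along the surjective homomorphism $\lambda\mapsto(\lambda^{\beta^{\vee}})_{\beta\in F}$ (whose kernel is the finite group $X_G^G$, under which the integrand is invariant), pushes the normalised Haar measure of $\Im X_M^G$ forward to the normalised Haar measure of $(\mathbb{S}^1)^{F}$, and concludes with the conversion $\d z=2\pi i\,z\,\widetilde{\d z}$. The only small imprecision is your identification of $\Im X_M^G$ with $\Hom(\ago_{M,\mathbb{Z}}\cap\ago_M^G,\mathbb{S}^1)$ — it is really $\Hom(\ago_{M,\mathbb{Z}}/\tilde{\ago}_{G,\mathbb{Z}},\mathbb{S}^1)$, a finite (generally disconnected) cover of that torus — but since your pushforward step uses only surjectivity and total mass one, the argument is unaffected.
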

\begin{proof}
Soit $m=\dim\ago_{M}^{{G}}$. Considérons le morphisme défini par 
\begin{align*}
u_{F}:\Im{X_M^{G}}&\longrightarrow (\mathbb{S}^{1})^{m} . \\
\lambda&\longmapsto (\lambda^{\beta^{\vee}})_{\beta\in F}
\end{align*}
Ce morphisme réalise $(\mathbb{S}^{1})^{m}$ comme un quotient de $\Im X_M^{G}$. La fonction $$\lambda\mapsto\prod_{\beta\in F}f_{\beta}(\lambda^{\beta^{\vee}}){\lambda}^{\beta^{\vee}} $$ est invariante sous l'action de $X_{{G}}^{G}=\ker (u_F)$. 
Soit $\widetilde{\d z}$ la mesure de Lesbegue de $\mathbb{S}^{1}$ telle que le volume de $\mathbb{S}^{1}$ vaut $1$, on a $$\int_{\Im{X_M^{G}}}\prod_{\beta\in F}f_{\beta}(\lambda^{\beta^{\vee}}){\lambda}^{\beta^{\vee}}\d \lambda=\prod_{\beta\in F}\int_{\mathbb{S}^{1}}f_{\beta}(z) z \widetilde{\d z}.$$
Le lemme alors découle de la définition de l'intégrale de contour. 
\end{proof}

\begin{coro}\label{INT}
Soit $(c_{\beta})_{\beta\in \Phi(Z_M, {G})}$ une famille de  fonctions  holomorphes non-nulles  sur un voisinage de $\{z\in\mathbb{C}^\times|\ |z|=1\}$. Soit $(c_{Q})_{Q\in\mathcal{P}(M)}$ une famille des fonctions définies par $$c_{Q}(\lambda)=\prod_{\beta\in\Phi_{Q}}c_{\beta}(\lambda^{\beta^{\vee}}). $$
Alors
$$\int_{\Im{X^{{G}}_{M}}} \lim_{\mu\rightarrow 1} \sum_{Q\in \mathcal{P}(M)} \theta_{Q}(\mu)^{{-1}} \frac{c_{Q}( \lambda\mu)}{c_{Q}(\lambda)}\d \lambda= \sum_{F}\prod_{\beta\in F}(\mathrm{N}(c_{\beta})-\mathrm{P}(c_\beta)) , $$
où dans la somme, $F$ parcourt les parties de $\Phi(Z_M,{G})$ qui forment des bases de $\ago_{P}^{{G},*}$, où $\mathrm{N}(c_{\beta})$ $(resp.  $ $\mathrm{P}(c_{\beta}))$ désigne le nombre des zéros ($resp. $ des pôles) de $c_{\beta}$ dans le disque $|z|<1$.
\end{coro}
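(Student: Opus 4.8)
The plan is to combine Theorem~\ref{GMM} with Lemma~\ref{Rus}, after first massaging the integrand into a form to which Theorem~\ref{GMM} applies. First I would observe that for each $Q\in\mathcal{P}(M)$ one has
$$\frac{c_Q(\lambda\mu)}{c_Q(\lambda)}=\prod_{\beta\in\Sigma_Q}\frac{c_\beta\bigl((\lambda\mu)^{\beta^\vee}\bigr)}{c_\beta(\lambda^{\beta^\vee})}=\prod_{\beta\in\Sigma_Q}c_\beta^{(\lambda)}(\mu^{\beta^\vee}),$$
where for fixed $\lambda\in\Im X_M^G$ I set $c_\beta^{(\lambda)}(z):=c_\beta(\lambda^{\beta^\vee}z)/c_\beta(\lambda^{\beta^\vee})$. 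Since the $c_\beta$ are holomorphic and non-vanishing near $|z|=1$, each $c_\beta^{(\lambda)}$ is holomorphic near $z=1$ with $c_\beta^{(\lambda)}(1)=1$. Hence $\bigl(c_Q^{(\lambda)}\bigr)_{Q\in\mathcal{P}(M)}$ is a $(G,M)$-family in the sense of Theorem~\ref{GMM} (for $\mu$ in a neighbourhood of $1$), and that theorem gives
$$\lim_{\mu\to1}\sum_{Q\in\mathcal{P}(M)}\theta_Q(\mu)^{-1}\frac{c_Q(\lambda\mu)}{c_Q(\lambda)}=\sum_{F}\prod_{\beta\in F}\bigl(c_\beta^{(\lambda)}\bigr)'(1),$$
the sum running over the subsets $F\subseteq\Phi(Z_M,G)$ that form bases of $\ago_P^{G*}$. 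A direct computation of the derivative yields $\bigl(c_\beta^{(\lambda)}\bigr)'(1)=\dfrac{c_\beta'(\lambda^{\beta^\vee})}{c_\beta(\lambda^{\beta^\vee})}\lambda^{\beta^\vee}$, i.e. the logarithmic derivative of $c_\beta$ evaluated at $\lambda^{\beta^\vee}$, times $\lambda^{\beta^\vee}$.

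Substituting this into the integral and interchanging the finite sum over $F$ with the integral, I get
$$\int_{\Im X_M^G}\lim_{\mu\to1}\sum_{Q}\theta_Q(\mu)^{-1}\frac{c_Q(\lambda\mu)}{c_Q(\lambda)}\,\d\lambda=\sum_F\int_{\Im X_M^G}\prod_{\beta\in F}\frac{c_\beta'(\lambda^{\beta^\vee})}{c_\beta(\lambda^{\beta^\vee})}\lambda^{\beta^\vee}\,\d\lambda.$$
Now I apply Lemma~\ref{Rus} with $f_\beta=c_\beta'/c_\beta$ (integrable on $\mathbb{S}^1$ since $c_\beta$ is holomorphic and non-vanishing on a neighbourhood of $|z|=1$, so its logarithmic derivative has no poles there): each summand factors as
$$\prod_{\beta\in F}\frac{1}{2\pi i}\int_{\mathbb{S}^1}\frac{c_\beta'(z)}{c_\beta(z)}\,\d z.$$
By the argument principle, $\dfrac{1}{2\pi i}\int_{\mathbb{S}^1}\dfrac{c_\beta'(z)}{c_\beta(z)}\,\d z=\mathrm{N}(c_\beta)-\mathrm{P}(c_\beta)$, the number of zeros minus the number of poles of $c_\beta$ inside $|z|<1$; here I use that $c_\beta$ has neither zeros nor poles on the contour itself. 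Assembling the pieces gives exactly the asserted formula $\sum_F\prod_{\beta\in F}\bigl(\mathrm{N}(c_\beta)-\mathrm{P}(c_\beta)\bigr)$.

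The main points requiring care rather than the main obstacle are the justifications of the two interchanges of limit/sum and of sum/integral: for the first, the $(G,M)$-family limit in Theorem~\ref{GMM} is a genuine holomorphic limit uniform in $\lambda$ on the compact torus $\Im X_M^G$, so passing it under the integral is legitimate; for the second, the sum over $F$ is finite, so no subtlety arises. One should also double-check the orientation and normalisation conventions so that the sign in the argument principle matches the statement of Lemma~\ref{Rus} (the factor $\d z=2\pi i\,z\,\widetilde{\d z}$ on $\mathbb{S}^1$), but this is exactly what Lemma~\ref{Rus} was set up to handle. So in fact there is no serious obstacle: the corollary is a clean packaging of Theorem~\ref{GMM}, Lemma~\ref{Rus}, and the argument principle, with the only genuinely new input being the elementary identity $\bigl(c_\beta^{(\lambda)}\bigr)'(1)=(\log c_\beta)'(\lambda^{\beta^\vee})\,\lambda^{\beta^\vee}$ that links the $(G,M)$-family derivative to the logarithmic derivative appearing in the winding-number integral.
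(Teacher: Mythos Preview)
Your proof is correct and follows essentially the same approach as the paper: apply Theorem~\ref{GMM} to the $(G,M)$-family $\mu\mapsto c_Q(\lambda\mu)/c_Q(\lambda)$ to obtain the sum over bases $F$ of products of logarithmic derivatives, then invoke Lemma~\ref{Rus} and the argument principle (the paper says ``th\'eor\`eme des r\'esidus'') to evaluate each factor as $\mathrm{N}(c_\beta)-\mathrm{P}(c_\beta)$. Your write-up is simply a more explicit version of the paper's two-line proof.
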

\begin{proof}
La famille des fonctions  $$(\mu\mapsto\frac{c_{Q}(\lambda\mu)}{c_{Q}(\lambda)})_{Q\in\mathcal{P}(M)}$$  
satisfait la condition du théorème \ref{GMM}, donc c'est une $({G},M)$-famille et on a 
 $$\lim_{\mu\rightarrow 1} \sum_{Q\in \mathcal{P}(M)} \theta_{Q}(\mu)^{{-1}} \frac{c_{Q}( \lambda\mu)}{c_{Q}(\lambda)}= \sum_{F}\prod_{\beta\in F} \frac{c'_\beta(\lambda^{\beta^{\vee}})\lambda^{\beta^{\vee}} }{c_\beta(\lambda^{\beta^{\vee}})} .$$
Donc par le lemme \ref{Rus} et le théorème des résidus, on obtient le résultat. 
\end{proof}

\begin{lemm}\label{4.2.7}
Soit $(c_{Q})_{Q\in\mathcal{P}(M)}$ une $({G},M)$-famille comme dans le théorème \ref{GMM}. Alors pour tout sous-groupe de Levi  $L\in\mathcal{L}(M)$, les valeurs  $c_{M}^{R}$ (définis dans \ref{GLfa}) sont indépendantes des sous-groupes paraboliques $R\in\mathcal{P}(L)$, on notera cette valeur par $c_{M}^{L}$.
\end{lemm}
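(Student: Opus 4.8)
\end{lemm}
\begin{proof}
Le plan est de se ramener au th\'eor\`eme \ref{GMM} appliqu\'e \`a $L$ en lieu et place de $G$ (l'\'enonc\'e de ce th\'eor\`eme vaut pour tout couple form\'e d'un groupe r\'eductif et d'un de ses sous-groupes de Levi). Fixons $L\in\mathcal{L}(M)$ et $R\in\mathcal{P}(L)$. Par construction (cf. \ref{GLfa}), $c_{M}^{R}$ est la valeur, au sens de la valeur $c_{M}$ d'une famille, de la $(L,M)$-famille $(c_{R'}^{R})_{R'\in\mathcal{P}^{L}(M)}$ obtenue \`a partir de $(c_{Q})_{Q\in\mathcal{P}(M)}$ en prenant $Q=R$, c'est-\`a-dire
\[
c_{R'}^{R}(\lambda)=c_{R(R')}(\lambda)=\prod_{\beta\in\Phi_{R(R')}}c_{\beta}(\lambda^{\beta^{\vee}}),\qquad \lambda\in X_{M}^{L}.
\]
On veut donc montrer que $\lim_{\lambda\to 1}\sum_{R'\in\mathcal{P}^{L}(M)}c_{R'}^{R}(\lambda)\,\theta_{R'}(\lambda)^{-1}$ ne d\'epend pas de $R$.

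La premi\`ere \'etape est combinatoire. Comme $R(R')=R'N_{R}$, le radical unipotent de $R(R')$ est $N_{R'}N_{R}$, et puisque $Z_{M}$ normalise \`a la fois $N_{R'}\subseteq L$ et $N_{R}$, on obtient une d\'ecomposition disjointe
\[
\Phi_{R(R')}=\Phi_{R'}\ \sqcup\ \{\beta\in\Phi(Z_{M},G)\mid \mathfrak{g}_{\beta}\subseteq\mathfrak{n}_{R}\}.
\]
Le point-cl\'e est que le second morceau ne d\'epend que de $R$ et non de $R'$. Par cons\'equent, en posant pour $\lambda\in X_{M}^{L}$
\[
C_{R}(\lambda)=\prod_{\beta:\,\mathfrak{g}_{\beta}\subseteq\mathfrak{n}_{R}}c_{\beta}(\lambda^{\beta^{\vee}})\qquad\text{et}\qquad d_{R'}(\lambda)=\prod_{\beta\in\Phi_{R'}}c_{\beta}(\lambda^{\beta^{\vee}}),
\]
on a $c_{R'}^{R}(\lambda)=C_{R}(\lambda)\,d_{R'}(\lambda)$, d'o\`u
\[
\sum_{R'\in\mathcal{P}^{L}(M)}c_{R'}^{R}(\lambda)\,\theta_{R'}(\lambda)^{-1}=C_{R}(\lambda)\sum_{R'\in\mathcal{P}^{L}(M)}d_{R'}(\lambda)\,\theta_{R'}(\lambda)^{-1}.
\]

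La seconde \'etape conclut via le th\'eor\`eme \ref{GMM}. Tout $\beta\in\Phi_{R'}$ apparaissant dans $\mathfrak{n}_{R'}\subseteq\mathfrak{l}$ est une racine de $Z_{M}$ dans $L$, donc $\beta^{\vee}\in\ago_{M}^{L}$; la famille $(d_{R'})_{R'\in\mathcal{P}^{L}(M)}$ est ainsi exactement du type consid\'er\'e dans le th\'eor\`eme \ref{GMM} pour le couple $(L,M)$, avec les fonctions $(c_{\beta})_{\beta\in\Phi(Z_{M},L)}$ v\'erifiant $c_{\beta}(1)=1$. Le th\'eor\`eme \ref{GMM} montre alors que $(d_{R'})$ est une $(L,M)$-famille, que $\sum_{R'}d_{R'}(\lambda)\,\theta_{R'}(\lambda)^{-1}$ est holomorphe au voisinage de $\lambda=1$, et que sa valeur en $1$ est \'egale \`a $\sum_{F}\prod_{\beta\in F}c_{\beta}'(1)$, o\`u $F$ parcourt les parties de $\Phi(Z_{M},L)$ formant une base de $\ago_{M}^{L,*}$; en particulier cette valeur ne fait intervenir aucune donn\'ee li\'ee \`a $R$. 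D'autre part, chaque $c_{\beta}$ \'etant holomorphe en $1$ avec $c_{\beta}(1)=1$, la fonction $C_{R}$ est holomorphe au voisinage de $1$ et $C_{R}(1)=1$. En passant \`a la limite $\lambda\to 1$ dans l'\'egalit\'e pr\'ec\'edente on obtient
\[
c_{M}^{R}=C_{R}(1)\sum_{F}\prod_{\beta\in F}c_{\beta}'(1)=\sum_{F}\prod_{\beta\in F}c_{\beta}'(1),
\]
qui est ind\'ependant de $R\in\mathcal{P}(L)$. Le seul point demandant de la vigilance --- et c'est l\`a l'ingr\'edient essentiel de l'argument --- est l'\'egalit\'e $\Phi_{R(R')}=\Phi_{R'}\sqcup\{\beta\mid\mathfrak{g}_{\beta}\subseteq\mathfrak{n}_{R}\}$ jointe \`a l'ind\'ependance en $R'$ du second terme; c'est un fait combinatoire \'el\'ementaire sur les syst\`emes de racines, que l'on v\'erifie directement (par exemple sur $GL_{n}$, en termes de partitions de $\{1,\dots,n\}$).
\end{proof}
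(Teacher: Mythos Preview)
Your proof is correct and follows essentially the same approach as the paper's: both factor $c_{R'}^{R}(\lambda)$ as a product over $\Phi_{R'}$ (depending only on $R'$, not $R$) times a product over the roots in $\mathfrak{n}_{R}$ (depending only on $R$, not $R'$), then use $c_{\beta}(1)=1$ to kill the second factor at $\lambda=1$. You go slightly further by invoking Th\'eor\`eme~\ref{GMM} for the couple $(L,M)$ to identify the surviving limit explicitly as $\sum_{F}\prod_{\beta\in F}c_{\beta}'(1)$, whereas the paper simply observes that this limit is visibly independent of $R$; but the core argument is the same.
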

\begin{proof}

  Pour tout sous-groupe parabolique $Q\in \mathcal{P}^{L}(M)$, soit $\Phi_{Q}^L:=\Phi(Z_M, N_Q)$, on a 
$$c_{Q}^{R} (\lambda)=c_{ QN_{R}  } (\lambda)=  \prod_{\beta\in \Phi_{Q}^L}c_{\beta}(\lambda^{\beta^{\vee}})   \prod_{\beta\in \Phi_{{R}} } c_{\beta}(\lambda^{\beta^{\vee}}).    $$ 
De plus comme $c_{\beta}(1)=1$, on déduit que
 \begin{align*} & \lim_{\lambda\rightarrow 1} \sum_{Q\in\mathcal{P}^{L} (M) }\theta_{Q}^{L}({\lambda} )^{-1} c_{Q}^{R}(\lambda) \\ = &  \lim_{\lambda\rightarrow 1}  \left( \sum_{Q\in\mathcal{P}^{L} (M) } \theta_{Q}^{L}({\lambda} )^{-1}  \prod_{\beta\in \Phi_{Q}^L}c_{\beta}(\lambda^{\beta^{\vee}})  \right) \prod_{\beta\in \Phi_{{R}} } c_{\beta}(\lambda^{\beta^{\vee}})  \\
= &    \lim_{ \lambda\rightarrow 1}\sum_{Q\in\mathcal{P}^{L} (M) }\theta_{Q}^{L}({\lambda} )^{-1}  \prod_{\beta\in \Phi_{Q}^L}c_{\beta}(\lambda^{\beta^{\vee}}),
\end{align*} 
qui est indépendante de $R\in\mathcal{P}(L)$.
\end{proof}

\begin{lemm}\label{SPE}
Soit $\mu_0\in X_M^{G}$. 
Soit $(c_{Q})_{Q\in\mathcal{P}(M)}$  une $({G},M)$-famille sur un domaine contenant $\mu_0^{\mathbb{Z}}$ telle que pour chaque $L\in \mathcal{L}(M)$, $c_{M}^{R}$ soient indépendants de $R\in \cal{P}(L)$, qu'on note $c_{M}^{L}$. Supposons que $c_{Q}(\lambda\mu_{0})=c_{Q}(\lambda)$ pour tout $\lambda\in X_{M}^{{G}}$ où $c_Q$ est définie. Alors 
$$\lim_{\lambda\rightarrow 1} \sum_{Q\in \mathcal{P}(M)} \theta_{Q}(\lambda\mu_{0})^{-1}{c_{Q}( \lambda\mu_{0})} , $$ 
s'annule sauf si $\mu_{0}\in X_{{G}}^{{G}}$ et dans ce cas la limite est égale à $\mu_{01}^{-\dim\ago_{M}^{{G}}}c_{M}$, où $\mu_{01}$ est la première coordonnée de $\mu_{0}$ (cf.  \ref{SYS}). 
\end{lemm}
\begin{proof}
Tout d'abord, la limite existe car $(c_{Q})_{Q\in\cal{P}(M)}$ est une $({G},M)$-famille sur un voisinage de $\mu_0$.  

Pour tout $Q\in \mathcal{P}(M)$, on définit la fonction rationnelle 
$$d_{Q}(\lambda):=\theta_{Q}(\lambda) \theta_{Q}(\lambda\mu_{0})^{-1}.$$ Notons que
\begin{equation}\label{poiuyy}
d_{Q}(\lambda)=\prod_{\beta\in \Delta_{Q}}d_{\beta}(\lambda), \end{equation}
pour $d_{\beta}(\lambda)= \frac{\langle\lambda, \beta^{\vee}\rangle}{\langle\lambda\mu_{0}, \beta^{\vee}\rangle}$ qui est holomorphe en $\lambda=1$.

Nous montrons tout d'abord que $(d_{Q})_{Q\in\mathcal{P}(M)}$ est une $({G},M)$-famille sur un voisinage de $1$. 
Soient $Q_1, Q_2\in \mathcal{P}(M)$ deux sous-groupes paraboliques adjacents. Leur mur commun dans $X_M^{G}$ est défini par $\lambda^{\alpha^{\vee}}=1$ pour une $\alpha\in \Delta_{Q_1}$ telle que $-\alpha\in \Delta_{Q_2}$. On a deux choses l'une: \\
$-$ Soit $\mu_{0}^{\alpha^{\vee}}\neq 1$. Pour tout $\lambda$ tel que $\lambda^{\alpha^{\vee}}=1$, on a ${\langle \lambda, \alpha^{\vee}\rangle}=0$ et ${\langle\lambda\mu_{0}, \alpha^{\vee}\rangle}\neq 0$, donc      $$\frac{\langle \lambda, \alpha^{\vee}\rangle}{\langle\lambda\mu_{0}, \alpha^{\vee}\rangle}=0 .$$
Dans ce cas, sur le mur    $\lambda^{\alpha^{\vee}}=1$  on a $d_{Q_{1}}(\lambda)=d_{Q_{2}}(\lambda)=0$ puisque $\alpha\in \Delta_{Q_1}$ et $-\alpha\in \Delta_{Q_2}$. \\
$-$ Soit $\mu_{0}^{\alpha^{\vee}}= 1$. Pour tout $\beta_1\in \Delta_{Q_1}-\{\alpha\}$, il existe un unique $\beta_2\in \Delta_{Q_2}-\{-\alpha\}$ tel que pour tout  $\lambda\in X_M^{G}$ satisfaisant $\lambda^{\alpha^{\vee}}=1$, on a $\lambda^{\beta_1^{\vee}}=\lambda^{\beta_2^{\vee}}$, donc $\langle\lambda, \beta_1^\vee \rangle=\langle\lambda, \beta_2^\vee \rangle$. Puisque l'on a $(\lambda\mu_{0})^{\alpha^{\vee}}=1$ aussi. Donc
$$\frac{\langle\lambda, \beta_1^\vee \rangle}{\langle\lambda\mu_0, \beta_1^\vee  \rangle}=\frac{\langle\lambda, \beta_2^\vee \rangle}{\langle\lambda\mu_0, \beta_2^\vee  \rangle}.$$
De plus soit $\alpha^\vee=e_{M,u}-e_{M,v}$ (cf.  \ref{coor}), pour tout $\lambda^{\alpha^{\vee}}\neq 1$,  on a
$$\frac{\langle \lambda, \alpha^{\vee}\rangle}{\langle\lambda\mu_{0}, \alpha^{\vee}\rangle}=\frac{1}{\mu_{0u}}=\frac{\langle \lambda, -\alpha^{\vee}\rangle}{\langle\lambda\mu_{0}, -\alpha^{\vee}\rangle} .$$
Donc on a $$ d_{\alpha}(\lambda)=d_{-\alpha}(\lambda),$$
pour tout $\lambda\in X_M^G$. 

Par \eqref{poiuyy}, on peut déduire que 
 $(d_{Q})_{Q\in \mathcal{P}(M)}$ est une $({G},M)$-famille sur un domaine contenant $1$. 

Soit $L\in \mathcal{L}(M)$, un sous-groupe de Levi contenant $M$, on considère la valeur $d_{L}$. Il y a deux cas:\\
$-$ Soit il existe $\alpha\in \Phi(Z_M,{L})$ tel que $\mu_{0}^{\alpha^{\vee}}\neq 1$, alors pour tout $\lambda\in X_{L}^{{G}}$  (en particulier on a $\lambda^{\alpha^\vee}=1$), par la définition de $d_{Q}$ quand $Q\in \mathcal{P}(L)$ (cf. \ref{GLfa}), on a: $$d_{Q}(\lambda)=0\quad \forall\ Q\in \cal{P}(L), $$
car il existe toujours un sous-groupe parabolique $P\in\mathcal{P}^{Q}(M)$ tel que $\alpha\in \Delta_{P}$. Dans ce cas on a $d_L=0$ pour tout $L\in \mathcal{L}(M)$. 
\\
$-$ Soit $\mu_{0}^{\alpha^{\vee}}= 1$ pour $\forall\ \alpha\in \Phi(Z_M,{L})$, alors $\mu_{0}\in X_{L}^{{G}}$ et pour $\lambda\in X_L^G$ et tout $Q\in\cal{P}(L)$, on a
$$d_{Q}(\lambda)=f_L(\mu_0) \theta_{Q}(\lambda)\theta_{Q}(\lambda{\mu_{0}})^{-1} ,  $$
où $$f_L(\mu_0)= \prod_{\alpha\in \Delta_{P\cap L}^{L} }\frac{\langle\lambda, \alpha^{\vee}\rangle}{\langle \lambda\mu_0, \alpha^{\vee}\rangle}, $$
qui ne dépend pas de $\lambda$, ni de $P$.  
D'où quand $L\neq {G}$
$$d_L=\sum_{Q\in \mathcal{P}({L})}\theta_{Q}(\lambda)^{-1}d_{Q}(\lambda)= f_L(\mu_0)\sum_{Q\in\mathcal{P}(L)} \theta_{Q}(\lambda\mu_{0})^{-1}\equiv 0\quad \forall\ \lambda\in X_{L}^{{G}} .$$
Quand $L={G}$, c'est-à-dire $\mu_{0}\in X_{{G}}^{{G}}$. On a $\theta_{{G}}= 1$ et $d_{{G}}=\mu_{0 1}^{-\dim\ago_{P}^{{G}}}$ (où $\mu_{01}$ est la première coordonnée de $\mu_{0}$ par le plongement $X_M^{G}\hookrightarrow (\mathbb{C}^\times)^{\dim \ago_M}$). 

Par la définition de $d_Q(\lambda)$ et l'hypothèse que $c_{Q}(\lambda\mu_{0})=c_{Q}(\lambda)$, on a
$$ \sum_{Q\in \mathcal{P}(M)} \theta_{Q}(\lambda\mu_{0})^{-1}{c_{Q}( \lambda\mu_{0})}=  \sum_{Q\in \mathcal{P}(M)} \theta_{Q}(\lambda)^{-1}d_Q(\lambda){c_{Q}( \lambda)}. $$ 
Pour conclure, il suffit d'appliquer la formule du produit (la proposition \ref{SCIN}) aux  $({G},M)$-familles $(c_Q(\lambda))_{Q\in\mathcal{P}(M)}$   et $(d_Q(\lambda))_{Q\in \mathcal{P}(M)}$.
Comme $d_L$ est toujours nul si $L\in \mathcal{P}(M)$ mais $L\neq G$, on obtient 
$$\lim_{\lambda\rightarrow 1} \sum_{Q\in \mathcal{P}(M)} \theta_{Q}(\lambda\mu_{0})^{-1}{c_{Q}( \lambda\mu_{0})} = d_G c_M^G. $$ 
Pour que $d_G$ soit non-nul, il faut que $\mu_0\in X_G^G$ et dans ce cas il est égal à $\mu_{01}^{-\dim\ago_{M}^{{G}}}$.  \end{proof}

\section{La partie spectrale de la formule des traces}\label{S5}
 
\subsection{L'énoncé d'une expression de $J_e$  provenant du développement spectral }\label{4.2}
Dans cette section, nous allons énoncer une expression pour $J_e$ (la proposition \ref{expr1}) venant du développement spectral de Lafforgue (le corollaire \ref{Trace}).

\subsubsection{Un choix auxiliaire: $\kappa\in \ago_B$}\label{1theta}
 Outre le sous-groupe de Borel $B$, il faut faire un choix auxiliaire.
On fixe $\kappa\in \ago_B$ dans la chambre positive tel que la projection de $\kappa$ sur $\ago_L$ ne soit pas nulle pour tout sous-groupe de Levi semi-standard $L$. Par ce choix, on a $\alpha(\kappa)\neq 0$ pour tout $\alpha\in \Phi(Z_L, {G})$. L'avantage de choisir ce vecteur est que nous pouvons fixer un sous-groupe parabolique $Q_L$ dans $\mathcal{P}(L)$ de manière à ce que toutes ses racines soient positives, et donc nous pouvons fixer un isomorphisme $L\cong G_{n_1}\times G_{n_2}\times\cdots\times G_{n_r}$ (i.e. en utilisant l'élément de Weyl $w$ qui conjugue $Q_L$ à un sous-groupe parabolique standard, on a $w(L)\cong G_{n_1}\times G_{n_2}\times\cdots\times G_{n_r}$). 

On a besoin d'un tel choix même si on ne considère que des sous-groupes paraboliques standards. À la page 305 de \cite{Laff}, pour une permutation $\sigma$, Lafforgue a introduit un sous-groupe parabolique standard $P_{\sigma}$ et une autre permutation $\tau_{\sigma}$. La paire $(P_{\sigma}, \tau_{\sigma})$ implique un choix.

\subsubsection{Une expression de $J_e$ provenant du développement spectral} 
Soit $(P,{\pi})$ une paire discrète partout non-ramifiée (cf. \ref{234}) avec $M_P\cong G_{n_1}\times\cdots\times G_{n_r}$. Supposons que ${\pi}=\Pi_1\otimes\cdots\otimes\Pi_r$ où $\Pi_i$ est une représentation discrète de $G_{n_i}(\AAA)$.  Soit $\beta\in \Phi(Z_{M_P}, {G})$ le racine $\frac{1}{n_i}\det_{M_P, i}-\frac{1}{n_j}\det_{M_P, j}$ (cf. \ref{coor} pour notation).  On notera pour tout $z\in \mathbb{C}$: 
\begin{equation} \label{nbetapiz}\gls{nbeta}=q^{(1-g)n_in_j} \frac{{L}(\Pi_{i}\times \Pi_{j}^{\vee}, z)}{{L}(\Pi_{i}\times \Pi_{j}^{\vee}, q^{-1} z)}. \end{equation}
où $L(\Pi_i\times \Pi_j^{\vee}, z)$ est la fonction $L$ de paire (cf. par exemple \cite[Appendice B]{Lafforgue} pour une définition). 
De plus, soit $(P,{\pi})$ une paire discrète partout non-ramifiée, $w\in W_n/W^P$ et $\lambda\in X_{M_P}^{G}$,
on pose \begin{equation}\gls{npi}=\prod_{\substack{ \beta\in \Phi(Z_M, {G})\\     \beta>0; w(\beta)<0}} n_{\beta}({\pi},\lambda^{-\beta^\vee})  .\end{equation}

Soit $L$ un sous-groupe de Levi contenant $M_P$.
Pour tout $\alpha\in \Phi(Z_L, {G})$ et $z\in \mathbb{C}$,  on définit la fonction méromorphe $n_{\alpha}({\pi},  z)$ par:  
\begin{equation}\label{Zeros}
n_{\alpha}({\pi},  z)=\prod_{\{ \beta\in \Phi(Z_{M_P}, {G})|\ \beta|_{\ago_{L}^{G} }=\alpha  \} }  n_{\beta}({\pi},  z). \end{equation}

\begin{prop}\label{expr1}
Soit $(e,n)=1$, la trace tronquée $J_{e}=J^{T=0}_e$ est égale à la somme 
$$\sum_{(P,\pi)}$$
portant sur un ensemble de bons représentants $(P,{\pi})$ (voir \ref{234} pour la notion des bons représentants) des classes d'équivalence inertielle des paires discrètes partout non-ramifiées et la somme $$\sum_{(w,1)\in \stab(P,\pi)}$$
portant sur les $w\in W^P\backslash W_n/W^P$ tels que $(w,1)\in  \stab(P,\pi)$ (le groupe $\stab(P,\pi)$ est défini en-dessous de la définition \ref{pairedi})
du produit des trois facteurs $(a)$, $(b)$ et $(c)$ suivants
\begin{enumerate}\item[(a)]
$$\frac{1}{|\stab{(P,{\pi})|}}  \frac{1}{|w|| X_{L}^{L}|} $$
où 
$(1)$ $L=L^w$ est le plus petit sous-groupe de Levi  contenant $M$ tel que $w$ agit trivialement sur $\ago_L$
(cf.   \ref{Lw}); \\ 
$(2)$ $\gls{w}$ est le produit des longueurs des cycles de $w$; \\
$(3)$ $|\mathfrak{stab}(P,{\pi})|$ (resp. $|X_{L}^{L}|$) est le cardinal du groupe $\mathfrak{stab}(P,{\pi})$  (resp. du groupe $X_{L}^{L}$ des caractères inertiels de $L(\AAA)$ triviaux sur $Z_{L}(\AAA)$ ); 
\item[(b)]
$$ \sum_{\lambda_\pi\in \Fix(\pi)}
\sum_{  \substack{      \lambda \in \Im X_{G}^{{G}}, \lambda^{L}\in (\Im X_{M_{P}}^{L})^{{\circ}}       \\        \lambda_{{{\pi}}}=\lambda^{L}\lambda     }   } \sum_{\substack{ \lambda_{w}\in \Im X_{M_P}^{L}   \\ \lambda_{w}/w^{-1}(\lambda_{w})=\lambda^{L}} } n_{{{\pi}}}( w, w^{-1}(\lambda_{w})  ) {\lambda_{1}^{-e} }$$    
où $(1)$ la deuxième somme est prise sur l'ensemble des couples $( \lambda,  \lambda^{L})\in  \Im X_{G}^{{G}}\times  (\Im X_{M_{P}}^{L})^{{\circ}} $ tels que $\lambda_{{{\pi}}}=\lambda^{L} \lambda $, où $(\Im X_{M_{P}}^{L})^{{\circ}} $ est la composante connexe d'identité de $\Im X_{M_{P}}^{L}$; 
\\
$(2)$ la troisième somme est prise sur l'ensemble des éléments $\lambda_w\in \Im X_{M_P}^{L}$ tels que
$\lambda_{w}/w^{-1}(\lambda_{w})=\lambda^{L} $.  \\
$(3)$$\lambda_1=\lambda^{\deg \det x}$ pour n'importe quelle matrice $x\in G(\AAA)$ de degré $1$. 
\item[(c)]
$$ \sum_{F} \prod_{\beta\in F }(\mathrm{N}(n_{\beta}({\pi}, \cdot))- \mathrm{P}(n_{\beta}({\pi}, \cdot))) $$
où $(1)$ la première somme porte sur toutes les parties $F$ de $-\Phi_{Q_{L}}$ telles que $F$ forme une base de $\ago_{L}^{{G},*}$ ($Q_L\in \mathcal{P}(L)$ est un sous-groupe parabolique fixé dans \ref{1theta}).  \\
$(2)$ Les opérateurs $\mathrm{N}$ et $\mathrm{P}$ donnent respectivement le nombre de zéros et de pôles dans la région $|z|<1$.    
\end{enumerate} 
\end{prop}

La preuve se trouve dans \ref{4.3}.

\subsection{Le développement spectral de la formule des traces}\label{spectral}
Nous donnons le développement spectral de Lafforgue. 
Le lecteur trouvera quelques différences dans la formulation de cet article et celle de \cite[Section IV.2]{Laff}. Elles viennent du fait que dans \cite{Laff}, on ne considère que des sous-groupes paraboliques standards et les objets sont indexés par des permutations. Nous choisissons d'utiliser les notations d'Arthur et de changer le point de vue en passant des sous-groupes paraboliques standards aux sous-groupes paraboliques semi-standards. 

\subsubsection{Fonctions $\hat{\mathbbm{1}}_{Q}(\lambda)$ et $\hat{\mathbbm{1}}_{Q}^{e}(\lambda)$} \label{2theta}
\label{a_L}
La fonction  $\hat{\mathbbm{1}}_{Q}(\lambda)$  est définie par Lafforgue dans la preuve de Lemme 5, p.301 de  \cite{Laff} (qu'il a notée $\hat{\mathbbm{1}}_{P, \tau}^{p}(\lambda)$ pour un sous-groupe parabolique standard $P$, une permutation $\tau$ et un polygone $p$ qui doit être $0$ dans notre cas). On rappelle ses définitions dans la suite et on donne dans la proposition \ref{1Qexplicit} une formule explicite que le lecteur peut aussi prendre comme définition.

Soit $M$ un sous-groupe de Levi standard. Soit $Q\in \mathcal{P}(M)$ et $s\in W_n/W^Q$ tel que $ sQs^{-1}$ soit standard. Soit $\phi_Q$ la fonction caractéristique des $H\in \ago_{s(M)}$ tels que pour tout $\alpha\in \Delta_Q$, $$\varpi_{s(\alpha)}(H)\leq 0 \quad \text{si }  \alpha(\kappa)>0,$$ et  $$\varpi_{s(\alpha)}(H)> 0 \quad \text{si }  \alpha(\kappa)<0,$$ où $\varpi_{s(\alpha)}\in \hat{\Delta}_{sQs^{-1}}$ est dual à $s(\alpha)^{\vee}$ (cf. \ref{raciness}). 
Soit $\varepsilon(Q)$ le nombre des  $\alpha\in \Delta_Q$ tels que $\alpha(\kappa)<0$. 
La fonction 
$$\gls{1Q}(\lambda)$$ sur $X_M^G$ est définie par prolongement analytique de la série 
\begin{equation}
(-1)^{\varepsilon(Q)}\sum_{ H \in \ago_{M, \mathbb{Z}}/ X_{*}(Z_G)} \phi_Q(s(H)) \lambda^{-H} , 
\end{equation}
qui converge sur la région des $\lambda\in X_M^G$, tels que $|\lambda^{\alpha^{\vee}}|<1$ pour tout $\alpha\in \Phi(Z_M, G)$ tel que $\alpha(\kappa)>0$ (notons que cette région rencontre toutes les composantes connexes de $X_M^G$),   
où $X_{*}(Z_G)$ est identifié à $\Hom_{\mathbb{Z}}(X^{*}(Z_G), \mathbb{Z})$ qui est vu comme un sous-groupe de $\Hom_{\mathbb{Z}}(X^{*}(Z_M), \mathbb{Z})\subseteq  \ago_{M, \mathbb{Z}}$. 

Soit $L$ un sous-groupe de Levi semi-standard.  Soit $w\in W_n/W^{Q_L}$ l'unique élément tel que $wQ_Lw^{-1}$ soit standard. Pour tout $Q\in \mathcal{P}(L)$, la fonction $\hat{\mathbbm{1}}_Q(\lambda)$ est définie par $$\hat{\mathbbm{1}}_Q(\lambda):= \hat{\mathbbm{1}}_{wQw^{-1}}(w(\lambda)),\quad\forall \lambda\in X_L^G.  $$

On a aussi besoin d'une variante $\hat{\mathbbm{1}}_{Q}^{e}(\lambda)$  (``la partie de degré $e$" de $\hat{\mathbbm{1}}_{Q}(\lambda)$) pour tout $e\in \mathbb{Z}$. Soit $\zeta$ une racine $n$$^{i\text{è}me}$ primitive de l'unité. 
Soit $\eta$ le caractère de $G(\AAA)$ défini par $\eta(g)=\zeta^{\deg\det g}$. C'est un élément de $X_{G}^{G}$. 
Pour chaque $e\in \mathbb{Z}$, soit \begin{equation}\gls{1Qe}(\lambda)=\frac{1}{n} \sum_{k=1}^n  \zeta^{ek} \hat{\mathbbm{1}}_{Q}(\lambda  \eta^{k}  ), \quad \forall \lambda\in X_L^G. \end{equation}
Il s'ensuit que
\begin{equation}\hat{\mathbbm{1}}_{Q}(\lambda)=\sum_{e=0}^{n-1}\hat{\mathbbm{1}}_{Q}^{e}(\lambda)  .\end{equation}

Soit $M\cong G_{n_1}\times\cdots \times G_{n_r}$ un sous-groupe de Levi standard. L'ensemble $\mathcal{P}(M)$ est en bijection avec l'ensemble $\mathfrak{S}_{r}$. En effet, pour tout $P\in \mathcal{P}(M)$, il existe un $w\in W_n$ tel que $wPw^{-1}$ soit standard. Alors $w$ permute les facteurs de $M$ et s'identifie à un élément de $\mathfrak{S}_r$ qui ne dépend pas de choix de $w$.  
Plus généralement, le choix de $\kappa$ dans \ref{1theta} nous permet d'établir une bijection  entre $\mathcal{P}(L)$ et $\mathfrak{S}_r$ pour un sous-groupe de Levi semi-standard à $r$-facteurs.

\begin{prop}\label{1Qexplicit}
Soit $L$ un sous-groupe de Levi semi-standard défini sur $F$ et $L\cong G_{n_1}\times G_{n_2}\times \cdots \times G_{n_r}$ par le choix de $\kappa$ (cf. \ref{1theta}). 
Soit $Q\in \mathcal{P}(L)$, et $s\in \mathfrak{S}_r$ qui lui correspond. On a
\begin{align}\hat{\mathbbm{1}}^{e}_{Q}(\lambda)&=\lambda^{{\HE}_{Q}^{e}}\prod_{\alpha\in\Delta_{Q}}\frac{1}{1-\lambda^{\alpha^{\vee}}} ,
\end{align} où  $$\glslink{tHQe}{\mathrm{H}^e_{Q}}=s^{-1}([\frac{e}{n}   r_{s}^{0}]-[\frac{e}{n}r_{s}^{1} ],\ldots, [\frac{e}{n}r_{s}^{r-1}]-[\frac{e}{n}r_{s}^{r} ])\in \ago_L^{G},$$
et $r_{s}^{i}=n_{s^{-1}(1)}+\cdots+n_{s^{-1}(i)}$ et $[x]$ pour la partie entière de $x$, i.e. le plus grand entier $m$ tel que $m\leq x$. En particulier, lorsque $e=-1$  on a 
\begin{equation}\label{degree-1}
\hat{\mathbbm{1}}^{-1}_{Q}(\lambda)=\left((-1)^{\dim \mathfrak{a}_{Q}^{{G}}} \prod_{i=1}^{r}\lambda_{i}\right)\theta_{Q}(\lambda)^{-1} .\end{equation}
\end{prop}
\begin{proof}
Par définition, il suffit de considérer le cas où $L$ est standard. 

Soit $Q\in \mathcal{P}(L)$ et $s\in \mathfrak{S}_r$ qui lui est associé. 
Notons que chaque $H\in \ago_{L,\mathbb{Z}}$ peut être écrit uniquement comme
$$H=s^{-1}(e,0,\ldots, 0)+\sum_{\alpha\in\Delta_{Q}}m_{\alpha}\alpha^{\vee},$$
pour $(e,0,\ldots, 0)\in \ago_{s(L), \mathbb{Z}}$. L'assertion résulte d'un calcul direct en utilisant la base $\hat{\Delta}_{sQs^{-1}} $ précisée dans \ref{Pa3} et de ce que pour tout $z\in \mathbb{C}$ et $n\in \mathbb{Z}$, $$\sum_{m>n} z^m = \frac{z^{n+1}}{1-z}, \quad \text{si } |z|<1,  $$
et $$  \sum_{m\leq n} z^m=\frac{z^n}{1-1/z}= -\frac{z^{n+1}}{1-z}, \quad \text{si } |z|>1. $$
\end{proof}

\subsubsection{Opérateurs d'entrelacement}\label{api}
On introduit les opérateurs d'entrelacement pour fixer les notations. On peut consulter \cite[p.284-p.287]{Laff} ou \cite[§5.1 - §5.3]{Labe} pour plus de détails et des références. 

Soit $(P,{\pi})$ une paire discrète de $G$. Soit $M$ le sous-groupe de Levi de $P$. Soit $R\in\mathcal{P}(M)$. Soit $\mathcal{A}_{R}$ l'espace des fonctions complexes sur $M(F)N_{R}(\AAA)\backslash G(\mathbb{A})/K$. 
Soit $\glslink{APi}{\mathcal{A}_{R, {\pi}}}$ l'induite de ${\pi}$ dans $\mathcal{A}_{R}$. C'est-à-dire que l'espace sous-jacent de $\mathcal{A}_{R, {\pi}}$ est constitué des fonctions $\phi$ sur $M(F)N_R(\AAA)\backslash G(\AAA)/K$  telles que la fonction 
$$m\in M(\AAA)\mapsto \rho_{P}(m)^{-1} \phi(m)   $$ est dans l'espace de ${\pi}$, où $\gls{rhoP}$ est la 
racine carrée positive du caractère modulaire de $M(\AAA)$ agissant sur $N_P(\AAA)$. 

Soit $w\in W_n$ et  $M'=wMw^{-1}$. 
Soient $R\in\mathcal{P}(M)$ et $R'\in\mathcal{P}(M')$. On définit l'opérateur d'entrelacement $\glslink{Mw}{\M_{R'|R}(w,\lambda)}:\mathcal{A}_{R, {\pi}}\rightarrow \mathcal{A}_{{R}', w({\pi})}$ par prolongement analytique de l'intégrale suivante qui définit $(\glslink{Mw}{\M_{R'|R}(w,\lambda)}\varphi)(g)$: 
\begin{equation}\label{entre}w(\lambda)^{-H_{R'}(g)}\int_{w N_{R}(\mathbb{A})w^{-1}\cap N_{R'}(\mathbb{A})\backslash N_{R'}(\mathbb{A})}\varphi(w^{-1}ng) \lambda^{H_R(w^{-1}ng)} \d n, \quad \end{equation} 
pour $\varphi\in \mathcal{A}_{R, {\pi}}$ et $\lambda\in X_M^{G}$ dans l'ouvert de la convergence. 

L'opérateur $\M_{R'|R}(1,\lambda)$ sera parfois noté simplement $\glslink{Mw}{\M_{R' | R}(\lambda)}$:
$$\glslink{Mw}{\M_{R' | R}(\lambda)}:=\M_{R'|R}(1,\lambda). $$ 
Quand, $R$ et $R'$ sont standards, et $P$ fixé, la donnée du couple $(w, \lambda)$ suffit à déterminer l'opérateur $\M_{R'|R}(w,\lambda)$ qui sera parfois noté simplement par $\glslink{Mw}{\M(w,\lambda)}$:
$$\glslink{Mw}{\M(w,\lambda)}:=\M_{R'|R}(w,\lambda). $$ 

Pour $\lambda, \mu \in X_M^{G}$, on désignera l'opérateur composé par
\begin{equation} \glslink{MQ}{\mathcal{M}_R(\lambda, P;\mu)}:=\M_{R|P}(\lambda)^{-1}\circ \M_{R|P}(\lambda/\mu).\end{equation}
Soit  $L$ un sous-groupe de Levi contenant $M$ et $Q\in \mathcal{P}(L)$.  Soit $$\glslink{PRM}{\mathcal{P}^{Q}(M)}$$ l'ensemble des sous-groupes paraboliques de $G$ contenus dans $Q$ et admettant $M$ comme un sous-groupe de Levi.
Pour tout $\lambda\in \Im X_{M}^{G}$, 
la famille des opérateurs $$( \mu\mapsto \mathcal{M}_Q(\lambda, P; \mu))_{Q\in \mathcal{P}(M)}$$ est une $({G},M)$-famille à valeurs dans un espace vectoriel topologique sur une région contenant   $\Im X_M^{G}$ (cf. \cite[5.3.2]{Labe}). Donc pour $\mu\in X_L^{G}$,  l'opérateur $\mathcal{M}_R(\lambda, P; \mu)$ ne dépend que de l'ensemble $\glslink{PRM}{\mathcal{P}^{Q}(M)}$ dans lequel $R$ se trouve. Dans ce cas, on notera cet opérateur comme $$\mathcal{M}_Q(\lambda, P; \mu).$$

\subsubsection{L'énoncé de la formule des traces d'Arthur-Lafforgue} 
Soit $\eta\in X_G^G$. Il existe une racine $n^{\text{ième}}$ de l'unité $\zeta$ tel que $\eta(g)=\zeta^{\deg\det g}$ pour tout $g\in G(\AAA)$. 
On va considérer l'intégrale de la fonction $g\mapsto \eta(g)k^{T}(g,g)$ sur $G(F)\backslash G(\mathbb{A})/\Xi_{{G}}$:  \begin{equation}J^{T}_{\eta}:=\int_{G(F)\backslash G(\mathbb{A})/\Xi_{{G}}}\eta(g)k^{T}(g,g)\d g=\sum_{e=1}^{n}\zeta^{e} J^{T}_{e}.\end{equation}
où $k^T(x,y)$ est le noyau tronqué d'Arthur (défini par l'expression (\ref{noyauf})) et 
$$J_e^{T}:=\int_{G(F)\backslash G(\mathbb{A})^{e}} k^{T}(x,x)\d x .
$$

L. Lafforgue a donné une expression spectrale pour $J^{T}_{\eta}$ quand $\eta$ est trivial (\cite[Théorème 11, p.307]{Laff}). Le changement nécessaire lorsque $\eta$ n'est pas trivial est modeste. Prenons garde qu'il y a des petites erreurs dans l'énoncé de ce théorème, qui viennent  de la preuve de \cite[Théorème 11, p.307]{Laff} lorsqu'on applique le développement de Fourier avec les coefficients de Fourier obtenus dans \cite[Lemme 9, p.306]{Laff}. Lafforgue a omis les calculs, donc nous détaillons ces calculs en utilisant le  lemme 9 de $loc.$ $cit.$ pour corriger ces petites erreurs. Les différences de notations sont expliquées dans la preuve. Notre utilisation des notations d'Arthur rend, espérons-le, la formule plus intuitive et les calculs plus faciles.  Des notations  sont expliquées dans la proposition \ref{expr1}.

\begin{theorem}[L. Lafforgue]\label{Laffo}
La trace tronquée tordue $J^{ }_{\eta}$ est égale à la somme portant sur un ensemble de représentants des classes d'équivalence inertielle des paires discrètes $(P,{\pi})$ partout non-ramifiées, et la somme sur les couples $(w,\lambda_{{\pi}}) \in \stab(P,{\pi})$, de l'intégrale suivante
\begin{multline} \frac{1}{|\stab{(P,{\pi})|}}  \int_{\Im X_{L}^{{G}}}
\frac{1}{|w|| X_{L}^{L}|}  
\sum_{  \substack{      \lambda_{L}\in \Im X_{L}^{{G}}, \lambda^{L}\in (\Im X_{M_{P}}^{L})^{{\circ}}       \\        \lambda_{{{\pi}}}=\lambda^{L}\lambda_{L}     }   } \sum_{\substack{ \lambda_{w}\in \Im X_{M_P}^{L}   \\ \lambda_{w}/w^{-1}(\lambda_{w})=\lambda^{L}}          }  \\ \lim_{\mu\rightarrow 1}  \mathbf{Tr}_{\mathcal{A}_{P,{\pi}}}(   
 \sum_{Q\in \cal{P}(L)} \hat{\mathbbm{1}}_{Q}({\mu\lambda_{L}}{\eta}^{-1})  \mathcal{M}_{Q}({\lambda}{ \lambda_w}, P; {\mu}{ \lambda_{L}} ) \circ \M(w,  w^{-1}({\lambda_{w}}))\circ \lambda_{{\pi}}) \d\lambda.
\end{multline} 
Rappelons que $$ \mathcal{M}_{Q}({\lambda}{ \lambda_w}, P; {\mu}{ \lambda_{L}} )= \M_{R|P}(\lambda\lambda_w)^{-1}\circ \M_{R|P}(\lambda\lambda_w/\mu\lambda_L ), $$
pour n'importe quel $R\in\mathcal{P}^Q(M_P)$. 
\end{theorem}
\begin{proof}[Remarques sur la démonstration de Lafforgue]
Pour corriger les  petites erreurs, nous allons commencer par le lemme 9, p. 306 de \cite{Laff} (ainsi que le corollaire 10 qui le suit).  Expliquons tout d'abord le lien entre les notations de Lafforgue et les nôtres. 

Soit $M$ un sous-groupe de Levi standard d'un sous-groupe parabolique standard $P$. Soit $|P|$ le nombre des facteurs de $M$. 
Soit $\sigma\in \mathfrak{S}_{|P|}$,  Lafforgue utilise $P_{\sigma}$ pour le sous-groupe parabolique standard dont le sous-groupe de Levi standard est conjugué à $L^{\sigma}$ (cf. \ref{Lw}) par un élément de Weyl $\tau_\sigma$ (qui dépend d'un choix, cf.  \ref{1theta}). Soit $Q\in \mathcal{P}(M)$ et $\sigma\in \mathfrak{S}_r$ la permutation associée (cf. la discussion avant la proposition \ref{1Qexplicit}), la fonction $\hat{\mathbbm{1}}_{Q}$ coïncide avec $\hat{\mathbbm{1}}_{P, \sigma}$ définie par Lafforgue. Les deux familles de fonctions sur $X_{L^\sigma}^{G}$  $(\hat{\mathbbm{1}}_{Q}(\mu))_{Q\in \mathcal{P}(L^\sigma)}$ utilisée dans cet article et $(\hat{\mathbbm{1}}_{P_\sigma, \tau}(\tau_{\sigma}(\mu)))_{\tau\in \mathfrak{S}_{|P_{\sigma}|}}$ définie par Lafforgue coïncident. De plus, pour une paire discrète $(P,{\pi})$, Lafforgue a utilisé $\Fix({\pi})$ pour notre $\mathfrak{stab}(P,{\pi})$; $\Lambda_P$ pour notre $X_M^{G}$;
$\M_{P,\tau}^{\tau(P)}(\varphi, \lambda)$ pour notre $\M(\tau, \lambda)\varphi $, où $\tau(P)$ est le sous-groupe parabolique standard admettant $\tau M\tau^{-1}$ comme sous-groupe de Levi. 

La présence de $\eta$ ne change pas la preuve de \cite[VI §2]{Laff} jusqu'au début de $(e)$ (page 304). On peut faire un changement de variable $\mu_Q\rightarrow \mu_Q\eta^{-1}$ dans le début de $(e)$ de \cite[VI §2]{Laff}, alors la preuve fonctionne de manière identique jusqu'au \cite[Lemme 9, p.306]{Laff}. On déduit que $J^{ }_{\eta}$ est égal à la somme portant sur toutes classes d'équivalence inertielle des paires discrètes partout non-ramifiées $(P, \pi)$ et l'ensemble des caractères continus $\chi$ de $\Im X_{M_P}^{G}$ de l'expression: 
\begin{multline}   \label{First}
\frac{1}{|\stab(P,{\pi})|} \sum_{(w,\lambda_{{\pi}}) \in \stab(P,{\pi})}   \lim_{\substack{\mu_0\in X_{L^w}^{G} \\   \mu_0\rightarrow 1  }  }         \sum_{Q\in \cal{P}(L^{w})}  \int_{\Im X_{L^w}^{{G}}} \int_{\Im X_{M_P}^{{G}}}   
   \hat{\mathbbm{1}}_{Q}({\mu\mu_{0}}{\eta}^{-1})   \\    \chi(\frac{\lambda  w(\lambda_{{\pi}})\mu_{0}\mu}{w(\lambda)})    \textbf{Tr}_{\mathcal{A}_{P,{\pi}}}( \mathcal{M}_Q(\lambda, P; \mu\mu_0)\circ \M(w^{-1}, {w(\lambda)} ) \circ w(\lambda_{{\pi}})^{-1}  )   \d\lambda   \d\mu
\end{multline} 
où on a utilisé le fait que les opérateurs d'entrelacement $\M_{Q|P}(w,\lambda)$ sont des isométries quand $\lambda\in \Im X_{M_P}^{{G}}$ (cf. \cite[5.2.2.(i)]{Labe}), l'équation fonctionnelle \cite[5.3.4.(1)]{Labe}, et les relations triviales suivantes (cf. \cite[5.3.1]{Labe}): $$w(\lambda_0)^{H_P(\cdot)}\M(w,\lambda)(\varphi)= \M(w,\lambda\lambda_0^{-1})(\varphi\lambda_0^{H_P(\cdot)}); $$ 
$$\M(w,\lambda\mu)=\M(w, \lambda) \quad \forall \mu\in X_{L^w}^{G}. $$

En utilisant  la décomposition $\Im X_{M_{P}}^{{G}}=\Im X_{L^w}^{{G}} \Im X_{M_{P}}^{L^w}$ et $$\int_{\Im X_{M_{P}}^{{G}}}f(\lambda)\d\lambda=\int_{\Im X_{L^w}^{{G}}}\int_{\Im X_{M_{P}}^{L^w}} f(\lambda\lambda')   \d\lambda   \d\lambda'   , $$
et en changeant $(w,\lambda_{{\pi}})\in \stab(P,{\pi})$ par son inverse: $(w^{-1}, w(\lambda_{{\pi}})^{-1})$, l'expression (\ref{First}) est égale à
\begin{multline}   \label{PPP}
\frac{1}{|\stab(P,{\pi})|}    \sum_{(w,\lambda_{{\pi}}) \in \stab(P,{\pi})}   \lim_{\substack{ \mu_0\in X_{L^w}^{G}  \\ \mu_{0}\rightarrow 1}}\sum_{Q\in \cal{P}(L^{w})} \int_{\Im X_{L^w}^{{G}}} \int_{\Im X_{L^w}^{{G}}} \int_{\Im X_{M_P}^{L^w}}  \\
  \hat{\mathbbm{1}}_{Q}({\mu\mu_{0}}{\eta}^{-1})   \chi({\frac{\lambda' \mu_{0}\mu}{w^{-1}(\lambda') \lambda_{{\pi}}}})     \textbf{Tr}_{\mathcal{A}_{P,{\pi}}}( \mathcal{M}_Q(\lambda\lambda', P; \mu\mu_0)\circ \M(w, w^{-1}(\lambda') ) \circ \lambda_{{\pi}} )  \d\lambda'    \d\mu  \d\lambda  .
\end{multline}

On considère la suite exacte courte:
$$0\rightarrow \ker\mu_{w} \rightarrow \Im X_{L^w}^{{G}}\oplus \Im X_{M_{P}}^{L^w}\xrightarrow{\mu_{w}} \Im X_{M_{P}}^{{G}}\rightarrow 0$$
où $\mu_{w}$ est le morphisme qui envoie $(\mu,\lambda')\in \Im X_{L^w}^{{G}}\oplus \Im X_{M_{P}}^{L^w}$ sur $\lambda'\mu/w^{-1}(\lambda')\in \Im X_{M_P}^{{G}}$. La surjectivité de $\mu_w$ résulte du fait que
le morphisme 
 $$\lambda'\longmapsto \frac{\lambda^{\prime}}{w^{-1}(\lambda^{\prime})}\ ,\quad \Im X_{M_{P}}^{L^w}\longrightarrow \Im X_{M_{P}}^{L^w} $$ a pour image $(\Im X_{M_P}^{L^w})^{\circ}$, la composante connexe neutre de $\Im X_{M_P}^{L^w}$, et  le groupe $\Im X_{L^w}^{{G}}$ rencontre chaque composante connexe de $\Im X_{M_{P}}^{L^w}$.

D'après cette suite exacte, les caractères de $\Im X_{M_{P}}^{{G}}$ sont les caractères de $\Im X_{L^w}^{{G}}\oplus \Im X_{M_{P}}^{L^w}$ qui sont triviaux sur $\ker\mu_{w}$. On appliquera l'identité suivante qui découle du développement de Fourier: \begin{equation}\label{Fourier}\sum_{\chi}  \int_{  \Im X_{L^w}^{{G}}\oplus \Im X_{M_{P}}^{L^w}} \chi(\frac{\mu_w(\lambda)}{ \lambda_{{\pi}}}) f(\lambda) \d \lambda =   \frac{1}{|\ker\mu_w|}\sum_{\lambda_0\in \mu_w^{-1}(   \lambda_{{\pi}}   )}   f({\lambda_0} ) , \end{equation}
où $f$ est une fonction  lisse sur $  \Im X_{L^w}^{{G}}\oplus \Im X_{M_{P}}^{L^w} $, et la somme $\sum_{\chi} $ porte sur les caractères continus de $\Im X_{M_P}^{G}$ et $\d \lambda$ est la mesure produit. 
Concernant le cardinal du $\ker\mu_{w}$, observons que le morphisme 
 $$\lambda'\longmapsto \frac{\lambda^{\prime}}{w^{-1}(\lambda^{\prime})}\ ,\quad \Im X_{M_{P}}^{L^w}\longrightarrow (\Im X_{M_{P}}^{L^w}){^{{\circ}}}   . $$ 
a pour noyau $\Im X_{L^w}^{L^w} = X_{L^w}^{L^w}$ et 
l'intersection $(\Im X_{M_P}^{L^w})^{{\circ}}\cap \Im X_{L^w}^{{G}}$ a pour cardinal $|w|$, le produit des longueurs des cycles de $w$ en tant qu'élément du groupe des permutations $\mathfrak{S}_{r}$ qui permute les facteurs de $M_P\cong G_{n_1}\times \cdots\times G_{n_r}$. Donc $|\ker\mu_{w}|=|X_{L^w}^{L^w}||w|$.

Par l'égalité (\ref{Fourier}),  la somme des expressions (\ref{PPP}) portant sur tous caractères $\chi$ de $\Im X_{M_{P}}^{{G}}$ est  égale à
\begin{multline}\frac{1}{|\stab{(P,{\pi})|}}  \sum_{(w,\lambda_{{\pi}}) \in \stab(P,{\pi})}     \int_{\Im X_{L^w}^{{G}}} \frac{1}{|w||X_{L^w}^{L^w}|} \sum_{\lambda_{{{\pi}}}=\lambda^{L^w}\lambda_{L^w}} \sum_{\lambda_{w}: \lambda_{w}/w^{-1}(\lambda_{w})=\lambda^{L^w}}    \\  \lim_{\mu\rightarrow 1} \mathbf{Tr}_{\mathcal{A}_{P,{\pi}}}(   
 \sum_{Q\in \cal{P}(L^w)}\hat{\mathbbm{1}}_{Q}({\mu\lambda_{L^w}}{\eta}^{-1})  \mathcal{M}_{Q}({\lambda}{ \lambda_w}, P; {\mu}{ \lambda_{L^w}} ) \circ \M(w,  w^{-1}({\lambda_{w}}))\circ \lambda_{{\pi}}) \d\lambda ,
\end{multline} 
où la somme sur $\sum_{\lambda_{{{\pi}}}=\lambda^{L^w}\lambda_{ L^w}}$ porte sur toutes les écritures telles que $\lambda_{L^w}\in \Im X_{L^w}^{{G}}$ et $\lambda^{L^w}\in (\Im X_{M_{P}}^{L^w})^{{\circ}}$ et la somme $\sum_{\lambda_{w}: \lambda_{w}/w(\lambda_{w})=\lambda^{L^w}}  $ porte sur tous les $\lambda_w\in\Im X_{M_P}^{L^w}$ tels que $ \lambda_{w}/w^{-1}(\lambda_{w})=\lambda^{L^w} $.

\end{proof}

Supposons que $\zeta$ est une racine $n^{\text{ième}}$ primitive de l'unité. 
On a  pour tout $k\in\mathbb{Z}$:
$$J^{T}_{\eta^{k}}=\sum_{e=1}^{n}\zeta^{ek}J^{T}_{e} . $$
Donc pour tout $e\in \mathbb{Z}$:
$$ J^{T}_{e}=\frac{1}{n} \sum_{k=1}^{n}\zeta^{-ek}J^{T}_{\eta^{k}} . $$ 
Par la définition de la fonction $\hat{\mathbbm{1}}_{Q}^{e}$, on obtient le résultat suivant:
\begin{coro}[L. Lafforgue]\label{Trace}
Avec les notations du théorème \ref{Laffo},
pour tout $e\in\mathbb{Z}$, 
$J_{e}$ est égale à la somme portant sur un ensemble de représentants des classes d'équivalence inertielle des paires discrètes $(P,{\pi})$ partout non-ramifiées, et la somme sur les couples $(w,\lambda_{{\pi}}) \in \stab(P,{\pi})$, de l'intégrale suivant
\begin{multline} \frac{1}{|\stab{(P,{\pi})|}}  \int_{\Im X_{L}^{{G}}}
\frac{1}{|w|| X_{L}^{L}|}  
\sum_{  \substack{      \lambda_{L}\in \Im X_{L}^{{G}}, \lambda^{L}\in (\Im X_{M_{P}}^{L})^{{\circ}}       \\        \lambda_{{{\pi}}} = \lambda^{L}\lambda_{L}     }   } \sum_{\substack{ \lambda_{w}\in \Im X_{M_P}^{L}   \\ \lambda_{w}/w^{-1}(\lambda_{w})=\lambda^{L}}       }  \\ \lim_{\mu\rightarrow 1}  \mathbf{Tr}_{\mathcal{A}_{P,{\pi}}}(   
 \sum_{Q\in \cal{P}(L)}\hat{\mathbbm{1}}^{e}_{Q}({\mu\lambda_{L}})  \mathcal{M}_{Q}({\lambda}{ \lambda_w}, P; {\mu}{ \lambda_{L}} ) \circ \M(w,  w^{-1}({\lambda_{w}}))\circ \lambda_{{\pi}}) \d\lambda.
\end{multline}

\end{coro}

\subsection{Preuve de la proposition \ref{expr1}} \label{4.3}
En bref:

On évalue directement les traces dans le développement spectral de la formule des traces d'Arthur-Lafforgue (Corollaire \ref{Trace}).
Soit $\pi$ une représentation automorphe discrète partout non-ramifiée de $G(\AAA)$. Par la décomposition à la Flath, $\pi\cong\otimes_{v\in |X_1|} \pi_v$, où $\pi_v$ est un $\mathcal{H}_{G, v}$-module simple. La commutativité ainsi que la finitude du nombre de générateurs de $\mathcal{H}_{G, v}$ (par l'isomorphisme de Satake) implique que $\pi_v$ est de dimension $1$. Donc $\pi$ est de dimension $1$ et les opérateurs d'entrelacement agissent par des scalaires qui peuvent être exprimés à l'aide du système de racine et de fonctions $L$ (la proposition \ref{cons}). 
On démontre dans \ref{identity1} que l'action du $\cdot \lambda_\pi$ sur $\pi$ est l'identité pour tout $\lambda_\pi\in \Fix(\pi)$.

\subsubsection{L'opérateur ``$\cdot \lambda_{{\pi}}$"}\label{identity1}
\begin{prop}\label{Fixer}    
Soit $(P,{\pi})$ une paire discrète partout non-ramifiée. Soit $\varphi\in \mathcal{A}_{P,{\pi}}$.  Alors $\varphi \lambda_{{\pi}}  =\varphi $ pour tout $\lambda_{{\pi}}\in \Fix({\pi})$. 
\end{prop}
\begin{remark}\label{multi2}
Soit $\pi$ une représentation automorphe cuspidale de $G(\AAA)$. Nous l'avons définie comme un $\mathcal{H}_G$-module. Soit $\lambda_\pi\in \Fix(\pi)$, i.e. on a $\pi\otimes\lambda_\pi = \pi$. Prenons garde que $\langle G(\AAA)\pi\rangle$, la représentation régulière engendrée par $\pi$, admette deux structures de représentation de $G(\AAA)$, la représentation régulière $R_G$ et sa torsion par $\lambda_\pi$:  $R_G\otimes \lambda_\pi$. On sait que ces deux représentations sont isomorphes (par la remarque  \ref{multi1}), mais cet isomorphisme n'est pas l'identité sur l'espace sous-jacent sauf si $\lambda_\pi=1$: en effet, si l'isomorphisme est une identité, on a $\varphi(y x)\lambda_\pi(x)=\varphi(y x)$ pour tout $x,y\in G(\AAA)$ et $\varphi\in \pi$ non-nulle, donc $\lambda_\pi=1$. 

C'est le point de de cette proposition.  
\end{remark}

On a besoin d'un lemme tout d'abord. 

\begin{lemm} \label{tensor}
Considérons une représentation automorphe cuspidale partout non-ramifiée $\pi$ de $G$, et $\varphi \in \pi$, alors il existe un élément $x$ de degré $0$ dans $G(\bbb{A})$ tel que $\varphi(x)\neq 0$.
\end{lemm}
\begin{proof}
Une fonction cuspidale peut être retrouvée par ses coefficients de Fourier qui sont des fonctions de Whittaker globales. On va montrer qu'un des coefficient de Fourier de $\varphi$ admet une valeur non-nulle en un élément de degré $0$, ce qui démontrera le résultat. 
Par l'unicité du modèle de Whittaker, une fonction de Whittaker globale est un produit des fonctions de Whittaker locales qui peuvent être précisées par la formule de Shintani. 

Soit $D=\sum_{v\in|X_1|}n_{v}v$ un diviseur canonique. On sait que $$\deg D=\sum_{v\in|X_1|}n_{v}\deg v=2g-2. $$ Choisissons et fixons   une uniformisante pour tout $v\in|X_1|$, on la note $\varpi_{v}$. On définit un idèle $u=(\varpi_{v}^{-n_{v}})_{v\in |X_1|}\in\mathbb{A}^{\times}$, alors par la dualité de Serre et par le fait que $X_1$ est géométriquement connexe, on a (cf. \cite[\rom{2}.5, Proposition 3]{Serre})
$$\mathbb{A}/(F+u\mathcal{O})\cong H^{1}(X_1, \mathcal{O}_{X_1}(D))\cong H^{0}(X_1,\mathcal{O}_{X_1})^{\vee}\cong\mathbb{F}_{q}$$ 
où $H^{0}(X_1,\mathcal{O}_{X_1})^{\vee}$ est le groupe  dual de $H^{0}(X_1,\mathcal{O}_{X})$.
Donc un choix d'un caractère non-trivial de $\mathbb{F}_{q}$ définit un caractère non-trivial $\psi$ de $\mathbb{A}/(F+u\mathcal{O})$ en  utilisant l'isomorphisme ci-dessus. On a une décomposition $\psi=\otimes'_{v}\psi_{v}$.

On voit que le conducteur de $\psi_{v}$ pour chaque $v\in |X_1|$ est exactement $n_v$. 
En effet, soit $v_0\in |X_1|$, il est clair que $\psi$ est trivial sur $\varpi_{v_0}^{-n_{v_0}}\mathcal{O}_{v_0}$. De plus, posons $n_{v}'=n_{v}$ pour $v\neq v_{0}$ et $n'_{v_0}=n_{v_0}+1$,  et notons le faisceau inversible associé au diviseur $v_{0}$ par $\mathcal{L}=\mathcal{O}(\{v_{0}\})$. Alors 
$$\mathbb{A}/(F+(\varpi_{v}^{-n'_{v}})\mathcal{O})\cong H^{1}(X_1, \mathcal{O}_{X_1}(D)\otimes\mathcal{L}  )\cong H^{0}(X_1,\mathcal{L}^{-1})^{\vee}=\{0\}$$ car $\deg \mathcal{L}^{-1}<0$. Comme $\psi$ n'est pas trivial, il ne se factorise pas par un caractère de $\mathbb{A}/(F+(\varpi_{v}^{-n'_{v}})\mathcal{O})$. Donc $\psi_{v_{0}}$ n'est pas trivial sur $\varpi_{v_{0}}^{-n'_{v_{0}}}\mathcal{O}_{v_{0}}$ et alors le conducteur de $\psi_{v_0}$  est  $n_{v_0}$. 

Cela implique que le caractère de $F_v$ défini par $a\mapsto \psi_v(\varpi_{v}^{-n_{v}} a)$ est de conducteur $0$, on le notera par $\varpi_{v}^{-n_{v}}\psi_v$.

On définit un caractère de $N_B(\mathbb{A})$ (resp. $N_B(F_v)$) qu'on notera encore par $\psi$ (resp. $\psi_v$) par $$\psi(n):=\psi(\sum_{i=1}^{n-1} n_{i,i+1}),$$
pour $n=(n_{ij})_{n\times n} \in N_B(\mathbb{A})$ (resp. $n=(n_{ij})_{n\times n} \in N_B(F_v)$). 
Soit $\mathcal{W}(\psi_v)$ (resp. $\mathcal{W}(\psi)$) l'espace des fonctions $$W: G(F_v)/K_v\rightarrow \mathbb{C}$$ $$ \text{(resp. }W: G(\AAA)/K\rightarrow \mathbb{C})$$  telles que  $$W(ng)=\psi(n)W(g)$$ pour tout $n\in N_B(F_v)$ et $g\in G(F_v)/K_v$ (resp. $n\in N_B(\AAA)$ et $g\in G(\AAA)/K$). C'est un $\mathcal{H}_{G,v}$-module 
 (resp. $ \mathcal{H}_{G}$-module 
). Comme $\pi$ est une représentation cuspidale irréductible, on sait que $\pi_v$ est générique. Par le théorème de multiplicité un local, il existe un unique $\mathcal{H}_{G_v}$-sous-module de $\mathcal{W}(\psi_v)$ qui est isomorphe à $\pi_v$. Ce sous-module est appelé le modèle de Whittaker de $\pi_v$, qu'on notera  $\mathcal{W}(\pi_{v},\psi_{v})$. Comme $\pi_v$ est de dimension $1$, $\mathcal{W} (\pi_{v},\psi_{v})$ l'est aussi.  En remplaçant $\psi_v$ par $\varpi_v^{-n_v}\psi_v$, on obtient le modèle de  
Whittaker $\mathcal{W}(\pi_v, \varpi_v^{-n_v}\psi_v)$ par rapport à $\varpi_v^{-n_v}\psi_v$.

 Pour $J=(j_{1},\ldots,j_{n})\in\mathbb{Z}^{n}$, soit $$\varpi_v^{J}=\begin{pmatrix}\varpi_{v}^{j_{1}}&&\\
&\ddots&\\
&&\varpi_{v}^{j_{n}}
\end{pmatrix}
.$$ 
Soit $$J_v=\biggr(-(n-1)n_{v},-(n-2)n_{v},\ldots, 0 \biggr).$$
Alors on a un isomorphisme de $\mathcal{H}_{G,v}$-module
\begin{align*}\mathcal{W}({\pi_v, \psi_v}) &\cong \mathcal{W}(\pi_v, \varpi_v^{-n_v}\psi_v) \\
(x\mapsto W(x))&\mapsto (x\mapsto W(\varpi_v^{J_v}x ))
\end{align*}
Comme $\pi_v$ est non-ramifiée et $\varpi_v^{J_v}\psi_v$ est de conducteur $0$. On a d'après la formule de Shintani (cf. \cite[p.116]{Cogdell}) que pour toute fonction non-nulle $W'\in \mathcal{W}(\pi_v, \varpi_v^{-n_v}\psi_v)$ 
 $$W'(1)\neq 0.$$
Donc si $W\in \mathcal{W}(\pi_v, \psi_v)$ est non-nulle, cela implique que 
$$ W(\varpi_v^{J_v})\neq 0  $$

Soit $W_v^{o}\in \mathcal{W}(\pi_v, \psi_v)$ une fonction de Whittaker normalisée telle que $$W_v^{o}(\varpi_v^{J_v})=1.$$
Alors $W(x):=\prod_{v\in|X_1|} W_v^{o}(x)$ définit une fonction dans $\mathcal{W}(\psi)$ qui engendre un sous-$\mathcal{H}_G$-module isomorphe à $\pi$. Par l'unicité globale du modèle de Whittaker, c'est le sous-module unique de $\mathcal{W}(\psi)$ isomorphe à $\pi$. 
Soit $\varphi\in\pi$ non-nulle, on sait que la fonction \begin{equation}\label{whittaker}W_{\varphi}(x)=\int_{N_{B}(F)\backslash N_{B}(\mathbb{A})}\varphi(nx)\psi^{-1}(n)\d n \end{equation}
appartient à $\mathcal{W}( {\psi})$ et engendre aussi un sous-module isomorphe à $\pi$. Donc il existe une constante $C$ non-nulle, telle que
$$W_{\varphi}(x)=C \prod_{v\in|X_1|}W_{v}^o(x) \quad \forall\ x=(x_v)_{v\in |X_1|}\in G(\AAA).$$

Soit $x_{0}=(\varpi_{v}^{J_v})_{v\in|X_1|}\in G_{n}(\mathbb{A})$. Alors  $$\deg x_{0}=-\sum_{v\in|X_1|}\frac{n(n-1)}{2} n_{v}\deg v=-n(n-1)(g-1)$$
et $W_{\varphi}(x_{0})=C\neq 0$. Choisissons un élément $a\in \AAA^{\times}$ de degré $1$, vu comme une matrice diagonale, on a $$W_{\varphi}(x_{0}a^{(n-1)(g-1)})\neq 0, $$
et $\deg x_{0}a^{(n-1)(g-1)}=0$. Vu l'équation \eqref{whittaker}, on obtient le résultat voulu.  
\end{proof}

\begin{proof}[Démonstration de la proposition \ref{Fixer}]
Par la définition de $\mathcal{A}_{P,{\pi}}$, on est réduit aux formes automorphes dans une représentation discrète de sous-groupe de Levi $M_P$ de $G$, qui est un produit de groupes généraux linéaires. Donc il suffit de considérer les groupes ${G}_n$ ($n\geq 1$). 

Soit $\lambda_{ {\pi}}\in \Fix( {\pi})$, i.e. on a $\lambda_{ {\pi}} {\pi}= {\pi}$. Donc $\varphi$ et $\varphi\lambda_{\pi}$ sont toutes deux contenues dans $\pi$. Il existe une constante $c$ tel que $c \varphi= \varphi {\lambda_{ {\pi}}}$ puisque $\dim {\pi} =1$ (cf. le début de cette sous-section). 

Si $\pi$ est cuspidale, par le lemme \ref{tensor}, il existe $x_0\in G(\AAA)$ un élément de degré $0$  tel que $\varphi(x_0)$ soit non-nulle. 
Alors la relation $$c \varphi(x_0)= \varphi(x_0) {\lambda_{ {\pi}}}(x_0)$$ implique $c=1$, c'est-à-dire $\varphi=\varphi{\lambda_{ {\pi}}}$. 

Si $ {\pi}$ n'est pas cuspidale, d'après Langlands (voir \cite[V.3.13(iii)]{Wald-Moe}), $\forall\   \varphi\in  {\pi}$, il existe une paire discrète $(P', {\pi}')$ telle que tous les facteurs de ${\pi}'$ soient cuspidaux et il existe une fonction cuspidale $\varphi'\in \mathcal{A}_{P', {\pi}'}$ et un point $\lambda'\in X_{P'}^{G} $ tels que $$\varphi(g)=\mathrm{Res}_{\lambda'}\E(\varphi', \cdot)(g)\quad\forall\ g\in G(\mathbb{A}), $$
où $\mathrm{Res}$ est un opérateur de résidus et $\E(\varphi',\lambda)(g)$  pour $\lambda\in X_{P'}^G$, $g\in G(\AAA)$ est la série d'Eisenstein définie associée à $\varphi'$(cf. \rom{2}.1.5. \cite{Wald-Moe}). 
On a $\Fix({\pi})= \Fix( {\pi}')$ par l'inclusion $X_G^{G}\subseteq X_{P'}^{G}$ (la proposition \ref{res e}).
Donc $\forall\ \lambda_{0}\in \Fix( {\pi})$, on peut réduire le problème au cas cuspidal:
 \begin{align*}\varphi(g){\lambda_{0}}(g)&=  \mathrm{Res}_{\lambda'}\E(\varphi', \cdot)(g){\lambda_{0}}(g) \\
&=   \mathrm{Res}_{\lambda'}\E(\varphi'{\lambda_{0}}, \cdot)(g)\\
&=\mathrm{Res}_{\lambda'}\E(\varphi', \cdot)(g), \\
&= \varphi(g). 
\end{align*}
\end{proof}

\subsubsection{Calculs explicites d'opérateurs d'entrelacement. }\label{constop} 
On fixe une fois pour toutes une fonction $\varphi_\Pi$ non-nulle pour chaque représentation discrète partout non-ramifiée $\Pi$ de $G_k$ ($k\geq 1$). Soit $(P,{\pi})$ une bonne paire discrète partout non-ramifiée de $G$. Cela nous permet de fixer une fonction $\varphi_\pi$ sur $M_P(\AAA)$ dans $\pi$ d'une manière évidente. Pour tout $R\in \mathcal{P}(M)$, soit $\varphi_R$ la fonction sur $N_R(\AAA)M_P(F)\backslash G(\AAA)/K$ telle que $\varphi_R(nmk)=\rho_R(m)^{-1} \varphi_\pi(m)$ pour tout $n\in N_R(\AAA)$, $m\in M_P(\AAA)$ et $k\in K=G(\mathcal{O})$.

Soit  $L$ un sous-groupe de Levi semi-standard contenant $M$. Pour tous $P, Q\in \mathcal{P}(L)$ et $\lambda\in X_M^{G}$,  notons \begin{equation} \label{snPQ}n_{Q|P}({\pi},\lambda)=\prod_{\alpha\in\Phi_{Q}\cap\Phi_{\bar{P}}} \prod_{\{ \beta\in \Phi(Z_{M_P}, {G})|\ \beta|_{\ago_{L}^{G} }=\alpha  \} }       n_{\beta}({\pi},\lambda^{\beta^{\vee}})     ,\end{equation}
rappelons que $\Phi_P=\Phi(Z_P, N_P)$. 

\begin{prop}\label{cons}
Avec les notations ci-dessus, soit $S, R\in \mathcal{P}(M_P)$, on a 
$$\M_{R|S}(\lambda)\varphi_S=n_{R|S}(\pi, \lambda)\varphi_R.$$
En particulier, soit $(w, 1)\in \stab(P,{\pi})$, alors $$\M(w,\lambda) \varphi_P = (\prod_{\substack{  \beta\in \Phi(Z_M, {G})  \\ \beta>0; w(\beta)<0} } n_{\beta}({\pi}, \lambda^{-\beta^{\vee}}) )  \varphi_P= n_{{\pi}}(w,\lambda)\varphi_P.$$
\end{prop}
\begin{proof}
Cette proposition est bien connue des experts. 

Par l'équation fonctionnelle (cf. \cite[5.2.2.(i)(1)]{Labe}), on peut supposer sans perte de généralité que $S=P$ est standard. Soit $w\in W_n/W^M$ tel que $wRw^{-1}$ est standard. On a
$$ \M_{R|P}(\lambda)=\M_{R|wRw^{-1}}(w^{-1}, w(\lambda))\circ \M(w, \lambda). $$ 
Il est clair que  (\cite[5.2.1]{Labe} où $T_0=0$ pour $G_n$)
$$\M_{R|wRw^{-1}}(w^{-1}, w(\lambda))\varphi_{wRw^{-1}}=\varphi_R. $$

Il suffit de considérer $\M(w,\lambda)$. L'existence d'une constante $c$ telle que $\M(w,\lambda)\varphi_P=c \varphi_{wRw^{-1}}$ vient du fait que $\mathcal{A}_{P, \pi}$ et $\mathcal{A}_{wRw^{-1}, \pi}$ sont de dimension $1$. Cette constante ne dépend que de la structure de $\mathcal{H}_G$-module de $\pi$ mais pas du modèle choisi. 
Par le prolongement analytique, il suffit de considérer l'identité dans la région du convergence de l'intégrale qui définit l'opérateur d'entrelacement $\lambda\mapsto M(w, \lambda)$. 
On peut utiliser l'expansion d'Euler de l'opérateur d'entrelacement (\cite[II.1.9]{Wald-Moe}) qui réduit la question à une formule locale, souvent appelée la formule de Gindikin-Karpelevich. 
En décomposant $w$ en une composition de symétries élémentaires par rapport aux racines simples, la formule de Gindikin-Karpelevich peut être ramené à un calcul local de $SL_2$ (cf. \cite[4.2.1, 4.2.2]{Shahidi}). On renvoie le lecteur au \cite[4.3.2]{Shahidi} pour ce calcul de $SL_2$. Notons que le facteur $q^{(g-1)n_in_j}$ dans la définition \ref{nbetapiz} de $n_{\beta}(\pi, \lambda^{\beta^{\vee}})$ vient du fait que la mesure de Haar locale est fixée telle que $\vol(N_P(\mathcal{O}_v))=1$ mais la mesure de Haar globale est fixée telle que $\vol(N_P(F)\backslash N_P(\AAA))=1$. 
\end{proof}

\subsubsection{Preuve de la proposition \ref{expr1}}\label{const}
Soit $(P,{\pi})$ une bonne paire discrète partout non-ramifiée. On revient au corollaire \ref{Trace}. 

Soit $L$ un sous-groupe de Levi contenant $M$. 
Soit $Q\in \mathcal{P}(L)$. On commence par calculer l'opérateur $\mathcal{M}_Q(\lambda, P; \mu)$. 

Rappelons que $\mathcal{P}^{L}(M)$ (resp. $\mathcal{P}^{Q}(M)$) est l'ensemble des sous-groupes paraboliques de $L$ (resp. de $G$ contenus dans $Q$) qui ont pour sous-groupe de Levi $M$.
On a une bijection entre $\mathcal{P}^{Q}(M)$ et $\mathcal{P}^{L}(M)$  donnée par l'application
$$R'  \mapsto R' \cap L$$ qui a pour inverse $R  \mapsto RN_{Q}$. 
Soit $S= (P\cap L)N_{Q_L}$. 
Soit $R=(P\cap L)N_Q$. Alors $S\in\mathcal{P}^{Q_L}(M)$ et $R\in \mathcal{P}^{Q}(M)$.
Par définition, on a 
$$\mathcal{M}_Q(\lambda, P; \mu )= \M_{R|P}(\lambda)^{-1}\circ  \M_{R|P}(\lambda/\mu), $$
donc (cf. \cite[5.2.2(i)(1)]{Labe})
$$\mathcal{M}_Q(\lambda, P; \mu )= \M_{S|P}(\lambda)^{-1}\circ   \M_{R|S}(\lambda)^{-1}\circ \M_{R|S}(\lambda/\mu)\circ  \M_{S|P}(\lambda/\mu).   $$
Notons qu'on a  
$$\Phi_R \cap \Phi_{\bar{S}} =   \Phi(Z_M, N_Q\cap N_{\bar{Q}_L}). $$
Donc $$ n_{R|S}(\pi, \lambda)= n_{Q|Q_L}(\pi, \lambda).  $$

On conclut par la considération ci-dessus, la proposition \ref{Fixer} et la proposition \ref{cons} que l'expression de Lafforgue dans le corollaire \ref{Trace} de $J_{e}$ (pour $e\in \mathbb{Z}$) est égale à la somme portant sur un ensemble de bons représentants des classes d'équivalence inertielle des paires discrètes partout non-ramifiées $(P,{\pi})$ et la somme sur les couples $(w,\lambda_{{\pi}}) \in \stab(P,{\pi})$, de l'intégrale suivante 
\begin{multline}\label{533}
\frac{1}{|\stab{(P,{\pi})|}}  \int_{\Im X_{L}^{{G}}}
\frac{1}{|w|| X_{L}^{L}|}    
\sum_{  \substack{      \lambda_{L}\in \Im X_{L}^{{G}}, \lambda^{L}\in (\Im X_{M_{P}}^{L})^{{\circ}}       \\        \lambda_{{{\pi}}}=\lambda^{L}\lambda_{L}     }   } \sum_{\substack{ \lambda_{w}\in \Im X_{M_P}^{L}   \\ \lambda_{w}/w^{-1}(\lambda_{w})=\lambda^{L}}          }              \\   n_{{{\pi}}}( w, w^{-1}(\lambda_{w})  )    
\frac{n_{S|P}(\pi, \frac{\lambda\lambda_w}{\lambda_L})}{n_{S|P}(\pi, \lambda\lambda_w)}
\lim_{\mu\rightarrow 1} 
 \sum_{Q\in \cal{P}(L)} \hat{\mathbbm{1}}_{Q}^{e}({\mu}{\lambda_{L}})  
\frac{ n_{Q|Q_{L}}({\pi},  \frac{\lambda \lambda_w }{ \mu \lambda_L}) }{n_{Q|Q_{L}}({\pi}, {\lambda}\lambda_{w}  )}   \d\lambda .  \end{multline}

Pour tout $\lambda\in \Im X_L^G$, la famille
$$\biggr(c_Q(\mu)= \frac{ n_{Q|Q_{L}}({\pi},  \frac{\lambda \lambda_w }{ \mu}) }{n_{Q|Q_{L}}({\pi}, {\lambda}\lambda_{w}  ) } \biggr)_{Q\in\mathcal{P}(L)} $$  est une $({G},L)$-famille sur un voisinage de $1$. 
Notons que $c_Q(\mu)$ est de type spécial comme dans le théorème \ref{GMM}:
Soit  $$c_\alpha(z) = \begin{cases}  \prod_{\{ \beta\in \Phi(Z_{M_P}, {G})|\ \beta|_{\ago_{L}^{G} }=\alpha  \}} \frac{n_\beta(\pi, {(\lambda\lambda_w)^{\beta^\vee}}/{z})}{n_{\beta}(\pi, (\lambda\lambda_w)^{\beta^\vee})}, \quad \text{ si $\alpha\in -\Phi_{Q_L}$}; \\    1, \quad \text{ sinon}.   \end{cases} $$
Alors $$c_Q(\mu)= \prod_{\alpha\in \Phi_Q} c_\alpha(\mu^{\alpha^{\vee}}) . $$

On sait \begin{equation}\label{5.3.3}c_{Q}(\mu\lambda_{L})=c_Q(\mu),\end{equation} pour tout $\mu\in X_{L}^{G}$. 
En effet, pour tout $(w,\lambda_{{\pi}}) \in \stab(P,{\pi})$, $\lambda_\pi\in \Fix(\pi)$,  on a $$w^{-1}(\frac{ \lambda_w}{ \lambda_{L}})=  \frac{\lambda_{w}}{\lambda_{{\pi}}}.$$ 
Notons que 
\begin{equation}\label{Lcons}n_{Q|Q_L}(\pi, \mu) =n_{Q|Q_L}(\pi, \mu  \lambda_{{\pi}}). \end{equation} 
L'égalité \eqref{5.3.3} résulte alors de la relation  \begin{equation*} n_{Q|Q_L}({\pi}, \lambda)= n_{Q|Q_L}({\pi}, w(\lambda)) , \quad \forall \lambda\in X_{M_P}^G,\end{equation*} puisque l'ensemble $\{ \beta\in \Phi(Z_{M_P}, {G})|\ \beta|_{\ago_{L}^{G} }=\alpha  \}$ est stable par tout $w$ qui est contenu dans $L$.  

Le lemme \ref{SPE}  (pour $e=-1$) et le lemme \ref{SPF} (pour tout $e\in \mathbb{Z}$) s'appliquent, et la limite 
$$\lim_{\mu\rightarrow 1} 
 \sum_{Q\in \cal{P}(L)} \hat{\mathbbm{1}}_{Q}^{e}({\mu}{\lambda_{L}})  
c_Q(\mu\lambda_L)$$
s'annule si $\lambda_{L}\in  X_{L}^{{G}}-X_{G}^{{G}}$. Quand $\lambda_L\in X_G^G$, on a clairement 
$$\frac{n_{S|P}(\pi, \frac{\lambda\lambda_w}{\lambda_L})}{n_{S|P}(\pi, \lambda\lambda_w)}=1.$$
On a aussi 
$$\lim_{\mu\rightarrow 1} 
 \sum_{Q\in \cal{P}(L)} \hat{\mathbbm{1}}_{Q}^{e}({\mu}{\lambda_{L}}) c_Q(\mu\lambda_L)= \lambda_{L1}^{e} \lim_{\mu\rightarrow 1}  \sum_{Q\in \cal{P}(L)}\hat{\mathbbm{1}}_{Q}^{e}(\mu)c_Q(\mu). $$

Par \eqref{533}, on conclut que pour tout $e\in \mathbb{Z}$, $J_{e}$ est égal à la somme portant sur un ensemble de bons représentants des classes d'équivalence inertielle des paires discrètes partout non-ramifiées $(P,{\pi})$ et la somme sur les couples $(w,\lambda_{{\pi}}) \in \stab(P,{\pi})$, de l'intégrale suivante 
\begin{multline}\label{aaaz}
\frac{1}{|\stab{(P,{\pi})|}}  
\frac{1}{|w|| X_{L}^{L}|}    
\sum_{  \substack{      \lambda_G \in \Im X_{G}^{{G}}, \lambda^{L}\in (\Im X_{M_{P}}^{L})^{{\circ}}       \\        \lambda_{{{\pi}}}=\lambda^{L}\lambda_G }   } \sum_{\substack{ \lambda_{w}\in \Im X_{M_P}^{L}   \\ \lambda_{w}/w^{-1}(\lambda_{w})=\lambda^{L}}          }              \\   n_{{{\pi}}}( w, w^{-1}(\lambda_{w})  )    
\lambda_{G1}^{e} \int_{\Im X_{L}^{{G}}}    \lim_{\mu\rightarrow 1} 
 \sum_{Q\in \cal{P}(L)} \hat{\mathbbm{1}}_{Q}^{e}({\mu})  
\frac{ n_{Q|Q_{L}}({\pi},  \frac{\lambda \lambda_w }{ \mu}) }{n_{Q|Q_{L}}({\pi}, {\lambda}\lambda_{w}  )}   \d\lambda,  \end{multline}
où $\lambda_{G1}$ est le première coordonnée de $\lambda_G$ dans $X_L^G$, qui est égal à $\lambda_G^{\deg \det x}$ pour n'importe quelle matrice $x\in G(\AAA)$ de degré $1$ par définition. 

Notons que (Proposition \ref{1Qexplicit}), 
\begin{equation}
\hat{\mathbbm{1}}^{-1}_{Q}(\mu)=\left((-1)^{\dim \mathfrak{a}_{Q}^{{G}}} \prod_{i=1}^{r}\mu_{i}\right)\theta_{Q}(\mu)^{-1} .\end{equation}
En appliquant le corollaire \ref{INT} pour $e=-1$, on obtient 
\begin{equation}\label{pppo} \int_{\Im X_L^G} \lim_{\mu \rightarrow 1}\sum_{Q\in \cal{P}(L)} \hat{\mathbbm{1}}_{Q}^{e}({\mu})   \frac{ n_{Q|Q_{L}}({\pi},  \frac{\lambda \lambda_w }{ \mu}) }{n_{Q|Q_{L}}({\pi}, {\lambda}\lambda_{w}  ) } \d \lambda = \sum_{F} \prod_{\alpha\in F} (\mathrm{N}(n_\alpha(\pi, \cdot ) ) -   \mathrm{P}(n_\alpha(\pi, \cdot )) ),  \end{equation}
où la somme sur $F$ est prise sur les parties de $-\Phi_{Q_L}$ telles que $F$ forme une base de $\ago_{L}^{G,*}$, car $|\lambda_w^{\beta^{\vee}}|=1$ pour tout $\beta\in \Phi(Z_M, G)$. Le lemme \ref{A3} implique alors que \eqref{pppo} est vraie pour $(e,n)=1$.


Nous insérons l'égalité \eqref{pppo} dans l'expression \eqref{aaaz}. Notons que le côté droit de \eqref{pppo} ne dépend pas de $\lambda_\pi$, $\lambda_G$, $\lambda^L$ ou $\lambda_w$. On obtient donc la proposition \ref{expr1}.

\section{Preuve du théorème \ref{b}}\label{technique}
\subsection{Les zéros et les pôles des fonctions L}
Soit $(\Pi_1, \Pi_2)$ un couple de représentations automorphes discrètes. Soit $\mathrm{L}(\Pi_1\times \Pi_2, z)$ la fonction L associée à la paire $(\Pi_1, \Pi_2)$ et $\varepsilon(\Pi_1\times \Pi_2)$ le facteur epsilon (pour la définition cf. par exemple \cite[Appendice B]{Lafforgue}). On a besoin d'informations sur les zéros et les pôles de $\mathrm{L}(\Pi_1\times \Pi_2, z)$. On commence par le cas où $(\Pi_1, \Pi_2)$ est un couple de représentations cuspidales.

\begin{prop}\label{fL}
Soit $(\pi_{1},\pi_{2})$ un couple   de représentations cuspidales partout non-ramifiées unitaires de $G_{n_{1}}$ et $G_{n_{2}}$. Soit $\mathrm{L}(\pi_{1}\times\pi_{2}^{\vee}, z)$ la fonction $\mathrm{L}$ associée à la paire $(\pi_{1},\pi_{2})$.  C'est une fonction rationnelle en $z$ que nous pouvons écrire $\mathrm{L}(\pi_{1}\times\pi_{2}^{\vee}, z)=\frac{P(z)}{Q(z)}$ avec $P(z), Q(z)$ deux polynômes premiers entre eux. \\
Quand $n_{1}\neq n_{2}$ ou quand $n_{1}=n_{2}$ et qu'il n'existe pas de $\lambda\in X_{G_{n_2}}$ tel que $\pi_{1}\cong\pi_{2}\otimes\lambda$, on a $$\deg P(z)=(2g-2)n_{1}n_{2},$$ et $$Q(z)=1.$$ 
Pour le cas restant, supposons sans perte de généralité que $\pi_{1}=\pi_{2}$, alors $$Q(z)=(z^{|\Fix(\pi_{1})|}-1)((qz)^{|\Fix(\pi_{1})|}-1),$$
et $$ \deg P(z)= (2g-2)n_{1}^{2}+2|\Fix(\pi_{1})|.$$
On sait aussi $\varepsilon(\pi_1\times \pi_1^{\vee})=q^{(g-1)n_1^{2}}$. 
 \end{prop}
 \begin{proof} 
Soit $\mathscr{F}_1$ (resp. $\mathscr{F}_2$) un faisceau de Weil  lisse irréductible (ou un système local $\ell$-adique si on suppose sans perte de généralité que le caractère central de $\pi_1$ est d'ordre fini) sur $X_1$ correspondant à $\pi_1$ (resp. $\pi_2$) par la correspondance de Langlands (\cite[Théorème VI.9]{Lafforgue}, voir aussi Section IV.3.5 de \cite{Hen-Le}). On est ramené à traiter les fonctions L de $\mathscr{F}_1\otimes \mathscr{F}_2^{\vee}$. 

Tout d'abord, on remarque que par le théorème de pureté de Deligne et la conjecture de Ramanujan-Petersson démontrée par Lafforgue (\cite[Théorème VI.10]{Lafforgue}), les polynômes $$\det(1-\ z\mathrm{F}_q|H^{i}_{c}(X, \mathscr{F}_1\otimes \mathscr{F}_2^{\vee}  ))\quad  i=0,1,2$$ sont premiers entre eux car leurs racines ont des valeurs absolues distinctes. Donc d'après l'interprétation cohomologique des fonctions L de Grothendieck (cf. par exemple \cite[Théorème VI.1]{Lafforgue}) $$P(z)=\det(1-\ z\mathrm{F}_q|H^{1}_{c}(X, \mathscr{F}_1\otimes \mathscr{F}_2^{\vee}  )),$$ et $$Q(z)=\det(1-\ z\mathrm{F}_q|H^{0}_{c}(X, \mathscr{F}_1\otimes \mathscr{F}_2^{\vee}  ))\det(1-\ z\mathrm{F}_q|H^{2}_{c}(X, \mathscr{F}_1\otimes \mathscr{F}_2^{\vee}  )).$$ 
 
\sloppy
On a
\begin{equation}\label{H0}
H^{0}_c(X, \mathscr{F}_1\otimes \mathscr{F}_2^{\vee})\cong H^{0}(X, \mathscr{F}_1\otimes \mathscr{F}_2^{\vee})\cong \Hom_X(\mathscr{F}_1, \mathscr{F}_2), \end{equation}
et par la dualité de Poincaré, 
 \begin{equation}\label{H2}
H^{2}_c(X, \mathscr{F}_1\otimes \mathscr{F}_2^{\vee})\cong H^{0}(X, \mathscr{F}_1^{\vee}\otimes \mathscr{F}_2(1))^{\vee}\cong \Hom_X(\mathscr{F}_2, \mathscr{F}_1)^{\vee}(-1). \end{equation}
Donc lorsque $\mathscr{F}_1|_X$ et $\mathscr{F}_2|_X$ n'ont pas de facteur commun, i.e. $\pi_1\not\cong \pi_2\otimes\lambda $, pour tout $\lambda\in X_{G_{n_2}}$, $H^{0}_c(X, \mathscr{F}_1\otimes \mathscr{F}_2^{\vee})$ et $H^{2}_c(X, \mathscr{F}_1\otimes \mathscr{F}_2^{\vee})$ sont triviaux. 

\sloppy
Supposons maintenant $\mathscr{F}_1=\mathscr{F}_2$. Soit $|\Fix(\pi_1)|=d$. Alors  $\mathscr{F}_1|_{\pi_1^{\text{ét}}(X, o)}=\oplus_{j=0}^{d-1} \Fr_X^{*j}\mathscr{F}$ et par le lemme de Schur, on a
$$H^{0}_c(X, \mathscr{F}_1\otimes \mathscr{F}_1^{\vee})\cong \bigoplus_{j=0}^{d-1}\Hom_X(\Fr_{X}^{j*}\mathscr{F}, \Fr_{X}^{j*}\mathscr{F})\cong \bar{\mathbb{Q}}_{\ell}^{d}. $$
L'action de $\Fr_{X}^{*}$ coïncide avec $\mathrm{F}_q$ par le lemme 1.3 de \cite{Deligne-Flicker}, donc
$\mathrm{F}_q$ agit sur $\bar{\mathbb{Q}}_{\ell}^{d}$ par permutation cyclique sur les coordonnées. On en déduit que $\det(1-\ z\mathrm{F}_q|H^{0}_{c}(X, \mathscr{F}_1\otimes \mathscr{F}_1^{\vee}  )) = 1-z^d $. 
De même, par (\ref{H2}), on a $\det(1-\ z\mathrm{F}_q| H^{2}_{c}(X, \mathscr{F}_1\otimes \mathscr{F}_1^{\vee}  )) = 1-(qz)^d $. Comme le cup-produit est anti-symétrique sur $H^1$, on déduit par la dualité de Poincaré que  les valeurs propres de $\mathrm{F}_q$ sur   $H^{1}_{c}(X, \mathscr{F}_1\otimes \mathscr{F}_1^{\vee})$ peuvent être    appariées de manière à ce que chaque paire ait un produit $q$. La valeur du facteur $\varepsilon$ peut être ainsi obtenue. 

Le nombre de zéros de fonction $\mathrm{L}$ s'en déduit par l'équation fonctionnelle (cf. par exemple \cite[Théorème VI.6.]{Lafforgue}) \begin{equation}\label{equafon}\mathrm{L}(\mathscr{F}_{1}\otimes\mathscr{F}_{2}^{\vee},z)= \varepsilon(\mathscr{F}_{1}\otimes\mathscr{F}_{2}^{\vee}) z^{-\chi(\mathscr{F}_{1}\otimes\mathscr{F}_{2}^{\vee})}   \mathrm{L}(\mathscr{F}_{1}^{\vee}\otimes\mathscr{F}_{2},\frac{1}{qz}), \end{equation}
où $\varepsilon(\mathscr{F}_{1}\otimes\mathscr{F}_{2}^{\vee})\in \bar{\mathbb{Q}}_{\ell}^{\times}$ est une constante, et le facteur $\chi(\mathscr{F}_{1}\otimes\mathscr{F}_{2}^{\vee})$ est la caractéristique d'Euler de $\mathscr{F}_{1}\otimes\mathscr{F}_{2}^{\vee}$ qui est très simple dans notre cas (voir  \cite[Théorème 1, p.133]{Raynaud}):
$$  \chi(\mathscr{F}_{1}\otimes\mathscr{F}_{2}^{\vee})={(2-2g)n_1 n_2}.  $$
 \end{proof}

L'hypothèse de Riemann généralisée est démontrée par L. Lafforgue (Théorème VI.10. de \cite{Lafforgue}),  c'est-à-dire les zéros de $P(z)$ satisfont $|z|=q^{-1/2}$.

\begin{coro}\label{nombre}
Soit $ \Pi_1=\pi_1\boxtimes\nu_1$ et $ \Pi_{2}=\pi_2\boxtimes\nu_2$  deux représentations discrètes partout non-ramifiées unitaires de $G_{n_1\nu_1}$ et $G_{n_2\nu_2}$ respectivement (cf.  théorème \ref{MWres} pour la notation). Soit $\nu_{1}\geq \nu_{2}$. Pour toute fonction méromorphe $f$ sur  $\mathbb{C}$, on note $\gls{Nf}$  le nombre des zéros de $f$ dans le région $|z|<1$ et $\gls{Pf}$ pour celui des pôles, on a 
\begin{equation} \begin{split} 
&\mathrm{N}(\frac{ \mathrm{L}( \Pi_{1}\times  \Pi_{2}^{\vee}, z)}{ \mathrm{L}( \Pi_{1}\times  \Pi_{2}^{\vee}, q^{-1}z) })-\mathrm{P}( \frac{ \mathrm{L}( \Pi_{1}\times  \Pi_{2}^{\vee}, z)}{\mathrm{L}( \Pi_{1}\times  \Pi_{2}^{\vee}, q^{-1}z) } ) 
 \\   = &  \begin{cases}  \nu_{2} (   2g-2  ) n_1  n_2    \quad\text{si  $ \Pi_{1}\not\sim \Pi_{2}$   }  ;  \\       
\nu_{2} (   2g-2  ) n_1 n_2 +|\Fix({\pi}_{1})|  \quad\text{si  $ \Pi_{1} =  \Pi_{2}$    }    .  \end{cases} \end{split}
\end {equation}

\end{coro}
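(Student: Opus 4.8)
The plan is to reduce the computation to Lafforgue's results on the Rankin--Selberg $L$-functions of the cuspidal data $\pi_1,\pi_2$ (Theorem~\ref{fL}), using the description of $\Pi_i$ as a Speh-type representation (Theorem~\ref{MWres}). I begin with a purely formal observation: if $h$ is a rational function with $h(0)=1$ and $\phi(z)=h(z)/h(q^{-1}z)$, then $\mathrm{ord}_{z_0}\phi=\mathrm{ord}_{z_0}h-\mathrm{ord}_{q^{-1}z_0}h$, so summing orders over $|z_0|<1$ yields
\begin{equation*}
\mathrm{N}(\phi)-\mathrm{P}(\phi)=\#\{\text{zeros of }h\text{ with }q^{-1}\leq|z|<1\}-\#\{\text{poles of }h\text{ with }q^{-1}\leq|z|<1\}
\end{equation*}
(counted with multiplicity). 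Applying this to $h(z)=\mathrm{L}(\Pi_1\times\Pi_2^\vee,z)$, which is a rational function with $h(0)=1$, the task becomes to count the zeros and poles of $\mathrm{L}(\Pi_1\times\Pi_2^\vee,\cdot)$ in the half-open annulus $q^{-1}\leq|z|<1$.

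Next I factor this $L$-function. By Theorem~\ref{MWres}, $\Pi_i=\pi_i\boxtimes\nu_i$ has at every place the same Hecke eigenvalues as the isobaric sum $\bigoplus_{k=0}^{\nu_i-1}\pi_i|\cdot|^{\frac{\nu_i-1}{2}-k}$; since an unramified twist shifts the variable, $\mathrm{L}(\pi|\cdot|^{s}\times\tau,z)=\mathrm{L}(\pi\times\tau,q^{-s}z)$, this gives the identity of rational functions
\begin{equation*}
\mathrm{L}(\Pi_1\times\Pi_2^\vee,z)=\prod_{k_1=0}^{\nu_1-1}\prod_{k_2=0}^{\nu_2-1}\mathrm{L}\!\left(\pi_1\times\pi_2^\vee,\,q^{\,k_1-k_2-\frac{\nu_1-\nu_2}{2}}z\right)=\prod_{\ell}\mathrm{L}\!\left(\pi_1\times\pi_2^\vee,\,q^{\,e_\ell}z\right)^{\mu_\ell},
\end{equation*}
where $e_\ell=\tfrac{}{}\ell-\tfrac{\nu_1-\nu_2}{2}$, the index $\ell=k_1-k_2$ runs over $-(\nu_2-1)\leq\ell\leq\nu_1-1$, and $\mu_\ell=\#\{(k_1,k_2):k_1-k_2=\ell\}$ is the trapezoidal sequence that increases by $1$ from $\mu_{-(\nu_2-1)}=1$ to the plateau value $\nu_2$ on the range $0\leq\ell\leq\nu_1-\nu_2$, then decreases to $\mu_{\nu_1-1}=1$. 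Because the quantity "(number of zeros in the annulus)$-$(number of poles in the annulus)" is additive over products (a cancellation between a zero and a pole at the same point changes both counts equally), $\mathrm{N}(f)-\mathrm{P}(f)$ equals $\sum_\ell\mu_\ell$ times the corresponding quantity for the single factor $\mathrm{L}(\pi_1\times\pi_2^\vee,q^{e_\ell}\cdot)$.

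Now I feed in Theorem~\ref{fL}. After its harmless reduction to $\pi_1=\pi_2$ in the ramified case, write $\mathrm{L}(\pi_1\times\pi_2^\vee,w)=P(w)/Q(w)$ in lowest terms: all zeros of $P$ lie on $|w|=q^{-1/2}$ by the generalized Riemann hypothesis, and either $Q=1$ with $\deg P=(2g-2)n_1n_2$ (when $\pi_1\not\cong\pi_2\otimes\lambda$), or $Q(w)=(w^{d}-1)\bigl((qw)^{d}-1\bigr)$ with $d=|\Fix(\pi_1)|$ and $\deg P=(2g-2)n_1^{2}+2d$ (when $\pi_1=\pi_2$). Hence $\mathrm{L}(\pi_1\times\pi_2^\vee,q^{e_\ell}\cdot)$ has $\deg P$ zeros on $|z|=q^{-1/2-e_\ell}$, $d$ poles on $|z|=q^{-e_\ell}$ (from $w^{d}=1$) and $d$ poles on $|z|=q^{-1-e_\ell}$ (from $(qw)^{d}=1$); these three circles meet the annulus $q^{-1}\leq|z|<1$ exactly when $e_\ell\in(-\tfrac12,\tfrac12]$, $e_\ell\in(0,1]$ and $e_\ell\in(-1,0]$ respectively, and each of these ranges contains exactly one admissible index (the distinguished value of $e_\ell$ being $0$ or $\tfrac12$ according to the parity of $\nu_1-\nu_2$). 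A short parity-by-parity check shows that the distinguished zero-index lies in the plateau, so carries weight $\nu_2$; this gives $\mathrm{N}(f)-\mathrm{P}(f)=\nu_2(2g-2)n_1n_2$ when $Q=1$, and in the ramified case it gives $\nu_2(2g-2)n_1^{2}$ plus a correction of the form $d(\mu_{\ell}-\mu_{\ell'})$ for two adjacent indices, a correction that vanishes when $\nu_1>\nu_2$ (both indices in the plateau) and equals $d(\mu_0-\mu_1)=d$ when $\nu_1=\nu_2$. Finally I translate the dichotomy via Proposition~\ref{res e}: "$\pi_1\not\cong\pi_2\otimes\lambda$ or $\nu_1\neq\nu_2$" is "$\Pi_1\not\sim\Pi_2$", and "$\pi_1=\pi_2$ and $\nu_1=\nu_2$" is "$\Pi_1=\Pi_2$"; this yields the two stated formulas.

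The main obstacle is this last bookkeeping step. One must keep the annulus half-open so that the poles with $w^{d}=1$ (which lie on $|z|=1$) are excluded while those with $(qw)^{d}=1$ (on $|z|=q^{-1}$) are included; one must carry along the parity of $\nu_1-\nu_2$, which governs whether the distinguished circle lands exactly on $|z|=q^{-1/2}$; and — the only genuinely non-formal point — one must check that when $\pi_1\cong\pi_2\otimes\lambda$ but $\nu_1>\nu_2$ the extra $2d$ zeros of $P$ are precisely cancelled in the annulus by the two families of poles, so that the count agrees with the case $Q=1$. This cancellation is exactly why the statement splits along "$\Pi_1\not\sim\Pi_2$" versus "$\Pi_1=\Pi_2$" and not along anything finer.
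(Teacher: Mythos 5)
Your proposal is correct and follows essentially the same route as the paper: both arguments rest on the factorization of $\mathrm{L}(\Pi_{1}\times\Pi_{2}^{\vee},z)$ into shifted Rankin--Selberg $L$-functions of the cuspidal supports (lemme \ref{deco}, via le th\'eor\`eme \ref{MWres}) and on le th\'eor\`eme \ref{fL} (hypoth\`ese de Riemann, position et nombre des p\^oles, degr\'es), the only difference being bookkeeping --- the paper telescopes the quotient $\mathrm{L}(z)/\mathrm{L}(q^{-1}z)$ into $2\nu_{2}$ shifted factors whose zeros and poles lie cleanly inside or outside $|z|<1$, whereas you count zeros and poles of the untelescoped $L$-function in the annulus $q^{-1}\leq|z|<1$ with the trapezoidal multiplicities $\mu_{\ell}$. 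Your parity/plateau computation checks out in every case (provided one reads $\mu_{\ell}=0$ for out-of-range $\ell$, which repairs the slightly overstated claim that each window contains exactly one admissible index when $\nu_{1}=\nu_{2}$), so both computations give the stated counts.
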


\begin{proof}

Rappelons le lemme bien connu ci-dessous:
\begin{lemm}\label{deco}
Soit $\Pi=\pi\boxtimes\nu$  une représentation discrète irréductible. Alors pour toute représentation discrète $\Pi'$, on a
 $$ \mathrm{L}( \Pi\times {\Pi'}^{\vee}, z) =\prod_{i=1}^{\nu} \mathrm{L}(\pi\times{\Pi'}^{\vee},   q^{\frac{\nu+1}{2}-i }z). $$
\end{lemm}
\begin{proof}[Preuve du lemme]
Elle résulte du  théorème \ref{MWres} et  du produit eulérien des fonctions $\mathrm{L}$. 
\end{proof}
Par ce lemme, on a
\[ \frac{ \mathrm{L}( \Pi_1\times  \Pi_2^{\vee}, z)}{ \mathrm{L}( \Pi_1 \times { \Pi_2}^{\vee}, q^{-1}z) } = \frac{  \prod_{j=1}^{\nu_2}\prod_{i=1}^{\nu_1} L(\pi_1\times \pi_2^{\vee}, q^{\frac{\nu_1+\nu_2}{2} +1 -i-j} z) }{  \prod_{j=1}^{\nu_2}\prod_{i=1}^{\nu_1} L(\pi_1\times \pi_2^{\vee}, q^{\frac{\nu_1+\nu_2}{2} -i-j}  z )   } .\]
Soit $\nu_{1}\geq \nu_{2}$, après simplification, on a alors 
\begin{equation}\label{Lfu} \frac{ \mathrm{L}( \Pi_1\times  \Pi_2^{\vee}, z)}{ \mathrm{L}( \Pi_1 \times { \Pi_2}^{\vee}, q^{-1}z) } = \frac{\prod_{i=1}^{\nu_{2}} \mathrm{L}({\pi}_{1}\times{\pi}_{2}^{\vee}, q^{\frac{\nu_{1}+\nu_{2}}{2}-i }z)    }{      \prod_{ i=1  }^{\nu_{2}}         \mathrm{L}({\pi}_{1}\times{\pi}_{2}^{\vee}, q^{-\frac{\nu_{1}+\nu_{2}}{2}+i-1 }z)       }    . \end{equation}
On conclut respectivement pour les zéros et les pôles du numérateur et du dénominateur: \\
$-$ Par l'hypothèse de Riemann généralisée, tous les zéros de numérateur sont dans la région $|z|< 1$, et aucun zéro de dénominateur n'est dans  cette région. \\
$-$ Utilisons  la proposition \ref{fL}. Si $\pi_1\not\sim\pi_2$, il n'y a pas de pôles pour les fonctions $\mathrm{L}$. Si $\pi_1=\pi_2$  on sait que quand $\nu_{1}\neq \nu_{2}$ tous les pôles  du numérateur sont dans la région $|z|< 1$ et aucun pôle du dénominateur n'est dans  cette région; mais quand $\nu_{1}=\nu_{2}$ les pôles sur $|z|=1$ se compensent. Notons que $\Pi_{1}\sim\Pi_{2}$ si et seulement si $\nu_{1}=\nu_{2}$ et ${\pi}_{1}\sim {\pi}_{2}$, le corollaire en résulte. \end{proof}

\subsection{Calcul du facteur (b.) dans la proposition \ref{expr1} } \label{4.5}
\subsubsection{Résumé du résultat.} \label{bonrep} 
Soit $(P,{\pi})$ une paire discrète partout non-ramifiée, et $(w,1)\in \stab(P, \pi)$. Soit $L=L^w$. 

Soit $L\cong G_{l_{1}d_{1}}\times\cdots \times G_{l_{r}d_{r}}$.
Après une réindexation, on identifie $M_{P}$ avec \begin{equation}\label{6.2.4}
\underbrace{G_{d_{1}}\times\cdots\times G_{d_{1}}}_{l_1}\times\underbrace{ G_{d_{2}}\times\cdots\times G_{d_2} }_{l_2}\times \cdots\times \underbrace{G_{d_{r}}\times\cdots\times G_{d_{r}} }_{l_{r}},\end{equation} tel que $w$ permute cycliquement les facteurs de $M_P$ contenus  dans un même facteur de $L$. Soit $ \Pi_{i}=\pi_i\boxtimes \nu_i$ (notation du théorème \ref{MWres}) une représentation de $G_{d_{i}}$ qui est un des facteurs de ${\pi}$.

On désigne par $\gls{ne}$ le plus grand commun diviseur des deux entiers positifs $n$ et $e$, et on désigne par $\gls{p.g.c.d.}(a_j)$ le plus grand commun diviseur de la famille des entiers positifs $(a_j)_{j}$. 
\begin{prop}\label{ABCD}
Quand $(e,n)=1$, le facteur (b.) dans la proposition \ref{expr1} est égal à:
\begin{equation} |X_{L}^{L}|  \sum_{ l \mid \mathrm{p.g.c.d.}( l_{i}|\Fix(\pi_{i})|) }\mu(l)\prod_{j=1}^{ r } (-1)^{\frac{l_{j}}{l/(l, |\Fix(\pi_{j})|)}  -1 } |\Fix(\pi_{j})|^{l_j-1} , \end{equation}
où $\mu(\cdot)$ est la fonction de Möbius. 
\end{prop}

\subsubsection{Calculs}
Nous revenons à la proposition \ref{expr1}. 
Soit $\Delta({\pi},w)$ le facteur $(b.)$ dans la proposition \ref{expr1}. 
Par un changement d'ordre, $\Delta({\pi},w)$ est égal à
\begin{equation}
\label{weyl} 
\sum_{ {\lambda}\in X_{G}^{G} } {{\lambda }_{1}}^{-e}   \sum_{\{\lambda_{{\pi}}\in \Fix({\pi})|\ \lambda_{{\pi}} / \lambda  \in (\Im X_{M_{P}}^{L})^{{\circ}}   \}}    \sum_{\{\lambda_{w}\in \Im X_{M_P}^{L}   |\ \lambda_{w}/w^{-1}(\lambda_{w})=  \lambda_{{\pi}}/ { \lambda }            \}} n_{{\pi}}(w,w^{-1}(\lambda_{w}))
\end{equation}
où $L=L^w$ et ${\lambda}_1$ est la première coordonnée de ${\lambda}\in X_{{L}}^{G}$. 
Le lemme suivant concerne l'ensemble sur lequel une somme dans \eqref{weyl} est prise. 
 \begin{lemm}\label{LambdaL} 
Soit $c$ le plus grand commun diviseur des ${|\Fix( \Pi_{i})|l_{i}}={|\Fix( \pi_{i})|l_{i}}$ pour $i=1,\ldots, r$. Nécessairement c'est un diviseur de $n$. Soit ${\Lambda}_G$ le sous-groupe d'ordre $c$ de $X_{G}^{G}$. Soit ${\lambda}\in X_{G}^{G}$.  

Si ${\lambda}\not\in {\Lambda}_G$, l'ensemble $$\{\lambda_{{\pi}}\in \Fix({\pi})|\ \lambda_{{\pi}} { \lambda}^{-1} \in (\Im X_{M_{P}}^{L})^{{\circ}}   \}$$ est vide. 
Si ${\lambda}\in {\Lambda}_G$, l'ensemble $$\{\lambda_{{\pi}}\in \Fix({\pi})|\ \lambda_{{\pi}} { \lambda}^{-1} \in (\Im X_{M_{P}}^{L})^{{\circ}}   \}$$ est un espace principal homogène sous l'action de $\Fix({\pi}) \cap(\Im X_{M_{P}}^{L})^{{\circ}} $.
\end{lemm}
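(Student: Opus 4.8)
Le plan est de ramener l'énoncé à un calcul de sous-groupes finis de $X_{M_P}^G$ effectué dans le système de coordonnées du paragraphe \ref{SYS}. On écrit $L=L^w$. D'abord je fixerai un bon représentant, de sorte qu'après réindexation $M_P\cong^{\rho_L}\prod_{i=1}^{|L|}\prod_{j=1}^{l_i}GL_{d_i}$, l'élément $w$ permutant cycliquement les $l_i$ facteurs du bloc $i$, ces facteurs portant alors tous la même représentation $\Pi_i$ (conséquence de $w(\tilde\pi)=\tilde\pi$ jointe à la définition d'un bon représentant). Comme les facteurs de la décomposition grossière de $\tilde\pi$ sont deux à deux non inertiellement équivalents, tout $\lambda=(\lambda_{i,j})\in X_{M_P}^{M_P}$ fixant $\tilde\pi$ envoie chaque facteur $\Pi_i$ sur un facteur de même classe inertielle, donc sur $\Pi_i$ lui-même ; d'où $\Fix(\tilde\pi)=\prod_{i,j}\Fix(\Pi_i)$, chaque $\Fix(\Pi_i)$ étant l'unique sous-groupe $\mu_{f_i}$ d'ordre $f_i:=|\Fix(\Pi_i)|$ du groupe cyclique $X_{GL_{d_i}}^{GL_{d_i}}\cong\mu_{d_i}$, avec $f_i\mid d_i$ (passage aux caractères centraux, cf.\ la proposition \ref{res e} et les remarques sur l'ordre de $\Fix$ autour de la définition \ref{inert}).

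Ensuite j'expliciterai les trois objets en coordonnées : $X_G^G=\{(\zeta,\ldots,\zeta):\zeta^n=1\}$ plongé diagonalement ; la condition de trivialité sur $A_L$ donne $X_{M_P}^L=\{(\lambda_{i,j}):\prod_{j=1}^{l_i}\lambda_{i,j}^{d_i}=1\ \forall i\}$, donc $(\Im X_{M_P}^L)^{\circ}=\{(\lambda_{i,j}):|\lambda_{i,j}|=1,\ \prod_{j=1}^{l_i}\lambda_{i,j}=1\ \forall i\}$, car dans le bloc $i$ les fibres de l'application $(\lambda_{i,j})_j\mapsto\prod_j\lambda_{i,j}\in\mu_{d_i}$ sont connexes. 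Notons $S(\widetilde{\lambda_L})$ l'ensemble de l'énoncé. Puisque $(\Im X_{M_P}^L)^{\circ}$ est un groupe, $S(\widetilde{\lambda_L})\ne\emptyset$ si et seulement si $\widetilde{\lambda_L}\in\Fix(\tilde\pi)\cdot(\Im X_{M_P}^L)^{\circ}$ ; et dès que $S(\widetilde{\lambda_L})\ne\emptyset$, il est automatiquement un espace principal homogène sous $\Fix(\tilde\pi)\cap(\Im X_{M_P}^L)^{\circ}$ (si $\lambda,\lambda'\in S(\widetilde{\lambda_L})$ alors $\lambda(\lambda')^{-1}$ appartient à $\Fix(\tilde\pi)\cap(\Im X_{M_P}^L)^{\circ}$, et réciproquement la multiplication par un élément de ce groupe préserve $S(\widetilde{\lambda_L})$). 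Les deux assertions résulteront alors de l'égalité $\bigl(\Fix(\tilde\pi)\cdot(\Im X_{M_P}^L)^{\circ}\bigr)\cap X_G^G=\widetilde{\Lambda}_L$.

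Pour établir cette égalité, je calculerai $\Fix(\tilde\pi)\cdot(\Im X_{M_P}^L)^{\circ}$ bloc par bloc : dans le bloc $i$ c'est l'ensemble $\{(\tau_j)_j\in(\mathbb{S}^1)^{l_i}:\prod_j\tau_j\in\mu_{f_i}\}$, et la contrainte globale $\prod_{i,j}\lambda_{i,j}^{d_i}=1$ définissant $X_{M_P}^G$ y est automatiquement satisfaite puisque $f_i\mid d_i$. Un élément diagonal $(\zeta,\ldots,\zeta)\in X_G^G$ appartient donc à $\Fix(\tilde\pi)\cdot(\Im X_{M_P}^L)^{\circ}$ si et seulement si $\zeta^{l_if_i}=1$ pour tout $i$. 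En posant $d=\gcd_{i}(l_if_i)$, on a $d\mid l_if_i\mid l_id_i$ pour chaque $i$, d'où $d\mid\sum_i l_id_i=n$ ; par conséquent $\{\zeta\in\mathbb{C}^\times:\zeta^n=1\text{ et }\zeta^{l_if_i}=1\ \forall i\}=\mu_d$, qui est précisément le sous-groupe d'ordre $d$ de $X_G^G$, c'est-à-dire $\widetilde{\Lambda}_L$.

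Le point délicat sera ce dernier calcul : l'identification exacte de la composante neutre $(\Im X_{M_P}^L)^{\circ}$ et la détermination, bloc par bloc, de l'image du produit $\Fix(\tilde\pi)\cdot(\Im X_{M_P}^L)^{\circ}$, d'où sortent la formule en $\gcd_{i}(l_if_i)$ et la vérification que ce nombre divise $n$ ; en comparaison, la structure de $\Fix(\tilde\pi)$ et le caractère de torseur relèvent de la comptabilité élémentaire.
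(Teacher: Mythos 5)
Votre preuve est correcte et suit essentiellement la même démarche que celle du texte : réduction au critère de non-vacuité (la structure de torseur étant élémentaire), description en coordonnées de $X_G^G$, de $\Fix(\tilde\pi)=\prod_{i,j}\Fix(\Pi_i)$ et de $(\Im X_{M_P}^{L})^{\circ}$ comme dans le paragraphe \ref{SYS}, puis la même condition bloc par bloc $\zeta^{l_i|\Fix(\Pi_i)|}=1$ donnant $\mu_d=\widetilde{\Lambda}_L$ (avec $d\mid n$ car $|\Fix(\Pi_i)|\mid d_i$). Votre reformulation par le calcul du sous-groupe $\bigl(\Fix(\tilde\pi)\cdot(\Im X_{M_P}^{L})^{\circ}\bigr)\cap X_G^G$ n'est qu'une réorganisation de l'argument de nécessité/suffisance du texte, qui construit explicitement le même élément $z_{j,1}=\widetilde{\lambda_L}_1^{\,l_j}$.
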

\begin{proof}
L'ensemble considéré, s'il est non-vide,  porte une action transitive et fidèle de $\Fix({\pi}) \cap(\Im X_{M_{P}}^{L})^{{\circ}} $, donc il suffit de prouver que l'ensemble $\{\lambda_{{\pi}}\in \Fix({\pi})|\ \lambda_{{\pi}} { \lambda}^{-1} \in (\Im X_{M_{P}}^{L})^{{\circ}}   \}$ est non-vide si et seulement si ${\lambda}\in {\Lambda}_G$. 

Soit ${\lambda}=({\lambda}_{1}, \ldots, \lambda_{1})$ en utilisant l'inclusion $X_{G}^{G}\subseteq X_L^{G}$. 

S'il existe $\lambda_{{\pi}}\in \Fix({\pi})\subseteq X_M^M$ tel que \(\lambda_{{\pi}} { \lambda}^{-1} \in (\Im X_{M_{P}}^{L})^{{\circ}}\), où on sait que 
$$(\Im X_{M_{P}}^{L})^{{\circ}} \cong\{(z_{1,1},\ldots, z_{1,l_1}, z_{2,1}, \ldots, z_{r, l_{r}} ) \in ({\mathbb{C}^\times})^r |\ z_{j,1}z_{j,2}\cdots z_{j, l_j} =1, \forall j   \} .  $$
Nécessairement $$  (\lambda_{1}^{l_j})^{|\Fix( \Pi_j)|}=1; \quad j=1,\ldots, r   , $$  
donc $${\lambda}_{1}^{  \mathrm{p.g.c.d.}( l_j |\Fix( \Pi_j)|)}= {\lambda}_{1}^c=   1  . $$
C'est-à-dire ${\lambda}\in {\Lambda}_G$.

Inversement, supposons que ${\lambda}\in {\Lambda}_G$.  On a $${\lambda}_1^{l_j}\in\Fix( \Pi_j)  \quad j=1,\ldots, r,   $$
car le groupe $\Fix( \Pi_j)$ est  cyclique et $\lambda_1^{l_j|\Fix( \Pi_j)|}=1  $. 
Dans ce cas, on pose $z_{j,1}=\lambda_1^{l_j} $ et $z_{j,i}=1$ pour $i\neq 1$ et $j=1,\ldots, r$, alors
$$\lambda_{{\pi}}= (z_{1,1},\ldots, z_{1,l_1}, z_{2,1}, \ldots, z_{r, l_{r}} )\in \Fix({\pi})\subseteq X_M^M,$$
et $$\lambda_{{\pi}}  \lambda^{-1} \in (\Im X_{M_{P}}^{L})^{{\circ}}.$$\end{proof}

D'après ce lemme, $\Delta({\pi},w)$ est égal à
\begin{multline*}\sum_{ {\lambda}\in {\Lambda}_G } { \lambda_{1}}^{-e} \sum_{\{\lambda_{{\pi}}\in \Fix({\pi})|\ \lambda_{{\pi}} { \lambda}^{-1} \in (\Im X_{M_{P}}^{L})^{{\circ}}   \}} \\ 
 \sum_{\{\lambda_{w}\in \Im X_{M_P}^{L}   |\ \lambda_{w}/w^{-1}(\lambda_{w})=  \lambda_{{\pi}}  \lambda^{-1}            \}} n_{{\pi}}(w,w^{-1}(\lambda_{w})).
\end{multline*}

On peut faire le calcul suivant chaque facteur de $L$. Par simplification (de notations), supposons d'abord  $L={G}_m$, alors $$M_P\cong G_{d}\times \cdots \times G_{d},$$ et $w$ permute les facteurs de $M_P$ cycliquement. Soit $l=m/d$. 
Dans ce cas, les facteurs de la paire discrète $(P,{\pi})$ sont égaux. Soit $${\pi}=\Pi_0\otimes\cdots\otimes\Pi_0,  $$
et par le théorème de \ref{MWres}, on peut supposer 
$$\Pi_0= \pi_0\boxtimes\nu, $$
avec $\pi_0$ une représentation automorphe cuspidale et $\nu\geq 1$.

Notons qu'on a $\Fix({\pi}_0\boxtimes\nu)\cong\Fix(\pi_0)$ (proposition \ref{res e}).
 Après une réindexation, on peut supposer simplement que $w$ est la permutation cyclique  $(1,2,\ldots, l)$. 

Soit ${\lambda}\in {\Lambda}_{G}$.
Soit $\lambda_{{\pi}}=(\mu_1,\cdots, \mu_l )\in \Fix({\pi})$ tel que $\lambda_{{\pi}}{\lambda}^{-1}  \in (\Im X_{M}^{{G}})^{{\circ}}$. 
Par l'inclusion $\Lambda \subseteq \Im X_{M}^{{G}}$, on a $ {\lambda}=( {\lambda}_1, \ldots, \lambda_1) $. Comme $\Fix(\pi)\subseteq \Im X_M^G $, la condition $\lambda_{{\pi}}{\lambda}^{-1}  \in (\Im X_{M}^{{G}})^{{\circ}}$
se traduit
$$1=\frac{\mu_{1} \cdots  \mu_l     }{  \lambda^l_1    }.$$
L'ensemble des $\lambda_{w}\in \Im X_{M}^{{G}}$ tels que $\lambda_{w}/w^{-1}(\lambda_{w})=\lambda_{{\pi}} \lambda^{-1}$ est 
\begin{equation}\label{lambdaw}
\{  \lambda_{w}=\zeta^{i} z
(\frac{\mu_{1} \cdots  \mu_l     }{  \lambda^l_1 }, \frac{\mu_{2} \cdots  \mu_l   }{\lambda_{1}^{l-1} },\ldots, \frac{\mu_{l}}{\lambda_{1} })| \  i =1,\ldots, m   \}, \end{equation}
où $z\in\mathbb{C}^\times$ est un nombre tel que $$ z
(\frac{\mu_{1} \cdots  \mu_l     }{  \lambda_1^l    }, \frac{\mu_{2} \cdots  \mu_l   }{\lambda_{1}^{l-1} },\ldots, \frac{\mu_{l}}{\lambda_{1} }):=
(z\frac{\mu_{1} \cdots  \mu_l  }{  \lambda_1^l    }, z\frac{\mu_{2} \cdots  \mu_l   }{\lambda_{1}^{l-1} },\ldots, z\frac{\mu_{l}}{\lambda_{1} })$$ soit contenu dans $\Im X_M^{G}$ et $\zeta$ une racine $m$$^{i\text{è}me}$ primitive de l'unité. 

Maintenant, on peut faire le calcul:

Soit $\glslink{rpi}{\mathrm{r}(\pi_0)}$ le rang de $\pi_0$ (i.e. $\mathrm{r}(\pi_0)\nu = d$). 
Soit
$$L(\pi_0\times\pi_0^{\vee}, z) =\frac{P(z)}{Q(z)}, $$ pour $P$ et $Q$ deux polynômes premiers entre eux. On a d'après l'équation fonctionnelle de la fonction L et la proposition \ref{fL}:
\begin{equation}\label{Pz}
P(z)= q^{(g-1) {\mathrm{r}(\pi_0)}^{2}+ |\Fix(\pi_0)| } z^{(2g-2){\mathrm{r}(\pi_0)}^{2}+ 2|\Fix(\pi_0)|} P(\frac{1}{qz}). \end{equation}
Par \eqref{Lfu}, soit $\beta\in \Phi(Z_{M_P}, {G})$, on a
$$  n_{\beta}({\pi}, z) = q^{(1-g)d^{2}} \prod_{i=1}^{\nu}  \frac{ P(q^{i-1} z )  }{  P(q^{-i}z )  } \prod_{i=1}^{\nu} \frac{ 1-(q^{-i}z)^{ |\Fix(\pi_0)|  }   }{    1-(q^{i}z)^{ |\Fix(\pi_0)|  }    }   \prod_{i=1}^{\nu-1} \frac{ 1-(q^{-i}z)^{ |\Fix(\pi_0)|  }   }{    1-(q^{i}z)^{ |\Fix(\pi_0)|  }  } .      $$
On insère l'équation (\ref{Pz}) dans l'expression ci-dessus de $n_{\beta}(\pi, z)$,  on obtient
\begin{multline}\label{nbetp}  n_{\beta}({\pi}, z) = - z^{ ((2g-2)\mathrm{r}(\pi_0)^{2}+4|\Fix(\pi_0)| )\nu -|\Fix(\pi_0)|   } \\
\prod_{i=1}^{\nu}  \frac{ P(q^{-i} \frac{1}{z}  )  }{  P(q^{-i} {z} )  } \prod_{i=1}^{\nu} \frac{ 1-(q^{i}\frac{1}{z} )^{ |\Fix(\pi_0)|  }   }{    1-(q^{i}z)^{ |\Fix(\pi_0)|  }    }   \prod_{i=1}^{\nu-1} \frac{ 1-(q^{i} \frac{1}{z})^{ |\Fix(\pi_0)|  }   }{    1-(q^{i}z)^{ |\Fix(\pi_0)|  }  }       .   \end{multline}

La somme interne dans $\Delta(\pi, w)$ est égale à
\begin{align}\label{sooo}
\sum_{\substack{\lambda_{w} \in \Im X_{M_P}^{G}   \\ \lambda_{w}/w^{-1}(\lambda_{w})=\lambda_{{\pi}}{\lambda_{G}}^{-1}  }} n_{{\pi}}(w, w^{-1}(\lambda_{w}))     &=\sum_{i =1}^{m} \prod_{s=2}^{l} n_{\beta}({\pi},  {  {\lambda}_1^{s-1}  })  \\
&= m\prod_{s=2}^{l}   n_{\beta}({\pi}, {\lambda_1^{s-1 }  }  ) .    \nonumber
\end{align}

Notons que pour tout $\mu \in \Fix(\pi_0)$, on a $P(z\mu)=P(z)$, i.e. $P$ est un polynôme en $z^{|\Fix(\pi_0)|}$. Par le lemme \ref{LambdaL}, puisque $\lambda\in\Lambda_G$, le multi-ensemble $\{\lambda_1^{|\Fix(\pi_0)|}, \lambda_1^{2| \Fix(\pi_0)|}, \dots, \lambda_1^{(l-1)|\Fix(\pi_0)|}\}$ coïncide avec le multi-ensemble \[ \{\lambda_1^{-|\Fix(\pi_0)|}, \lambda_1^{-2| \Fix(\pi_0)|}, \dots, \lambda_1^{-(l-1)|\Fix(\pi_0)|}\}. \] Donc par l'expression  (\ref{nbetp}) de $n_{\beta}$, on a $$\prod_{s=2}^{l}   n_{\beta}({\pi}, {\lambda_1^{s-1 }  }  )= \prod_{s=2}^{l} (\lambda_1^{s-1})^{((2g-2)\mathrm{r}(\pi_0)^{2}+4|\Fix(\pi_0)| )\nu -|\Fix(\pi_0)| }. $$
C'est-à-dire
la somme (\ref{sooo}) est égale à
$$   (-1)^{l-1} m  \lambda_{1}^{-|\Fix(\pi_0)|\frac{l(l-1)}{2}}  \lambda_{1} ^{\nu\left((2g-2)\mathrm{r}(\pi_0)^{2}+4|\Fix(\pi_0)| \right)\frac{l(l-1)}{2} }.$$
Puisque $\lambda_1^{l|\Fix(\pi_0)|}=1$, on peut conclure que:
\begin{equation}\sum_{\substack{\lambda_{w} \in \Im X_{M_P}^{G} \\ \lambda_{w}/w^{-1}(\lambda_{w})=\lambda_{{\pi}}^{{G}}}}    n_{{\pi}}(w, w^{-1}(\lambda_{w})) = (-1)^{l-1}m \lambda_{1}^{- |\Fix(\pi_0)| \frac{l(l-1)}{2}  }. \end{equation}

Notons que le résultat ne dépend pas de $\lambda_{{\pi}}$, et que le groupe $\Fix({\pi_0}) \cap(\Im X_{M_{P}}^{{G}})^{{\circ}} $ a pour cardinal $|\Fix(\pi_0)|^{r-1} $. Donc par le lemme \ref{LambdaL}  \begin{multline}\label{jusqu'a} \sum_{\{\lambda_{{\pi}}\in \Fix({\pi_0})|\ \lambda_{{\pi}} { \lambda_{{G}}}^{-1} \in (\Im X_{M_{P}}^{{G}})^{{\circ}}   \}}\sum_{\{\lambda_{w}\in \Im X_{M_P}^{G}|\ \lambda_{w}/w^{-1}(\lambda_{w}) =\lambda_{{\pi}}  {\lambda_{{G}}}_{1}^{-1}    \}} n_{{\pi}}(w,w^{-1}(\lambda_{w}))\\ = m(-1)^{l-1} |\Fix(\pi_0)|^{ l-1}  \lambda_{1}^{- |\Fix(\pi_0)| \frac{l(l-1)}{2} }.\end{multline}

Pour le cas général, avec la notation de \eqref{6.2.4}, soit ${\lambda} \in {\Lambda}_G$ on obtient
\begin{equation}
\Delta(\pi,w)=\sum_{\lambda\in \Lambda_G}\lambda_1^{-e} |X_{L}^{L}|  \prod_{i=1}^{ r }\biggr( (-1)^{l_i-1} |\Fix(\pi_{i})|^{l_i-1}      {\lambda}_{1}^{- |\Fix(\pi_{i})| \frac{l_i(l_i-1)}{2} }\biggr).
\end{equation}

Soit $$\delta_e(\pi, L):= \sum_{{\lambda}\in {\Lambda_{G}}} {{\lambda_{ 1}}}^{-(\sum_i \frac{l_i(l_i-1)}{2} |\Fix(\pi_{i})|)- e }. $$
On a \begin{equation} \label{Deltapi}
\Delta({\pi}, w)= |X_{L}^{L}|  \prod_{j=1}^{ r }\left((-1)^{l_j-1} |\Fix(\pi_{j})|^{l_j-1} \right) \delta_e({\pi}, L)    .
\end{equation}
\begin{remark}\label{re623}
La somme $\delta_{e}({\pi},{L})$ est égale à $|{\Lambda}_{G}|$  ou $0$ selon si le caractère $$ {{\lambda}_{1}}\mapsto  {{\lambda}_{1}}^{-(\sum \frac{l_i(l_i-1)}{2} |\Fix(\pi_{i})|)-e}  $$ est trivial ou non. Mais il y a une autre expression de $\delta_e({\pi}, L)$ dans la proposition \ref{Deltapi1}.  Malgré l'apparence compliquée du côté droit de l'identité, 
cela facilitera grandement les calculs dans les sections suivantes.
\end{remark}
\begin{prop}\label{Deltapi1}
Soit $e$ premier avec $n$, on a
$$\delta_{e}({\pi}, L)=\sum_{ l \mid \mathrm{p.g.c.d.}( l_{i}|\Fix(\pi_{i})|) }\mu(l) (-1)^{ \sum_{j}(l_{j}+\frac{l_{j}}{l/(l, |\Fix(\pi_{j})|)}  )}, $$
où $(l, |\Fix(\pi_{j})|) $ désigne le plus grand commun diviseur de $l$ et $|\Fix(\pi_{j})|$. 
\end{prop}
\begin{proof}
On vérifie le cas $e=-1$, et laisser le cas $(e,n)$ dans l'appendice (cf. Théorème \ref{Jeind}). 
On voit que $$\chi: {\lambda}\mapsto  {{\lambda_{1}}}^{-(\sum \frac{l_{i}(l_{i}-1)}{2} |\Fix(\pi_{i})|)+1 }$$ est un caractère de ${\Lambda}_G$ vers $\mathbb{C}^\times$ que nous noterons par $\chi$, il s'ensuit que $\delta_{-1}({\pi}, L)$ s'annule sauf si ce caractère est trivial. 

Cependant, on voit que $$\chi^{2}({\lambda})=\lambda_{1}^{2},$$  puisque l'ordre de ${\Lambda}_G$ divise tous $l_{i}|\Fix(\pi_{i})|$. 
Donc si $\chi$ est trivial, nécessairement le plus grand commun diviseur des $l_{i} |\Fix(\pi_{i})|$ est égal à $2$ ou $1$ et respectivement ${\Lambda}_G=\{\pm 1\}$ ou ${\Lambda}_G=\{1\}$.
Si le plus grand commun diviseur des $l_{i} |\Fix(\pi_{i})|$ est égal à $1$, alors $\delta_{-1}({\pi}, L)=1$, et la somme à droite dans le lemme est égale à $1$ aussi. 

On considère donc le cas où le plus grand commun diviseur des $l_{i} |\Fix(\pi_{i})|$ est plus grand que $1$. Dans ce cas $\delta_{-1}({\pi}, L)$ est égal à soit $2$ soit $0$.

Pour que $\delta_{-1}({\pi}, L)$ soit égal à 2, on a besoin que le plus grand commun diviseur des $l_{i} |\Fix(\pi_{i})|$ soit égal à $2$ et $-{\sum \frac{l_{i}(l_{i}-1)}{2} |\Fix(\pi_{i})|-1 }$ soit un nombre pair. 
Regardons $\frac{l_{i}(l_{i}-1)}{2} |\Fix(\pi_{i})|$ suivant la valeur de $l_{i}$ $mod$ $4$. La seule possibilité que la valeur $2\nmid \frac{l_{i}(l_{i}-1)}{2} |\Fix(\pi_{i})|$ et $2\mid l_{i} |\Fix(\pi_{i})|$  est quand $l_{i}\equiv 2$ $mod$ $4$ et $|\Fix(\pi_{i})|$ est un nombre impair. Il faut et suffit qu'on ait un nombre impair des indices  $i $ qui vérifient  la condition précédente  pour que $\delta_{-1}({\pi}, L)$ soit égale à 2. Dans tous les autres cas, $\delta_{-1}({\pi}, L)$ est nul.

Maintenant, on précise   la somme à droite de ce lemme.

Si $\mathrm{p.g.c.d.}( l_{i}|\Fix(\pi_{i})|) $ est un nombre impair, alors les $l$ apparaissant dans la somme sont tous impairs. 
Donc 
$ (-1)^{ \sum_{j}(l_{j}+\frac{l_{j}}{l/(l, |\Fix(\pi_{j})|)}  )}=1$, et 
$$\sum_{l\mid \mathrm{p.g.c.d.}( l_{i}|\Fix(\pi_{i})|) }\mu(l) (-1)^{ \sum_{j}(l_{j}+\frac{l_{j}}{l/(l, |\Fix(\pi_{j})|)}  )}= \sum_{l\mid \mathrm{p.g.c.d.}( l_{i}|\Fix(\pi_{i})|)  } \mu(l) =0, $$
comme nous sommes dans le cas où $\mathrm{p.g.c.d.}( l_{i}|\Fix(\pi_{i})|) >1$. Cela est égale à $\delta_{-1}(\pi, w)$. 

Si $\mathrm{p.g.c.d.}( l_{i}|\Fix(\pi_{i})|) $ est un nombre pair, alors soit $4\mid \mathrm{p.g.c.d.}( l_{i}|\Fix(\pi_{i})|)  $,  soit $ 4 \nmid  \mathrm{p.g.c.d.}( l_{i}|\Fix(\pi_{i})|) $ et $2\mid  \mathrm{p.g.c.d.}( l_{i}|\Fix(\pi_{i})|) $. 

Dans le premier cas. Considérons les $l$ apparaissant dans la somme tels que $\mu(l)\neq 0$. Par la définition de la fonction M\"obius, on a $4 \nmid l$. Il y a trois sous-cas pour les nombres $l_{j}$ et $|\Fix(\pi_{j})|$: \\
1. $2 \nmid\  l_{j}$, on a $2\mid l_{j}+\frac{l_{j}}{l/(l, |\Fix(\pi_{j})|)}$ car ``$1+1=2$''.    \\
2. $4\mid l_{j} $, on a $2\mid \frac{l_{j}}{l/(l, |\Fix(\pi_{j})|)}$ donc $2\mid l_{j}+\frac{l_{j}}{l/(l, |\Fix(\pi_{j})|)}$.  \\
3.  $2\ ||\ l_{j}$ (i.e. $2\mid l_{j}$ mais $4 \nmid l_{j}$ ), on a $2\mid l_{j}$ et $2\mid |\Fix(\pi_{j})|$, alors $2 \nmid \frac{l}{(l,|\Fix(\pi_{j})|)} $ et donc $2\mid l_{j}+\frac{l_{j}}{l/(l, |\Fix(\pi_{j})|)}$. \\
En tout cas $$\sum_{l\mid \mathrm{p.g.c.d.}( l_{i}|\Fix(\pi_{i})|) }\mu(l) (-1)^{ \sum_{j}(l_{j}+\frac{l_{j}}{l/(l, |\Fix(\pi_{j})|)}  )}= \sum_{l\mid \mathrm{p.g.c.d.}( l_{i}|\Fix(\pi_{i})|)  } \mu(l) =0.$$

Dans l'autre cas. Si $2\mid |\Fix(\pi_{j})|$ pour un $j$, on a $2 \nmid \frac{l}{(l,|\Fix(\pi_{j})|)} $ et donc $2\mid l_{j}+\frac{l_{j}}{l/(l, |\Fix(\pi_{j})|)}$  parce que $ l_{j}$ et $\frac{l_{j}}{l/(l, |\Fix(\pi_{j})|)}$ sont de même parité. Si $2 \nmid\ |\Fix(\pi_{j})| $, forcément $2\mid l_{j}$. Si $4\mid l_{j}$, on a $2\mid l_{j}+\frac{l_{j}}{l/(l, |\Fix(\pi_{j})|)}$. Si $2|| l_{j}$, alors $$(-1)^{  l_{j}+\frac{l_{j}}{l/(l, |\Fix(\pi_{j})|)}}=\begin{cases} 1, \quad \text{si  $2 \nmid l$ } \\ -1, \quad \text{si  $2| l$ }\end{cases}.
$$
Donc s'il y a un nombre pair de ${j}$ pour lesquels $2\ ||\ l_{j}$ et $|\Fix(\pi_{j})|$ sont impairs, on a 
$$\sum_{l\mid \mathrm{p.g.c.d.}( l_{i}|\Fix(\pi_{i})|) }\mu(l) (-1)^{ \sum_{j}(l_{j}+\frac{l_{j}}{l/(l, |\Fix(\pi_{j})|)}  )}= \sum_{l\mid \mathrm{p.g.c.d.}( l_{i}|\Fix(\pi_{i})|)  } \mu(l) =0.$$
S'il y a un nombre impair de ${j}$ pour lesquels $2\ ||\ l_{j}$ et $|\Fix(\pi_{j})|$ sont impairs, on a
\begin{align*}
&\sum_{l\mid \mathrm{p.g.c.d.}( l_{i}|\Fix(\pi_{i})|) }\mu(l) (-1)^{ \sum_{j}(l_{j}+\frac{l_{j}}{l/(l, |\Fix(\pi_{j})|)}  )}\\
=& \sum_{l\mid \mathrm{p.g.c.d.}( l_{i}|\Fix(\pi_{i})|);\ 2\nmid l  } \mu(l) - \sum_{l\mid \mathrm{p.g.c.d.}( l_{i}|\Fix(\pi_{i})|);\ 2| l  } \mu(l)       \\
=&2 \sum_{l|\frac{\mathrm{p.g.c.d.}( l_{i}|\Fix(\pi_{i})|)}{2}} \mu(l)\\
=&\begin{cases} 2, \quad \text{si  $\mathrm{p.g.c.d.}( l_{i}|\Fix(\pi_{i})|)=2$ } \\ 0, \quad \text{si  $\mathrm{p.g.c.d.}( l_{i}|\Fix(\pi_{i})|)>2$ }\end{cases}.
 \end{align*}
Cela est exactement $\delta_{-1}({\pi},L )$!
\end{proof}

\subsection{Petite préparation de la théorie des graphes}\label{4.6}
La préparation de cette section a pour but de traiter la somme $\sum_{F}$ sur des parties d'ensemble des racines dans le facteur (c.) de la proposition \ref{expr1}. Les résultats principaux de cette sous-section sont le lemme \ref{kappa} et le théorème \ref{Mat}. 

\subsubsection{}
Les graphes considérés dans cet article sont des graphes simples, finis et non-orientés. 
Un arbre est un graphe tel que deux sommets sont reliés par exactement un chemin.

Soit $T$ un graphe. Introduisons des
 variables $x_{ij}=x_{ji}$  correspondant  aux arêtes, et associons à  chaque graphe un monôme:
 $$ x^{T}=\prod_{\{i,j\}\in A(T)} x_{ij},$$
 où $A(T)$ est l'ensemble des arêtes de $T$.  Rappelons que pour une matrice de taille $n\times n$, le $i$$^{i\text{è}me}$ cofacteur principal est le déterminant de la matrice qui s'en déduit par suppression de  la $i$$^{i\text{è}me}$ ligne et la $i$$^{i\text{è}me}$ colonne.

\begin{theorem}[Un cas du théorème de ``Matrix-Tree"/ Kirchhoff]\label{Tree}
Le polynôme générateur  (appelé aussi le polynôme de Kirchhoff)  $$\sum_{T} x^{T}$$ de tous les arbres de sommets  $\{1,\ldots, n\}$, est égal au $i$$^{i\text{è}me}$ ($1\leq i\leq n$) cofacteur principal de la matrice (appelé la matrice de Kirchhoff) suivante (en particulier, ces cofacteurs principaux sont égaux entre eux.)
$$\begin{pmatrix} x_{12}+\cdots+x_{1n}&-x_{12}& \cdots  & -x_{1n} \\-x_{21}&  x_{21}+x_{23}+\cdots+x_{2n}   &\cdots &  -x_{2n}  \\ \cdots&\cdots& \ddots &\vdots   \\-x_{n1}&  \cdots &\cdots  &  x_{n1}+x_{n2}+\cdots+x_{n,n-1}
\end{pmatrix}.
$$
\end{theorem}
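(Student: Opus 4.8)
Le Matrix-Tree theorem est un résultat classique de la théorie des graphes, et il y a plusieurs preuves standard. La plus élégante, et celle que je suivrais, procède par récurrence sur le nombre d'arêtes via la formule de \emph{deletion-contraction}. Fixons l'indice $i$ pour lequel on calcule le cofacteur principal; appelons $K$ la matrice de Kirchhoff et $K_i$ la sous-matrice obtenue en supprimant la $i^{\text{ème}}$ ligne et colonne. Je commencerais par établir que $\det K_i$ et le polynôme générateur $\sum_T x^T$ satisfont la même relation de récurrence lorsqu'on spécialise une variable $x_{kl}$ (avec $\{k,l\}$ une arête fixée, disons $k,l\neq i$ ou on se ramène à ce cas par symétrie de la matrice), puis je vérifierais le cas de base.

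\textbf{Étapes détaillées.} Premièrement, la récurrence côté combinatoire: un arbre couvrant $T$ de $K_n$ soit utilise l'arête $\{k,l\}$, soit ne l'utilise pas. Les arbres ne l'utilisant pas sont exactement les arbres couvrants du graphe $G\setminus\{k,l\}$ (arête supprimée); ceux qui l'utilisent sont en bijection avec les arbres couvrants du graphe $G/\{k,l\}$ (arête contractée, les sommets $k$ et $l$ identifiés). Au niveau des polynômes générateurs, en écrivant $P(G)=\sum_T x^T$, cela donne $P(G)=P(G\setminus\{k,l\}) + x_{kl}\,P(G/\{k,l\})$. Deuxièmement, je montrerais que $\det K_i$ satisfait la même identité: on décompose la matrice de Kirchhoff en isolant la contribution de la variable $x_{kl}$. Écrivant $K = K' + x_{kl} L_{kl}$ où $L_{kl}$ est la matrice laplacienne élémentaire associée à l'arête $\{k,l\}$ (une matrice de rang $1$, à savoir $(e_k-e_l)(e_k-e_l)^{\mathrm{T}}$), la multilinéarité du déterminant en les colonnes donne $\det K_i = \det K'_i + x_{kl}\cdot(\text{terme linéaire})$, et une analyse directe du terme linéaire — en développant le déterminant de $(e_k-e_l)(e_k-e_l)^{\mathrm{T}}$ restreint aux lignes/colonnes $\neq i$ — l'identifie au cofacteur principal de la matrice de Kirchhoff du graphe contracté $G/\{k,l\}$. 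Troisièmement, le cas de base: quand il n'y a aucune arête (ou quand le graphe est un arbre à $n$ sommets avec exactement $n-1$ arêtes, cf. lemme \ref{arbre}), un calcul direct montre que $\det K_i = x^T$ (le monôme unique), ce qui coïncide avec $\sum_T x^T$. Enfin, l'égalité des cofacteurs principaux pour différents $i$ résulte de ce que $\sum_T x^T$ ne dépend pas de $i$, une fois l'égalité établie pour un $i$ quelconque.

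\textbf{Obstacle principal.} La seule subtilité technique est l'identification précise, dans l'étape de multilinéarité, du terme linéaire en $x_{kl}$ avec le cofacteur principal du graphe contracté. Il faut soigneusement gérer ce qui se passe quand $i\in\{k,l\}$ versus $i\notin\{k,l\}$: dans le premier cas la contraction déplace l'indice de référence, et il faut invoquer le fait (à prouver d'abord, ou à admettre comme partie de l'énoncé) que tous les cofacteurs principaux de $K$ sont égaux, de sorte qu'on peut toujours choisir $i\notin\{k,l\}$ quand le graphe a au moins trois sommets. Alternativement, et c'est peut-être plus propre, on peut invoquer directement le théorème de Cauchy-Binet: on écrit la matrice laplacienne réduite $K_i$ comme $B_i B_i^{\mathrm{T}}$ où $B$ est une matrice d'incidence orientée pondérée (lignes indexées par les sommets, colonnes par les arêtes, entrée $\pm\sqrt{x_{kl}}$ formellement — ou mieux, travailler sur le corps des fractions rationnelles), $B_i$ étant $B$ privée de sa $i^{\text{ème}}$ ligne; Cauchy-Binet développe alors $\det(B_i B_i^{\mathrm{T}})$ comme somme sur les sous-ensembles de $n-1$ arêtes des carrés des mineurs de $B_i$, et un mineur de $B_i$ de taille maximale est non nul (et vaut $\pm\prod x_{kl}^{1/2}$) si et seulement si le sous-ensemble d'arêtes correspondant forme un arbre couvrant. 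Cette seconde voie évite la récurrence mais demande de manipuler proprement les racines carrées formelles ou de factoriser différemment; je privilégierais de toute façon de citer la référence \cite{Tutte} pour une preuve complète, comme le fait déjà le lemme \ref{arbre}, et de ne donner qu'une esquisse de l'argument de deletion-contraction.
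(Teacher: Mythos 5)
Le texte de l'article ne démontre pas ce théorème : il renvoie simplement au théorème VI.29 de \cite{Tutte}, ce qui coïncide avec votre conclusion (esquisser l'argument et citer \cite{Tutte}). Votre esquisse par deletion-contraction (ou, en variante, Cauchy-Binet appliqué à la matrice d'incidence signée, que l'on peut d'ailleurs écrire $N_i\,\mathrm{diag}(x_e)\,N_i^{\mathrm{T}}$ pour éviter les racines carrées formelles) est l'argument classique et est correcte, avec les précautions que vous signalez vous-même.
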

\begin{proof}
cf.  théorème \rom{6}.29 \cite{Tutte}.
\end{proof}

\subsubsection{}\label{V11}
Notons qu'une caractéristique importante de la matrice de Kirchhoff est que les sommes des coefficients de chaque ligne et  chaque colonne sont nulles. Le lemme suivant nous permet de calculer les cofacteurs d'une telle matrice d'une autre fa\c con:

Pour une matrice $A$ complexe de taille $n\times n$ de rang $<n$, on voit que $\det(A+\lambda\Id)$ est un  polynôme de  terme constant $0$, et on définit
\begin{equation} \gls{kappaA}=\frac{1}{n} \frac{\det(A+\lambda\Id)}{\lambda}|_{\lambda=0}. \end{equation}
Notons que $\kappa(A)$ est invariant par conjugaison. 

\begin{lemm}\label{kappa}
Soit $A$ une matrice carrée complexe de taille $n\times n$ de rang $<n$,  les valeurs suivantes sont égales: \\
$a.$ $\kappa(A)$\\
$b.$ le produit des valeurs propres non-nulles, prises avec leur multiplicité, divisé par $n$ si la matrice $A$ est de rang $n-1$, zéro si la matrice $A$ est de rang $<n-1$\\
$c.$ la moyenne des cofacteurs principaux de $A$ 

Si la somme des coefficients de chaque ligne de la matrice $A$ est nulle, alors $\kappa(A)$ est  égal à \\
$d.$ $$\frac{1}{n\sum_{i=1}^{n}v_{i} } \det(A+J_{n} \mathrm{diag}(v_{1},\ldots, v_{n}) ),$$ où  $J_{n}$ est une matrice de taille $n\times n$ dont tous les éléments sont $1$ et $\sum_{i=1}^{n}v_{i}\neq0$. 

Si de plus,  la somme des coefficients de chaque colonne de $A$ est nulle aussi, les cofacteurs principaux de $A$ sont égaux, et $\kappa(A)$ est égal à la valeur suivante:\\
$e.$ $$\frac{1}{(\sum_{i=1}^{n}u_{i})(\sum_{j=1}^{n}v_{j}) } \det(A+ (u_{i}v_{j})_{i,j} ),$$ où $\sum_{i=1}^{n}u_{i}\neq0$ et $\sum_{j=1}^{n}v_{j}\neq0$. 

\end{lemm}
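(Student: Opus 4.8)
The statement collects five characterizations of the quantity $\kappa(A)$ attached to a singular $n\times n$ complex matrix. The plan is to establish them in the order $a \Leftrightarrow b$, then $b \Leftrightarrow c$, then $d$, then $e$, each reduction being elementary linear algebra. First I would compute the characteristic polynomial: writing $\det(A+\lambda \Id) = \prod_{i=1}^n (\lambda + \mu_i)$ where $\mu_i$ are the eigenvalues of $A$ with multiplicity, the coefficient of $\lambda^1$ in this product is $\sum_{i} \prod_{j\neq i}\mu_j$. Since $\mathrm{rank}(A)<n$, at least one $\mu_i$ vanishes; if exactly one vanishes (rank $n-1$) only the term with that index survives, giving the product of the nonzero eigenvalues, and if two or more vanish every term has a zero factor, giving $0$. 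Dividing by $n$ yields the equality of $a$ and $b$. Invariance of $\kappa$ under conjugation is then immediate from the conjugation-invariance of the characteristic polynomial.

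For $b \Leftrightarrow c$, I would expand $\det(A+\lambda\Id)$ differently: the coefficient of $\lambda^1$ equals the sum of all $(n-1)\times(n-1)$ principal minors of $A$ (a standard fact about the characteristic polynomial: the coefficient of $\lambda^k$ is the sum of the $(n-k)\times(n-k)$ principal minors). Dividing by $n$ exhibits $\kappa(A)$ as the average of the $n$ principal cofactors, which is $c$. Combined with the previous paragraph this already gives $a = b = c$ unconditionally.

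For part $d$, assume each row of $A$ sums to $0$, i.e. $A\mathbf{1}=0$ where $\mathbf{1}=(1,\dots,1)^{\mathsf T}$. Set $D=\mathrm{diag}(u_1,\dots,u_n)$ and $s=\sum_i u_i \neq 0$; I want $\det(A+J_n D) = n\,s\,\kappa(A)$. The trick is to use multilinearity of the determinant in the columns: $A + J_n D$ has $j$-th column equal to (column $j$ of $A$) $+ u_j \mathbf{1}$. Expanding by multilinearity over the choice, for each column, of the ``$A$ part'' or the ``$u_j\mathbf 1$ part'', any term picking the $\mathbf 1$-part in two or more columns vanishes (repeated column $\mathbf 1$), so only terms with at most one $\mathbf 1$-column survive. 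The term with no $\mathbf 1$-column is $\det A = 0$. The term replacing exactly column $j$ by $u_j\mathbf 1$ is $u_j$ times the determinant of $A$ with its $j$-th column replaced by $\mathbf 1$; expanding that along the $j$-th column gives $u_j \sum_i C_{ij}$ where $C_{ij}$ is the $(i,j)$ cofactor of $A$. Now the row-sum-zero hypothesis forces, for fixed $j$, all cofactors $C_{ij}$ to be equal (this is the standard ``all entries of the kernel-generating cofactor vector agree up to the row'' phenomenon, provable by noting the vector $(C_{i j})_i$ lies in $\ker A^{\mathsf T}$... actually by noting $(C_{1j},\dots,C_{nj})$ as $j$ varies behaves suitably; cleanest is: adding all other rows of $A$ to row $i$ turns row $i$ into zero minus nothing, so the cofactor is unchanged under this, giving equality across $i$). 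Hence $\sum_i C_{ij} = n\,C_{1j}$, and summing over $j$ the surviving terms give $n\sum_j u_j C_{1j}$; but when additionally we only assumed row sums zero, $C_{1j}$ need not be independent of $j$, so I would instead directly identify $\sum_j u_j \cdot n C_{?j}$ with $n\,s\,\kappa(A)$ using that the principal cofactors equal $\kappa(A)\cdot$(something) — the clean route is to first prove $e$ in the doubly-balanced case and deduce $d$ is a degeneration, but since $d$ is claimed under the weaker one-sided hypothesis, the honest argument is: all principal cofactors are equal to $\kappa(A)$ is false in general under row-sum-zero alone, yet the average is $\kappa(A)$ by $c$; the point is that under row-sum-zero, $C_{ij}$ depends only on $j$ (by the row-addition argument above), so $C_{1j}=C_{2j}=\dots$ and then $\kappa(A) = \frac1n\sum_i C_{ii} = \frac1n \sum_i C_{i\,i}$, and combining with $\det(A+J_nD) = n\sum_j u_j C_{1j}$ one checks $\sum_j u_j C_{1j} = (\sum_j u_j)\cdot\frac1n\sum_i C_{ii} = s\,\kappa(A)$ — this last equality is where I would do the small bookkeeping, using $C_{1j}$ independent of the first index to rewrite $\sum_j u_j C_{1j}$ and compare. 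For part $e$, with both row and column sums of $A$ zero, the cofactor $C_{ij}$ is independent of \emph{both} indices, hence equals $\kappa(A)$ for every $i,j$; running the same column-multilinearity expansion on $A + (u_iv_j)_{ij} = A + \mathbf{u}\mathbf{v}^{\mathsf T}$ (a rank-one update) gives $\det(A+\mathbf u\mathbf v^{\mathsf T}) = \det A + \mathbf v^{\mathsf T}\,\mathrm{adj}(A)\,\mathbf u = 0 + \sum_{i,j} v_j C_{ij} u_i = \kappa(A)(\sum_i u_i)(\sum_j v_j)$, which is exactly $e$.

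\textbf{Main obstacle.} The only genuinely delicate point is the claim, under the one-sided (row-sum-zero) hypothesis in part $d$, that the principal cofactors are \emph{equal} to one another and that $\sum_j u_j C_{1j}$ collapses to $s\,\kappa(A)$; one must be careful that ``$C_{ij}$ independent of $i$'' (from row-sum-zero) is enough without also invoking column balance. I expect to handle this by the row-operation argument (adding all other rows to a given row kills it modulo the column being deleted, showing deletion of row $i$ versus row $i'$ gives the same minor up to sign bookkeeping), and then the identity $\det(A + J_n D) = n \sum_j u_j C_{1j}$ together with $c$ finishes it. Everything else is the standard dictionary between characteristic-polynomial coefficients, principal minors, and the adjugate, applied to a singular matrix.
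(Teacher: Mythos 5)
Your chain $a=b=c$ and your treatment of $e$ are essentially fine, and they follow a genuinely different route from the paper (adjugate identities and column multilinearity, versus the paper's conjugation by an explicit matrix $P$ for $d$ and direct row/column operations for $e$). The genuine gap is in part $d$, exactly the point you flag as your ``main obstacle'': the cofactor fact you invoke is stated backwards, and with your version the proof does not close. Zero row sums means $A\mathbf{1}=0$; since $A\,\mathrm{adj}(A)=\det(A)\,\mathrm{Id}=0$, each \emph{row} of the cofactor matrix lies in $\ker A$, so when $\mathrm{rang}\,A=n-1$ one gets $C_{i1}=C_{i2}=\cdots=C_{in}$ for each fixed $i$: the cofactors are constant in the \emph{column} index $j$, not in the row index $i$ as you claim (``for fixed $j$, all cofactors $C_{ij}$ are equal'', ``$C_{ij}$ depends only on $j$''). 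A counterexample to your version: $A=\begin{pmatrix}1&-1\\ 2&-2\end{pmatrix}$ has zero row sums, yet $C_{11}=-2\neq 1=C_{21}$. Your sketched justification (adding all the other rows to row $i$ to annihilate it) is in fact the argument that needs zero \emph{column} sums. And, as you yourself concede, with your version of the constancy the required identity $\sum_j u_j\sum_i C_{ij}=(\sum_j u_j)\sum_i C_{ii}$ does not follow and is left as ``bookkeeping'', so $d$ is not proved as written.

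The repair is immediate once the constancy is stated the right way: with $C_{ij}$ independent of $j$ (and, in the case $\mathrm{rang}\,A\le n-2$, all cofactors and $\kappa(A)$ equal to zero, so both sides vanish), the column sums $\sum_i C_{ij}$ of the cofactor matrix are independent of $j$ and equal $\sum_i C_{ii}=n\kappa(A)$ by $c$; your expansion $\det(A+J_nD)=\sum_j u_j\sum_i C_{ij}$ then gives $n\kappa(A)\sum_j u_j$ directly, which is $d$. For comparison, the paper sidesteps cofactor identities in $d$ altogether: it conjugates by the matrix $P$ whose first column is $\mathbf{1}$, so that $P^{-1}AP$ has zero first column, identifies $\kappa(A)$ with $\frac1n$ times the unique surviving principal cofactor, and computes $P^{-1}J_n\,\mathrm{diag}(u_1,\ldots,u_n)P$ explicitly; for $e$ it uses row and column additions. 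Your route, once corrected, is arguably shorter, but as submitted part $d$ rests on a false statement and an unproved final identity.
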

\begin{proof}
On voit que l'égalité entre les quantités dans $(a.)$ et $(b.)$ est claire. Pour montrer que $\kappa(A)$ est égal à la moyenne des cofacteurs principaux, il suffit de regarder le terme de degré $1$ de $\det(A+\lambda \mathrm{Id})$.

Maintenant on vérifie $\kappa(A)$ est égal à la valeur dans $(d.)$ quand la somme des coefficients de chaque la ligne de la matrice $A$ est nulle. Soit 
$$P=\begin{pmatrix}
1&0&0&\cdots&0\\
1&1&0&\cdots&0\\
1&0&1&\cdots&0\\
\vdots& & & \cdots&0\\
1&0&0&\cdots&1\\ 
\end{pmatrix}_{n\times n} \  \text{ d'inverse }
\quad P^{-1}=\begin{pmatrix}
1&0&0&\cdots&0\\
-1&1&0&\cdots&0\\
-1&0&1&\cdots&0\\
\vdots& & & \cdots&0\\
-1&0&0&\cdots&1\\ 
\end{pmatrix}_{n\times n}.
$$
On voit que la première colonne de la matrice $P^{-1}AP$ est nulle donc tout le cofacteur principal est nul sauf le premier. Si on note $(P^{-1}AP)_{1}$ le premier cofacteur principal de $P^{-1}AP$, on a 
$$\kappa(A)=\kappa(P^{-1}AP)=\frac{1}{n}(P^{-1}AP)_{1}.$$
Cependant, on vérifie aussi que 
$$P^{-1}J_{n}\mathrm{diag}(v_{1},\ldots, v_{n})P=\begin{pmatrix}
\sum_{i=1}^{n}v_{i}& v_{2}&\cdots&v_{n}\\
0&0&\cdots&0\\
\vdots&  & \cdots&\\
0&0&\cdots&0\\ 
\end{pmatrix},$$
 est une matrice concentrée sur la première ligne dont le premier élément est $\sum_{i=1}^{n}v_{i}$, donc 
$$\det(A+J_{n}\mathrm{diag}(v_{1},\ldots, v_{n}))=(\sum_{i=1}^{n}v_{i})(P^{-1}AP)_{1}=n\kappa(A)\sum_{i=1}^n v_i.$$

Quand la somme des coefficients de chaque colonne de la matrice $A$ est nulle aussi, soit $A=(a_{ij})$. On peut calculer $\det(A+ (u_{i}v_{j})_{i,j} )$ comme cela: ajouter chaque ligne de $A+ (u_{i}v_{j})_{i,j} $ à la première ligne, ensuite ajouter chaque colonne à la première colonne. On voit que $A+ (u_{i}v_{j})_{i,j} $ est transformée en une matrice qui est de la forme $$\begin{pmatrix}
(\sum_{i=1}^{n}u_{i})(\sum_{j=1}^{n}v_{j})&v_{2}(\sum_{i=1}^{n}u_{i} )&v_{3}(\sum_{i=1}^{n}u_{i})&\cdots&v_{n}(\sum_{i=1}^{n}u_{i})\\
u_{2}(\sum_{j=1}^{n}v_{j})&a_{22}+u_{2}v_{2}&a_{23}+u_{2}v_{3}&\vdots&a_{2n}+u_{2}v_{n}\\
u_{3}(\sum_{j=1}^{n}v_{j})&a_{32}+u_{3}v_{2}&a_{33}+u_{3}v_{3}&\vdots &a_{3n}+u_{3}v_{n}\\
\vdots&\cdots &\cdots &\ddots &\vdots \\
u_{n}(\sum_{j=1}^{n}v_{j})&a_{n2}+u_{n}v_{2}&a_{n3}+u_{n}v_{3}&\cdots&a_{nn}+u_{n}v_{n}\\
\end{pmatrix}.$$
On peut soustraire à la $i^{i\text{è}me}$ ligne $\frac{u_{i}}{ \sum_{i=1}^{n}u_{i}}$ fois la première ligne (pour tout $i\geq2$). D'après ces opérations, il se trouve que $$\det(A+ (u_{i}v_{j})_{i,j} )= (\sum_{i=1}^{n}u_{i})(\sum_{j=1}^{n}v_{j})A_{1},$$
où $A_{1}$ est le premier cofacteur principal de $A$. Comme l'indice $1$ n'a pas de rôle particulier dans la preuve, on en déduit que les cofacteurs de $A$ sont égaux et cela finit la démonstration du lemme.
\end{proof}

\subsubsection{}

Soit $L\cong G_{n_{1}}\times\cdots\times G_{n_{r}}$ un sous-groupe de Levi semi-standard. 
Soit $Q$ le sous-groupe parabolique dans $\mathcal{P}(L)$ tel que $\alpha(\kappa)>0$ pour tout $\alpha\in\Delta_{Q}$ (rappelons que $\kappa$ est un vecteur fixé dans $\ago_B$, cf. \ref{1theta}). 
Considérons  l'ensemble des sommets $\{\frac{1}{n_{1}}\det_{L,1}, \ldots, \frac{1}{n_{r}}\det_{L,r}\}$ (cf.  \ref{syst}), on notera $e_i= \frac{1}{n_{i}}\det_{L,i}$ dans la suite.

\begin{lemm}\label{bijection}
Sous la bijection entre l'ensemble de toutes les arêtes de sommets $\{e_1, \ldots, e_r\}$ et l'ensemble $-\Phi_{{Q}}$ qui envoie l'arête $\{ e_i, e_j \}$ $(i>j)$ sur la racine $e_i-e_j$ dans $-\Phi_{{Q}}$, les arbres de sommets $\{ e_1, \ldots, e_r \}$ et les bases de  $\mathfrak{a}_{L}^{{G},*}$ formées de racines dans $-\Phi_{{Q}}$  sont en bijection.
\end{lemm}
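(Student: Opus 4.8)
The statement asserts a bijection between spanning trees on the vertex set $\{e_1,\ldots,e_r\}$ and bases of $\ago_L^{G*}$ consisting of roots from $-\Phi_{Q_L}$, via the edge-to-root assignment $\{e_i,e_j\}\mapsto e_i-e_j$ (for $i>j$). The plan is to reduce everything to elementary linear algebra in $\ago_L^{G*}$, using the explicit coordinate description of roots as differences $e_i-e_j$ from paragraph \ref{coor}, and then invoke the matroid-theoretic characterisation of spanning trees that is already implicit in Lemma \ref{arbre}.

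First I would fix the setup: identify $\ago_L^{G*}$ with the hyperplane $\{(x_1,\ldots,x_r)\mid n_1x_1+\cdots+n_rx_r=0\}$ inside $\ago_L^*\cong\mathbb{R}^r$ (as in \ref{coor}), noting that $\dim \ago_L^{G*}=r-1$. Each root in $\Phi(Z_L,G)$ is of the form $\pm(e_i-e_j)$, and $-\Phi_{Q_L}$ picks out exactly one sign for each unordered pair, namely $e_i-e_j$ with $i>j$ (this is the ordering convention from \ref{QLpho}--\ref{coor}). So the map $\{e_i,e_j\}\mapsto e_i-e_j$ is a well-defined bijection from the edge set of the complete graph $K_r$ onto $-\Phi_{Q_L}$, and it extends to a bijection between subsets of edges and subsets of $-\Phi_{Q_L}$. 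A subset $F\subseteq -\Phi_{Q_L}$ is a basis of $\ago_L^{G*}$ iff $|F|=r-1$ and $F$ is linearly independent. Correspondingly, a spanning subgraph of $K_r$ is a tree iff it has $r-1$ edges and is connected (Lemma \ref{arbre}). So the proof reduces to the single linear-algebra claim: \emph{a set of edges $\{ \{i_1,j_1\},\ldots,\{i_k,j_k\}\}$ spans a connected subgraph on a vertex subset $S$ iff the corresponding vectors $\{e_{i_s}-e_{j_s}\}$ are linearly independent precisely when the subgraph is a forest}, i.e. the graphic matroid of $K_r$ is isomorphic, under this edge-to-vector map, to the linear matroid of $\{e_i-e_j\}$ in $\ago_L^{G*}$.

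The key step is thus to establish that a family of vectors $e_{i_s}-e_{j_s}$ is linearly dependent if and only if the associated edges contain a cycle. For one direction, a cycle $i_1-i_2-\cdots-i_m-i_1$ gives the obvious telescoping relation $\sum (e_{i_t}-e_{i_{t+1}})=0$ among the corresponding vectors (with appropriate signs depending on edge orientations), so a cyclic edge set yields a dependent vector family. For the converse, I would argue that an acyclic (forest) edge set yields independent vectors: proceed by induction on the number of edges, removing a leaf edge $\{i,j\}$ where vertex $j$ has degree $1$; then $e_i-e_j$ is the only vector involving the coordinate indexed by $j$, so it cannot appear in any linear relation among the remaining vectors, giving independence. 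This shows the two matroids agree, hence their bases agree: $r-1$ independent vectors $\leftrightarrow$ spanning forest with $r-1$ edges $\leftrightarrow$ spanning tree. Combined with the count $\dim\ago_L^{G*}=r-1$, the bijection of the lemma follows.

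The main obstacle — though minor — is bookkeeping about signs and the ordering convention: one must check carefully that $-\Phi_{Q_L}$ really is $\{e_i-e_j\mid i>j\}$ under the identifications fixed by $\rho_L$ in \ref{QLpho} and \ref{coor}, so that the edge-to-root map is genuinely a bijection (not a two-to-one map onto unordered differences). Once that is pinned down, the fact that a set of $r-1$ vectors among $\{e_i-e_j\}$ spanning the $(r-1)$-dimensional space $\ago_L^{G*}$ corresponds exactly to a spanning tree is the standard graphic-matroid argument, and the span being all of $\ago_L^{G*}$ (rather than a proper subspace) is automatic from the independence plus dimension count. I would also remark that this is exactly the incidence-matrix picture underlying the Matrix-Tree theorem (Theorem \ref{Tree}), which makes the next step of the paper — parametrising bases by spanning trees and summing the Kirchhoff polynomial — the natural continuation.
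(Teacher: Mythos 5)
Your proof is correct and follows essentially the same route as the paper: both reduce the lemma to the identification of the graphic matroid on $\{e_1,\ldots,e_r\}$ with the linear matroid of the vectors $e_i-e_j$, using the telescoping relation $\sum_{t}(e_{i_t}-e_{i_{t+1}})=0$ for the "cycle implies dependence" direction and the count $|F|=r-1=\dim\ago_{L}^{G*}$ to conclude. The only (inessential) difference is in the converse direction: the paper shows that a tree's edges span $\ago_{L}^{G*}$ by writing each $e_i-e_{i+1}$ as a telescoping sum along a path, while you show that a forest's edges are linearly independent by a leaf-removal induction — both are valid and combine with the dimension count in the same way.
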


\begin{proof}
Notons que le nombre d'arêtes d'un arbre et le nombre de vecteurs d'une base sont égaux à $r-1$.  

Si $F\subseteq -\Phi_{{Q_L}}$ est une base, le graphe $T$ associé n'aura pas de cycle. Car un cycle $e_{i_{1}}\rightarrow e_{i_{2}} \rightarrow \cdots \rightarrow e_{i_{k}}=e_{i_{1}}$ dans $T$ donnera une relation linéaire avec coefficients $\pm 1$ pour les vecteurs de $F$: $$\sum_{j=1}^{k-1}(e_{i_{j}} - e_{i_{j+1}})=0 .$$
Un graphe à $n$ sommets est un arbre si et seulement s'il est connexe et s'il y a exactement $n-1$ arêtes, donc $T$ est un arbre.

Réciproquement si $T$ est un arbre de sommets $\{ e_1, \ldots, e_r \}$, l'ensemble des arêtes donne un ensemble $F$ qui  engendre $\mathfrak{a}_{L}^{{G},*}$. Car un chemin $e_{i}=e_{i_{1}}\rightarrow e_{i_{2}} \rightarrow \cdots \rightarrow e_{i_{k}}=e_{i+1} $ dans $T$ donnera une expression de vecteur $e_{i}-e_{i+1}$  comme combinaison  linéaire des vecteurs de $F$:
$$ \sum_{j=1}^{k-1}(e_{i_{j}} - e_{i_{j+1}})= e_{i}-e_{i+1} , $$
et $\mathfrak{a}_{L}^{{G},*}$ est engendré par les $e_i-e_{i+1}$. Donc $F$ est une base. 
\end{proof}

D'après ce lemme et le  théorème \ref{Tree}, on a

\begin{theorem}\label{Mat}
Soit $(y_{\beta})_{\beta\in -\Phi_{{Q}}}$ une famille de nombres complexes, et $A$ la matrice symétrique dont l'élément indexé par $(i,j)$ $i< j$ est égal à $-y_{\beta}\in\mathbb{C}$, pour $\{i,j\}$ correspondant à $\beta$ au sens de la bijection du lemme précédent; on prend les éléments diagonaux de $A$ de sorte que la somme de chaque ligne soit nulle. Alors
$$ \sum_{F} \prod_{\beta\in F } y_{\beta}  =  \kappa(A),  $$
où la somme porte sur toute partie $F$ de $-\Phi_{Q}$ qui forme une base de $\ago_L^{{G},*}$.\end{theorem}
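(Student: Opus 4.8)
The plan is to combine the two combinatorial inputs already established: the bijection between spanning trees on the vertex set $\{e_1,\ldots,e_r\}$ and root-bases of $\ago_L^{G*}$ lying in $-\Phi_{Q_L}$ (Lemme \ref{bijection}), and the Matrix-Tree theorem (th�or�me \ref{Tree}), which computes the generating polynomial $\sum_T x^T$ over all spanning trees as a principal cofactor of the Kirchhoff matrix. First I would set up the dictionary: to the family $(y_\beta)_{\beta\in -\Phi_{Q_L}}$ attach to the edge $\{i,j\}$ (with $i>j$, corresponding to $\beta=e_i-e_j$) the ``weight'' $y_\beta$, and form the weighted Kirchhoff matrix $A$ whose off-diagonal entry in position $(i,j)$ is $-y_\beta$ — wait, here one must be careful about signs: the matrix $A$ in the statement is prescribed to have off-diagonal entry \emph{equal to} $y_\beta$ (not $-y_\beta$) with diagonal entries chosen so each row sums to zero. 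So the matrix in the statement is the \emph{negative} of the standard Kirchhoff matrix of th�or�me \ref{Tree}. I would reconcile this by noting $A = -K$ where $K$ is the Kirchhoff matrix with edge-variables $x_{ij}=y_{e_i-e_j}$, and an $(r-1)\times(r-1)$ principal cofactor of $-K$ equals $(-1)^{r-1}$ times the corresponding cofactor of $K$.

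Next I would assemble the identity. By th�or�me \ref{Tree}, each principal cofactor of $K$ equals $\sum_T \prod_{\{i,j\}\in A(T)} x_{ij} = \sum_T \prod_{\{i,j\}\in A(T)} y_{e_i-e_j}$, the sum over spanning trees $T$ on $\{e_1,\ldots,e_r\}$. By Lemme \ref{bijection} this sum is exactly $\sum_F \prod_{\beta\in F} y_\beta$, where $F$ ranges over subsets of $-\Phi_{Q_L}$ forming a basis of $\ago_L^{G*}$ (each such $F$ has $r-1$ elements, matching the $r-1$ edges of a spanning tree). Finally I invoke Lemme \ref{kappa}: the quantity $\kappa(A)$ is by definition $\frac1r$ times the coefficient of $\lambda$ in $\det(A+\lambda\Id)$, which by part (c) of that lemma equals the average of the principal cofactors of $A$; since the row sums of $A$ vanish, part (c) (or the explicit computation in its proof) shows all principal cofactors of $A$ are equal — so $\kappa(A)$ equals any single principal cofactor of $A$. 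Chasing the sign, a principal cofactor of $A=-K$ is $(-1)^{r-1}\sum_F\prod_{\beta\in F}y_\beta$; and since $-\Phi_{Q_L}$ consists of roots $e_i-e_j$ with $i>j$, the edge-labelling is consistent and the sign works out. Here I should double-check orientation conventions so that $\kappa(A)=\sum_F\prod_{\beta\in F}y_\beta$ with the correct sign — the cleanest route is to observe that $\kappa$ is conjugation-invariant and that the generating polynomial has nonnegative structure, pinning the sign once one normalizes on the trivial example $r=2$.

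The main obstacle I anticipate is purely bookkeeping of signs and of the diagonal normalization: the Kirchhoff matrix of th�or�me \ref{Tree} has \emph{positive} diagonal entries $x_{i1}+\cdots$ and \emph{negative} off-diagonal entries $-x_{ij}$, whereas the matrix $A$ in our statement has off-diagonal entries $y_\beta$ with diagonal entries forcing row-sum zero — i.e.\ the diagonal entries of $A$ are $-\sum_{j\ne i} y_{e_i-e_j}$, so $A$ is literally $-K$ under $x_{ij}\mapsto y_{e_i-e_j}$. Once that identification is made explicit, the $(r-1)$-dimensionality ensures the cofactor picks up exactly $(-1)^{r-1}$, which must then cancel against a matching sign coming from... in fact it does not need to cancel: I would simply verify on $r=2$ (one edge $\beta$, $A=\begin{pmatrix} -y_\beta & y_\beta\\ y_\beta & -y_\beta\end{pmatrix}$, $\det(A+\lambda\Id)=\lambda^2-2y_\beta\lambda$, so $\kappa(A)=\frac12\cdot(-2y_\beta)$... ) — hmm, that gives $-y_\beta$, which signals that either the intended reading is that $A$ has off-diagonal entries $-y_\beta$, or that the roots $\beta\in-\Phi_{Q_L}$ are naturally written $e_j-e_i$ for $i>j$ making the edge weight $y_{e_j-e_i}$; I would resolve this by reading the bijection of Lemme \ref{bijection} literally ($\{e_i,e_j\}\mapsto e_i-e_j$ for $i>j$, a root of $-\Phi_{Q_L}$ since $\det_{L,i}$ comes after $\det_{L,j}$ in the $\rho_L$-ordering) and adjusting the statement's sign convention accordingly, or else noting that all the $y_\beta$ appearing in the application (differences $\mathrm N-\mathrm P$ of zero/pole counts) enter symmetrically enough that the overall formula in th�or�me \ref{SUMM} absorbs the sign. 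In the write-up I would present the argument as: identify $A$ with (minus) a Kirchhoff matrix, apply th�or�me \ref{Tree} to get the spanning-tree sum, apply Lemme \ref{bijection} to rewrite it as the basis sum, and apply Lemme \ref{kappa}(c) to identify $\kappa(A)$ with a principal cofactor — the three cited results do all the work and only the sign/diagonal reconciliation needs a line of care.
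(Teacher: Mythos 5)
Votre preuve est correcte et suit exactement la route (implicite) du papier : le lemme \ref{bijection} plus le th\'eor\`eme \ref{Tree}, combin\'es au lemme \ref{kappa} qui identifie $\kappa(A)$ \`a un cofacteur principal commun (les sommes des lignes et colonnes \'etant nulles), donnent l'identit\'e. Votre diagnostic de signe est \'egalement le bon : la matrice doit \^etre lue comme la matrice de Kirchhoff/laplacienne pond\'er\'ee (coefficients hors diagonale $-y_\beta$, diagonale rendant les sommes de lignes nulles), ce qui est bien ce qui est utilis\'e pour $\mathrm{M}_{\tilde\pi,w}$ dans la preuve du th\'eor\`eme \ref{Matr} (via $x_{\Pi_i,\Pi_j}(s,t)=l^{\Pi_i}_{s}l^{\Pi_j}_{t}x_{\Pi_i,\Pi_j}$ avec $x_{\Pi_i,\Pi_j}>0$), tandis que la lecture litt\'erale de l'\'enonc\'e donnerait $(-1)^{r-1}\sum_F\prod_{\beta\in F}y_\beta$, comme le montre d\'ej\`a votre v\'erification en rang $r=2$.
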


\subsection{Calcul du facteur (c.) dans la proposition \ref{expr1}}\label{G-A}\label{4.7}
\subsubsection{}\label{notations}
On utilisera $(P, \tilde{\pi})$ pour une paire discrète et on conservera $\pi$ pour une représentation cuspidale. 
Soit $(P, {\tilde{\pi}})$ une paire discrète partout non-ramifiée. Rappelons qu'on a choisi un bon représentant (cf. \ref{bonrep}), c'est-à-dire 
$$\tilde{\pi}=\underbrace{ \Pi_1\otimes\cdots\otimes  \Pi_{1}}_{m_{1}}\otimes \underbrace{  \Pi_{2}\otimes\cdots\otimes  \Pi_2 }_{m_{2}}\otimes\cdots \otimes\underbrace{ \Pi_{k}\otimes\cdots\otimes \Pi_{k}}_{m_{k}} ,$$
et $\Pi_i$ sont irréductibles deux-à-deux non inertiellement équivalentes. 
Soit $I=\{ \Pi_1, \ldots,   \Pi_k\}$; on a $| I |=k$. 

Pour une représentation automorphe discrète $\Pi_i\in I$, avec la classification des représentations discrètes automorphes de Moeglin-Waldspurger (le théorème \ref{MWres}), on peut supposer que $\Pi_i=\pi_i\boxtimes \nu_i$ avec $\pi_i$ cuspidale. 
Pour une représentation automorphe cuspidale irréductible $\pi$ et un entier $\nu\in \mathbb{N}^{*}$, soit
$$I_{\pi}=\{\Pi_i \in I \mid \pi_i =\pi       \},$$
$$I_{\nu} =\{\Pi_i \in I  \mid \nu_i =\nu     \}. $$
 On dispose aussi d'un ensemble
              $${{{N}}}:=\{\nu\in\mathbb{N}^{*}|\  I_{\nu}\neq \varnothing    \}.  $$

Soit $(w,1)\in \stab(P, \tilde{\pi})$.  Nécessairement $w$ laisse chaque $ \Pi_i^{m_{i}}$ fixé et permute  ses facteurs donc il s'identifie à un élément de $$\prod_{ i=1}^{k}\mathfrak{S}_{m_{i}}.$$ 
Soit $w=w_1\times w_2\times\cdots\times w_k$ la décomposition associée.  Soit $$\alpha_{i}$$ le nombre de cycles de la permutation $w_i$ et $$(l_{i, j})_{j=1,\ldots, \alpha_{ i}}$$ les longueurs de ses cycles. 
Notons que pour tout $\Pi_i \in I$, on a \begin{equation}\sum_{j=1}^{\alpha_i } l_{i, j}=m_{i}.  \end{equation}

Pour tout $\nu\in \mathbb{N}^{*}$, soit \begin{equation}  a_{\nu}:=\sum_{ \Pi_i \in I_{\nu}} m_{i} d_i  , \end{equation} où $d_i$ est le rang de $\pi_i$. 
Donc on a \begin{equation} \label{parN}n=\sum_{\nu\in {{{N}}}  }   a_{\nu}\nu. \end{equation}

\subsubsection{Le résultat et la preuve} 
\begin{theorem}\label{Matr}
 Avec les notations précédentes, pour le facteur (c.) de la proposition \ref{expr1}, on a 
\begin{multline}\label{SSSS}   \sum_{F }     \prod_{\beta\in F }\biggr(\mathrm{N}(n_{\beta}(\tilde{\pi}, \cdot))- \mathrm{P}(n_{\beta}(\tilde{\pi}, \cdot))\biggr)  =
 \\
\frac{ |w| (\prod_{ \Pi_i \in I}d_{i})   (2g-2)^{|I|-1} \prod_{\mu\in {{{N}}}}  (  \sum_{\nu\in {{{N}}}} a_{\nu} \mathrm{min}\{\mu,\nu\}    )^{|I_{\mu}|}  }{ n  \sum_{\nu\in N_{ }} a_{\nu}   }       \\
\prod_{ \Pi_i  \in I  }    ( |\Fix( \Pi_i)|m_{i}+ (2g-2)d_{i} \sum_{ \nu\in {{{N}}}}a_{\nu}\mathrm{min}\{\nu_i,\nu\}  )^{\alpha_{i}-1}  , 
\end{multline}
où la première somme $\sum_{F}$ porte sur toutes les parties $F$ de $-\Phi_{Q_{L}}$  telles que $F$ forme une base de $\ago_{L}^{G,*}$. Rappelons que $|w|$ est le produit des longueurs des orbits de $w$;  les opérateurs $\mathrm{N}$ et $\mathrm{P}$ donnent respectivement le nombre de zéros et de pôles d'une fonction dans la région $|z|<1$; et $g$ est le genre de la courbe $X_1$.  
\end{theorem}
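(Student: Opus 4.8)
The strategy is to feed the explicit description of the numbers $\mathrm{N}(n_\beta(\tilde\pi,\cdot))-\mathrm{P}(n_\beta(\tilde\pi,\cdot))$ obtained from Corollary~\ref{nombre} into the ``Matrix–Tree'' identity of Theorem~\ref{Mat}, and then to evaluate the resulting $\kappa(A)$ for a matrix $A$ of a very special block structure. First I would recall that, by the bijection of Lemma~\ref{bijection}, the sum $\sum_F\prod_{\beta\in F}y_\beta$ over bases $F\subseteq-\Phi_{Q_{L^w}}$ of $\ago_{L^w}^{G*}$ equals $\kappa(A)$, where $A$ is the symmetric matrix indexed by the factors of $L^w$ whose $(i,j)$-entry for $i\neq j$ is $y_{\beta}$ for the root $\beta\in-\Phi_{Q_{L^w}}$ attached to $\{i,j\}$, with diagonal chosen so that every row sums to $0$. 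Here the indexing set of the factors of $L^w$ is precisely the set of pairs $(\Pi,j)$ with $\Pi\in I_{\tilde\pi}$ and $1\le j\le \alpha_\Pi$, each such factor being a $GL$ of rank $l_j^\Pi d_\Pi\nu$ with $\nu=p_2(\Pi)$. So the first step is to write down $y_\beta$ explicitly: for $\beta$ joining the factor indexed by $(\Pi,i)$ to the one indexed by $(\Pi',j)$, Corollary~\ref{nombre} (together with $n_\beta(\tilde\pi,z)=n_\alpha(\tilde\pi,z)$ collecting all roots above a given root of $Z_{L^w}$, cf.\ \eqref{Zeros}) gives $y_\beta=l_i^\Pi l_j^{\Pi'}\min\{p_2(\Pi),p_2(\Pi')\}(2g-2)d_\Pi d_{\Pi'}$ when $\Pi\not\sim\Pi'$, and $y_\beta=l_i^\Pi l_j^\Pi\bigl((2g-2)d_\Pi^2\min\{\nu,\nu\}+|\Fix(\pi)|\bigr)$-type expression when $\Pi=\Pi'$ (the extra $|\Fix(\pi_1)|$ term of Corollary~\ref{nombre} only occurs on the ``diagonal blocks''). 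The bookkeeping of these factors $l$, $d$, $\min$ is routine but must be done carefully.

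\textbf{Reduction to a rank-one-perturbation determinant.} The key observation is that, up to the diagonal correction forced by the row-sum-zero condition, the off-diagonal part of $A$ is essentially of rank $|N_{\tilde\pi}|$ on the ``$\Pi$-level'' and of rank $1$ on each ``internal'' block of size $\alpha_\Pi$. More precisely, I would group the index set $\{(\Pi,j)\}$ into blocks indexed by $\Pi\in I_{\tilde\pi}$; within a block the matrix is (after removing the diagonal) a rank-one matrix built from the vector $(l_1^\Pi,\dots,l_{\alpha_\Pi}^\Pi)$; between two blocks for $\Pi,\Pi'$ it is the rank-one matrix $c_{\Pi\Pi'}\,(l_i^\Pi)_i(l_j^{\Pi'})_j^{t}$ with scalar $c_{\Pi\Pi'}=(2g-2)d_\Pi d_{\Pi'}\min\{p_2(\Pi),p_2(\Pi')\}$. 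Crucially $c_{\Pi\Pi'}$ depends on $\Pi$ only through $p_2(\Pi)$, so the ``between-blocks'' part further factors through the map $p_2$ and has rank at most $|N_{\tilde\pi}|$. Then I would apply the multiplicative formula of Lemma~\ref{kappa} (item $(e.)$, or $(d.)$) to peel off these rank-one pieces: $\kappa(A)$ is a determinant of a matrix which, after suitable elementary row/column operations grouping each block, becomes block-diagonal with: one $|N_{\tilde\pi}|\times|N_{\tilde\pi}|$ factor coming from the ``$\nu$-level'' matrix $(\min\{\mu,\nu\})_{\mu,\nu\in N_{\tilde\pi}}$ weighted by $a_\nu$, and for each $\Pi$ a factor of size $\alpha_\Pi$ coming from the internal rank-one structure, whose $\kappa$ (or determinant after the perturbation) is $\bigl(|\Fix(\Pi)|m_\Pi+(2g-2)d_\Pi\sum_\nu a_\nu\min\{p_2(\Pi),\nu\}\bigr)^{\alpha_\Pi-1}$ times $|w|\prod l_j^\Pi$-type constants. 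This is exactly how the three products in \eqref{SSSS} arise: the factor $\prod_{\Pi}(\dots)^{\alpha_\Pi-1}$ from the internal blocks, the factor $\prod_{\mu\in N_{\tilde\pi}}(\sum_\nu a_\nu\min\{\mu,\nu\})^{|I_\mu|}$ together with $(2g-2)^{|I_{\tilde\pi}|-1}|N_{\tilde\pi}|$ from the $\nu$-level Matrix–Tree evaluation, and $|w|\prod d_\Pi/(n\sum a_\nu)$ from the normalising constants in $\kappa$ and the relation $n=\sum_\nu a_\nu\nu$.

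\textbf{The $\nu$-level evaluation.} The heart of the computation is the evaluation of the $|N_{\tilde\pi}|\times|N_{\tilde\pi}|$ piece: one must show that $\kappa$ of the matrix whose $(\mu,\nu)$-entry ($\mu\neq\nu$) is $a_\mu a_\nu(2g-2)\min\{\mu,\nu\}$ (with row-sum-zero diagonal) equals $(2g-2)^{|N_{\tilde\pi}|-1}\,|N_{\tilde\pi}|^{-1}\bigl(\prod_\nu a_\nu\bigr)\cdot(\text{something})$ — but in fact one wants it in the grouped form of \eqref{SSSS}, so really one evaluates a Matrix–Tree sum over trees on the vertex set $N_{\tilde\pi}$ with edge weights $a_\mu a_\nu\min\{\mu,\nu\}$. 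Here I would invoke Theorem~\ref{Tree}/Theorem~\ref{Mat} again, or compute the relevant principal cofactor directly by exploiting the totally-ordered structure of $\{\min\{\mu,\nu\}\}$ (the matrix $(\min\{\mu,\nu\})$ has a well-known tridiagonal-like inverse). Either way one gets a clean product $\prod_\mu(\sum_\nu a_\nu\min\{\mu,\nu\})$ up to the combinatorial factor, and then distributing the $|I_\mu|$ copies of each value $\mu$ (one per element of $I_\mu\subseteq I_{\tilde\pi}$ lying over $\mu$) produces the exponents $|I_\mu|$ in \eqref{SSSS}.

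\textbf{Main obstacle.} I expect the genuinely delicate point to be the careful tracking of all the multiplicative constants ($l_j^\Pi$, $d_\Pi$, $|w|=\prod_{\Pi,j}l_j^\Pi$, the factors $\tfrac1n$, $\tfrac1{\sum a_\nu}$, $\tfrac1{|N_{\tilde\pi}|}$ coming from the various normalisations of $\kappa$ in Lemma~\ref{kappa}) through the successive rank-one reductions, so that the three products combine into exactly the stated right-hand side with the exact power $(2g-2)^{|I_{\tilde\pi}|-1}$ and the exact prefactor $|w|(\prod d_\Pi)|N_{\tilde\pi}|/(n\sum a_\nu)$. The conceptual structure (Matrix–Tree, then peel off rank-one blocks via Lemma~\ref{kappa}$(d.)$–$(e.)$, then a final Matrix–Tree on $N_{\tilde\pi}$) is clear; the risk is purely in the constants and signs, which is why one wants to organise the elementary operations block by block and apply the lemma in the cleanest possible form at each stage.
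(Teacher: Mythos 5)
Your plan is correct and follows essentially the same route as the paper: express the left-hand side as $\kappa$ of the explicit block matrix indexed by the cycles $(\Pi,j)$ with entries $l_i^{\Pi}l_j^{\Pi'}\bigl((2g-2)d_\Pi d_{\Pi'}\min\{p_2(\Pi),p_2(\Pi')\}+\delta_{\Pi,\Pi'}|\Fix(\Pi)|\bigr)$ via Theorem \ref{Mat} and Corollary \ref{nombre}, then peel off the rank-one block structure using Lemma \ref{kappa}$(d.)$--$(e.)$ (the paper packages this step in an auxiliary determinant lemma inside the proof), reducing successively to the $\Pi$-level and then to the $N_{\tilde{\pi}}$-level, and finally evaluate the remaining $\min\{\mu,\nu\}$-matrix by exploiting its ordered structure (the paper triangularizes it by conjugation with the unipotent all-ones lower-triangular matrix). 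The only work left in your outline is the constant bookkeeping you already identify, which is exactly what the paper's computation carries out.
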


\begin{proof}
On va utiliser le théorème \ref{Mat}  en  posant $y_{\beta}=\mathrm{N}(n_{\beta}(\tilde{\pi}, \cdot))- \mathrm{P}(n_{\beta}(\tilde{\pi}, \cdot))$ 
pour construire une matrice $\mathrm{M}$ telle que $\kappa(\mathrm{M})$ (cf. \ref{V11} pour la notation) soit égale au membre à gauche de l'expression  (\ref{SSSS}). 

Par le corollaire \ref{nombre} et l'équation (\ref{Zeros}) de \ref{constop}, le nombre de zéros et de pôles de $n_{\beta}(\tilde{\pi}, \cdot)$ dépend de la condition d'égalité, ou non, des deux facteurs de $\Pi$ correspondant à $\beta$ sont égaux ou non. Il faut décrire $\mathrm{M}$ comme une matrice par blocs.
On utilise $I$ pour indexer les blocs de $\mathrm{M}$. Pour chaque bloc associé à $(\Pi_i,\Pi_j )\in I\times I$, on utilise $1\leq s\leq m_{i}$ et $1\leq t\leq m_{j}$ pour indexer les éléments dans le bloc. 
Afin de réduire l'utilisation des indices,  
on désigne par  $({x_{i, j}}({s, t})_{s, t})$ la matrice par blocs $(A_{\Pi_i, \Pi_j})_{\Pi_i, \Pi_j \in I}$ où chaque bloc est donné par  $${A_{\Pi_i, \Pi_j}}=\biggr({x_{i, j}}(s,t)\biggr)_{\substack{1\leq s\leq m_{i}\\ 1\leq  t\leq m_{j} }}.$$

La matrice $\mathrm{M}$ peut alors s'écrire  
           $$\mathrm{M}= \mathrm{diag}( y_{i}(s)_{s})    -  (x_{{i}, {j}}  (     s   ,    t    )_{s,t}  )_{  {{i}} ,    { {j}}      }, $$ 
où on a par la définition \eqref{Zeros} de $n_\beta$ avec $\beta\in \Phi(Z_L, G)$ et le corollaire \ref{nombre}:   
$$x_{ {i}, {j}}(     s ,    t    ) = 
 \begin{cases} -l_{i, s} l_{ j, t}  (2g-2)d_{ {i}}d_{ {j}} \min\{\nu_i, \nu_j\}, \quad \quad \text{si $ \Pi_{i}\neq \Pi_{j}$;} \\
\\ 
-l_{i,s} l_{j,t}  \biggr((2g-2)d_{ {i}}d_{ {j}} \min\{\nu_i, \nu_j \} +|\Fix( \Pi_{i})|\biggr), \quad  \quad \text{si $ \Pi_{i}= \Pi_{j}$;} 
\end{cases}
$$
et $$y_{i}(s)=  l_{i, s }  |\Fix( \Pi_i)| m_{i}
 +   (2g-2)l_{i, s} d_{i }      \sum_{  \Pi_j    \in I } d_{j}    m_{j} \mathrm{min}\{ \nu_j,  \nu_i \}  ,$$
 est donné de manière à ce que la somme de chaque ligne de $\mathrm{M}$ soit nulle. 

Soit
 $$ y_{i}=|\Fix( \Pi_i)|m_{i}+ (2g-2)d_{i} \sum_{ \nu\in {{{N}}}}a_{\nu}\mathrm{min}\{\nu_i ,\nu\}, $$ 
alors on a $$y_{i}(s)=y_{i}l_{i,s} .$$
Soit $$x_{{i}, {j}} = (2g-2)d_{{i}}d_{{j}} \min\{\nu_i, \nu_j \} + \delta_{ {i}, {j}}|\Fix( \Pi_{i})|, $$ où $\delta_{ {i}, {j}}$ est le symbole de Kronecker, on a donc $$x_{{i}, {j}}(s, t)=l_{i, s}l_{j,t} x_{i, j} .  $$

On a besoin d'un lemme qui nous permet de réduire le rang de la matrice pour calculer le déterminant:

\begin{lemm}\label{det}
Soit $(a_{ij})_{1\leq i, j\leq k}$ une famille de nombres complexes. 
On se donne pour tout $1\leq i\leq k$ un vecteur complexe $U_{i}={^{t}(u^{i}_{1},\ldots, u^{i}_{n_{i}})}$ de taille $n_i\times 1$. Soient $N=\sum_{i=1}^{k}n_{i}$, et $J_{n_{i}\times n_{j}}$ la matrice de taille $n_{i}\times n_{j}$ dont tous les éléments sont $1$, alors on a l'égalité:
 \begin{multline*}\det(\Id_{N}- (a_{ij }J_{n_{i}\times n_{j}})_{1\leq i,j\leq k}\mathrm{diag}(u^{1}_{1},\ldots, u^{1}_{n_{1}}, u^{2}_{1},\ldots, u^{k}_{n_{k}} )) \\ =\det(\Id_{k}-(a_{ij}\sum_{l=1}^{n_{j}}u^{j}_{l})_{1\leq i, j\leq k}).  \end{multline*}
\end{lemm}
\begin{proof}
Supposons tout d'abord que les valeurs propres de la matrice $(a_{ij}\sum_{l=1}^{n_{j}}u^{j}_{l})_{1\leq i,j\leq k}$ sont deux-à-deux distinctes et non-nulles, en particulier la matrice est diagonalisable. 

Soient $(V_{i})_{1\leq i\leq k}$ des vecteurs de taille $n_{i}\times 1$, tels que ${^{t}U_{i}}V_{i}=0$ pour tout $1\leq i\leq k$, alors le vecteur 
 $\begin{pmatrix} V_1\\V_2\\ \vdots\\V_k  \end{pmatrix}$
est un vecteur propre pour la valeur propre $0$ de $(a_{ij }J_{n_{i}\times n_{j}})_{N\times N}\mathrm{diag}(u^{1}_{1},\ldots, u^{1}_{n_{1}}, u^{2}_{1},\ldots, u^{k}_{n_{k}}). $ Les vecteurs de ce type engendrent  un sous-espace de l'espace propre pour la valeur propre $0$, donc l'espace propre pour la valeur propre $0$ est de dimension$\geq \sum_{i=1}^{k}(n_{i}-1)=N-k$. Maintenant, soit $V=(v_{1},\ldots,v_{k})$ un vecteur propre pour la valeur propre $\lambda\neq 0$ de la matrice $(a_{ij}\sum_{l=1}^{n_{j}}u^{j}_{l})_{1\leq i, j\leq k}$, on vérifie que le vecteur 
$$^{t}(\underbrace{v_{1}, \ldots, v_{1}}_{n_1 \text{ fois}},\ldots, \underbrace{v_k,\ldots, v_{k}}_{n_k \text{ fois}})$$
est un vecteur propre de $(a_{ij }J_{n_{i}\times n_{j}})_{1\leq i, j\leq k}\mathrm{diag}(u^{1}_{1},\ldots, u^{1}_{n_{1}}, u^{2}_{1},\ldots, u^{k}_{n_{k}} )$ de valeur propre $\lambda$ aussi. Par notre hypothèse, ces vecteurs engendrent un sous-espace de dimension $k$. Donc l'égalité considérée est vraie d'après la comparaison des valeurs propres des deux côtés.  

Notons que l'ensemble des éléments $((a_{ij})_{1\leq i,j\leq k}, U_1, \ldots, U_k)\in Mat_{k}(\mathbb{C})\times \mathbb{C}^{\sum_{j=1}^k n_j}$ tels que les valeurs propres de la matrice $(a_{ij}\sum_{l=1}^{n_{j}}u^{j}_{l})_{1\leq i,j\leq k}$ soient deux-à-deux distinctes et non-nulles est un sous-ensemble ouvert non-vide de $Mat_{k}(\mathbb{C})\times \mathbb{C}^{\sum_{j=1}^k n_j}$. 
L'égalité étant vraie pour un sous-ensemble ouvert est donc vraie pour tout $((a_{ij})_{1\leq i,j\leq k}, U_1, \ldots, U_k)\in Mat_{k}(\mathbb{C})\times \mathbb{C}^{\sum_{j=1}^k n_j}$. 
\end{proof}

Comme la somme des coefficients de chaque ligne et chaque colonne de  la matrice $\mathrm{M}$ est nulle, 
par le lemme \ref{kappa} (l'égalité entre les quantités $a.$ et $e.$), on a 
$$\kappa(\mathrm{M})=\frac{1}{(\sum_{\nu\in N_{ } }     a_{\nu}  )^{2}    }  \det( \mathrm{M}+  ((l_{i, s} l_{j, t} d_{{i}}d_{{j}} )_{s, t }) ) .$$
Donc par le lemme \ref{det},  cette expression est égale à
\begin{multline*}
 \frac{1}{(\sum_{\nu\in {{{N}}} }     a_{\nu}  )^{2}    }     \det\left(  \mathrm{diag}( y_{i}( s)_s) -(x_{i , j}  (  s   ,    t)_{   s   ,t})     +          ((l_{i,s} l_{j, t}  d_{i}d_{ j } )_{s,t} ) \right )\\
      =  \frac{\prod_{ \Pi_i \in I}  \biggr(y_{i}^{\alpha_{i}-1}     \prod_{s=1}^{\alpha_{i}}l_{ i, s} \biggr) }{(\sum_{\nu\in {{{N}}} }     a_{\nu}  )^{2}    }   \det\left(   \mathrm{diag}( y_{i})_{i} -(x_{ i, j}  m_{ {j}})_{ {i}, {j}}+ ( d_{ {i}}d_{{j}} m_{ {j}})_{ {i}, {j}}     \right)  .
                    \end{multline*} 
Soient $$z_{\mu} =  (2g-2)\sum_{ \nu\in {{{N}}}}a_{\nu}\mathrm{min}\{\mu,\nu\} ,$$ et $$ \omega_{\mu,\nu}=(2g-2)\mathrm{min}\{ \mu,\nu       \} , $$ pour $\mu,\nu\in {{{N}}}$.  En utilisant encore une fois le lemme \ref{det}, on voit que $   \kappa(\mathrm{M}) $ est égal à
\begin{multline*}
        \frac{\prod_{ \Pi_i  \in I}  \biggr(y_{i}^{\alpha_{i}-1}     \prod_{s=1}^{\alpha_{i}}l_{i, s} \biggr) }{(\sum_{\nu\in {{{N}}} }     a_{\nu}  )^{2}    }\det( \mathrm{diag}(z_{\nu_i} d_{i} )_{i} -(\omega_{\nu_i,  \nu_j} m_{{j}}d_{{i}}d_{{j}})_{{i}, {j}}+  (d_{ {i}}d_{ {j}}m_{ {j}})_{{i},{{j}}}    ) = \\        
                                                  \frac{(\prod_{\mu{\in {{{N}}}  }}   z_{\mu} ^{ |I_{\mu}| -1}  )  \prod_{ \Pi_i \in I}  \biggr(d_{i}y_{i}^{\alpha_{i}-1}     \prod_{s=1}^{\alpha_{i}}  l_{i,s} \biggr) }{(\sum_{\nu\in {{{N}}} }     a_{\nu}  )^{2}    } \det (   \mathrm{diag}(z_{\mu})_{\mu} - (\omega_{\mu,\nu} a_{\nu} )_{\mu,\nu} +  ( a_{\nu})_{\mu,\nu} )   .
                   \end{multline*}  
              De plus, comme la somme des coefficients de chaque ligne de la matrice $\mathrm{diag}(z_{\mu})_{\mu} - (\omega_{\mu,\nu} a_{\nu} )_{\mu,\nu} $ est nulle, par l'égalité entre les valeurs $a.$ et $d.$ du lemme \ref{kappa},  l'expression ci-dessus peut s'écrire comme \begin{equation}\label{kM1}  |{{{N}}}|   \frac{(\prod_{\mu{\in {{{N}}}  }}   z_{\mu} ^{ |I_{\mu}| -1}  )  \prod_{ \Pi_i \in I}  \biggr(d_{i}y_{i}^{\alpha_{i}-1}     \prod_{s=1}^{\alpha_{i}}l_{i,s} \biggr) }{  \sum_{\nu\in {{{N}}} }     a_{\nu}   } \kappa (   \mathrm{diag}(z_{\mu})_{\mu} - (\omega_{\mu,\nu} a_{\nu} )_{\mu,\nu}  ) .\end{equation}

\sloppy
Pour calculer $\kappa (   \mathrm{diag}(z_{\mu})_{\mu} - (\omega_{\mu,\nu} a_{\nu} )_{\mu,\nu}  )$, on va transformer la matrice $  \mathrm{diag}(z_{\mu})_{\mu} - (\omega_{\mu,\nu} a_{\nu} )_{\mu,\nu} $ en une matrice triangulaire supérieure. 
On ordonne l'ensemble ${{{N}}}$ de telle sorte que $${{{N}}}=\{\nu_{1},\ldots,\nu_{|{{{N}}}|}\},$$ et $$\nu_{1}<\nu_{2}<\cdots<\nu_{|{{{N}}}|} . $$   
Soit $P$ la matrice de taille $ |{{{N}}}| \times |{{{N}}}| $, qui est triangulaire inférieure dont tous les éléments sous-diagonaux sont $1$ :
$$P=\begin{pmatrix}
1 &0 &0 &\cdots &0   \\
 1 & 1 &0 &   \cdots    &0 \\
  1 & 1 &1 &   \cdots    &0 \\
   &     & \ddots & &  \\
   1&   1   &    \cdots        &  & 1
    \end{pmatrix}    \   
\text{d'inverse} \ P^{-1}= \begin{pmatrix}
1 &0 &0 &\cdots &0   \\
 -1 & 1 &0 &   \cdots    &0 \\
  0 & -1 &1 &   \cdots    &0 \\
   &     & \ddots & &  \\
   0&   0  &    \cdots        &  -1& 1
    \end{pmatrix}   . 
    $$
On considère la matrice: 
$$P^{-1} \left( \mathrm{diag}(z_{\mu})_{\mu} - (\omega_{\mu,\nu} a_{\nu} )_{\mu,\nu} \right)P .$$
Après un calcul direct, on trouve que c'est une matrice triangulaire supérieure telle que les éléments diagonaux sont $(2g-2)$  multiplié par $$0,\  \nu_{1}(\sum_{i=1}^{{|{{{N}}}|}}a_{\nu_{i}}),\  \nu_{1}a_{\nu_{1}}+\nu_{2}(\sum_{i=2}^{{|{{{N}}}|}}a_{\nu_{i}}),\ \ldots,\  (\sum_{i=1}^{|{{{N}}}|-1}\nu_{i}a_{\nu_{i}})+\nu_{{{|{{{N}}}|-1}}} a_{\nu_{{{|{{{N}}}|}}}} . $$
En utilisant encore une fois le lemme \ref{kappa} (l'égalité entre $(a.)$ et $(b.)$), on a
\begin{equation} \kappa (   \mathrm{diag}(z_{\mu})_{\mu} - (\omega_{\mu,\nu} a_{\nu} )_{\mu,\nu}  )=\frac{(2g-2)^{|{{{N}}}|-1 } \prod_{\mu\in {{{N}}}}  (\sum_{ \nu\in {{{N}}}} a_{\nu} \mathrm{min}\{\mu,\nu\} ) }{  n  |{{{N}}}   |  } , \end{equation}
où on a utilisé le fait que $n=\sum_{\nu\in N} a_\nu \nu$ pour simplifier l'expression. 

On conclut que la valeur cherchée (l'expression (\ref{kM1})) est égale à
\begin{multline}\label{CON}
\frac{ |w| (\prod_{ \Pi_i \in I}d_{i})   (2g-2)^{|I|-1} \prod_{\mu\in {{{N}}}}  (  \sum_{\nu\in {{{N}}}} a_{\nu} \mathrm{min}\{\mu,\nu\}    )^{|I_{\mu}|}  }{ n  \sum_{\nu \in {{{N}}}} a_{\nu}   }       \\
\prod_{ \Pi_i \in I }    ( |\Fix( \Pi_i )|m_{i}+ (2g-2)d_{i} \sum_{ \nu\in {{{N}}}}a_{\nu}\mathrm{min}\{\nu_i,\nu\}  )^{\alpha_{i}-1}, 
\end{multline}
où, par definition, $|w|= \prod_{\Pi_i \in I} \prod_{j=1}^{\alpha_{i}}l_{i,j} $. 
\end{proof}

\subsection{Calcul de la somme $\sum_{(w,1)}$ dans la proposition \ref{expr1}}
\begin{prop}\label{ee}
Avec les notations de \ref{notations}, quand $(e, n)=1$, on a que $J_e$ est égal à
\begin{multline} \sum_{l\mid n}  \sum_{(P, \tilde{\pi})}     \mu(l)  \frac{1}{ |\stab(P,\tilde{\pi})|(2g-2) n  \sum_{\nu\in {{{N}}}} a_{\nu}   }    \\   \prod_{ \Pi_i \in I} \begin{pmatrix}  S_{i}/\xi_{i}   \\  m_{i}/\xi_{i} \end{pmatrix} { |\Fix( \Pi_i )|^{m_{i}} (-1)^{\frac{m_{i}}{ \xi_{i}   }  } } m_{i}! ,  \end{multline} 
où la somme $\sum_{l\mid n}$ porte sur les diviseurs $l$ de $n$, la somme 
$\sum_{(P, \tilde{\pi})}$ porte sur un ensemble de bons représentants $(P, \tilde{\pi})$  des classes d'équivalence inertielle de paires discrètes telles que $\xi_i= \frac{l}{(l, |\Fix( \Pi_i)|)} \mid m_{i}$ pour tout $ \Pi_i\in I$ et $S_{i}=- \frac{ (2g-2)d_{i} \sum_{ \nu\in {{{N}}}}a_{\nu}\mathrm{min}\{\nu_i,\nu\} }{|\Fix( \Pi_i)|} $. 
\end{prop}
\begin{proof}
Par l'expression de $J_e$ donnée dans la  proposition \ref{expr1}, avec les résultats des calculs dans la proposition \ref{ABCD} et le théorème \ref{Matr},  le nombre $J^{ }_{e}$ est la somme $$\sum_{l\mid n}$$
 portant sur les diviseurs $l$  de $ n$, la somme $$\sum_{(P,\tilde{\pi})}$$
 portant sur un ensemble de bons représentants $(P, \tilde{\pi})$ des classes d'équivalence inertielle des paires discrètes partout  non-ramifiées, et la somme  $$\sum_{(w,1)\in \stab(P, \tilde{\pi})}$$ 
 portant sur les $(w,1)\in \stab(P, \tilde{\pi})$ tels que les longueurs des cycles $l_{i,j}$ de $w$  satisfaisants $l\mid  l_{i, j}|\Fix(\Pi_i)|$ pour tout $ \Pi_i \in I, $et tout $j$ (cf. \ref{notations}), de l'expression:
\begin{multline}\label{Better}
 \frac{\mu(l)}{|\stab(P, \tilde{\pi})|}    \frac{  (\prod_{ \Pi_i\in I}d_{i})    (2g-2)^{|I|-1} \prod_{\mu\in {{{N}}}}  (  \sum_{\nu\in {{{N}}}} a_{\nu} \mathrm{min}\{\mu,\nu\}    )^{|I_{\mu}|}  }{ n  \sum_{\nu\in {{{N}}}} a_{\nu}   } \\  \prod_{\Pi_i \in I } \biggr( |\Fix(\Pi_i)|^{m_i-1 }  (-1)^{{m_i}/{\xi_i}}  \biggr)    \\
\prod_{ \Pi_i \in I} ( - \frac{|\Fix( \Pi_i)|m_{i}+ (2g-2)d_{i} \sum_{ \nu\in {{{N}}}}a_{\nu}\mathrm{min}\{\nu_i,\nu\}}{|\Fix( \Pi_i)|}  )^{\alpha_{i}-1} ,
  \end{multline} 
où $\xi_{i}= \frac{l}{(l, |\Fix( \Pi_i)|)}$ et on a utilisé les identités
$$\prod_{j=1}^{\alpha_{i}}|\Fix(\Pi_i)|^{{l}_{i, j}-1}=|\Fix( \Pi_i)|^{m_{i}-1}|\Fix( \Pi_i )|^{1-\alpha_{i}}  \quad \forall \Pi_i \in I, $$
et 
$$(-1)^{ \sum_{ \Pi_i \in I} \sum_{j}({l_{i, j} }/{  \xi_{i} }-1)      }=\prod_{ \Pi_i \in I}(-1)^{({m_{i}}/{ \xi_{i} }-1)-(\alpha_{i}-1)}.$$

On observe que la première ligne de $(\ref{Better})$ ne dépend pas de $w$ et la deuxième ligne de $(\ref{Better})$ ne dépend que des longueurs des cycles des facteurs de $w$ dans chaque $\mathfrak{S}_{m_{i}}$ pour $\Pi_i\in I$. Soient $\xi, m\in \mathbb{N}^{*}$.
On utilise la notation $$(j^{c_{j}})\gls{vdashxi} m$$ si $(j^{c_{j}})=(1, \ldots, 1, 2, \ldots, 2, \ldots)$ (chaque $j$ est répété $c_j$ fois)   est une partition non-ordonnée de $m$ telle que $\xi\mid j$ dès que $c_j\neq 0$.  Notons que pour une partition $(j^{c_{j}})$ de $m_{i}$ le nombre des permutations dans $\mathfrak{S}_{m_i}$ dont les cycles ont pour longueurs  $(j^{c_j})$ est  $ \frac{m_{i}!}{\prod_{j}c_{j}!  \prod_{j} j^{c_{j}}}$.
De plus, pour tout $\Pi_i \in I$, avec $\xi_i= \frac{l}{(l, |\Fix( \Pi_i)|)}$, 
une partition $(l_{i, 1},\ldots, l_{i, \alpha_i})$ de $m_{i}$  satisfait $$l\mid l_{i ,j} |\Fix( \Pi_i )|, \quad  \forall\ j=1, \ldots, \alpha_i, $$ si et seulement si  $$\xi_{i}\mid l_{i, j} ,\quad   \forall j=1, \ldots, \alpha_i.$$

Considerer la somme  $$  \sum_{(w,1)\in \stab(P, \tilde{\pi})} \prod_{ \Pi_i \in I} ( - \frac{|\Fix( \Pi_i)|m_{i}+ (2g-2)d_{i} \sum_{ \nu\in {{{N}}}}a_{\nu}\mathrm{min}\{\nu_i,\nu\}}{|\Fix( \Pi_i)|}  )^{\alpha_{i}-1} ,  $$
qui est la somme portant sur les  $w$  de l'expression dans la deuxième ligne d'equation (\ref{Better}), elle est égale à
 \begin{multline*}\sum_{\{(j^{c_{i, j}  } )\vdash_{\xi_i} m_i|\ \Pi_i \in I \}}  \frac{m_{i}!}{\prod_{j}c_{i, j}^{}!  \prod_{j} j^{c_{i, j}  }}  \\ 
   \prod_{ \Pi_i \in I} ( - \frac{|\Fix( \Pi_i )|m_{i}+ (2g-2)d_{i} \sum_{ \nu\in {{{N}}}}a_{\nu}\mathrm{min}\{\nu_i,\nu\}}{|\Fix( \Pi_i )|}  )^{\sum_{j}c_{i, j} -1} .
\end{multline*}
La condition nécessaire pour que la somme ne soit pas vide est $\xi_i\mid m_{i}$.

\begin{lemma}
Soit $S\in \mathbb{C}^\times$. Soient $\xi, m$ deux nombres naturels tels que $\xi\mid  m$. On a l'identité:
\begin{equation} m! \sum_{(j^{c_{j}})\vdash_{\xi} m}  \frac{1}{\prod_{j}c_{j}!  \prod_{j} j^{c_{j}}}S^{\sum_{j}c_{j}-1}= {(m-1)!} \binom{\frac{S}{\xi}+\frac{m}{\xi}-1}{ \frac{m}{\xi}-1}.
\end{equation}
\end{lemma}
\begin{proof}[Preuve du lemme]
Considerons la fonction génératrice suivante: 
$$ \sum_{\{n\in\mathbb{N}| \ \xi | n\}} \sum_{(j^{c_j}) \vdash_{\xi} n}  \frac{1}{\prod_{j\geq 1}{c_j ! j^{c_j}} }S^{\sum_{j\geq 1}c_j } z^{n} ,$$
elle est égale à $$\exp(S\sum_{\{v\in \mathbb{N}^{*}| \xi \mid v\}}\frac{z^{v}}{v})=\exp(-\frac{S}{\xi}\ln(1-z^{\xi}))=\frac{1}{(1-z^{\xi})^{S/\xi}}    .$$
On obtient l'identité voulue  par une comparaison de coefficients. 
\end{proof}

Donc si $\xi_i\mid m_{i}$, $\forall\  \Pi_i \in I $,  la somme portant sur les $w$ de l'expression (\ref{Better}) est égale à 
\begin{multline*}
\frac{ (\prod_{ \Pi_i \in I}d_{i})  (2g-2)^{|I|-1}  \prod_{\mu\in {{{N}}}}  (  \sum_{\nu\in {{{N}}}} a_{\nu} \mathrm{min}\{\mu,\nu\}    )^{|I_{\mu}|}  }{ n  \sum_{\nu\in {{{N}}}} a_{\nu}   }       \\
  \prod_{ \Pi_i \in I  }(-1)^{\frac{m_{i}}{ \xi_i }  -1} \begin{pmatrix}-1+ \frac{  S_{ i}}{\xi_{ i}} \\ \frac{m_{ i}}{\xi_{ i}} -1  \end{pmatrix}   |\Fix({ \Pi_i })|^{m_{ i}-1 }{(m_{ i}-1)!},
\end{multline*}
où $S_{i}=- \frac{ (2g-2)d_{ i} \sum_{ \nu\in {{{N}}}}a_{\nu}\mathrm{min}\{\nu_i ,\nu\} }{|\Fix( \Pi_i )|} $.  Cette expression permet la simplification suivante:
$$\frac{1}{ (2g-2) n  \sum_{\nu\in {{{N}}}} a_{\nu}   }\prod_{ \Pi_i \in I }\begin{pmatrix}  S_{i}/\xi_{i}   \\ m_{i}/\xi_{i} \end{pmatrix} { |\Fix( \Pi_i )|^{m_{i}} (-1)^{\frac{m_{i}}{ \xi_{ i}   }  } }m_{ i}! . $$
Sinon, la somme  est égale à zéro.
\end{proof}

\subsection{Fin de la démonstration du théorème \ref{b}}\label{4.8}
\label{MoWa}
Dans cette sous-section, on achève la démonstration du théorème \ref{b}. Dans la proposition \ref{ee}, on a obtenu une expression pour $J_e$ qui est une somme indexée par les classes d'équivalence
inertielle des paires discrètes $(P, \pi)$ partout non-ramifiées. 
La stratégie est d'ensuite utiliser la description du spectre résiduel due à Moeglin-Waldspurger pour transformer cette somme en une somme sur les paires cuspidales. On obtient alors l'expression \ref{Paren}. Le problème restant est alors essentiellement combinatoire et nous utilisons le lemme \ref{THEO}  pour exprimer le nombre de représentations automorphes cuspidales en fonction de celui de représentations automorphes absolument cuspidales.

\subsubsection{Passer de la paire discrète à la paire cuspidale}

On rappelle les symboles et les nombres  (de \ref{notations}) qu'on continuera à utiliser (avec une petite modification).
Soit $(P, \tilde{\pi})$ un bon représentant d'une paire discrète partout non-ramifiée, c'est-à-dire:
$$\tilde{\pi}=\underbrace{ \Pi_1\otimes\cdots\otimes  \Pi_{1}}_{m_{1}}\otimes \underbrace{  \Pi_{2}\otimes\cdots\otimes  \Pi_2 }_{m_{2}}\otimes\cdots \otimes\underbrace{ \Pi_{k}\otimes\cdots\otimes \Pi_{k}}_{m_{k}} ,$$
et $\Pi_i=\pi_i\boxtimes \nu_i$ sont deux-à-deux non inertiellement équivalentes. 
Soit $$I({\tilde{\pi}})=\{ \Pi_1, \ldots,   \Pi_k\},$$
$$I_{j} =\{   \Pi_i \in I({\tilde{\pi}) } \mid \nu_i=j  \} \quad \forall j\in \mathbb{N}^{*}.  $$
Pour tout $j\in \mathbb{N}^{*}$,  soit 
\begin{equation}  a_{j}=\sum_{ \Pi_i \in I_{j}} m_{i} d_i, \end{equation} où $d_i $ est le rang de $\pi_i $. 
Notons qu'on a \begin{equation}\label{njaj} n=\sum_{j\geq 1 }   ja_{j}, \end{equation}
i.e. $(j^{a_j})$, les entiers naturels  $j$ répétés $a_j$ fois,  est une partition de $n$. 
On va regrouper la somme de la proposition \ref{ee} sur les paires discrètes partout non-ramifiées $(P, \tilde{\pi})$ suivant cette partition. 


Soit $(P, \tilde{\pi})$ un bon représentant d'une classe d'équivalence inertielle des paires discrètes de $G$.   On lui associe les pairs discrètes $(P_j, \tilde{\pi}_j)$ de $G_{ja_j}$ pour $j\geq 1$, de façon que $$M_{P_j}=\prod_{\Pi_i \in I_j }(G_{jd_i})^{m_{i}},$$ et $$\tilde{\pi}_j= \bigotimes_{\Pi_i \in I_j} \Pi_i^{\otimes m_{i}}.$$ C'est-à-dire, $\tilde{\pi}$ est un produit tensoriel de $\tilde{\pi}_j$ et les facteurs de $\tilde{\pi}_j$ sont des représentations automorphes discrètes de la forme $\pi\boxtimes j$ avec $\pi$ automorphe cuspidale. 
La classe d'équivalence inertielle de $(P_j, \tilde{\pi}_j)$ est uniquement déterminée par la classe d'équivalence inertielle de $(P, \tilde{\pi})$. De plus on a $$\frac{1}{|\stab(P, \tilde{\pi})|}=\prod_{j\geq 1}\frac{1}{|\stab(P_j, \tilde{\pi}_j)|} .$$

Par le théorème de la classification du spectre automorphe résiduel  de Moeglin-Waldspurger (le théorème \ref{MWres}) et la proposition  \ref{res e}, l'ensemble des classes d'équivalence inertielle des représentations discrètes $\pi\boxtimes j$ pour $\pi$ cuspidale et   $j$ fixé est en bijection avec l'ensemble des classes d'équivalence inertielle des représentations cuspidales $\pi$.  De plus,  on a $|\Fix(\pi\boxtimes j)|=|\Fix(\pi)|$. Soit $$\glslink{Silambda}{S_i(\lambda)}:=\sum_{j\geq 1} a_j \min\{i, j\}, $$ 
alors le nombre $S_i$ défini dans la proposition \ref{ee} satisfait (rappelons que $\nu_i=j$)
$$S_i=- \frac{ (2g-2)d_{i} \sum_{ \nu\in {{{N}}}}a_{\nu}\mathrm{min}\{ \nu_i ,\nu\} }{|\Fix( \Pi_i)|} =-\frac{(2g-2)d_i S_{j}(\lambda)}{|\Fix(\pi_i)|}. $$
On note $(P,\tilde{\pi})<_{cusp}G_k$ si $(P, \tilde{\pi})$ est une classe d'équivalence inertielle de paire discrète partout-non-ramifiée de $G_k$ avec $\tilde{\pi}$ une représentation automorphe cuspidale de $M_P(\AAA)$. 
Quand $(n,e)=1$, par la proposition \ref{ee},  $J^{ }_e$ est égal à 
\begin{multline}\label{Paren} \sum_{l\mid n}\sum_{ \lambda=(j^{a_j})\vdash n}  \frac{\mu(l)}{(2g-2) n \sum_{j\geq 1}a_{j}      } \prod_{j\geq 1} \\
\biggr(   \sum_{ \substack{ (P, \tilde{\pi})<_{cusp}G_{a_{j}}\\ \xi_{i}\mid m_i  , \forall \pi_i \in I({\tilde{\pi}})    }} \frac{1}{|\stab(P,\tilde{\pi})|}\prod_{ \pi_i\in I({\tilde{\pi}})}  \binom{ - \frac{ (2g-2)d_i  S_j(\lambda)      }{ \xi_{i}  |\Fix( \pi_i)|} }{ m_{ i}/\xi_{ i}} { |\Fix( \pi_i )|^{m_{i}} (-1)^{\frac{m_{ i}}{ \xi_{ i}   }  } }m_{ i}!      \biggr) .   \end{multline}

\subsubsection{}
On considère la somme entre parenthèses de la deuxième ligne de l'expression (\ref{Paren}), à savoir pour chaque entier $l$ divisant $n$, chaque partition $\lambda$ de $n$, et $j=j_0$:
\begin{equation}\label{EA} \begin{split}
 & \sum_{ \substack{ (P, \tilde{\pi})<_{cusp}G_{a_{j_0}}\\ \xi_{i}\mid m_i  , \forall \pi_i \in I({\tilde{\pi}})    }}  \frac{1}{|\stab(P, \tilde{\pi})|} \\  & \prod_{ \pi_i\in I({\tilde{\pi}})}  \binom{ - { (2g-2)d_i   S_{j_0}(\lambda)      }/{ \xi_{ i}  |\Fix( \pi_i )|} }{ m_{ i}/\xi_{ i}} { |\Fix( \pi_i )|^{m_{ i}} (-1)^{\frac{m_{ i}}{ \xi_{ i}   }  } }m_{ i}!    , \end{split}\end{equation}
où la somme est prise sur un ensemble des bons représentants $(P, \tilde{\pi})$ des classes d'équivalence inertielle des paires discrètes de $G_{a_{j_0}}$ tels que $\xi_i\mid m_i$ pour tout $\pi_i\in I(\tilde{\pi})$. 

On va regrouper cette somme suivant le groupe $\stab(P, \tilde{\pi})$. On veut expliciter l'expression \begin{equation}\label{produip}\prod_{ \pi\in I({\tilde{\pi}})}  \binom{ - { (2g-2) d_i   S_{j_0}(\lambda)      }/{ \xi_{ i}  |\Fix( \pi_i )|} }{ m_{ i}/\xi_{i}} { |\Fix( \pi_i )|^{m_{ i}} (-1)^{\frac{m_{ i}}{ \xi_{ i}   }  } }m_{ i}!  \end{equation}
et le nombre $$|\stab(P, \tilde{\pi})|. $$
Soit $(P, \tilde{\pi})<_{cusp}G_{a_{j_0}}$, tel que $M_P=\prod_{j\geq 1}G_j^{c_j}$. Cela fournit une partition $(j^{c_j})$ de ${a_{j_0}}$. 
\begin{center}
$(\ast)$ 
Soit $k_{d}^{j}$ le nombre de facteurs simples (avec multiplicités) $\pi$ de $\tilde{\pi}$ dont le rang est égal à $j$ et $|\Fix(\pi)|=d$. 
\end{center}
Nécessairement $d\mid j$ (autrement dit $k_d^j=0$ si $d\nmid j$) et $$\sum_{d\mid j}   k_d^j= c_j, \quad \forall\ j\geq 1.$$ 
\begin{center}
$(\ast\ast)$ 
Pour $i\geq 1$, soit $b_{d,i}^{j}$ le nombre de facteurs simples distincts (non équivalents inertiellement) $\pi$ de $\tilde{\pi}$ dont la multiplicité dans $\tilde{\pi}$ est $i$, le rang est $j$ et le cardinal du fixateur $|\Fix(\pi)|=d$. 
\end{center}
Pour chaque $d,j$, cela donne une partition $(i^{b_{d,i}^j})\vdash k_{d}^j$. Avec ces notations, on peut écrire \eqref{produip} comme
\begin{equation}
 \prod_{j\geq 1}\prod_{d\mid j} \prod_{i \geq 1}  \left(    \binom{-(2g-2)j S_{j_0}(\lambda)/d\frac{l}{(l,d)} }{i/\frac{l}{(l,d)}  }^{b_{d,i}^j}   d^{i b_{d,i}^j }(-1)^{ib_{d,i}^j/\frac{l}{(l,d)}}(i!)^{b_{d,i}^j}\right), 
\end{equation}
et on a 
 \begin{equation}\frac{1}{|\stab(P, \tilde{\pi})|}=\prod_{j\geq 1}\prod_{d\mid j}\frac{1}{ d^{k_{d}^{j}} \prod_{i\geq 1} (i!)^{b_{i,j}^d}  }. \end{equation}
Donc leur produit, après une simplification, est égal à
\begin{equation}
\prod_{j\geq 1}\prod_{d\mid j}\left(      (-1)^{k_d^j/ \frac{l}{(l,d)}}    \prod_{i\geq 1}  \binom{-(2g-2)j S_{j_0}(\lambda)/d\frac{l}{(l,d)} }{i/\frac{l}{(l,d)}  }^{b_{d,i}^j} \right). 
\end{equation}

Soit  $\gls{Dnd}$ le nombre des classes d'équivalence inertielle des représentations automorphes cuspidales $\pi$ de $G_n(\AAA)$ telles que $|\Fix(\pi)|=d$. 
Si on se donne une partition $(j^{c_j})\vdash {a_{j_0}}$, des entiers non-négatifs $k^j_d$ tels que $\sum_{d\mid j}k_d^j= c_j$ et des partitions $(i^{b_{d,i}^j})\vdash k^{j}_d$, le nombre de classes d'équivalence inertielle  des paires discrètes  $(P, \tilde{\pi})$ de $G_{a_{j_0}}$  avec $\tilde{\pi}$ cuspidales correspondantes à ces données, c'est-à-dire $(\ast)$ et $(\ast\ast)$ ci-dessus sont satisfaites, est égal à
  \begin{equation} \begin{split}  \prod_{j\geq 1}\prod_{d\mid j}\frac{D_{j}(d)(D_{j}(d)-1)\cdots (D_{j}(d)+1-\sum_{i}b_{d,i}^j  )}{  \prod_{i\geq 1}b_{d,i}^j! } \\ 
  =   \prod_{j\geq 1}\prod_{d\mid j} \binom{D_{j}(d)}{\sum_{i}b_{d,i}^j}\binom{\sum_{i}b_{d,i}^j}{b_{d,1}^j,b_{d,2}^j,\cdots}.  \end{split} \end{equation}

La somme (\ref{EA}) est égale à la somme portant sur toutes les partitions $(j^{c_j})$ de $a_{j_0}$, la somme portant sur tous les entiers non-négatifs $(k_d^j)_{d\mid j}$ tels que $ \sum_{d\mid j}k_{d}^{j} =c_{j}$ et $\frac{l}{(l,d)}\mid k_{d}^{j}$, et la somme portant sur toutes les partitions $(i^{b_{d,i}^j})$ de $k_d^j$ telles que  $b_{d,i}^j=0$ si  $ \frac{l}{(l,d)} \nmid i$ ($i\geq 1$), de l'expression
   \begin{equation} \label{EqA} \prod_{j\geq 1}\prod_{d|j}    (-1)^{k_d^j/ \frac{l}{(l,d)}}  \binom{D_{j}(d)}{\sum_{i}b_{d,i}^j}     \binom{\sum_{i}b_{d,i}^j}{b_{d,1}^j,b_{d,2}^j,\cdots}
  \prod_{i\geq 1}  \binom{-(2g-2)j S_{j_0}(\lambda)/d\frac{l}{(l,d)} }{i/\frac{l}{(l,d)}  }^{b_{d,i}^j}
  \end{equation} 
 
 Par un changement de l'ordre de sommation, pour chaque $(j^{c_j})$ et $(k_d^j)_{d\mid j}$,  la somme portant sur toutes les partitions $(i^{b_{d,i}^j})$ de $k_d^j$ telles que  $b_{d,i}^j=0$ si  $ \frac{l}{(l,d)} \nmid i$ ($i\geq 1$), de l'expression \eqref{EqA} est égale à 
  \begin{multline} \label{EB}
 \prod_{j\geq 1}\prod_{d|j}       (-1)^{k_d^j/ \frac{l}{(l,d)}} \\
   \Biggl(    \sum_{     (i^{b_{d,i}^j     }     )\vdash_{\frac{l}{(l,d)}} k_{d}^j }  \binom{D_{j}(d)}{\sum_{i}b_{d,i}^j}\binom{\sum_{i}b_{d,i}^j}{b_{d,1}^j,b_{d,2}^j,\cdots}
  \prod_{i\geq 1}  \binom{-(2g-2)j S_{j_0}(\lambda)/d\frac{l}{(l,d)} }{i/\frac{l}{(l,d)}  }^{b_{d,i}^j}\Biggl), 
  \end{multline}
rappelons que l'on a noté $(i^{b_{i}})\gls{vdashxi} k$ si $(i^{b_{i}})$ (chaque $i$ est répété $b_i$ fois)   est une partition non-ordonnée de $k$ telle que $b_i= 0$ si $\xi \nmid i$.  
Par le lemme \ref{colemm} ci-dessous, l'expression (\ref{EB}) peut-être simplifiée comme: 
\begin{equation} \prod_{j\geq 1}\prod_{d|j} (-1)^{\frac{k_{d}^{j}}{l/(l,d)} }       \binom{ -(2g-2){S_{j_0}(\lambda)} D_{j}(d) {j}/{d \frac{l}{(l,d)}   }     }{   k_{d}^{j}/  \frac{l}{(l,d)}}. 
\end{equation}

\begin{lemm}\label{colemm}
Soient $S,D\in \mathbb{C}$, et $\xi, k\in \mathbb{N}$ tels que $\xi \mid k $. On a l'identité:
$$\sum_{(i^{b_{i}})\vdash_{\xi} k}  \binom{D}{\sum_{i}b_{i}}\binom{\sum_{i}b_{i}}{b_{1},b_{2},\cdots}   \prod_{i} \begin{pmatrix} S\\
i/\xi
\end{pmatrix}^{b_{i}}= \begin{pmatrix}DS\\ k/\xi
\end{pmatrix} .$$ 
\end{lemm}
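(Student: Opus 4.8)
The identity to prove is a purely combinatorial one:
\[
\sum_{(i^{b_{i}})\vdash_{\xi} k}\binom{D}{\sum_{i}b_{i}}\binom{\sum_{i}b_{i}}{b_{1},b_{2},\dots}\prod_{i}\binom{S}{i/\xi}^{b_{i}}=\binom{DS}{k/\xi}.
\]
My plan is to prove it via generating functions, exactly as the previous lemma in the excerpt (the one with $\exp(S\sum_{\xi\mid v}z^{v}/v)$) was proved. The key observation is that the condition $(i^{b_i})\vdash_\xi k$ means $\xi\mid i$ whenever $b_i\neq 0$ and $\sum_i i\,b_i=k$; writing $i=\xi i'$ with $i'\geq 1$, this becomes a partition-type sum indexed by the $i'$ with $\sum_{i'} i' b_{\xi i'} = k/\xi$. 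So it suffices to prove the cleaner statement
\[
\sum_{(\text{all }b_{i'}\geq 0,\ \sum i' b_{i'}=m)}\binom{D}{\sum b_{i'}}\binom{\sum b_{i'}}{b_1,b_2,\dots}\prod_{i'}\binom{S}{i'}^{b_{i'}}=\binom{DS}{m},
\]
where $m=k/\xi$, and then substitute back.

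\textbf{Key steps.} First I would recognize $\binom{D}{\sum b_{i'}}\binom{\sum b_{i'}}{b_1,b_2,\dots}=\frac{D!}{(D-\sum b_{i'})!\,\prod_{i'}b_{i'}!}$, which is the multinomial coefficient $\binom{D}{b_1,b_2,\dots,D-\sum b_{i'}}$ selecting an ordered assignment: choose $b_1$ of the $D$ ``slots'' to carry weight-$1$ objects, $b_2$ to carry weight-$2$ objects, and so on. Second, this makes the left side the coefficient of $z^{m}$ in $\left(1+\sum_{i'\geq 1}\binom{S}{i'}z^{i'}\right)^{D}=(1+z)^{SD}$ by Newton's generalized binomial theorem applied twice (once to $\sum_{i'\geq 0}\binom{S}{i'}z^{i'}=(1+z)^{S}$, then $((1+z)^{S})^{D}=(1+z)^{SD}$). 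Third, $[z^{m}](1+z)^{SD}=\binom{SD}{m}$, which is the right side. Fourth, translate back: $m=k/\xi$, so the right side of the original lemma is $\binom{DS}{k/\xi}$, as claimed. The passage from the $b_i$-indexed sum (with the $\xi$-divisibility constraint) to the $b_{i'}$-indexed sum is a purely bookkeeping reindexing $b_{i'}:=b_{\xi i'}$ and should be dispatched in one line.

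\textbf{Main obstacle.} There is no serious obstacle here; this is a standard binomial identity. The only point requiring a little care is making sure the formal power series manipulations are legitimate — since $D$ and $S$ are arbitrary complex numbers, one works in $\mathbb{Q}(S,D)[[z]]$ (or argues first for $S,D\in\mathbb{N}$ and invokes the polynomial identity theorem, since both sides are polynomials in $S$ and in $D$ of bounded degree for fixed $k,\xi$), exactly as the preceding lemma's proof in the excerpt handles its generating function. I would phrase the argument so that it parallels that earlier proof as closely as possible, to keep the exposition uniform.
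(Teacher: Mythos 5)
Your proposal is correct and follows essentially the same route as the paper: the paper also converts the multinomial sum into a sum over ordered compositions and identifies it with the coefficient of $z^{k}$ in $\bigl(1+\tilde S(z)\bigr)^{D}=(1+z^{\xi})^{SD}$ where $\tilde S(z)=(1+z^{\xi})^{S}-1$, which is exactly your generating-function argument, only with $z^{\xi}$ kept in place instead of your preliminary reindexing $i=\xi i'$. The difference is purely cosmetic, and your remark about justifying $(\,(1+z)^{S})^{D}=(1+z)^{SD}$ for complex $S,D$ (formal series or polynomiality in $S,D$) is the right way to handle the only delicate point.
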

\begin{proof}
Tout d'abord, on note que $\binom{\sum_{i}b_{i}}{b_{1},b_{2},\cdots} $ est le nombre des partitions ordonnées dont la partition non-ordonnée associée est égale à  $(i^{b_{i}})$. Donc la somme à gauche est égale à 
$$\sum_{s\geq 1} \sum_{\{(\lambda_{1}, \ldots, \lambda_s )\in (\xi \mathbb{N}^*)^s|\ \lambda_{1}+\cdots+\lambda_{s}=k\} } \begin{pmatrix}D\\ s
\end{pmatrix}\prod_{i=1}^{s} \begin{pmatrix}S\\ \lambda_{i} /\xi
\end{pmatrix}. 
 $$
Soit $$\tilde{S}(z)=\sum_{\{i\geq 1|\ \xi\mid i      \}}\begin{pmatrix}S\\ i/\xi
\end{pmatrix} z^{i}= (1+z^{\xi})^{S}-1 . $$
Alors la somme considérée  est égale au coefficient de $z^{k }$ de la série formelle  
$$\sum_{i\geq 0}\begin{pmatrix}D\\ i
\end{pmatrix} \tilde{S}(z)^{i}= (1+\tilde{S}(z))^{D}=(1+z^{\xi})^{SD}. $$
Ce coefficient est égal à $\binom{DS}{ k/\xi
}$. 
\end{proof}

\subsubsection{Calcul par la fonction génératrice}
 Considérons la fonction génératrice $T(z)$ définie par 
$$  \prod_{j\geq 1}\Biggl( \sum_{\substack{c\geq 0 \\ \frac{l}{(l,d)}\mid c     }} \sum_{  \sum_{d\mid j} k_{d}^{j}=\frac{c}{l/(l,d)} }\biggr(\prod_{d\mid j} (-1)^{k_{d}^{j}} \binom{ -(2g-2){S_{j_0}(\lambda)} D_{j}(d) {j}/{d \frac{l}{(l,d)}   }     }{   k_{d}^{j}   }   \biggr)  z^{jc}\Biggl).
 $$ 
\textit{La somme (\ref{EA}) est égale au coefficient de degré ${{a_{j_0}}}$ de la série formelle  $T(z)$} (notons que le terme constant dans le produit est égal à $1$, donc le produit infini a un sens).  On fait un changement de variables $c\mapsto i\frac{l}{(l,d)}$, il s'ensuit que
\begin{equation}T(z)=\prod_{j\geq 1}\prod_{d|j}\left(  \sum_{i\geq 0} (-1)^{i} \binom{ -(2g-2){S_{j_0}(\lambda)} D_{j}(d)  {j}/{d \frac{l}{(l,d)}   }     }{    i }    z^{ij\frac{l}{ (l,d)}}
 \right).  \end{equation}
 Utilisons les fonctions formelles $\exp$ et $\log$, on a 
 \begin{align} T(z)&=\exp\biggr(\sum_{j \geq 1}\sum_{  d|j }    -(2g-2){S_{j_0}(\lambda)} D_{j}(d)   \frac{j}{d\frac{l}{ (l,d)}} \log(1-z^{j\frac{l}{ (l,d)}}) \biggr)  \\ 
 &= \exp\biggr((2g-2){S_{j_0}(\lambda)} \sum_{j \geq 1}\sum_{  d|j }     D_{j}(d)\frac{j}{d\frac{l}{ (l,d)}} \sum_{k\geq 1}\frac{  z^{jk\frac{l}{ (l,d)}}}{k}\biggr) \nonumber .
 \end{align}

Il faut faire un lien entre $D_n(d)$ et les nombres $C_n(X_k)$, les nombres de classes d'équivalence inertielle de représentations automorphes irréductibles, absolument cuspidales (i.e. celles qui satisfont $|\Fix(\pi)|=1$) et partout non-ramifiées de $G_n(\AAA_k)$. Par définition $C_n(X_1)=D_n(1)$.  
Par la correspondance de Langlands (cf. \ref{PASSS}), $D_n(d)$ est égal au nombre de classes d'équivalence inertielle des représentations $\ell$-adiques $\sigma$ du groupe de Weil $W(X_1, o)$ de rang $n$ telles que $|\Fix(\sigma)|=d$. 
On a le lemme suivant. 
\begin{lemm}\label{THEO}
On a 
$$D_{n}(d)= \frac{1}{d} \sum_{m\mid d}\mu(m) C_{n/d}(X_{{d/m}}) , $$
où $\gls{mu}$ est la fonction de M\"obius. 
 \end{lemm}
\begin{proof}
On utilise la correspondance de Langlands et on analyse directement  le côté galoisien.  
Le groupe $(\Fr_{X}^{*})^{\mathbb{Z}}$ agit sur $E_n^{(\ell)}$, soit $O_{n}(d)$ le nombre des orbites de cardinal $d$. 

 L'ensemble des   $\sigma$, représentations $\ell$-adiques irréductibles de rang $n$  de $W(X_{1}, o)$ tels que $|\Fix(\sigma)|=d$ est  en bijection avec l'ensemble des orbites de cardinal $d$ dans $E_{n/d}^{(\ell)}$ sous l'action de $(\Fr_{X}^{*})^{\mathbb{Z}}$. En effet, un système local $\ell$-adique de rang $n/d$ dans une orbite de cardinal $d$, s'étend en une représentation  absolument irréductible de $W(X_d,o)$ par le lemme \ref{Descente}. Son induite $\sigma$ est une représentation irréductible de 
$W(X_1, o)$ telle que $|\Fix(\sigma)|=1$ par la direction (2.$\implies$1.) de la proposition \ref{GEO}. Toutes les extensions des systèmes locaux dans une orbite donnent des représentations inertiellement équivalentes par le lemme \ref{Descente}. Deux orbites disjointes donnent deux classes non-inertiellement équivalentes. 
 Réciproquement, une représentation irréductible $\sigma$ de $W(X_1, o)$ de rang $n$ telle que $|\Fix(\sigma)|=d$ est obtenue de telle façon par un système local $\ell$-adique de rang $n/d$ dans une orbite de cardinal $d$ par la direction (1.$\implies$2.) de la proposition \ref{GEO}. 
On a donc $$O_{n/d}(d) = D_{n}(d).$$

On a aussi que 
$$C_{r}(X_{d})=|(E_r^{(\ell)})^{\Fr_X^{*d}}|=\sum_{m\mid d}m O_{r}(m).$$
Donc par la formule d'inversion de Möbius des fonctions arithmétiques, on a 
$$D_{n}(d)=O_{n/d}(d)=\frac{1}{d}\sum_{m\mid d}\mu(m) C_{n/d}(X_{{d/m}}) .  $$
\end{proof}

Après ce lemme:
$$\log T(z)=(2g-2){S_{j_0}(\lambda)}\sum_{j\geq 1}\sum_{d| j} \sum_{m|d} \mu(m)C_{j/d }(X_{{d/m}}  ) \frac{j}{d^{2}\frac{l}{(l,d)}}  \sum_{k\geq 1} \frac{  z^{jk\frac{ l}{( l,d)}} }{k}  .  $$
On pose $j/d=s$ et $d/m=t$, alors:
$$\log T(z)=(2g-2){S_{j_0}(\lambda)}\sum_{s\geq 1}\sum_{t\geq 1} \sum_{m\geq 1}  \sum_{k\geq 1} \frac{  z^{smtk\frac{ l}{( l,mt)}} }{k} \mu(m)C_{s }(X_t ) \frac{smt}{(mt)^{2}\frac{ l}{( l,mt)}}  . $$
Il s'ensuit d'après une simplification que   
\begin{align*}
&\quad\frac{1}{(2g-2){S_{j_0}(\lambda)}}\log T(z)   \\ =&   \sum_{s\geq 1}\sum_{t\geq 1} \sum_{m\geq 1}  \sum_{k\geq 1} \frac{  z^{smtk\frac{ l}{( l,mt)}} }{k} \mu(m)C_{s }(X_{{t}}  ) \frac{s}{mt\frac{ l}{( l,mt)}}  \\
=& \sum_{N\geq 1} \Biggl( \sum_{\{s, t, k\geq 1|\  stk \mid N  \}  }  \sum_{\{m\geq 1|  \frac{ ml}{( l,mt)}= \frac{N}{stk} \} }   \frac{  C_{s }(X_{t}  ) s^{2}    }{  N}  \mu(m)  \Biggl) z^{N}\\
=& \sum_{N\geq 1} \Biggl( \sum_{ \{s, t \geq 1|\   st \mid N   \} }   \frac{  C_{s }(X_{t}  ) s^{2}    }{  N} \sum_{\{m\geq 1| \frac{ ml}{( l,mt)}| \frac{N}{st} \} }\mu(m)  \Biggl) z^{N} , 
\end{align*}  
où dans la seconde ligne, on a regroupé la somme suivante $N=\frac{ stkml}{( l,mt)} $, et dans la dernière ligne on a enlevé  $k$ de la somme (puisque $\sum_{N}\sum_{\{k,a\in \mathbb{N}|  ka=N\}}f(a)=\sum_{N}\sum_{a\mid N}f(a)$).

\begin{lemm}
Soit $t, l, L\in \mathbb{N}^{*}$ on a l'identité suivante: 
$$ \sum_{\{m\in \mathbb{N}^{*}|\  \frac{ ml}{( l,mt)}\mid L  \}}\mu(m) =\begin{cases}
1\quad \text{si $L=1$ et $l\mid t$}  ; \\
0\quad \text{sinon} .
\end{cases}
$$
\end{lemm}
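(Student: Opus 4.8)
The statement to prove is the Möbius-type identity
$$\sum_{\{m\in\mathbb{N}^*\mid \frac{ml}{(l,mt)}\mid L\}}\mu(m)=\begin{cases}1 & \text{if }L=1\text{ and }l\mid t;\\ 0 & \text{otherwise.}\end{cases}$$
The first thing I would do is rewrite the summation condition $\frac{ml}{(l,mt)}\mid L$ in a more tractable multiplicative form. Since $\mu$ is supported on squarefree integers, it suffices to range over squarefree $m$, and because everything in sight is multiplicative in the prime factorization of $m$, the natural strategy is to reduce to the case where $m$ is a prime power — in fact a single prime $p$ — and then assemble the answer by an Euler-product / inclusion–exclusion argument. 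Concretely, I would show that the set $S_{l,t,L}:=\{m\in\mathbb{N}^*\mid \frac{ml}{(l,mt)}\mid L\}$ is \emph{divisor-closed} (if $m\in S_{l,t,L}$ and $m'\mid m$ then $m'\in S_{l,t,L}$): this follows from the elementary fact that $\frac{m'l}{(l,m't)}\mid \frac{ml}{(l,mt)}$ whenever $m'\mid m$, which one checks prime by prime. Once $S_{l,t,L}$ is known to be divisor-closed, $\sum_{m\in S_{l,t,L}}\mu(m)$ equals $1$ if $S_{l,t,L}=\{1\}$ and $0$ otherwise, since for a nontrivial divisor-closed set containing some prime $p$ one gets the cancelling pair $\{1,p\}$ and more generally the sum over all divisors of $\mathrm{lcm}(S_{l,t,L})$ vanishes.

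So the crux becomes: determine exactly when $S_{l,t,L}=\{1\}$, i.e. when \emph{no} prime $p$ satisfies $\frac{pl}{(l,pt)}\mid L$. I would analyze $\frac{pl}{(l,pt)}$ for a prime $p$ according to the $p$-adic valuations $a:=v_p(l)$, $b:=v_p(t)$. A short computation with valuations gives
$$v_p\!\left(\frac{pl}{(l,pt)}\right)=a+1-\min(a,\,b+1)=\begin{cases}1 & \text{if }a\le b,\\ a-b & \text{if }a>b,\end{cases}$$
while for primes $q\ne p$ the valuation is $v_q(l)-\min(v_q(l),v_q(t))=\max(0,v_q(l)-v_q(t))$, which is $\le v_q(L)$ precisely when $l\mid tL$ at $q$. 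Thus $p\in S_{l,t,L}$ iff $l\mid tL$ (a condition on the primes $\ne p$) \emph{and} the $p$-part condition $v_p(\frac{pl}{(l,pt)})\le v_p(L)$ holds. From this one reads off that $S_{l,t,L}=\{1\}$ fails — i.e. some prime $p$ lies in $S_{l,t,L}$ — exactly when $L=1$ and $l\mid t$ both fail to hold simultaneously; conversely, if $L=1$ and $l\mid t$, then for every prime $p$ we have $a\le b$, so $v_p(\frac{pl}{(l,pt)})=1>0=v_p(L)$, hence no prime qualifies and $S_{l,t,L}=\{1\}$. I would spell out the remaining case distinction: if $l\nmid t$ pick a prime $p$ with $v_p(l)>v_p(t)$ and $v_p(l)-v_p(t)$ minimal, adjusting by the global factor $L$; if $l\mid t$ but $L>1$ pick any prime $p\mid L$ — in both cases one produces a prime in $S_{l,t,L}$, killing the sum.

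The main obstacle, such as it is, will be bookkeeping the valuation inequalities cleanly so that the "some prime $p$ belongs to $S_{l,t,L}$" claim is airtight in every sub-case — in particular handling the interaction between the single distinguished prime $p$ (where the condition is $v_p(\frac{pl}{(l,pt)})\le v_p(L)$) and the remaining primes (where the condition collapses to $l\mid tL$ locally). I expect no serious difficulty beyond this; the argument is entirely elementary number theory once the divisor-closed reduction is in place. After establishing the lemma, it feeds directly into simplifying $\frac{1}{(2g-2)S_{j_0}(\lambda)}\log T(z)$: summing over $m$ in the previous display collapses the inner sum to the indicator of $\{L=1,\ l\mid t\}$, so that only terms with $st=N$ and $l\mid t$ survive, which is precisely the form needed to recognize $T(z)$ in terms of the series $\mathrm{aut}_{X_l}$ and to proceed to Theorem~\ref{b}.
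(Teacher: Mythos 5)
Your valuation computations are fine, but the logical core of your argument has a real gap at the step ``$0$ otherwise''. Divisor-closedness of $S_{l,t,L}$ is not enough to conclude that $\sum_{m\in S_{l,t,L}}\mu(m)=0$ once the set is larger than $\{1\}$: the set $\{1,2,3\}$ is divisor-closed and contains primes, yet $\mu(1)+\mu(2)+\mu(3)=-1$. Your appeal to ``the sum over all divisors of $\mathrm{lcm}(S_{l,t,L})$ vanishes'' only helps if $S_{l,t,L}$ is the \emph{full} divisor set of its lcm, which divisor-closedness does not give and which you never prove. What actually makes the lemma true --- and what the paper's proof establishes --- is the exact description of the summation set: writing $\alpha_p=v_p(t)$, $\beta_p=v_p(l)$, $\gamma_p=v_p(L)$, $\eta_p=v_p(m)$, one has $v_p\bigl(ml/(l,mt)\bigr)=\eta_p+\beta_p-\min(\beta_p,\alpha_p+\eta_p)=\max(\eta_p,\beta_p-\alpha_p)$, so the condition $\frac{ml}{(l,mt)}\mid L$ is equivalent to the conjunction ``$l\mid tL$ and $m\mid L$''. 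Hence $S_{l,t,L}$ is empty when $l\nmid tL$ and is the full set of divisors of $L$ otherwise, so the sum equals $\sum_{m\mid L}\mu(m)$, which is $1$ exactly when $L=1$ (and then $l\mid tL$ reads $l\mid t$). You essentially have this identity for $m=p$ prime, but you never apply it to a general $m$, and without it the vanishing half of the lemma is unproved.

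A secondary flaw: your dichotomy ``$S_{l,t,L}=\{1\}$ fails exactly when some prime lies in $S_{l,t,L}$'' ignores the case $S_{l,t,L}=\varnothing$, which occurs whenever $l\nmid tL$ (for instance $l=4$, $t=L=1$); there no prime lies in $S_{l,t,L}$ although $S_{l,t,L}\neq\{1\}$, and your recipe ``if $l\nmid t$ pick a prime $p$ with $v_p(l)>v_p(t)$ minimal'' produces no element of the set at all. The final value is still $0$ there because the empty sum is $0$, but the case analysis as written does not go through. Both problems are repaired at once by the valuation identity above, or equivalently by actually carrying out the Euler-product factorization you allude to at the start: the condition on $m$ is a conjunction of independent local conditions, so when $l\mid tL$ the sum over squarefree $m$ factors as $\prod_p\bigl(1-\mathbbm{1}_{\{\gamma_p\geq 1\}}\bigr)$, which is $1$ if $L=1$ and $0$ if $L>1$, and the sum is empty when $l\nmid tL$ --- exactly the paper's conclusion.
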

\begin{proof}
Soient $(p_{s})_{s\in I}$ les nombres premiers qui apparaissent comme des facteurs premiers de $t, l$ ou $L$. 
Supposons $t=\prod_{s\in I} p_{s}^{\alpha_{s}}$, $l=\prod_{s\in I} p_{s}^{\beta_{s}}$ et $L=\prod_{s\in I} p_{s}^{\gamma_{s}}$ avec $\alpha_{s}, \beta_{s}, \gamma_{s}\geq 0$. 
Si $m\in \mathbb{N}^{*}$ est tel que $\frac{ ml}{( l,mt)}|L$, alors on peut supposer $m=\prod_{s\in I} p_{s}^{ \eta_{s} }$. 
On a $$(l,mt)=\prod_{s\in I}p_{s}^{ \text{min}\{\beta_{s}, \alpha_{s}+\eta_{s}   \} },$$ et la condition $\frac{ ml}{( l,mt)}\mid L$ se traduit par:
$$\eta_{s}+\beta_{s}-  \text{min}\{\beta_{s}, \alpha_{s}+\eta_{s}\}  \leq  \gamma_{s} , $$
notons que cela équivaut à $\gamma_{s}\geq \eta_{s}+\beta_{s} - \alpha_{s}-\eta_{s}=\beta_{s}-\alpha_{s}$ et $\gamma_s\geq \eta_s$.

Donc la somme est non-vide si et seulement si $l\mid tL$, et dans ce cas la somme se ramène à la somme $\sum_{m\mid L} \mu(m)$. 
\end{proof}

D'après ce lemme, on peut changer la variable $N$ en $Nl$ et on conclut que 
\begin{equation}\begin{split}
\log T(z) &= \frac{(2g-2){S_{j_0}(\lambda)}  }{l}\sum_{N\geq 1}\sum_{s|N} \frac{C_{s }(X_{{\frac{Nl}{s}}} ) s^{2}  }{N} z^{Nl}  \\ &=\frac{(2g-2){S_{j_0}(\lambda)} }{l}\sum_{s\geq 1}\sum_{k\geq 1} \frac{C_{s }(X_{kl} ) s  }{k} z^{skl}.   
\end{split}
\end{equation}
Le théorème \ref{b} maintenant est un corollaire de l'expression (\ref{Paren}) car pour chaque entier $l$ divisant $n$, chaque partition $\lambda$ de $n$, et $j=j_0$, la somme entre parenthèse de (\ref{Paren}) est le coefficient de degré $a_{j_0}$ de $T(z)$.

\section{Analyse du nombre de systèmes locaux $\ell$-adiques} \label{Last}

\subsection{L'anneau $\gls{ZSg}$}
\label{641}

Soit  $\mathbb{Z}[t^{\pm1}, z_{1}^{\pm1}, \ldots, z_{g}^{\pm1}]^{\mathcal{S}_g\ltimes(\mathcal{S}_2)^{g}}$ l'anneau des polynômes de Laurent qui sont symétriques en les $z_i$ et invariants par les substitutions $z_i\mapsto tz_i^{-1}$ $(1\leq i\leq g)$. Le sous-anneau $$\gls{ZSg}$$ ne consiste que des polynômes $P\in \mathbb{Z}[t^{\pm1}, z_{1}^{\pm1}, \ldots, z_{g}^{\pm1}]^{\mathcal{S}_g\ltimes(\mathcal{S}_2)^{g}}$ tels que chaque monôme $t^{m}z_1^{n_1}\cdots z_g^{n_g}$ qui  apparaît dans $P$ vérifie \begin{equation}\label{O_o} m+\sum_{i=1}^g \min\{n_i, 0\}\geq 0 . \end{equation}
Cette dernière condition est suffisante pour obtenir des entiers algébriques pour les $q$-nombres de Weil dans le théorème \ref{Final}. Elle est aussi nécessaire vu l'existence d'une courbe projective, lisse et géométriquement connexe définie sur un corps fini dont la variété jacobienne est ordinaire (qui m'a été indiqué par Deligne, voir la preuve de la proposition \ref{ordinary}).

La remarque suivante m'a été suggérée par Deligne. 
\begin{remark}\textup{
L'anneau $\mathbb{Z}[z_1,\cdots, z_g, tz_1^{-1},\ldots, tz_g^{-1}]^{\mathcal{S}_g\ltimes(\mathcal{S}_2)^{g}}$ admet une description conceptuelle: 
Soit $GSp_{2g}$ le groupe des similitudes symplectiques défini sur $\mathbb{Q}$: pour chaque $\mathbb{Q}$-algèbre $R$, 
$$GSp_{2g}(R):=\{h\in G_{2g}(R)\ |\ \exists\ z\in R^{\times}\ \text{tel que}\  h\begin{pmatrix}0& I_g\\ -I_g& 0
\end{pmatrix} {^{t}h}= z \begin{pmatrix}0& I_g\\ -I_g& 0
\end{pmatrix} 
  \},$$ où $I_g$ est la matrice identité de taille $g\times g$. On a  la représentation tautologique  $GSp_{2g}\rightarrow G_{2g}$ et le caractère de similitude $t:  GSp_{2g}\rightarrow \mathbb{G}_m$. 
Soit $\mathcal{R}(GSp_{2g})$ l'anneau de Grothendieck des représentations de $GSp_{2g}$.  Soit  \[ \mathbb{Z}[t^{\pm1}, z_{1}^{\pm1}, \ldots, z_{g}^{\pm1}]^{\mathcal{S}_g\ltimes(\mathcal{S}_2)^{g}} \]  le sous-anneau de $\mathbb{Z}[t^{\pm1}, z_{1}^{\pm1}, \ldots, z_{g}^{\pm1}]$ des polynômes de Laurent qui sont  symétriques en les $z_i$ et invariants par les substitutions $z_i\mapsto tz_i^{-1}$ $(1\leq i\leq g)$. On a un isomorphisme $\mathcal{R}(GSp_{2g})\rightarrow \mathbb{Z}[t^{\pm1}, z_{1}^{\pm1}, \ldots, z_{g}^{\pm1}]^{\mathcal{S}_g\ltimes(\mathcal{S}_2)^{g}}$ qui envoie une représentation $V$ sur 
$$\mathrm{Trace}\left( \mathrm{diag}(z_1, \ldots, z_g, tz_1^{-1}, \ldots, tz_g^{-1})\ \middle \vert\ V  
\right)\in \mathbb{Z}[t^{\pm1}, z_{1}^{\pm1}, \ldots, z_{g}^{\pm1}]^{\mathcal{S}_g\ltimes(\mathcal{S}_2)^{g}}.$$
Par la théorie du plus haut poids, on sait aussi que l'anneau $\mathcal{R}(GSp_{2g})$ est l'anneau des polynômes en les $t^{\pm 1}$ et $\wedge^{k}V_{2g}- t\wedge^{k-2}V_{2g}$ (avec $\wedge^{k}V_{2g}=0$ si $k<0$), pour $1\leq k\leq g$, où $V_{2g}$ désigne la représentation tautologique. Alors le sous-anneau de $\mathcal{R}(GSp_{2g})$ engendré par les représentations qui figurent dans une puissance tensorielle de la représentation tautologique est $\mathbb{Z}[z_1,\cdots, z_g, tz_1^{-1},\ldots, tz_g^{-1}]^{\mathcal{S}_g\ltimes(\mathcal{S}_2)^{g}}$. En effet, c'est le sous-anneau engendré par $t$ et $\wedge^{k}V_{2g}- t\wedge^{k-2}V_{2g}$, pour $1\leq k\leq g$. }
\end{remark}

\subsection{Preuve de la première partie du théorème \ref{P}}
Dans cette section, on démontre la première partie du théorème \ref{P}. On le fait par récurrence, ce qui est expliqué dans   \ref{recu}.  La seule difficulté est l'intégralité des coefficients des polynômes du théorème, ce qui est fait dans le reste de cette sous-section.  

\subsubsection{}\label{recu}
Le polynôme dans le théorème \ref{P}.1 est nécessairement unique par un théorème de densité des $q$-entiers de Weil, cf. l'appendice B de  \cite{Schiffmann}.

Pour l'existence, on raisonne par récurrence sur $n$.  Il suffit qu'on démontre que pour tous entiers $g\geq 2$ et $n\geq 1$, il existe un élément $$P_{g, n}(t, z_{1},\ldots, z_{g})\in \mathbb{Z}[z_{1}, \ldots, z_{g}, tz_1^{-1}, \ldots, tz_g^{-1}]^{\mathcal{S}_g\ltimes(\mathcal{S}_2)^{g}},$$  tel que  pour tout corps fini $\mathbb{F}_q$ de cardinal $q$, toute courbe $X_1$ projective lisse et géométriquement connexe sur $\mathbb{F}_q$ de genre $g$, on ait
 $$C_n(X_1)=P_{g, n}(q,\sigma_{1}, \ldots, \sigma_{g}),$$
 où $\sigma_{1},\ldots, \sigma_{2g }$ les $q$-entiers de Weil de la courbe $X_{1}$, c'est-à-dire les valeurs propres de $\Fr_X$ sur $H^1_c(X_1, \bar{\mathbb{Q}}_{\ell})$, indexant de telle façon que $\sigma_{i}\sigma_{i+g}=q$ pour tout $g \geq i\geq 1$.
Quand $n=1$, on a $$C_{1}(X_{1})=|\text{Pic}_{X_1}^{0}(\mathbb{F}_{q})| . $$
Notons que $|\text{Pic}_{X}^{0}(\mathbb{F}_{q})|=\prod_{i=1}^{g}(1-\sigma_i)(1-q\sigma_i^{-1})$, donc le résultat est valide dans ce cas.

Soit $$\gls{etas}=\exp( \sum_{k\geq 1}\frac{C_{s}(X_{kl})}{k}z^{k }  ).$$ 
On suppose que pour tout $s\leq n-1$ le théorème est vrai.
Cela équivaut à dire que pour $1\leq s\leq n-1$ et $l\in \mathbb{N}^{*}$
$${\eta_{{s}, X_l}(z)}=\frac{\prod_{j=1}^{l_{s}  }(1-g_{j,s}(q, \sigma_{1}, \ldots, \sigma_{g }  )^lz)}{\prod_{j=1}^{m_{s}}(1-f_{j,s}(q, \sigma_{1}, \ldots, \sigma_{g })^lz)  },  $$
pour des monômes $f_{j,s}$ et $g_{j,s}$ satisfaisant les symétries désirées et l'inégalité (\ref{O_o}), et les nombres $l_s, m_s\in\mathbb{N}^{*}$ qui ne dépendent  que de $g$ et $s$.

Combinant l'expression spectrale de $J_e$ (le théorème \ref{b}), l'expression géométrique de $J_e$ (le théorème \ref{Maing}) et le théorème \ref{Mel} de Schiffmann  et Mellit, il existe un élément $$A_{g,n}\in  \mathbb{Z}[z_{1}, \ldots, z_{g}, tz_1^{-1}, \ldots, tz_g^{-1}]^{\mathcal{S}_g\ltimes(\mathcal{S}_2)^{g}},$$ tel que pour tout corps fini $\mathbb{F}_q$ de cardinal $q$, toute courbe $X_1$ projective lisse et géométriquement connexe sur $\mathbb{F}_q$ de genre $g$, on ait
 \begin{multline}\label{001}
 A_{g, n}(q,\sigma_{1}, \ldots, \sigma_{g})  = \\  
  \sum_{{\lambda=(i^{a_i})\vdash n  }}\frac{1}{n\sum_{i\geq 1}a_{i}(2g-2) }\sum_{ l \mid n}\mu(l)  \prod_{i\geq 1 } [z^{a_{i}}]   \biggr( \prod_{s\geq 1}\eta_{s, X_{l}}(z^{sl})^s     \biggr) ^{\frac{(2g-2)S_{i}(\lambda)}{l}} . 
 \end{multline}

On examine la relation (\ref{001}), elle montre que le nombre $A_{g, n}(q,\sigma_{1}, \ldots, \sigma_{g})$ peut s'exprimer comme un polynôme en $C_{k}(X_{d})$ pour $kd\leq n$. 
Quand $s>n$, $\eta_{s, X_{l}}(z^{sl})$ ne contribue pas dans la formule (\ref{001}), donc $C_n(X_1)$ n'apparait  que dans le terme de sommation où $l=1$, $\lambda=(1^n)$ et dans ce cas $S_1(\lambda)=n$. 
D'après ces remarques, on a 
\begin{multline}\label{002}
C_n(X_1)=A_{g, n}(q,\sigma_{1}, \ldots, \sigma_{g}) -  \\
 \sum_{{\lambda=(i^{a_i})\vdash n  }}\frac{1}{n\sum_{i\geq 1}a_{i}(2g-2) }\sum_{ l \mid n}\mu(l)  \prod_{i\geq 1 } [z^{a_{i}}]   \biggr( \prod_{s=1}^{n-1}\eta_{s, X_{l}}(z^{sl})^s     \biggr) ^{\frac{(2g-2)S_{i}(\lambda)}{l}} . 
 \end{multline}
Par l'hypothèse de récurrence, il suffit de montrer que pour tout $\lambda=(i^{a_i})\vdash n$, l'expression
\begin{equation}\label{topr} 
\frac{1}{n\sum_{i\geq 1}a_{i}(2g-2) }\sum_{ l \mid n}\mu(l)  \prod_{i\geq 1 } [z^{a_{i}}]   \biggr( \prod_{s=1}^{n-1}\eta_{s, X_{l}}(z^{sl})^s     \biggr) ^{\frac{(2g-2)S_{i}(\lambda)}{l}} 
\end{equation}
est une combinaison linéaire à coefficients entiers des nombres de Weil qui sont des monômes  en ceux de la courbe $X_{1}$, et les coefficients de cette écriture ne dépend que du $g$. Notons que tous ces énoncés sont clairs sauf l'intégralité  des coefficients des nombres de Weil, on se concentre sur sa preuve dans la suite.

\subsubsection{Preuve de l'intégralité }\sloppy
Pour tout $s<n$, soient $\alpha_{j,s}=f_{j,s}(q, \sigma_{1}, \ldots, \sigma_{g })$ et $\beta_{j,s}=g_{j,s}(q,  \sigma_{1}, \ldots, \sigma_{g })$ qui sont $q$-entiers de Weil. On a 
$${\eta_{{s}, X_l}(z)}=\exp( \sum_{k\geq 1}\frac{C_{s}(X_{kl})}{k}z^{k }  )=\frac{\prod_{j=1}^{l_{s}  }(1-\beta_{j,s}^lz)}{\prod_{j=1}^{m_{s}}(1-\alpha_{j,s}^lz)  }  . $$
Donc pour tout $m\in \mathbb{Z}$ (en particulier pour $m=(2g-2)S_i(\lambda)$), on sait que $\eta_{s, X_{l}}(z^{sl})^{\frac{sm}{l}}$ est alors
$$\left(\prod_{j=1}^{l_{s}}(\sum_{k\geq 0}(-1)^{k}\binom{-\frac{ms}{l}}{k}(\alpha_{j,s})^{kl}z^{ksl} )\right) \left(\prod_{j=1}^{m_{s}}(\sum_{k\geq 0}(-1)^{k}\binom{\frac{ms}{l}}{k}(\beta_{j,s})^{kl}z^{ksl} )\right) . $$
Soit  $\lambda=(\nu^{a_{\nu}})$ une partition de $n$, on va regarder les $\alpha_{j,s}$ et $\beta_{j,s}$ comme des variables; on pose pour tous $i, l\in\mathbb{N}^{*}$:
$$\widetilde{g_{i,l}^{\lambda}}(z^{l})=\prod_{s=1}^{n-1} \left( \prod_{j=1}^{l_{s}+m_{s}}(\sum_{k\geq 0}(-1)^{k}\binom{\frac{\varepsilon_{j,s} (2g-2)sS_{i}(\lambda)}{l} }{k } ({X^{i}_{j,s}})^{lk} z^{skl} )   \right), $$
où $$\varepsilon_{j,s}=\begin{cases}- 1, \quad 1\leq j\leq l_s \\
1, \quad l_s+1\leq j\leq l_s+m_s
\end{cases}
$$
 et $X_{j,s}^{i}$ sont des indéterminées à trois indices.  Si on évalue $X_{j,s}^i$ en les $\alpha_{j,s}$ ou $\beta_{j,s}$ de manière appropriée (pour tout $i$, $X_{j,s}^{i}=\alpha_{j,s}$ si $1\leq j\leq l_s$ et $X_{j,s}^{i}=\beta_{j,s}$ si $l_s+1\leq j\leq l_s+m_s$), on retrouvera $$ \prod_{s=1}^{n-1}(\eta_{s,X_l}(z^{sl})^{s})^{\frac{(2g-2)S_i(\lambda)}{l}}. $$   
On va montrer que 
\begin{equation}\label{60} \frac{1}{n\sum_{i\geq 1}a_{i}(2g-2) }\sum_{ l \mid n}\mu(l)  \prod_{i\geq 1} [z^{a_{i}}] \widetilde{g_{i,l}^{\lambda}}(z^{l})\ \in \ \mathbb{Z}[X_{j,s}^{i}]_{i,j,s} .   \end{equation}
En effet, comparant avec l'expression (\ref{topr}), cela est suffisant pour l'intégralité  désiré.
\begin{lemm}\label{combnumbers}
Pour un monôme $\prod_{i,j,s}(X_{j,s}^{i})^{k_{j,s}^{i}}$, si $\sum_{j,s}sk_{j,s}^{i}=a_{i}$, $\forall\ i\geq 1$, son coefficient dans le polynôme (\ref{60}) est  égal à $$\frac{1}{n\sum_{i}a_{i}(2g-2) }   \sum_{l\mid \mathrm{p.g.c.d.}(k_{j,s}^{i})} \mu(l)(-1)^{\frac{\sum_{ j,s }sk_{j,s}^{i}}{l} } \prod_{i,j,s}  \binom{{\varepsilon_{j,s}(2g-2)sS_{i}(\lambda)}/{l}}{k_{j,s}^{i}/l}\ ; $$
sinon le coefficient est zéro. 
\end{lemm}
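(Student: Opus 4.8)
The statement to prove is an explicit formula for the coefficient of a monomial $\prod_{i,j,s}(X_{j,s}^i)^{k_{j,s}^i}$ in the polynomial displayed in equation~(\ref{60}). This is a purely formal-power-series computation: one must expand the product $\prod_{i\geq 1}[z^{a_i}]\widetilde{g^\lambda_{i,l}}(z^l)$ and track which monomials in the $X_{j,s}^i$ arise, with which binomial coefficients, and then perform the sum over $l\mid n$ weighted by $\mu(l)$.

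The plan is to work one $i$ at a time, since $\widetilde{g^\lambda_{i,l}}(z^l)$ is a product over $j,s$ of series $\sum_{k\geq 0}(-1)^k\binom{\varepsilon_{j,s}(2g-2)sS_i(\lambda)/l}{k}(X_{j,s}^i)^{lk}z^{skl}$, each of which contributes only powers $(X_{j,s}^i)^{lk}$ and $z^{skl}$. First I would observe that extracting $[z^{a_i}]$ from the product over $(j,s)$ forces, for a given collection of exponents, the relation $\sum_{j,s}s(lk_{j,s}^i)=a_i$ where $lk_{j,s}^i$ is the exponent of $X_{j,s}^i$ coming from the $(j,s)$-factor — equivalently, if the total exponent of $X_{j,s}^i$ in the final monomial is to be $k_{j,s}^i$, then $l$ must divide each $k_{j,s}^i$ and $\sum_{j,s}sk_{j,s}^i=a_i$. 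This immediately explains the dichotomy in the statement: the coefficient vanishes unless $\sum_{j,s}sk_{j,s}^i=a_i$ for every $i$, and the sum over $l$ is restricted to $l\mid \mathrm{p.g.c.d.}(k_{j,s}^i)$. Next I would read off the binomial coefficient: the $(j,s)$-factor contributes $(-1)^{k_{j,s}^i/l}\binom{\varepsilon_{j,s}(2g-2)sS_i(\lambda)/l}{k_{j,s}^i/l}$, and the sign collects to $(-1)^{\sum_{j,s}sk_{j,s}^i/l}=(-1)^{a_i/l}$ after noting $\sum_{j,s}(k_{j,s}^i/l)\equiv \sum_{j,s}s(k_{j,s}^i/l)\pmod 2$ — actually one must be slightly careful here, the exponent in the claimed formula is $\sum_{j,s}sk_{j,s}^i/l$, which is exactly what comes out of $[z^{a_i}]$ bookkeeping since the $z$-power of the $(j,s)$-factor raised to $k_{j,s}^i/l$ is $sk_{j,s}^i$.

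Then I would multiply over $i$: the product of the $[z^{a_i}]$ extractions gives $\prod_i$ of the above, and since $\sum_{j,s}sk_{j,s}^i=a_i$ for all $i$ is needed, the total sign is $(-1)^{\sum_i a_i/l}$; but wait — the claimed formula has $(-1)^{\sum_{j,s}sk_{j,s}^i/l}$ inside, indexed as if summed over all $i,j,s$, so I would just reconcile the indexing, writing the single exponent $\sum_{i,j,s}sk_{j,s}^i/l = \sum_i a_i/l$. Finally, summing over $l\mid n$ with the factor $\mu(l)$ and the overall normalization $\frac{1}{n(2g-2)\sum_i a_i}$ (which is independent of $l$ and of the monomial) yields precisely the stated expression, with the sum over $l$ automatically supported on $l\mid \mathrm{p.g.c.d.}(k_{j,s}^i)$ because any $l$ not dividing some $k_{j,s}^i$ produces no contribution to that monomial.

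The main obstacle is bookkeeping discipline rather than any conceptual difficulty: one must be meticulous about the distinction between the exponent $k_{j,s}^i$ of $X_{j,s}^i$ in the final monomial and the internal summation index $k$ in each factor (they differ by the factor $l$), about the sign exponents ($(-1)^{k}$ in each factor versus the collected $(-1)^{\sum sk/l}$ in the answer — these agree because $k\equiv sk\pmod 2$ is false in general, so one actually needs $(-1)^{\sum k_{j,s}^i/l}$ and must check this equals $(-1)^{\sum sk_{j,s}^i/l}$, which holds iff $\sum(s-1)k_{j,s}^i/l$ is even; here I would instead simply note that the honest expansion gives $(-1)^{\sum k_{j,s}^i/l}$ and the formula as written in the lemma should be read with that understanding, or verify the parities match term by term), and about confirming that the constraint $\sum_{j,s}sk_{j,s}^i=a_i$ is exactly the condition for nonvanishing. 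Once these indexing matters are pinned down, the proof is a direct expansion with no further input needed.
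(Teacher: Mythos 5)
Your proof is the same direct expansion the paper itself uses, and it is essentially correct. Two remarks. On the sign: your final reading is the right one — each factor contributes $(-1)^{k^i_{j,s}/l}$, so the total sign is $(-1)^{\sum_{i,j,s}k^i_{j,s}/l}$ with no factor $s$; this is exactly the exponent that appears when the lemma is fed into the th\'eor\`eme \ref{wala} (see (\ref{butt})), so the $s$ in the exponent of the lemma as printed is a slip, and you were right to abandon the attempted parity identification $k\equiv sk \pmod 2$ rather than force it. On the range of $l$: as written, your argument only shows that the sum over $l\mid n$ in (\ref{60}) is supported on those $l$ dividing $\mathrm{p.g.c.d.}(k^i_{j,s})$; to rewrite it as a sum over \emph{all} divisors of $\mathrm{p.g.c.d.}(k^i_{j,s})$, as the statement does, you also need $\mathrm{p.g.c.d.}(k^i_{j,s})\mid n$, which follows from $n=\sum_i i\,a_i=\sum_{i,j,s} i\,s\,k^i_{j,s}$ — this one-line observation is the point the paper's proof makes explicitly and your write-up omits.
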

\begin{proof}
C'est parce qu'on a $\forall\  i\geq 1$
$$[z^{a_{i}}]\widetilde{g_{i,l}^{\lambda}}(z)=\sum_{ \{ k_{j,s}\in\mathbb{N}^{*}| \sum_{j,s}sk_{j,s}=a_{i};\ l\mid k_{j,s}\} } \prod_{ j,s} \binom{\varepsilon_{j,s}(2g-2)sS_{i}(\lambda)/l }{k_{j,s}/l } ({X^{i}_{j,s}})^{ k_{j,s}  }  .  $$
De plus si $\sum_{j,s}sk_{j,s}^{i}=a_{i}$, $\forall\ i\geq 1$, alors $\mathrm{p.g.c.d.}(k_{j,s}^i)\mid n$ puisque $(i^{a_i})$ est une partition de $n$. 
\end{proof}

Nous allons prouver que les coefficients dans ce lemme sont des entiers dans le dernier théorème; avant de l'énoncer, nous indiquons le fait suivant. 
\begin{lemm}\label{small}
Pour tous nombres entiers non-nuls $n,m$ avec $m\geq1$, on a
$$\frac{n}{(n,m)}\mid \binom{n}{m} .$$
\end{lemm}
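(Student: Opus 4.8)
The statement to prove is the small arithmetic fact that $\frac{n}{(n,m)}$ divides $\binom{n}{m}$ for all nonzero integers $n,m$. The cleanest route is the classical identity that rewrites the binomial coefficient with the ``absorption'' relation: one has
\begin{equation*}
m\binom{n}{m}=n\binom{n-1}{m-1}.
\end{equation*}
First I would record this identity (it is immediate from expanding both sides as factorials, or from the combinatorial meaning). From it the divisibility of $\binom{n}{m}$ by $\frac{n}{(n,m)}$ will follow by a short coprimality argument.

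The key steps, in order, are as follows. Write $d=(n,m)$, so that $n=d n'$, $m=d m'$ with $(n',m')=1$. Substituting into the absorption identity gives $d m'\binom{n}{m}=d n'\binom{n-1}{m-1}$, hence
\begin{equation*}
m'\binom{n}{m}=n'\binom{n-1}{m-1}.
\end{equation*}
Now $n'$ divides the right-hand side, so $n'$ divides $m'\binom{n}{m}$; since $(n',m')=1$, Gauss's lemma (Euclid's lemma on coprime divisors) yields that $n'$ divides $\binom{n}{m}$. But $n'=\frac{n}{(n,m)}$, which is exactly the claim. One should also dispose of the trivial edge cases: if $m$ is negative or $|m|>|n|$ then $\binom{n}{m}=0$ (or the statement is vacuous/conventional), and if $m=0$ then $(n,m)=|n|$ and $\frac{n}{(n,m)}=\pm 1$ divides $\binom{n}{0}=1$; for $n<0$ one reduces to $|n|$ since the divisibility statement is insensitive to signs and the generalized binomial coefficient $\binom{n}{m}$ with integer arguments reduces to the usual case or vanishes. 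For the purposes of the paper only the case $1\le m\le n$ is actually used, so I would state and prove it there and remark that the general case follows formally.

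The proof is entirely routine; there is no real obstacle. The only point requiring a word of care is making sure the absorption identity is invoked in the form where the factor $n$ appears on the side one wants to extract a divisor from, and then observing that passing to $n'=n/(n,m)$ and $m'=m/(n,m)$ is precisely what makes the coprimality argument go through. I would keep the write-up to three or four lines: state $m\binom{n}{m}=n\binom{n-1}{m-1}$, divide by $(n,m)$, and conclude by coprimality. This lemma is then exactly the divisibility input needed in the subsequent integrality argument (it guarantees that the multinomial-type coefficients appearing in the formula for $C_n(X_1)$ clear the denominator $n\sum_i a_i(2g-2)$ after summing the M\"obius contributions).
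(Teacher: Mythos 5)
Your proof is correct and is essentially identical to the paper's: both rest on the absorption identity $m\binom{n}{m}=n\binom{n-1}{m-1}$, division by $(n,m)$, and the coprimality of $m/(n,m)$ and $n/(n,m)$ to conclude. The remarks on edge cases are fine but not needed for the paper's use of the lemma.
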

\begin{proof}
C'est parce que $$\frac{m}{n}\binom{n}{m}=\binom{n-1}{m-1}\in\mathbb{Z}$$
donc $$\frac{m}{(m,n)}\binom{n}{m}\in \frac{n}{(n,m)}\mathbb{Z} .$$
Comme $(\frac{m}{(m,n)}, \frac{n}{(m,n)})=1$, on a $$\binom{n}{m}\in \frac{n}{(n,m)}\mathbb{Z}  .$$
\end{proof}

Par simplicité, on note  $\chi=2g-2$ dans la suite; on va utiliser le fait que $\chi$ est un nombre pair. 
\begin{theorem}\label{wala} Soit $m\geq 1$ un entier. 
Soient $a_{1},\ldots, a_{m}$ des nombres naturels non-nuls, et $(k_{j,s}^{i})_{\substack{1\leq i\leq m\\
 j\in J_i\\
s\in K_i
}
}$ des nombres naturels tels que $$\sum_{j\in J_i, s\in K_i}sk_{j,s}^{i}=a_{i}, $$
où pour tout $i$, $J_i$ et $K_i$ sont des sous-ensembles finis de $\mathbb{N}$. 
Soient $\nu_{1},\cdots,\nu_m$ des nombres entiers.  Soit $S_{i}=\sum_{j<i}\nu_{j} a_{j}+ \nu_{i} \sum_{j\geq i}a_{j}$, $i=1,\cdots, m$.
Soit $\varepsilon_{i,j,s}\in\{\pm 1\}$ pour $1\leq i\leq m$, $j\in J_i$ et $s\in K_i$. 
Le nombre suivant 
$$\sum_{l\mid \mathrm{p.g.c.d.}(k^{i}_{j,s})} \mu(l)(-1)^{\frac{\sum_{i,j,s} k^{i}_{j,s}}{l}} \prod_{\substack{1\leq i\leq m\\   j\in J_i, s\in K_i}}  \binom{\varepsilon_{i,j,s}\chi{sS_{i}}/{l} }{{k^{i}_{j,s}}/{l}}$$ 
est divisible par $\chi S_m\sum_{j}a_{j}$, où  $\chi\in 2\mathbb{N}$. 
\end{theorem}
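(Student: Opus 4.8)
The strategy is to interpret the alternating sum over divisors $l$ of $\gls{p.g.c.d.}(k^i_{j,s})$ as a M\"obius inversion, so that the statement becomes an integrality statement about a single ``M\"obius-free'' expression. Concretely, set $K = \gls{p.g.c.d.}(k^i_{j,s})$ and write
$$
N := \sum_{l\mid K}\mu(l)(-1)^{\frac{1}{l}\sum_{i,j,s}k^i_{j,s}}\prod_{i,j,s}\binom{\varepsilon_{i,j,s}\chi s S_i/l}{k^i_{j,s}/l}.
$$
For each $l\mid K$, the inner product over $(i,j,s)$ is a product of binomial coefficients $\binom{\varepsilon_{i,j,s}\chi s S_i/l}{k^i_{j,s}/l}$. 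The first step is to isolate, using Lemma~\ref{small}, the factor of $\frac{\chi s S_i/l}{(\chi s S_i/l,\, k^i_{j,s}/l)}$ dividing each such binomial; the numerator $\chi s S_i$ is what we want to see appear. Since $S_i = \sum_{j<i}\nu_j a_j + \nu_i\sum_{j\ge i}a_j$ and we want divisibility by $\chi S_m\sum_j a_j$, the hard constraint is that $S_m$ and $\sum_j a_j$ both divide $N$; the factor $\chi$ is the easiest, since $\chi=2g-2$ is even and each $(-1)$ sign combined with the parity considerations already handled in the preceding $\delta(\tilde\pi,L)$-lemma gives an even contribution.

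**Key steps, in order.** First, I would reduce to the case $\gcd$ of all $k^i_{j,s}$ equal to $1$ after extracting a common $l_0 = \gcd$; more precisely, a standard substitution $k^i_{j,s}\mapsto k^i_{j,s}/l_0$ lets one factor the M\"obius sum and reduces to proving: when $\gcd(k^i_{j,s})=1$, the expression $\sum_{l\mid 1}\cdots = (-1)^{\sum k}\prod\binom{\varepsilon\chi sS_i}{k^i_{j,s}}$ together with the correction terms from divisors is divisible by $\chi S_m\sum_j a_j$ — but actually the cleaner route is to keep the full sum and argue $p$-adically prime by prime. So: (i) fix a prime $p$ and let $p^a \| \chi S_m\sum_j a_j$; (ii) show $p^a\mid N$ by a Kummer/Lucas-type analysis of the $p$-adic valuation of each binomial $\binom{\varepsilon_{i,j,s}\chi sS_i/l}{k^i_{j,s}/l}$, noting that $v_p\big(\binom{n}{m}\big)\ge v_p(n) - v_p(m) = v_p(n/\gcd(n,m))$ by Lemma~\ref{small}; (iii) handle the M\"obius cancellation: for the primes $p$ dividing $K$ one must combine the $l$ and $pl$ terms and use that $\mu(pl)=-\mu(l)$ to produce cancellation of the lower-valuation contributions, exactly as in the telescoping argument used for the $\delta(\tilde\pi,L)$ lemma earlier. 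The identity $\frac{m}{n}\binom{n}{m}=\binom{n-1}{m-1}$ from Lemma~\ref{small} is the workhorse throughout.

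**The main obstacle.** The genuinely delicate point is step (iii): controlling the interaction between the M\"obius sum over $l\mid K$ and the $p$-adic valuations, because the binomials $\binom{\chi sS_i/l}{k^i_{j,s}/l}$ do not vary in a simply monotone way with $l$. The cleanest formalization is to observe that $N$ is, up to sign, the coefficient extraction $\prod_{i\ge 1}[z^{a_i}]$ applied to $\sum_{l\mid K}\mu(l)\,\widetilde{g^{\lambda}_{i,l}}(z^l)$ — i.e. exactly the generating-function manipulation carried out in Section~\ref{MoWa}, where the same sum was shown to collapse (via the two auxiliary lemmas on $\sum_{m}\mu(m)$ with divisibility conditions) into $\gls{aut}$-type series whose coefficients are manifestly $\mathbb{Z}$-linear in products of Weil numbers. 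In other words, the integrality being claimed here is the arithmetic shadow of the clean form of Theorem~\ref{DER}: once $J^{T=0}_e = A_{n,e}(X_1)$ is expressed as in~(\ref{der}), and $A_{n,e}(X_1)$ is known (Theorem~\ref{Mel}) to be a $\mathbb{Z}$-polynomial in Weil numbers of $X_1$, the recursion $C_n(X_1) = A_{n,e}(X_1) - (\text{polynomial in } C_s(X_t),\ s<n)$ forces the remaining combinatorial expression to be a $\mathbb{Z}$-combination as well. So the proof of this theorem is: run the generating-function computation of Section~\ref{MoWa} symbolically in the indeterminates $X^i_{j,s}$, invoke the two M\"obius lemmas to collapse the $l$-sum, and read off that the surviving coefficients carry the claimed factor $\chi S_m\sum_j a_j$ in the denominator-killing normalization — the one remaining check being that the parity factor $(-1)^{(\cdots)/l}$ pairs with the even $\chi$ to keep everything integral, which is where the hypothesis $\chi\in 2\mathbb{N}$ is used.
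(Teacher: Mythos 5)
Your general orientation (work prime by prime, use Lemma \ref{small}, pair the divisors $l$ and $lp$ in the M\"obius sum) does match the paper's strategy, but both places where the real work lies are missing from your sketch. First, you never establish the key valuation inequality: for a prime $p$ with $v_p(\mathrm{p.g.c.d.}(k^i_{j,s}))=0$ one must show $v_p\bigl(\prod_{i,j,s}\tfrac{\chi sS_i}{(\chi sS_i,\,k^i_{j,s})}\bigr)\geq v_p\bigl(\chi S_m\sum_j a_j\bigr)$ (Lemma \ref{pdelicat}), and this is \emph{not} a consequence of $v_p\binom{n}{m}\geq v_p(n)-v_p(\gcd(n,m))$ applied factor by factor; it uses the specific shape $S_i=\sum_{j<i}\nu_ja_j+\nu_i\sum_{j\geq i}a_j$, namely that $\chi S_k$ is an integral combination of $\chi\sum_j a_j$ and $a_1,\dots,a_{k-1}$, and also of $\chi S_m$ and $a_{k+1},\dots,a_m$, which feeds the two telescoping inequalities (\ref{la}) and (\ref{da}) and a separate analysis when a single index carries the minimal valuation of the $k^i_{j,s}$. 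Second, for primes dividing the gcd, the cancellation between the $l$ and $lp$ terms is not ``exactly as in the $\delta(\tilde\pi,L)$ lemma'' (which is a simple character/parity computation): what is actually needed is the congruence $p^{2\alpha}\mid f_p(p^\alpha n)-f_p(p^\alpha)^n$ for $f_p(n)=\prod_{1\leq i\leq n,\ p\nmid i}i$ (with a mod $4$ refinement for $p=2$), which shows that the \emph{difference} of the two products of binomials gains a factor $p^{2v_p(\mathrm{p.g.c.d.})}$, exactly compensating the loss of $2v_p(\mathrm{p.g.c.d.})$ in inequality (\ref{II}); the residual case $p=2$, $v_2(\mathrm{p.g.c.d.})=1$ requires the parity of $\chi$ and an explicit computation of integer parts. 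None of this appears in your argument.

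The fallback you propose in the last paragraph — deducing the divisibility from Section \ref{MoWa}, Theorem \ref{DER} and Theorem \ref{Mel} via the recursion $C_n(X_1)=A_{n,e}(X_1)-\cdots$ — cannot close the gap. Within the paper's induction, Theorem \ref{wala} is precisely what is needed to prove that the expression (\ref{topr}), hence $P_{g,n}$, has integer coefficients, so invoking that integrality is circular. Moreover, even granting the geometric integrality of $A_{n,e}(X_1)$, it would not suffice: the theorem asserts a divisibility for \emph{arbitrary} integers $a_i$, $k^i_{j,s}$, $\nu_i$, arbitrary signs $\varepsilon_{i,j,s}$ and arbitrary even $\chi$, and it is a monomial-by-monomial statement — it is the coefficient of one fixed monomial $\prod_{i,j,s}(X^i_{j,s})^{k^i_{j,s}}$ in the polynomial (\ref{60}) in independent indeterminates. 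Integrality of the specialized total sum (after substituting Weil numbers for the $X^i_{j,s}$) says nothing about each individual coefficient, since distinct monomials could compensate; this is exactly why the paper passes to indeterminates before proving the divisibility by a purely combinatorial $p$-adic argument.
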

\begin{proof}
Soit $\gls{vp}$ la valuation $p$-adique, normalisée telle que $v_p(p)=1$. 
\begin{lemm}\label{pdelicat}
Avec les notations de l'énoncé du théorème \ref{wala}, si \[ v_{p}(\mathrm{p.g.c.d.}(k^{i}_{j,s}))=0,\] on a
 \begin{equation}  \label{I} 
  v_{p}(\prod_{\substack{1\leq i\leq m\\   j\in J_i, s\in K_i}}\frac{\chi {sS_{i}}  }{    ( \chi {sS_{i}}  , {k^{i}_{j,s}}    ) })
 \geq v_{p}(\chi S_m\sum_{j}a_{j} ) \end{equation}
\end{lemm}
\begin{proof}[Preuve du lemme]

Soit $$A=v_{p}(\sum_{j}a_{j})\geq 0 .$$
Soient $$\alpha_{i}=v_{p}(\chi S_{i})\geq 0 \quad\forall\ i\geq 1 ,$$
$$\beta_{i}=v_{p}(\underset{j\in J_i,s\in K_i}{\mathrm{p.g.c.d}}\{k_{j,s}^{i}\})\geq 0 \quad\forall\ i\geq 1 ,$$
où pour chaque $i$, $\underset{j\in J_i,s\in K_i}{\mathrm{p.g.c.d}}\{k_{j,s}^{i}\}$ désigne le plus grand commun diviseur des nombres $k_{j,s}^{i}$ pour $j\in J_i$ et $s\in K_i$, et
$$\gamma_{i}=v_{p}(a_{i})\geq 0\quad\forall\ i\geq 1 . $$

Il faut montrer qu'on a $$ v_{p}(\prod_{\substack{1\leq i\leq m\\   j\in J_i, s\in K_i}}\frac{\chi {sS_{i}}  }{    ( \chi {sS_{i}}  , {k^{i}_{j,s}}    ) })
 \geq A+\alpha_m.$$

L'hypothèse $v_{p}(\mathrm{p.g.c.d.}(k^{i}_{j,s}))=0$ est équivalente  à la condition $\min_{1\leq i\leq m}\{\beta_i\}=0$. 
C'est toujours vrai que $ \beta_{i}\leq\gamma_{i}$ car pour tout $1\leq i\leq m$,
\begin{align*}
\gamma_i=v_p(a_i)&=v_p(\sum_{j\in J_i,s\in K_i}sk_{j,s}^{i}   )\\
&\geq \min_{j\in J_i,s\in K_i}\{v_p(s)+v_p(k_{j,s}^{i})\} \\
&\geq v_p(\underset{j\in J_i,s\in K_i}{\mathrm{p.g.c.d.}}(k_{j,s}^i) )=\beta_i  . 
\end{align*}

Notons que pour chaque $i$, il existe au moins un $(j_{0},s_{0})\in J_i\times K_i$ tel que $$v_{p}(k_{j_{0},s_{0}}^{i})=\beta_{i} .$$
On a alors
 $$ v_{p}(\frac{\chi {s_0 S_{i}}  }{    ( \chi {s_0 S_{i}}  , {k^{i}_{j_0,s_0}}    ) })\geq \max\{v_p(\frac{\chi s_{0}S_{i}}{k_{j_{0},s_{0}}^{i}}), 0 \}     \geq \max\{\alpha_{i}-\beta_{i}+v_{p}(s_{0}), 0 \}.$$
Soit $1\leq i\leq m$, on part de
$$\sum_{j\in J_i,s\in K_i}sk_{j,s}^{i}=a_{i} .$$
De deux choses l'une: 
1) Soit $v_{p}(s_{0})+\beta_{i}< \gamma_{i}$, i.e. $v_{p}(s_0 k_{j_0, s_0}^{i})<v_p(a_i)$,  alors il existe un autre $(j_{1},s_{1})\neq (j_0, s_0)\in J_i\times K_i$ tel que $$v_{p}(s_{1}k_{j_{1},s_{1}}^{i})\leq  v_{p}(s_0 k_{j_0, s_0}^{i})= v_{p}(s_{0})+\beta_{i}. $$
Comme $s_1$ est un entier, $v_p(s_1)\geq 0$. Donc on a  $$-v_{p}(k_{j_{1},s_{1}}^{i})\geq -  v_{p}(s_{0})-\beta_{i}, $$ et $$ v_{p}(\frac{\chi {s_1S_{i}}  }{    ( \chi {s_1S_{i}}  , {k^{i}_{j_1,s_1}}    ) })\geq \max\{\alpha_{i}-\beta_{i}-v_{p}(s_{0}), 0 \} . $$
Il s'ensuit que $$ v_{p}(\prod_{j\in J_i,s\in K_i}\frac{\chi {sS_{i}}  }{    ( \chi {sS_{i}}  , {k^{i}_{j,s}}    ) })   \geq \max\{2\alpha_{i}-2\beta_{i}, 0\} .$$
2) Soit $v_{p}(s_{0})+\beta_{i}\geq \gamma_{i}$, alors $$v_{p}(\prod_{j\in J_i,s\in K_i}\frac{\chi {sS_{i}}  }{    ( \chi {sS_{i}}  , {k^{i}_{j,s}}    ) })\geq \max\{\alpha_{i}+\gamma_{i}-2\beta_{i}, 0 \}. $$
Donc en tout cas pour tout $1\leq i\leq m$
\begin{align} \label{i}
v_{p}(\prod_{j\in J_i,s\in K_i}\frac{\chi {sS_{i}}  }{    ( \chi {sS_{i}}  , {k^{i}_{j,s}}    ) } )&\geq\min\{ \max\{2\alpha_{i}-2\beta_{i}, 0 \},  \max\{\alpha_{i}+\gamma_i-2\beta_{i}, 0\}   \} \\
&= \max\{ \min\{2\alpha_{i}-2\beta_{i}, \alpha_{i}+\gamma_i-2\beta_{i}\},  0\}  \nonumber  .
\end{align}

Si $m=1$, on a $S_{1}=\nu_1a_1$ donc $\gamma_1\leq \alpha_1$. L'hypothèse donne $\beta_1=0$, et l'inégalité (\ref{i}) implique que 
$$  v_{p}(\prod_{j\in J_i,s\in K_i} \frac{\chi {sS_{1}}  }{    ( \chi {sS_{1}}  , {k^{1}_{j,s}}    ) })\geq \alpha_1+\gamma_1 = v_{p}(\chi a_{1} S_1). $$

Si $m>1$.  L'inégalité (\ref{i}) implique 
\begin{equation}v_{p}(\prod_{j\in J_i,s\in K_i}\frac{\chi {sS_{i}}  }{    ( \chi {sS_{i}}  , {k^{i}_{j,s}}    ) } )\geq \max\{ \alpha_i-\beta_i, 0  \}    . \end{equation}

Pour tout $1\leq k\leq m$, on va prouver 
\begin{equation}\label{la}\sum_{1\leq i\leq k}  \max\{\alpha_{i}-\beta_{i}, 0\} \geq A- \min\{A, \beta_{1}, \ldots, \beta_{k}\} .   \end{equation}
On raisonne par récurrence. 
Quand $k=1$, c'est vrai car $\alpha_1\geq A$. Supposons que l'inégalité soit vraie pour $k-1$. Si $\beta_{k}>\min\{A, \beta_{1}, \ldots, \beta_{k-1}\}$
alors 
\begin{align*}
\sum_{i\leq k}  \max\{\alpha_{i}-\beta_{i}, 0\}& \geq \sum_{i\leq k-1}  \max\{\alpha_{i}-\beta_{i}, 0\}\\
& \geq  A- \min\{A, \beta_{1}, \ldots, \beta_{k-1}\} \\
&  = A- \min\{A, \beta_{1}, \ldots, \beta_{k}\}.\end{align*}
Si $\beta_{k}\leq \min\{A, \beta_{1}, \ldots, \beta_{k-1}\}$, comme $\chi S_{k}$ est une combinaison linéaire à coefficients entiers de $\chi \sum_{j}a_{j}$ et $a_{1},\ldots, a_{k-1}$, on a  
\begin{equation}\label{inequality}\alpha_{k}=v_p(\chi S_k)\geq\min\{A, \gamma_{1}, \ldots, \gamma_{k-1}\}\geq \min\{A, \beta_{1}, \ldots, \beta_{k-1}\} .\end{equation}
Donc 
\begin{align*}
\sum_{i\leq k}  \max\{\alpha_{i}-\beta_{i}, 0\}&\geq \alpha_{k}- \beta_{k}+ A- \min\{A, \beta_{1}, \ldots, \beta_{k-1}\}\\
& \geq A-\beta_k \\
& = A- \min\{A, \beta_{1}, \ldots, \beta_{k}\} . 
\end{align*}
Par récurrence, l'inégalité est vraie pour tout $k\geq 1$. \\ 

De même, on a
\begin{equation}\label{da}\sum_{i\geq k}  \max\{\alpha_{i}-\beta_{i}, 0\} \geq \alpha_m- \min\{\alpha_m, \beta_{k}, \ldots, \beta_{m}\},   \quad\forall 1\leq k\leq m . \end{equation}
En effet, quand $k=m$, l'inégalité (\ref{da}) est une égalité. Supposons que l'inégalité (\ref{da})  soit vraie pour $k+1$. Alors si $\beta_{k}>\min\{\alpha_m, \beta_{k+1}, \ldots, \beta_{m}\}$, 
on a
\begin{align*}
\sum_{i\geq k}  \max\{\alpha_{i}-\beta_{i}, 0\}& \geq \sum_{i\geq k+1}  \max\{\alpha_{i}-\beta_{i}, 0\}\\
& \geq  \alpha_m- \min\{\alpha_m, \beta_{k+1}, \ldots, \beta_{m}\} \\
&  = \alpha_m- \min\{\alpha_m, \beta_{k}, \ldots, \beta_{m}\}. \end{align*}
Si $\beta_{k}\leq \min\{\alpha_m, \beta_{k+1}, \ldots, \beta_{m}\}$. 
Comme $\chi S_k$ est une combinaison linéaire à coefficients entiers de $\chi S_m$ et $a_{m},\ldots,$ $a_{k+1}$, on a  
\begin{equation}\label{72}\alpha_{k}=v_p(\chi S_k)\geq\min\{\alpha_m, \gamma_{k+1}, \ldots, \gamma_{m}\}\geq \min\{\alpha_m, \beta_{k+1}, \ldots, \beta_{m}\} .\end{equation}
Donc 
\begin{align*}
\sum_{i\geq k}  \max\{\alpha_{i}-\beta_{i}, 0\}&\geq \alpha_{k}- \beta_{k}+ \alpha_m- \min\{\alpha_m, \beta_{k}, \ldots, \beta_{m}\}\\
& \geq \alpha_m-\beta_k \\
& \geq \alpha_m- \min\{\alpha_m, \beta_{k}, \ldots, \beta_{m}\} .
\end{align*}
Par récurrence l'inégalité est vraie pour tout $k\geq 1$.

Soit $i_{0}$ le plus petit indice tel que $\beta_{i_0}=0$ et $i_{1}$ le plus grand indice tel que $\beta_{i_1}=0$. Alors par l'inégalité (\ref{la}), on prend $k=i_0$,
\begin{equation}\sum_{i\leq i_{0}}\max\{\alpha_{i}-\beta_{i}, 0\} \geq A-\min\{ A, \beta_1, \ldots, \beta_{i_0} \}=A .
\end{equation}
De même, on prend $k=i_1$ dans l'inégalité (\ref{da}), on obtient:
 \begin{equation}\sum_{i\geq i_{1}}\max\{\alpha_{i}-\beta_{i}, 0\} \geq \alpha_{m}-\min\{ \alpha_m, \beta_{i_1}, \cdots, \beta_m \}=\alpha_m . 
\end{equation}

Donc si  $i_{0}\neq i_{1}$, on obtient $$v_{p}(\prod_{i,j,s}\frac{\chi {sS_{i}}  }{    ( \chi {sS_{i}}  , {k^{i}_{j,s}}    ) })\geq A+\alpha_m= v_p(\chi S_m\sum_j a_j ). $$

Si $i_{0}=i_{1}$, c'est-à-dire que $i_{0}$ est le seul indice tel que $\beta_{i}=0$. 
Comme $a_{i_{0}}=\sum_{j}a_{j}-\sum_{j\neq i_{0}}a_{j}$, on a $$\gamma_{i_{0}}\geq \min_{i\neq i_{0}}\{A,\gamma_{i}\}\geq \min_{i\neq i_{0}}\{A,\gamma_{i}, \alpha_{m}\}\geq \min_{i\neq i_{0}}\{A,\beta_{i}, \alpha_{m}\} .$$ 
Par l'inégalité (\ref{inequality}) et l'inégalité (\ref{72}),  on a alors  $$\alpha_{i_{0}}\geq \max\{    \min_{i< i_{0}}\{A, \beta_{i}\}, \min_{i>i_{0}} \{\alpha_{m},\beta_{i}\} \}   .$$
Donc
$$\alpha_{i_{0}}+\gamma_{i_{0}}\geq \min_{i< i_{0}}\{A, \beta_{i}\}+\min_{i>i_{0}} \{\alpha_{m},\beta_{i}\} ,$$
et clairement
$$2\alpha_{i_{0}}\geq \min_{i< i_{0}}\{A, \beta_{i}\}+\min_{i>i_{0}} \{\alpha_{m},\beta_{i}\}\  .$$
Combinant avec l'inégalité (\ref{i}), on obtient 
\begin{align*}
& v_{p}(\prod_{\substack{1\leq i\leq m\\ j\in J_i, s\in K_i} }\frac{\chi {sS_{i}}  }{    ( \chi {sS_{i}}  , {k^{i}_{j,s}}    ) } )   \\
\geq &  \max \big\{ \min\{2\alpha_{i_0}-2\beta_{i_0}, \alpha_{i_0}+\gamma_{i_0}-2\beta_{i_0}\}, 0   \big\}
  + \sum_{i\neq i_0} \max\{\alpha_{i}-\beta_{i}, 0\}    \\ 
   \geq & (\min_{i< i_{0}}\{A, \beta_{i}\}+\min_{i>i_{0}} \{\alpha_{m},\beta_{i}\})+  (A-\min_{i< i_{0}}\{A, \beta_{i}\})+(\alpha_m- \min_{i>i_{0}} \{\alpha_{m},\beta_{i}\}) \\
  = & A+\alpha_m
\end{align*}

Le lemme est prouvé.
\end{proof}

En utilisant ce lemme en rempla\c cant $k_{j,s}^{i}$ par $k_{j,s}^{i}/p^{v_{p}(\mathrm{p.g.c.d.}( k_{j,s}^{i}  ))}$ et le lemme \ref{small}, on a pour tout nombre $l\mid \mathrm{p.g.c.d.}(k_{j,s}^{i})$:  
 \begin{equation} \label{II} v_{p}(\prod_{\substack{1\leq i\leq m\\ j\in J_i, s\in K_i}} \binom{\varepsilon_{i,j,s}\chi {sS_{i}}/l }{{k^{i}_{j,s}}/l} )\geq v_{p}(\chi \sum_{j}a_{j} S_m)-2v_{p}(\mathrm{p.g.c.d.}(k_{j,s}^{i}) ) . \end{equation}
Retournons au théorème, il faut montrer que pour tout nombre premier $p$
\begin{equation}\label{butt}v_p\left(\sum_{l\mid \mathrm{p.g.c.d.} (k^{i}_{j,s}) } \mu(l)(-1)^{\frac{\sum_{i,j, s} k^{i}_{j,s}}{l}} \prod_{i,j,s}  \binom{\varepsilon_{i,j,s}\chi {sS_{i}}/{l} }{{k^{i}_{j,s}}/{l}}\right)   \geq v_{p}(\chi \sum_{j}a_{j}S_m) . \end{equation}

Si  $p$ est un nombre premier tel que  $p\nmid \mathrm{p.g.c.d.} (k_{j,s}^{i}) $, l'inégalité (\ref{II}) déjà implique le but (\ref{butt}).


Maintenant, soit $p$ un nombre premier qui divise $\mathrm{p.g.c.d.} (k_{j,s}^{i})$. Considérons  \[ \sum_{l\mid \mathrm{p.g.c.d.} (k^{i}_{j,s})} \mu(l)(-1)^{\frac{\sum_{i,j,s} k^{i}_{j,s}}{l}} \prod_{i,j,s}  \binom{\varepsilon_{i,j,s}\chi {sS_{i}}/{l} }{{k^{i}_{j,s}}/{l}} , \] on peut regrouper la somme suivant $p\mid l$ ou $p\nmid l$. Après simplification, elle est égale à 
\begin{multline} \label{Möbius}  
 \sum_{l\mid \mathrm{p.g.c.d.}(k^{i}_{j,s} );\  p\nmid l }  \mu(l)  \\
   \left( (-1)^{ \frac{\sum_{i,j,s}k_{j,s}^{i}}{l} }    \prod_{i,j,s}  \binom{\varepsilon_{i,j,s}\chi {sS_{i}}/{l} }{{k^{i}_{j,s}}/{l}} -  (-1)^{ \frac{\sum_{i,j,s}k_{j,s}^{i}}{lp}}     \prod_{i,j,s}  \binom{\varepsilon_{i,j,s}\chi {sS_{i}}/{lp} }{{k^{i}_{j,s}}/{lp}}    \right). 
\end{multline}

Pour tout nombre premier $p$ et tout nombre entier $n\geq 1$, soit
  $$f_{p}(n):=\prod_{i=1, p \nmid i}^{n}i = \frac{n!}{p^{[n/p]}  [n/p]! },$$ 
où $[n/p]$ est la partie entière de $n/p$. 
On va invoquer le résultat suivant qui est le lemme 3.1 de \cite{WZ}: \\
\\ \label{star}
\textit{  
$(*)$ Si $p$ est un nombre premier impair et $\alpha\geq 1$ un entier ou $p=2$ et $\alpha\geq 2$, on a  $$p^{2\alpha}\mid f_{p}(p^{\alpha}n)-f_{p}(p^{\alpha})^{n} . $$ 
Quand $p=2$, on a $$f_{2}(2n)\equiv (-1)^{[\frac{n}{2}]}\quad (mod\ 4)  . $$}

Comme  $$\binom{np}{mp}=\binom{n}{m}\frac{f_{p}(np)}{f_{p}(mp)f_{p}((n-m)p)}, $$
  et $$\binom{-np}{mp}=(-1)^{m(p-1)}\binom{-n}{m}\frac{f_{p}(np+mp)}{f_{p}(mp)f_{p}(np)}. $$

On discute suivant deux cas: \\
$\bullet\ 1^{er}$-cas. Si $p\neq 2$, ou $p=2$ et $v_{p}( \mathrm{p.g.c.d.}(k_{j,s}^{i}))\geq 2$,  la différence entre parenthèses dans l'expression (\ref{Möbius}) est égale à
\begin{equation}\label{po} (-1)^{ \frac{\sum_{i,j,s}k_{j,s}^{i}}{l}}  \left(  \prod_{\substack{1\leq i\leq m\\ j\in J_i, s\in K_i}}g_{i,j,s}  -1 \right) \prod_{\substack{1\leq i\leq m\\ j\in J_i, s\in K_i}}  \binom{\varepsilon_{i,j,s} \chi {sS_{i}}/{lp} }{{k^{i}_{j,s}}/{lp}}, \end{equation}
où 
\begin{equation*}g_{i,j,s}=\frac{ f_{p}(\chi sS_{i}/l)   }{  f_{p}(\chi sS_{i}/l-k_{j,s}^{i}/l)f_{p}(k_{j,s}^{i}/l)  } \end{equation*}
quand $\varepsilon_{i,j,s}=1$, et 
\begin{equation*} g_{i,j,s}=\frac{ f_{p}(\chi sS_{i}/l+k_{j,s}^{i}/l)   }{  f_{p}(\chi sS_{i}/l)f_{p}(k_{j,s}^{i}/l)  } \end{equation*}
quand $\varepsilon_{i,j,s}=-1$. D'après l'inégalité (\ref{II}), pour montrer (\ref{butt}), il suffit de montrer que l'expression entre parenthèses dans (\ref{po})  est divisible par $p^{2\mathrm{p.g.c.d.}(k_{j,s}^{i})}$.  

On identifie le corps des nombres rationnels $\mathbb{Q}$ à un sous-corps de $\mathbb{Q}_p$ des nombres $p$-adiques. Dans le cas où $\alpha=0$, il n'y a rien à prouver. On considère le cas où $\alpha=v_p(\mathrm{p.g.c.d.}( k_{j,s}^{i} ))>0$.  
Notons que pour tous $1\leq i\leq m$, $j\in J_i$ et $s\in K_i$, on a $$v_{p}(k_{j,s}^{i}) \geq \alpha, $$
et comme $S_i$ est une combinaison linéaire à coefficients entirs en $k_{j,s}^{i}$, on  a aussi
$$v_{p}(\chi sS_{i})\geq \alpha. $$
De plus, par définition, $f_{p}(p^{\alpha})$ est une unité dans $\mathbb{Z}_{p}^{\times}$, le groupe des éléments de valuation $0$ dans $\mathbb{Q}_p$.
Soit $u_{i,j,s}=f_{p}(p^{\alpha})^{k_{j,s}^{i}/lp^{\alpha} }$. Quand $\varepsilon_{i,j,s}=1$ (resp. quand $\varepsilon_{i,j,s}=-1$), soit $v_{i,j,s}=f_{p}(p^{\alpha})^{\chi s S_i/lp^{\alpha}- k_{j,s}^{i}/lp^{\alpha}  }$ (resp.  $v_{i,j,s}=f_{p}(p^{\alpha})^{\chi s S_i/lp^{\alpha}}$). Par le fait $(*)$, il existe $x_{i,j,s}, y_{i,j,s}, z_{i,j,s}\in \mathbb{Z}$  tels que 
$$ g_{i,j,s}=\frac{u_{i,j,s}v_{i,j,s}- z_{i,j,s}p^{2\alpha}}{(u_{i,j,s}-x_{i,j,s}p^{2\alpha})(v_{i,j,s}-y_{i,j,s} p^{2\alpha} )}  . $$
Utilisons l'identité:
$$\frac{1}{u-xp^{2\alpha}}=u^{-1}+\sum_{k\geq 1} \frac{x^{k}}{u^{k+1}}p^{2k\alpha} ,    $$
pour $x=x_{i,j,s} \in \mathbb{Z}_{p}$ et $u=u_{i,j,s}\in \mathbb{Z}_{p}^{\times}$  (resp. $x=y_{i,j,s}\in \mathbb{Z}_{p}$ et $u=v_{i,j,s}\in \mathbb{Z}_{p}^{\times}$), on obtient
$$ v_p (\prod_{i,j,s} \frac{u_{i,j,s}v_{i,j,s}- z_{i,j,s}p^{2\alpha}}{(u_{i,j,s}-x_{i,j,s}p^{2\alpha})(v_{i,j,s}-y_{i,j,s} p^{2\alpha} )} -1 )\geq 2\alpha  . $$
Donc on conclut que dans ce cas (\ref{butt}) est vraie. 
\\
$\bullet\ 2^{nd}$-cas.  Il ne reste qu'à prouver le cas où $p=2$ et $v_{2}(\mathrm{p.g.c.d.}(k_{j,s}^{i}))=1$. La différence entre parenthèses dans l'expression (\ref{Möbius}) est égale à
$$ \prod_{i,j,s}   \binom{\varepsilon_{i,j,s} \chi {sS_{i}}/{2l} }{{k^{i}_{j,s}}/{2l}}  \left(  \prod_{i,j,s}g_{i,j,s} -(-1)^{ \frac{\sum_{i,j,s}k_{j,s}^{i}}{2l}  } \right) ,$$
où 
\begin{equation*}g_{i,j,s}= \prod_{i,j,s} \frac{ f_{2}(\chi sS_{i}/l)   }{  f_{2}(\chi sS_{i}/l-k_{j,s}^{i}/l)f_{2}(k_{j,s}^{i}/l)  } ,\end{equation*}
quand $\varepsilon_{i,j,s}=1$, et 
\begin{equation*} g_{i,j,s}= (-1)^{k_{j,s}^{i}/2l}\prod_{i,j,s} \frac{ f_{2}(\chi sS_{i}/l+k_{j,s}^{i}/l)   }{  f_{2}(\chi sS_{i}/l)f_{2}(k_{j,s}^{i}/l)  }, \end{equation*}
quand $\varepsilon_{i,j,s}=-1$.
Dans ce cas, par (\ref{II}) on a
$$v_{2}(\prod_{i,j,s}   \binom{\varepsilon_{i,j,s} \chi {sS_{i}}/{2l} }{{k^{i}_{j,s}}/{2l}}) \geq v_{2}(  \chi \sum_{j}a_{j}S_m )-2  . $$
Donc il suffit de montrer que \begin{equation}\label{final}v_2(\prod_{i,j,s} g_{i,j,s} -(-1)^{ \frac{\sum_{i,j,s}k_{j,s}^{i}}{2l}  }  )\geq 2 . \end{equation}

Par $(*)$, on a $$ g_{i,j,s}\equiv  \begin{cases} (-1)^{([\frac{\chi sS_{i}}{4l}] - [\frac{k_{j,s}^{i}}{4l}]-  [\frac{\chi sS_{i}-k_{j,s}^{i}}{4l}] )} ,\quad   \text{si $\varepsilon_{i,j,s}=1 $}   \\ (-1)^{k_{j,s}^{i}/2l}
(-1)^{([\frac{\chi sS_{i}}{4l}+\frac{k_{j,s}^{i}}{4l}] - [\frac{k_{j,s}^{i}}{4l}]-  [\frac{\chi sS_{i}}{4l}] )}, \quad \text{si $\varepsilon_{i,j,s}=-1 $} 
\end{cases}   \quad(mod\ 4).
  $$
Cependant  $\chi$ est un nombre pair et $v_2(S_i)\geq v_2(\mathrm{p.g.c.d.}(k_{j,s}^{i}))=1$, donc $\frac{\chi sS_{i}}{4l}$ est un entier. On a $$[\frac{\chi sS_{i}}{4l}] - [\frac{k_{j,s}^{i}}{4l}]-  [\frac{\chi sS_{i}-k_{j,s}^{i}}{4l}] = - [\frac{k_{j,s}^{i}}{4l}]-  [\frac{-k_{j,s}^{i}}{4l}] \equiv \frac{k_{j,s}^{i}}{2l} \quad(mod\ 2)  .$$
De même
$${[\frac{\chi sS_{i}}{4l}+\frac{k_{j,s}^{i}}{4l}] - [\frac{k_{j,s}^{i}}{4l}]-  [\frac{\chi sS_{i}}{4l}] } \equiv 0 \quad(mod\ 2). $$
Donc c'est toujours vrai que $$g_{i,j,s}\equiv (-1)^{k_{j,s}^{i}/2l}   \quad (mod\ 4) .$$
Cela  implique (\ref{final}). 

La preuve est finalement complète !
\end{proof}

\subsection{Preuve de la deuxième  partie du théorème \ref{P}}\label{Char E}
\subsubsection{}
Dans l'article \cite{Deligne}, Deligne a conjecturé aussi que le nombre $C_n({X_k})$ est divisible par $|\text{Pic}_{X_k}^{0}(\mathbb{F}_{q^k})|$,  et il a donné des conjectures plus précises (la conjecture 6.3 et la conjecture 6.7 \cite{Deligne}). Dans cette section, on montre la conjecture 6.3 de $loc.\ cit.$ dans le cas partout non-ramifié  et calcule  ``la caractéristique d'Euler''. Rappelons que pour toute variété $V$  de dimension $m$, sa caractéristique d'Euler est définie par
$$\chi(V):= \sum_{i=1}^{2m} (-1)^{i}\dim_{\bar{\mathbb{Q}}_{\ell}} \mathrm{H}^{i}(V\otimes\mathbb{F}, \bar{\mathbb{Q}}_{\ell}) .$$
Par un théorème de Laumon (\cite{Cara}), on peut remplacer dans la définition les cohomologies $\ell$-adiques par celles à support compact. Donc par la formule des traces de Grothendieck-Lefschetz, si $$|V(\mathbb{F}_{q^k})|=\sum_{i}m_i\alpha_i^k, \quad \forall\ k\geq 1.$$ alors $\sum_{i}m_i$ est la caractéristique d'Euler de $V$.

Pour le polynôme $A_{g,n}$ du théorème \ref{Mel}, on a un résultat plus fin:

\begin{lemm}\label{zuihou}
Il existe $I_{g,n}(t, z_1, \cdots, z_g)\in \mathbb{Z}[t, z_1^{\pm 1}, \cdots, z_g^{\pm 1 }]$ tel que 
$$A_{g,n}(t, z_1, \ldots, z_g)=I_{g,n}(t,z_1,\ldots, z_g)\prod_{i=1}^{g}\left((1-z_i)(1-tz_i^{-1})\right)  .$$
Chaque monôme $t^{m}z_1^{n_1}\cdots z_g^{n_g}$ qui  apparaît dans $I_{g,n}$ vérifie \begin{equation*} m+\sum_{i=1}^g \min\{n_i, 0\}\geq 0 . \end{equation*}
De plus, on a $$I_{g,n}(1,1,\ldots,1)=\mu(n)n^{2g-3}.$$
\end{lemm}
\begin{proof}
On montre tout d'abord l'existence d'un tel polynôme. L'argument ci-dessous m'a été communiqué par A. Mellit. 

On utilise le théorème 1.1 de \cite{Mellit} et ses notations, qui a été rappelé après le théorème \ref{Mel}.

On désigne $\prod_{i=1}^{g}\left((z-z_i)(1-qz_i^{-1})\right)\in \mathbb{Q}[q,z][ z_1^{\pm 1}, \ldots, z_{g}^{\pm 1}]$ par $J_g$. 
Observons que $$\Omega_g\in 1+  T.J_g.  \mathbb{Q}(q,z)[T, z_1^{\pm 1}, \ldots, z_{g}^{\pm 1}].$$  On a  $$\mathrm{Log}\ \Omega_g\in T.J_g.  \mathbb{Q}(q,z)[T, z_1^{\pm 1}, \ldots, z_{g}^{\pm 1}],$$ et alors $ H_{g,n}\in J_g.  \mathbb{Q}(q,z)[z_1^{\pm 1}, \ldots, z_{g}^{\pm 1}]  $ pour tous $g, n\geq 1$. Donc il existe  $$I_{g,n}(q, z, z_1, \cdots, z_g)\in \mathbb{Q}(q,z)[z_1^{\pm 1}, \cdots, z_g^{\pm 1}],$$ tel que 
$$I_{g,n}(q, z, z_1, \cdots, z_g) J_g(q, z, z_1, \cdots, z_g)= H_{g,n}(q, z, z_1, \cdots, z_g) .$$ 
Comme \[ H_{g,n}(q,1,z_1,\ldots,z_g), J_g(q,1,z_1,\ldots,z_g)\in \mathbb{Z}[q][z_1^{\pm 1}, \cdots, z_g^{\pm 1}],\] et $J_g(q,1,z_1,\ldots,z_g)$ a pour terme constant $1$,  on déduit par lemme de Gauss que $I_{g,n}(q,1,z_1,\ldots,z_g)\in  \mathbb{Z}[q][z_1^{\pm 1}, \cdots, z_g^{\pm 1}]$.

Chaque monôme $q^{m}z_1^{n_1}\cdots z_g^{n_g}$ qui  apparaît dans  $$A_{g,n}(q,z_1, \cdots, z_g)$$ vérifie \begin{equation*} m+\sum_{i=1}^g \min\{n_i, 0\}\geq 0 . \end{equation*}
S'il y avait un monôme dans $I_{g,n}(q,1,z_1, \cdots, z_g) $ qui ne satisfaisait  pas cette condition, on obtiendrait une contradiction en regardant les monômes   $q^{m}z_1^{n_1}\cdots z_g^{n_g}$  apparaissant   dans le produit de $I_{g,n}(q,1,z_1, \cdots, z_g)$ et $J_g(q,1,z_1, \cdots, z_g)$ tels que $m+\sum_{i=1}^g \min\{n_i, 0\}$ prenne la plus petite valeur.

Enfin, on calcule $I_{g,n}(1,1,\ldots, 1)$. 
Soit toujours $(e,n)=1$.  Soit $X_{\mathbb{C}}$ une surface de Riemann de genre $g$. On sait que la cohomologie de $\mathrm{Higgs}_{n,e}^{st}(X_{\mathbb{C}})$ est pure (cf. \cite{Hausel}). Donc son polynôme de Poincaré 
$$   \sum_{i} (-1)^{i} \dim (H^{i}_c (\mathrm{Higgs}_{n,e}^{st}(X_\mathbb{C}), \mathbb{C})) t^{i}$$
 est donné par 
$ t^{2(1+n^{2}(g-1))}A_{g,n}(t^{2}, t, \ldots, t)$ (cf. corollaire 1.3 de \cite{Schiffmann}).  
Par la théorie de Hodge non-abélienne (cf. \cite{Simpson}), $t^{2(1+n^{2}(g-1))}A_{g,n}(t^{2}, t, \ldots, t) $ est aussi le polynôme de Poincaré de la variété de caractères tordus de $G$. On sait alors $I_{g,n}(1, 1\cdots, 1)$ est égal la caractéristique d'Euler de la variété de caractères tordus de $PGL_n$ que Hausel et Rodriguez-Villegas ont calculée être égale à $\mu(n)n^{2g-3}$ (cf. corollaire 1.1.1 \cite{HR}).
\end{proof}

Par la formule (\ref{002}), le nombre $ A_{g,n}(q, \sigma_1, \ldots, \sigma_g)  -   C_n(X_1)$ est un polynôme à coefficients rationnels en $C_s(X_k)$ pour $s<n$. 
Donc par récurrence, le lemme précédent et le lemme de Gauss, l'existence de $Q_{g,n}$ est clair.

Maintenant on calcule $Q_{g,n}(1,1,\cdots,1)$.  Soit $\chi_n=Q_{g,n}(1,1,\cdots,1)$. 
Encore $A_{g,n}(q, \sigma_1, \ldots, \sigma_g)$ est un polynôme à coefficients rationnels en $C_s(X_k)$ pour $sk\leq n$. Par le lemme précédent $$P_{g,n}(1,1,\ldots, 1)=0, $$ il suffit de trouver les termes qui sont linéaires en $C_s(X_k)$ de ce polynôme. Par  la formule (\ref{001}), c'est 
$$ \sum_{d\mid n} \sum_{l\mid d} \frac{\mu(l)}{n(2g-2) d }[z^{d}] \biggr(\frac{(2g-2)n }{l}\sum_{s\geq 1}\sum_{k\geq 1} \frac{C_{s }(X_{kl} ) s  }{k} z^{skl} \biggr),$$
qui est égal à 
$$  \sum_{d\mid n} \sum_{l\mid d}\sum_{s\mid  \frac{d}{l}}    \frac{\mu(l)s^2}{d^2} C_s(X_{{d}/{s}}) = \sum_{d\mid n}\sum_{s\mid d} C_s(X_{d/s})( \sum_{l\mid \frac{d}{s}}\mu(l))=\sum_{d\mid n} C_d(X_1).$$
Par la formule (\ref{001}) et le lemme \ref{zuihou}, on obtient alors 
$$\mu(n)n^{2g-3}= \sum_{d\mid n} \chi_{d} . $$
Par l'inversion de Möbius, on a $$\chi_n= \sum_{l\mid n}\mu(l)\mu(n/l) l^{2g-3}.$$


\appendix
\section{Indépendance du degré}
Dans cet appendice on montre que $J^{ }_{e}$ ne dépend que de l'ordre de $e$ dans $\mathbb{Z}/n\mathbb{Z}$ et cela implique que le nombre des classes d'isomorphie des fibrés vectoriels géométriquement indécomposables  de degré $e$ de rang $n$ ne dépend pas du degré $e$ quand $(e, n)=1$. 

Soit $M$ un sous-groupe de Levi standard à $r$ facteurs: $M\cong  G_{n_1}\times\cdots\times G_{n_r}$.  Soit $W(\ago_M)$ l'ensemble des $w\in W_n/W^M$ tel que $w(M)$ soit standard. On identifie $W(\ago_M)$ avec $\mathfrak{S}_r$ (cf. les discussions au dessus de la proposition \ref{1Qexplicit}). Soit $s\in W(\ago_M)$, on va noter $Q_s$ le sous-groupe parabolique dans $ \mathcal{P}(M)$ qui lui est associé.

Rappelons qu'avec les notations de la proposition \ref{1Qexplicit}, 
on a 
\begin{align}\hat{\mathbbm{1}}^{e}_{Q_s}(\lambda)&=\lambda^{{\HE}_{Q_s}^{e}}\prod_{\alpha\in\Delta_{Q_s}}\frac{1}{1-\lambda^{\alpha^{\vee}}} ,
\end{align} où  $$\glslink{tHQe}{\mathrm{H}^e_{Q_s}}=s^{-1}([\frac{e}{n}   r_{s}^{0}]-[\frac{e}{n}r_{s}^{1} ],\ldots, [\frac{e}{n}r_{s}^{r-1}]-[\frac{e}{n}r_{s}^{r} ])$$
et $r_{s}^{i}=n_{s^{-1}(1)}+\cdots+n_{s^{-1}(i)}$.
Avec la fonction $\theta_Q$ d'Arthur, on a 
\begin{equation}\label{1thetae}
\hat{\mathbbm{1}}^{e}_{Q_s}(\lambda) = (-1)^{\dim \ago_{L}^{{G}}}\lambda^{I_{Q_s}^{e}}\theta_{Q_s}(\lambda)^{-1}, 
 \end{equation}
 où $I_{Q_s}^e={\HE}_{Q_s}^{e}+s^{-1}(0,1,1\ldots,1) $.

\begin{lem}\label{Appendix}
Pour tout $e\in\mathbb{Z}$, la famille des fonctions 
$(\lambda\mapsto\lambda^{I_{Q}^{e}})_{Q\in \mathcal{P}(M)}$ définie dans \ref{a_L} (en particulier (\ref{1thetae})) est une $({G},M)$-famille sur $X_M^{G}$. 
\end{lem}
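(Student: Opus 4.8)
The plan is to verify the defining property of a $(G,M)$-family directly from the explicit formula for $\lambda^{\HE_Q^e}$ established just before the statement. Recall from the proposition in paragraph \ref{1theta} that for $Q = Q_s$ with $s \in W(\ago_L)$ one has $\HE_{Q_s}^e = \widetilde{\HE}_{Q_s}^e + s^{-1}(0,1,\ldots,1)$, where $\widetilde{\HE}_{Q_s}^e \in \ago_L^G$ is given in terms of the integer parts $[\tfrac{e}{n}r_{s(L)}^i]$. The function $\lambda \mapsto \lambda^{\HE_Q^e}$ is a monomial character on $X_M^G$, hence holomorphic everywhere on $X_M^G$; so the only thing to check is the matching condition: for any two adjacent $P, P' \in \mathcal{P}(M)$ and any $\lambda \in X_M^G$ with $\lambda^{\alpha^\vee} = 1$, where $\alpha$ is the unique root in $\Phi_P \cap \Phi_{\bar P'}$, one must have $\lambda^{\HE_P^e} = \lambda^{\HE_{P'}^e}$, i.e. $\lambda^{\HE_P^e - \HE_{P'}^e} = 1$.

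The key step is therefore to show that the difference $\HE_P^e - \HE_{P'}^e \in \ago_M^G$ is an integer multiple of $\alpha^\vee$ (or more precisely lies in the sublattice dual-annihilated by the hyperplane $\lambda^{\alpha^\vee}=1$, which for $GL_n$ amounts to being a multiple of $\alpha^\vee$ in $\ago_{M,\mathbb{Z}}$). For this I would use the coordinate description of paragraph \ref{coor}: adjacency of $P$ and $P'$ means the associated permutations $s, s' \in W(\ago_L) \cong \mathfrak{S}_r$ differ by a single transposition of two consecutive factors in the ordering determined by $\rho_L$, and the chambers agree on the common wall $\{H : \alpha(H) = 0\}$. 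Writing out $\widetilde{\HE}_{Q_s}^e$ and $\widetilde{\HE}_{Q_{s'}}^e$ via the formula with the partial sums $r_{s(L)}^i$, all coordinates outside the two swapped positions are literally unchanged, and in the two swapped positions the difference of the corresponding $[\tfrac{e}{n}r^i]$-entries is visibly an integer; one then checks that the resulting vector is $(\text{integer})\cdot\alpha^\vee$. The term $s^{-1}(0,1,\ldots,1)$ versus $s'^{-1}(0,1,\ldots,1)$ contributes similarly a $\pm 1$ times $\alpha^\vee$ (or zero), because a transposition of consecutive entries of $(0,1,\ldots,1)$ is itself a difference supported on two consecutive coordinates. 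Hence $\HE_P^e - \HE_{P'}^e \in \mathbb{Z}\alpha^\vee$, and $\lambda^{\alpha^\vee} = 1$ forces $\lambda^{\HE_P^e - \HE_{P'}^e} = 1$.

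Alternatively, and perhaps more cleanly, one can deduce the lemma from Lemma \ref{deg} together with the proposition giving $\hat{\mathbbm{1}}_Q = \sum_e \hat{\mathbbm{1}}_Q^e$ and $\hat{\mathbbm{1}}_Q^e(\lambda) = \lambda^{\widetilde{\HE}_Q^e}\prod_{\alpha \in \Delta_Q}(1-\lambda^{\alpha^\vee})^{-1}$: L.~Lafforgue's spectral expansion already presupposes that $(\hat{\mathbbm{1}}_Q(\lambda))_{Q}$ and $(\hat{\mathbbm{1}}_Q^e(\lambda))_Q$ behave like $(G,M)$-families (this is implicit in the convergence and reorganisation of the trace formula), and the factor $\prod_{\alpha\in\Delta_Q}(1-\lambda^{\alpha^\vee})^{-1}$ is exactly the reciprocal of a $(G,M)$-family of the type in Theorem \ref{GMM} (take $c_\beta(z) = (1-z)^{-1}$ for $\beta > 0$ and $c_\beta \equiv 1$ otherwise, rearranged per chamber). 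Dividing, the monomial family $(\lambda^{\widetilde{\HE}_Q^e})_Q$ — hence also $(\lambda^{\HE_Q^e})_Q$, which differs by the fixed shift $s^{-1}(0,1,\ldots,1)$ handled as above — inherits the matching condition on each common wall. I expect the main obstacle to be purely bookkeeping: tracking the effect of $s^{-1}$ and of the rounding operation $e \mapsto [\tfrac{e}{n}r]$ through a transposition of adjacent factors, and making sure the $\pm\alpha^\vee$ coefficient comes out an honest integer rather than a rational number — this is where the divisibility in the integer-part formula matters, and where one should be careful that the normalisation of $\alpha^\vee$ in $\ago_{M,\mathbb{Z}}$ (recall $\alpha_{ij}^\vee = \psi_{T,i} - \psi_{T,j}$ at the torus level, projected to $\ago_M$) is used consistently.
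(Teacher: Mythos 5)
Your primary argument is correct and is essentially the paper's own proof: the paper likewise works from the explicit integer-part formula and checks the wall-matching condition for $s,t$ with $st^{-1}$ an adjacent transposition $(l,l+1)$, the point being exactly the one you isolate, namely that $\HE_{Q_s}^{e}-\HE_{Q_t}^{e}$ is supported on the two swapped positions with integer entries and is an integer multiple of $\alpha^{\vee}=\psi_{M,s^{-1}(l)}-\psi_{M,s^{-1}(l+1)}$ (the two entries are opposite because the coordinate sum of $\HE_{Q}^{e}$ equals $r-1-e$ for every $Q$), so that $\lambda^{\alpha^{\vee}}=1$ forces equality. The alternative route in your second paragraph should be dropped: it is circular, since the assertion that $(\hat{\mathbbm{1}}_{Q}^{e})_{Q}$ behaves like a $(G,M)$-family is precisely what this lemma is meant to establish before the splitting and limit formulas (th\'eor\`eme \ref{SCIN}, th\'eor\`eme \ref{SPE}) can be invoked in the appendix.
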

\begin{proof} Cela a été vérifié par L. Lafforgue dans \cite[Lemme 5 (ii), p.301]{Laff}.

\end{proof}

\begin{lem}\label{SPF}
Soit $\mu_0\in X_M^{G}$.
Soit $(c_{Q})_{Q\in\mathcal{P}(M)}$  une $({G},M)$-famille sur un domaine contenant $\mu_0^{\mathbb{Z}}$ telle que pour chaque $L\in \mathcal{L}(M)$, $c_{M}^{Q}$ (définie dans \ref{GLfa}) soit indépendante de $Q\in \cal{P}(L)$, qu'on note $c_{M}^{L}$. Supposons que $c_{Q}(\lambda\mu_{0})=c_{Q}(\lambda)$ pour tout $\lambda\in X_{M}^{{G}}$ où $c_Q$ est définie. Alors 
$$\lim_{\lambda\rightarrow 1} \sum_{Q\in \mathcal{P}(M)} \hat{\mathbbm{1}}^{e}_Q(\lambda\mu_{0}){c_{Q}( \lambda\mu_{0})} , $$ 
s'annule sauf si $\mu_{0}\in X_{{G}}^{{G}}$. 
\end{lem}
\begin{proof}
C'est une variante du lemme \ref{SPE}. La preuve est similaire. Soit $$d_Q(\lambda)= \hat{\mathbbm{1}}^{e}_Q(\lambda\mu_{0})\theta_Q(\lambda)^{-1}.$$ 
On vérifie comme dans le lemme \ref{SPE} (ou par le lemme \ref{Appendix}) que $(d_Q(\lambda))_{Q\in \mathcal{P}(M)}$ est une $(G,M)$-famille. Il s'ensuit par le même argument dans le lemme \ref{SPE} que pour tout sous-groupe de Levi propre $L$ de $G$, on a $d_L=0$. 
\end{proof}

\begin{lem}\label{A3}
Supposons que $\{c_{Q} \}_{Q \in\mathcal{P}(M)}$ est une $({G},M)$-famille sur un voisinage de $1$ telle que pour chaque $L\in \mathcal{L}(M)$,  $c_{M}^{R}$ soit indépendante de $R\in \mathcal{P}(L)$ (qu'on désigne par $c_{M}^{L}$) pour tout sous-groupe de Levi $L$  défini sur $F$ contenant $M$. Alors la limite
$$\lim_{\mu\rightarrow 1} \sum_{Q\in \mathcal{P}(M)} \hat{\mathbbm{1}}_{Q}^{e}(\mu) c_{Q}(\mu)$$ ne dépend que de l'ordre de $e$ dans $\mathbb{Z}/n\mathbb{Z}$. 
\end{lem}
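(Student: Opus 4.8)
The plan is to realise the limit as the $M$-value of a product of two $(G,M)$-families, then use the splitting formula to push all dependence on $e$ into a family of numerical constants that can be analysed one Levi subgroup at a time. First I would apply the explicit formula of paragraph \ref{1theta}: since $Q\in\mathcal{P}(M)$ forces $L=M$ there, one has $\hat{\mathbbm{1}}^e_Q(\mu)=(-1)^{\dim\ago_M^G}\,\mu^{\HE^e_Q}\,\theta_Q(\mu)^{-1}$, hence
$$\lim_{\mu\to 1}\sum_{Q\in\mathcal{P}(M)}\hat{\mathbbm{1}}_Q^e(\mu)\,c_Q(\mu)=(-1)^{\dim\ago_M^G}\lim_{\mu\to 1}\sum_{Q\in\mathcal{P}(M)}\theta_Q(\mu)^{-1}\,\mu^{\HE_Q^e}\,c_Q(\mu).$$
By Lemma \ref{Appendix} the family $(\mu\mapsto\mu^{\HE_Q^e})_{Q\in\mathcal{P}(M)}$ is a $(G,M)$-family, so $(\mu^{\HE_Q^e}c_Q(\mu))_{Q}$ is again a $(G,M)$-family, and the right-hand side is, up to the sign, its $M$-value.

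Next I would invoke the splitting formula (Theorem \ref{SCIN}) for this product. Writing $h^e_Q(\mu)=\mu^{\HE^e_Q}$, it gives
$$\lim_{\mu\to 1}\sum_{Q}\theta_Q(\mu)^{-1}\mu^{\HE_Q^e}c_Q(\mu)=\sum_{L'\in\mathcal{L}(M)}(h^e)_M^{L'}\,c_{L'},$$
where $c_{L'}$ is the value at $1$ of the $(G,L')$-family deduced from $(c_Q)$ — a quantity not involving $e$ — and $(h^e)_M^{L'}$ is the $(L',M)$-family value attached to $(h^e_Q)$. So it remains to show that each $(h^e)_M^{L'}$ depends only on the order of $e$ in $\mathbb{Z}/n\mathbb{Z}$. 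Here I would use the decomposition $\HE_{Q_s}^e=\widetilde{\HE}_{Q_s}^e+s^{-1}(0,1,\dots,1)$ of paragraph \ref{1theta}: the second summand carries no $e$, so replacing $\HE^e_{Q_s}$ by $\widetilde{\HE}^e_{Q_s}$ only multiplies $(h^e)_M^{L'}$ by the corresponding, $e$-independent, constants of the fixed $(G,M)$-family $(\mu^{s^{-1}(0,\dots,1)})_Q$. One is thus reduced to the family $(\mu^{\widetilde{\HE}^e_{Q_s}})_Q$, whose exponents have entries $\lfloor\tfrac{e}{n}r^{i-1}\rfloor-\lfloor\tfrac{e}{n}r^i\rfloor$ with $r^i$ the partial sums of the block sizes.

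Finally, for $(\mu^{\widetilde{\HE}^e_{Q_s}})_Q$ I would compute $(h^e)_M^{L'}$ through the Kirchhoff/matrix-tree formalism of paragraph \ref{V11}: it can be written as a sum $\sum_{F}\prod_{\beta\in F} y^e_\beta$ over the bases $F$ of $\ago_{L'}^{G,*}$ consisting of roots of $-\Phi_{Q_{L'}}$, where the $y^e_\beta$ are pairings of these roots with $\widetilde{\HE}^e_Q$, hence by Theorem \ref{Mat} it equals $\kappa$ of the Kirchhoff matrix built from the $y^e_\beta$. Although a single floor $\lfloor\tfrac{e}{n}m\rfloor$ genuinely depends on $e$ more finely than its order, the symmetrisation carried by this sum over spanning trees is designed to collapse that dependence to one seeing only $\gcd(e,n)$, i.e.\ the order of $e$; I expect to establish this by an explicit evaluation of the relevant determinant, of the same flavour as the computations of Section VI. One can also organise the same step around the averaging identity $\hat{\mathbbm{1}}^e_Q(\mu)=\tfrac1n\sum_k\zeta^k\hat{\mathbbm{1}}_Q(\mu\eta^{ek})$ of Lemma \ref{deg} together with the vanishing criterion of Theorem \ref{SPE}, which confines the contributions to a set of shifts governed by the cyclic subgroup generated by $e$.

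The main obstacle is precisely this last paragraph: showing that the finer-than-order dependence of the floor functions $\lfloor\tfrac{e}{n}r^i\rfloor$ disappears once the $(L',M)$-family value has been formed. The real work is the bookkeeping of which roots of $-\Phi_{Q_{L'}}$ occur, and with which coefficients, in the vector $\widetilde{\HE}^e_{Q_s}$, and the verification that the resulting Kirchhoff determinant $\kappa$ depends on $e$ only through $\gcd(e,n)$; everything preceding it is formal manipulation of $(G,M)$-families.
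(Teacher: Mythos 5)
There is a genuine gap, and you locate it yourself: the whole lemma amounts to showing that the value attached to the exponential family $h^e_Q(\mu)=\mu^{\HE^e_Q}$ sees $e$ only through its order, and your last paragraph leaves precisely this as an expectation (``I expect to establish this by an explicit evaluation''). Worse, the route you sketch for it is not available: Theorem \ref{GMM} (hence Theorem \ref{Mat} and any Kirchhoff/spanning-tree expression) applies only to $(G,M)$-families of the special shape $c_Q(\lambda)=\prod_{\beta\in\Phi_Q}c_\beta(\lambda^{\beta^\vee})$, and $\mu\mapsto\mu^{\HE^e_Q}$ is not of this shape --- the exponent $\HE^e_{Q_s}$ is built from floors of the partial sums $r^i_{s(M)}$ and does not decompose root by root --- so the sum over bases $F\subseteq-\Phi_{Q_{L'}}$ you want to evaluate does not exist. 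Two intermediate steps are also unjustified: Theorem \ref{SCIN} requires the independence of $c_M^Q$ for the family whose $c_M^{L'}$'s appear in the formula, and the hypothesis of the lemma provides this for $(c_Q)$, not for $(h^e_Q)$, so the legitimate output is $\sum_{L'}c_M^{L'}(h^e)_{L'}$ rather than your $\sum_{L'}(h^e)_M^{L'}c_{L'}$; and you cannot ``factor out'' $\mu^{s^{-1}(0,1,\ldots,1)}$ at the cost of an $e$-independent constant, because this factor depends on $Q$ through $s$, and the value of a product of $(G,M)$-families is not the product of the values --- that discrepancy is exactly what the splitting formula measures.

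For comparison, the paper's proof, after the same two formal observations (the formula for $\hat{\mathbbm{1}}^e_Q$ and Lemma \ref{Appendix}), applies \ref{SCIN} in the orientation dictated by the hypothesis and is reduced to showing that the $(G,L)$-value $d^e_L$ of $(\mu^{\HE^e_Q})$ depends only on the order of $e$, for each $L\in\mathcal{L}(M)$. Restricting to $\lambda\in X_L^G$, one checks that the member indexed by $s$ equals a $t$-independent factor (tending to $1$) times $\lambda^{\HE^e_{Q_t}}$ for $t\in W(\ago_L)$, and then writes $\lambda^{\HE^e_{Q_t}}=\bigl(\prod_i\lambda_i^{-[\frac{e}{n}m_i]}\bigr)\, s\bigl(\prod_{i\geq 2}\lambda_i^{1-\varepsilon_i^{t,e}}\bigr)$ with $\varepsilon_i^{t,e}\in\{0,1\}$ recording jumps of fractional parts; the first factor is again $t$-independent, the second is a monomial of degree $\dim\ago_L^G-\sum_i\varepsilon_i^{t,e}$, and Lemma \ref{123} (a family of polynomials of degree $<\dim\ago_L^G$ has value $0$) forces $d^e_L=0$ unless all $\varepsilon_i^{t,e}$ vanish, which is equivalent to $n\mid e\,m_l$ for every block size $m_l$ of $L$, in which case the value is visibly independent of $e$. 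Since the condition $n\mid e m_l$ only depends on $n/(n,e)$, i.e.\ on the order of $e$, the lemma follows: the floor functions are never symmetrised away, they are killed by a degree count. To repair your outline you would have to either prove the missing independence property of $(h^e)$ needed for your orientation of \ref{SCIN} and then an analogue of this degree argument, or simply adopt the paper's orientation and argument.
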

\begin{proof}
Soit $d^{e}_{Q}(\lambda) =\lambda^{I^{e}_{Q}}$, $\forall \ Q\in\mathcal{P}(M)$. Alors par le lemme \ref{Appendix} cela définit une $({G},M)$-famille sur $X_M^{G}$.   Par la proposition \ref{SCIN},  il suffit donc de prouver que $d^{e}_{L}$ ne dépend que de l'ordre de $e$ pour tout $L\in \mathcal{L}(M)$.


Soit $L\in\mathcal{L}(M)$, un sous-groupe de Levi défini sur $F$ contenant $M$. Sans perte de généralité, on peut supposer que $M$ et $L$ sont standards. Par l'inclusion $X_{L}^{{G}}\subseteq X_{M}^{{G}}$, on suppose que $X_{L}^{{G}}$ est
$$
\{(\lambda_{1},\ldots, \lambda_{r})\in X_{M}^{{G}} \ |\ \lambda_{1}=\cdots=\lambda_{i_{1}}, \lambda_{i_{1}+1}=\cdots=\lambda_{i_{2}}, \ldots, \lambda_{i_{k-1}+1}=\cdots=\lambda_{i_{k}}          \} ,$$ 
pour $1\leq i_{1}\leq \cdots\leq i_{k}=r$. 


Soit $s\in W(\ago_M)$ et $t\in W(\ago_L)$ tel que  $Q_s \subseteq Q_t$. 
Rappelons que $s\in W(\ago_{M})$ définit un morphisme 
$$s: X_{M}^{{G}}\rightarrow X_{s(M)}^{{G}}, $$
tel que pour $H\in \ago_{s(M)}$ et $\lambda\in X_{M}^{{G}}$ on ait
$\lambda^{s^{-1}(H)}=s(\lambda)^{H}$. 
Alors $\forall \ \lambda\in X_{L}^{{G}}$,  soit $i_0=0$, on a
\[ \begin{split} \lambda^{I^{e}_{Q_s}} &=s(\prod_{l=1}^{r} \lambda_{l}^{[ r_{s}^{l-1}\frac{e}{n} ]-[ r_{s}^{l}\frac{e}{n} ]    }) s(\prod_{l=2}^{r}\lambda_{l} ) \\ &=  \lambda_{i_{1}}^{i_{1}-i_0-1} \cdots \lambda_{i_{k}}^{i_{k}-i_{k-1}-1}   s(\prod_{j=1}^{k} \lambda_{i_j}^{[ r_{s}^{i_{j-1}}\frac{e}{n} ]-[ r_{s}^{i_j}\frac{e}{n} ]    }     ) s(\prod_{j=2}^{k}\lambda_{i_{j}} ) ,  \end{split} \]
qui est égal à $(\lambda_{i_{1}}^{i_{1}-i_0-1} \cdots \lambda_{i_{k}}^{i_{k}-i_{k-1}-1}    )\lambda^{I^{e}_{Q_t}}$.  
Notons que le premier facteur ne dépend pas de $t$, donc par définition de $d_{L}^{e}$, on a  $$d_{L}^{e}= \lim_{\lambda\rightarrow 1}\sum_{Q\in \mathcal{P}(L)} \lambda^{I^{e}_{Q}}\theta_{Q}(\lambda)^{-1} (\lambda_{i_{1}}^{i_{1}-i_0-1} \cdots \lambda_{i_{j}}^{i_{k}-i_{k-1}-1}    ) =\lim_{\lambda\rightarrow 1}\sum_{Q\in \mathcal{P}(L)} \lambda^{I^{e}_{Q}}\theta_{Q}(\lambda)^{-1} .$$

Soit  $\{x\}=x-[x]$, $\forall x\in\mathbb{R}$, alors   $$[x]-[y]=\begin{cases} [x-y], \quad \text{si $\{x\}\geq \{y\} $ };          \\ [x-y]+1 , \quad \text{si $\{x\}< \{y\} $ }  .
\end{cases} 
$$
Soient $L\cong G_{m_{1}}\times\cdots G_{m_{k}}$ et $t\in W(\ago_L)$.  On sait que 
$$[\frac{e}{n}r_{t}^{i-1}]-[\frac{e}{n}r_{t}^{i}   ]= -[\frac{e}{n} m_{t^{-1}(i)} ]-\varepsilon_{i}^{t, e},$$
où $$\varepsilon_{i}^{t,e} =\begin{cases}0, \quad \text{si $\{\frac{e}{n}r_{t}^{i}   \}\geq \{\frac{e}{n}r_{t}^{i-1}\} $ }   ;       \\ 1 , \quad \text{si $\{\frac{e}{n}r_{t}^{i}\}< \{\frac{e}{n}r_{t}^{i-1}\} $ }  .
\end{cases}
 $$
En particulier, $\varepsilon_{1}^{t,e}=0$.  Il s'ensuit que $$\lambda^{I_{Q_t}^{e}} = (\prod_{j=1}^{k}\lambda_{i_j}^{-[\frac{e}{n}m_{j}]} )t(\prod_{j=2}^{k}\lambda_{i_j}^{1-\varepsilon_{j}^{t,e}}). $$
Notons que le premier facteur ne dépend pas de $t\in W(\ago_L)$. 
La fonction $t(\prod_{j=2}^{k}\lambda_{i_j}^{1-\varepsilon_{i}^{t,e}})$ est un polynôme, et 
$$\deg t(\prod_{j=2}^{k}\lambda_{i_j}^{1-\varepsilon_{i_j}^{t,e}})  =\dim\ago_{L}^{{G}}- \sum_{i=2}^{k} \varepsilon_{i}^{t,e}.   $$

On calcule $d_L^{e}$. Puisqu'on sait déjà que la limite 
$$d_{L}^{e}=\lim_{\lambda\rightarrow 1}\sum_{Q\in \mathcal{P}(L)} \lambda^{I^{e}_{Q}}\theta_{Q}(\lambda)^{-1}$$
existe. Pour l'évaluer, il suffit de prendre $\lambda_i=1+i\epsilon$ pour tout $i$ et laisser $\epsilon$ tendre vers $0$. Par l'expansion de Taylor et le fait que la limite existe, $d_{L}^{e}=0$ sauf si $\varepsilon_{i}^{t,e}=0$ pour tout $i$ et $t$. Ce dernier équivaut à que $$\{\frac{e}{n}r_{ t(Q) }^{i} \}=0 \quad \forall\ i\geq 1 ,$$
i.e. $n\mid e\sum_{l=1}^{i}m_{t^{-1}(l)}$ pour tout $i\geq 1$ et $t\in \mathfrak{S}_{k}$, et donc cela équivaut  à
$$n\mid e m_{l}\quad \forall\ l\geq 1 .$$
Notons que si $e, e'$ ont le même ordre dans $\mathbb{Z}/n\mathbb{Z}$ alors $$(n\mid e m_{l}\quad \forall\ l\geq 1)\Longleftrightarrow (n\mid e' m_{l}\quad \forall\ l\geq 1) .$$ Donc $d^{e}_{L}$ ne dépend que de l'ordre de $e$. L'assertion maintenant est un corollaire de la formule suivante (Proposition \ref{SCIN})
\begin{equation} \label{....}
\lim_{\mu\rightarrow 1} \sum_{Q\in \mathcal{P}(M)} \hat{\mathbbm{1}}_{Q}^{e}(\mu) c_{Q}(\mu)=\sum_{L\in\mathcal{L}(M)}d_L^{e}c_M^L. 
\end{equation}
\end{proof}

\begin{thm}\label{Jeind}
Le nombre $J^{ }_{e}$ ne dépend que de l'ordre de $e$ dans $\mathbb{Z}/n\mathbb{Z}$. 
\end{thm}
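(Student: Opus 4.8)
L'objectif est de montrer que $J^{T=0}_{e}$ ne d\'epend que de l'ordre de $e$ dans $\mathbb{Z}/n\mathbb{Z}$. Le point de d\'epart est l'expression spectrale du th\'eor\`eme \ref{Trace}, ou plut\^ot sa forme l\'eg\`erement simplifi\'ee (\ref{JT}) obtenue apr\`es le corollaire \ref{Fixer} et le corollaire \ref{const}: $J^{T=0}_{e}$ est une somme sur les classes d'\'equivalence inertielle de paires discr\`etes $(P,\tilde{\pi})$, et sur les donn\'ees $(w,\lambda_{\tilde{\pi}})\in\stab(P,\tilde{\pi})$, d'int\'egrales sur $\Im X_{L^w}^G$ de sommes finies, dont le seul terme o\`u le degr\'e $e$ intervient est le facteur $\hat{\mathbbm{1}}_{Q}^{e}(\mu\lambda_{\tilde{\pi}L^w})$. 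Toute la d\'ependance en $e$ est donc port\'ee par les fonctions $\hat{\mathbbm{1}}_{Q}^{e}$, et plus pr\'ecis\'ement, gr\^ace \`a l'\'egalit\'e $\hat{\mathbbm{1}}_{Q}^{e}(\mu)=\theta_{Q}(\mu)^{-1}\mu^{\HE_{Q}^{e}}$, par les exposants $\HE_{Q}^{e}$ (\'equivalemment $\widetilde{\HE}_{Q}^{e}$, la diff\'erence ne d\'ependant pas de $e$). L'id\'ee est d'isoler ces exposants et de leur appliquer la machinerie des $(G,M)$-familles.

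\textbf{\'Etapes principales.} Premi\`erement, je reconnais que pour chaque paire discr\`ete $(P,\tilde{\pi})$ et chaque $(w,\lambda_{\tilde{\pi}})\in\stab(P,\tilde{\pi})$, la famille $(\mu\mapsto c_Q(\mu))_{Q\in\mathcal{P}(L^w)}$ form\'ee des traces $\frac{n_{Q|Q_{L^w}}(\tilde{\pi},\lambda\lambda_w/(\lambda_{\tilde{\pi}L^w}\mu))}{n_{Q|Q_{L^w}}(\tilde{\pi},\lambda\lambda_w)}\,n_{\tilde{\pi}}(w,w^{-1}(\lambda_w))$ est une $(G,L^w)$-famille sur un voisinage de $1$, comme indiqu\'e au paragraphe \ref{opMPQ}, et que de plus ses valeurs $c_{M}^{R}$ sont ind\'ependantes de $R\in\mathcal{P}(L)$ pour $L\in\mathcal{L}(L^w)$ par le corollaire \ref{const} (et le lemme \ref{nQPw}). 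Deuxi\`emement, j'\'ecris $\hat{\mathbbm{1}}_{Q}^{e}(\mu\lambda_{\tilde{\pi}L^w})=\theta_{Q}(\mu\lambda_{\tilde{\pi}L^w})^{-1}(\mu\lambda_{\tilde{\pi}L^w})^{\HE_{Q}^{e}}$, et je regroupe $(\mu\lambda_{\tilde{\pi}L^w})^{\HE_{Q}^{e}}$ avec $c_Q$: la famille produit $(\mu\mapsto (\mu\lambda_{\tilde{\pi}L^w})^{\HE_{Q}^{e}}c_Q(\mu))_Q$ reste une $(G,L^w)$-famille gr\^ace au lemme \ref{Appendix}, qui affirme exactement que $(\lambda\mapsto\lambda^{\HE_Q^{e}})_{Q\in\mathcal{P}(M)}$ est une $(G,M)$-famille sur $X_M^G$ tout entier. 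La limite $\lim_{\mu\to 1}\sum_{Q\in\mathcal{P}(L^w)}\hat{\mathbbm{1}}_{Q}^{e}(\mu\lambda_{\tilde{\pi}L^w})c_Q(\mu)$ est alors exactement de la forme trait\'ee par le lemme \ref{A3} (avec $L=L^w$, $M=M_P$, la $(G,M)$-famille \'etant $(\mu\mapsto(\mu\lambda_{\tilde{\pi}L^w})^{\ast}c_Q(\mu))$ convenablement translat\'ee). Troisi\`emement, le lemme \ref{A3} donne, via la formule de scindage du th\'eor\`eme \ref{SCIN}, la d\'ecomposition de cette limite en $\sum_{L'\in\mathcal{L}(L^w)} c_{M_P}^{L'}\,d_{L'}^{e}$ o\`u $d_{L'}^{e}=\lim_{\lambda\to 1}\sum_{Q\in\mathcal{P}(L')}\lambda^{\HE_Q^{e}}\theta_Q(\lambda)^{-1}$, et le calcul explicite men\'e dans la preuve du lemme \ref{A3} (s\'eparation $\HE_{Q_t}^{e}=(\text{facteur ind\'ependant de }t)+(\text{polyn\^ome de degr\'e}\leq\dim\ago_{L'}^G-\sum_i\varepsilon_i^{t,e})$, puis application du lemme \ref{123} qui annule les contributions de degr\'e insuffisant) montre que $d_{L'}^{e}$ est nul sauf si $n\mid e\,m_l$ pour tous les rangs $m_l$ des facteurs de $L'$, condition qui ne d\'epend que de l'ordre de $e$ modulo $n$. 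Par cons\'equent chaque terme de la somme (\ref{JT}) ne d\'epend que de l'ordre de $e$, et donc $J^{T=0}_{e}$ aussi.

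\textbf{Difficult\'e principale.} L'essentiel du travail technique est d\'ej\`a absorb\'e dans les lemmes \ref{Appendix}, \ref{123} et \ref{A3} de l'appendice, si bien que l'obstacle r\'eel n'est pas calculatoire mais organisationnel: il faut v\'erifier soigneusement que le regroupement de $(\mu\lambda_{\tilde{\pi}L^w})^{\HE_Q^{e}}$ avec la $(G,L^w)$-famille $c_Q$ pr\'eserve bien la structure de $(G,L^w)$-famille \emph{et} la propri\'et\'e que les valeurs $c_{M}^{R}$ soient ind\'ependantes de $R\in\mathcal{P}(L)$, c'est-\`a-dire que les hypoth\`eses du lemme \ref{A3} (et du th\'eor\`eme \ref{SCIN}) sont bien satisfaites apr\`es ce produit. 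Le produit de deux $(G,M)$-familles est une $(G,M)$-famille, et le produit d'une famille dont les $c_M^L$ sont constants sur $\mathcal{P}(L)$ par une famille quelconque conserve cette propri\'et\'e au niveau du scindage; mais il convient de tracer pr\'ecis\'ement l'effet de la translation par $\lambda_{\tilde{\pi}L^w}$ (qui est d'ordre fini, \'el\'ement de $\Fix(\tilde{\pi})$) et de s'assurer que l'int\'egration restante sur $\Im X_{L^w}^G$ (de la fonction $\lambda\mapsto\Delta$-type du paragraphe \ref{ABCD}) commute avec la prise de limite $\mu\to 1$ de mani\`ere \`a ce que la d\'ependance en $e$ reste confin\'ee au seul facteur $d_{L'}^{e}$. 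Une fois ce point de comptabilit\'e \'etabli, la conclusion est imm\'ediate.
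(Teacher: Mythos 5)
Votre sch\'ema g\'en\'eral est bien celui du texte: partir de l'expression (\ref{JT}), observer que $e$ n'intervient que par $\hat{\mathbbm{1}}_{Q}^{e}$, et se ramener aux lemmes \ref{Appendix}, \ref{123} et \ref{A3} de l'appendice. Mais il y a une lacune r\'eelle exactement au point que vous rel\'eguez \`a un \og point de comptabilit\'e \fg{}: la translation par $\lambda_{\tilde{\pi}L^w}$. On ne peut pas absorber $(\mu\lambda_{\tilde{\pi}L^w})^{{\HE}_{Q}^{e}}$ dans la famille $(c_Q)$ puis invoquer le lemme \ref{A3}: ce lemme affirme que $\lim_{\mu\to1}\sum_{Q}\hat{\mathbbm{1}}_{Q}^{e}(\mu)c_{Q}(\mu)$ ne d\'epend que de l'ordre de $e$ pour une famille $(c_Q)$ \emph{ind\'ependante de $e$}; d\`es que vous mettez $(\mu\lambda_{\tilde{\pi}L^w})^{{\HE}_{Q}^{e}}$ \`a l'int\'erieur de la famille, celle-ci d\'epend de $e$ et la conclusion du lemme ne se transporte pas. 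De plus, pour $\lambda_{\tilde{\pi}L^w}\notin X_G^G$, la famille translat\'ee $\mu\mapsto(\mu\lambda_{\tilde{\pi}L^w})^{{\HE}_{Q}^{e}}$ n'est m\^eme pas une $(G,L^w)$-famille en $\mu$ (les conditions de recollement ont lieu sur les murs $(\mu\lambda_{\tilde{\pi}L^w})^{\alpha^{\vee}}=1$ et non $\mu^{\alpha^{\vee}}=1$); ce qui sauve ce cas, comme dans la preuve du lemme \ref{GMF}, c'est le th\'eor\`eme \ref{SPE}, qui donne l'annulation de la limite sauf si $\lambda_{\tilde{\pi}L^w}\in X_G^G$ --- un point que vous n'\'etablissez pas.

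Surtout, la d\'ependance en $e$ qui subsiste apr\`es le lemme \ref{A3} n'est pas trait\'ee. Lorsque $\lambda_{\tilde{\pi}L^w}\in X_G^G$, la limite se factorise en $(\lambda_{\tilde{\pi}L^w})_{1}^{e}$ multipli\'e par $\lim_{\mu\to1}\sum_{Q}\hat{\mathbbm{1}}_{Q}^{e}(\mu)\,\frac{n_{Q|Q_{L^w}}(\tilde{\pi},\lambda\lambda_w/\mu)}{n_{Q|Q_{L^w}}(\tilde{\pi},\lambda\lambda_w)}\,n_{\tilde{\pi}}(w,w^{-1}(\lambda_w))$; le second facteur ne d\'epend que de l'ordre de $e$ par le lemme \ref{A3}, mais le scalaire $(\lambda_{\tilde{\pi}L^w})_{1}^{e}$, pour un caract\`ere de torsion fix\'e, d\'epend de $e$ lui-m\^eme et non de son seul ordre. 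L'\'etape suppl\'ementaire de la preuve du texte, absente de votre proposition, consiste \`a sommer sur les $\lambda_{\tilde{\pi}}$ admissibles --- c'est-\`a-dire sur le groupe fini $\widetilde{\Lambda}_{L^w}$ du lemme \ref{LambdaL}, via les calculs du paragraphe \ref{ABCD} --- pour obtenir la somme de caract\`eres $\delta_{e}(\tilde{\pi},L^w)=\sum_{\widetilde{\lambda}\in\widetilde{\Lambda}_{L^w}}\widetilde{\lambda}_{1}^{-(\sum_i l_i(l_i-1)|\Fix(\pi_i)|/2)+e}$, puis \`a montrer que cette quantit\'e ne d\'epend que de l'ordre de $e$ dans $\mathbb{Z}/n\mathbb{Z}$. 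Sans cette \'etape (ou un substitut), votre argument ne contr\^ole qu'une partie de chaque terme, et le facteur portant $\lambda_{\tilde{\pi}L^w}$ \`a la puissance $e$ reste non justifi\'e: c'est pr\'ecis\'ement la partie qui ne peut pas \^etre pouss\'ee dans le formalisme des $(G,M)$-familles.
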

\begin{proof}
Par l'expression \ref{aaaz}, il suffit de prouver que (1) l'intégrale 
\begin{equation}\label{bbbz}
\int_{\Im X_L^G} \lim_{\mu\rightarrow 1}\sum_{Q\in \mathcal{P}(L)} \hat{\mathbbm{1}}_{Q}^{e}(\mu) \frac{ n_{Q|Q_{L}}({\pi},  \frac{\lambda \lambda_w }{ \mu}) }{n_{Q|Q_{L}}({\pi}, {\lambda}\lambda_{w}  )}   \d\lambda,   \end{equation}
ne dépend pas de $\lambda_w$ et ne dépend que de l'ordre de $e$ dans $\mathbb{Z}/n\mathbb{Z}$, de plus (2) le facteur (b.) de la proposition \ref{expr1} ne dépend que de  l'ordre de $e$ dans $\mathbb{Z}/n\mathbb{Z}$. 

On démontre la première indépendance. Soit $$c_Q(\mu)=\frac{ n_{Q|Q_{L}}({\pi},  \frac{\lambda \lambda_w }{ \mu}) }{n_{Q|Q_{L}}({\pi}, {\lambda}\lambda_{w}  )}  .$$
Par lemme \ref{A3}, la limite dans \ref{bbbz} ne dépend que de l'ordre de $e$ dans $\mathbb{Z}/n\mathbb{Z}$. 
En appliquant le corollaire \ref{INT} (notons que $\Im X_L^G=\Im X_L^O\Im X_O^G$) aux $(O,L)$-familles $c_L^O$ pour tout $O\in \mathcal{L}(L)$,  par la formule \eqref{....}, l'intégrale \ref{bbbz} ne dépend pas de $\lambda_w$ comme $|\lambda_w^{\beta^{\vee}}|=1$ pour tout $\beta\in \Phi(Z_{M_P}, L)$.

On démontre la seconde indépendance. 
Par le calcul de \ref{bonrep} (auquel on renvoie pour les notations), il suffit de montrer (cf. \eqref{Deltapi}) que la valeur $$    \delta_{e}({\pi},{L})= \sum_{{\lambda}\in {\Lambda_{ {G}}}} {{\lambda} _{1}}^{-(\sum \frac{l_i(l_i-1)}{2} |\Fix(\pi_{i})|)-e}  $$
ne dépend que de l'ordre de $e$ dans $\mathbb{Z}/n\mathbb{Z}$. Rappelons que ${\Lambda_{{G}}}$ est le sous-groupe de $X_{{G}}^{{G}}$  de cardinal $\mathrm{p.g.c.d.} (l_{i}|\Fix(\pi_{i})|) $.  La somme $\delta_{e}({\pi},{L})$ est égale à $|{\Lambda}_{G}|$  ou $0$ selon si le caractère $  {{\lambda}_{1}}\mapsto  {{\lambda}_{1}}^{-(\sum \frac{l_i(l_i-1)}{2} |\Fix(\pi_{i})|)-e}  $ est trivial ou non. 
Soient $e$ et $e'$ deux entiers avec le même ordre dans $\mathbb{Z}/n\mathbb{Z}$. Alors ils ont le même ordre dans $\mathbb{Z}/|{\Lambda}_G| \mathbb{Z}$. 
L'ordre de $e+ \sum \frac{l_i(l_i-1)}{2} |\Fix(\pi_{i})|$ dans $\mathbb{Z}/|{\Lambda_G}|\mathbb{Z} $, est égal au plus petit commun multiple de celui de $e$ et celui de $\sum \frac{l_i(l_i-1)}{2} |\Fix(\pi_{i})|$. Comme $|{\Lambda}_G|$ divise $n$, il est égal à l'ordre de $e'+ \sum \frac{l_i(l_i-1)}{2} |\Fix(\pi_{i})|$ dans $\mathbb{Z}/|{\Lambda}_G|\mathbb{Z} $.
On conclut que
$$\delta_{e}({\pi}, {L})=\delta_{e'}({\pi}, {L}) .$$
\end{proof}

Par le théorème \ref{Maing}, on a le corollaire suivant:
\begin{cor}
Le nombre $\mathcal{P}_n^{e}(X_1)$ des classes d'isomorphie des fibrés isoclines sur $X_1$ ne dépend que de l'ordre de $e$ dans $\mathbb{Z}/n\mathbb{Z}$. \end{cor}

\section{Lien entre les fibrés vectoriels indécomposables et les fibrés de Higgs}
\sectionmark{Fibrés indécomposables et fibrés de Higgs}
Dans cet appendice, on explique comment extraire de nos résultats et ceux de \cite{Chau}  une preuve 
 du résultat suivant:
$$\mathcal{P}_{n}^{e}(X_1)=q^{-n^2(g-1)-1}| \mathrm{Higgs}_{n,e}^{st}(X_1)(\mathbb{F}_q)| \quad \text{quand $(n,e)=1$},$$
(cf.  \ref{Higgs}) qui n'utilise que  la formule des traces. 


Soit $\mathfrak{g}$ algèbre de Lie de $G$. 

Pour tout $\gamma \in G(F)$ (resp. $X\in \mathfrak{g}(F)$) soit $\chi_\gamma$ (resp. $\chi_X$) son polynôme caractéristique. 
Si $x\in G(\AAA)$, et $\gamma\in G(F)$ ( resp. $X \mathfrak{g}(F)$), on a 
$$x^{-1}\gamma x\in G(\mathcal{O})\implies \chi_{\gamma}\in \mathbb{F}_q[X] .$$
$$(\text{resp.}\quad   x^{-1}X x\in \mathfrak{g}(\mathcal{O}) \implies  \chi_{X}\in \mathbb{F}_q[X].) $$

Soit $p\in \mathbb{F}_q[X]$ un polynôme unitaire de degré $n$. Pour tout $x\in G(\AAA)$ et $T\in\ago_B$  soient 
$$k_p^T(x,x)= F^{G}(x, T)\sum_{\gamma\in G(F), \chi_\gamma=p} \mathbbm{1}_K(x^{-1}\gamma x),$$ 
et 
$$\tilde{k}_p^{T}(x,x) = F^{G}(x, T)\sum_{X\in \mathfrak{g}(F), \chi_X=p} \mathbbm{1}_{ \mathfrak{g}({\mathcal{O}}) }(x^{-1}X x).   $$
où $\mathbbm{1}_{\mathfrak{g}(\mathcal{O})}$ est la fonction sur $\mathfrak{g}(\AAA)$ qui est la fonction caractéristique de $\mathfrak{g}(\mathcal{O})$. Quand $d(T):=\min_{\alpha\in \Delta_B}\alpha(T)$ est assez grand, soient $$J_{p,e}^T= \int_{G(F)\backslash G(\AAA)^e} {k}_p^T(x,x)\d g, $$
et 
$$\tilde{J}^{T}_{p,e}=\int_{G(F)\backslash G(\AAA)^e} \tilde{k}_p^T(x,x)\d g .$$
Donc par le dictionnaire des fibrés-adèles, on a (cf.  preuve de \ref{Maing})
$$ J_{p,e}^T=\sum_{\cal{E} \  T\text{-semi-stable de degré} \ e}\frac{|\{\gamma\in Aut(\mathcal{E})|\chi_\gamma=p \}  |}{|Aut(\mathcal{E})|}
$$
$$ \tilde{J}_{p,e}^T=\sum_{\cal{E} \  T\text{-semi-stable de degré} \ e}\frac{|\{\gamma\in End(\mathcal{E})|\chi_\gamma=p \}  |}{|Aut(\mathcal{E})|}
$$
où la somme porte sur les classes d'isomorphie des fibrés vectoriels $T$-semi-stables. Les $T\mapsto J_{p,e}^T$ et $T\mapsto \tilde{J}_{p,e}^T$ sont quasi-polynomiaux quand $d(T)$ est assez grand (Théorème \ref{QP}  et \cite[Théorème 5.2.1 et Théorème 6.1.1.2]{Chau}), et on désigne encore par $J_{p,e}^T$ et $\tilde{J}_{p,e}^T$ pour ces quasi-polynômes pour n'importe quel $T$.


Soit $\mathcal{E}$ un fibré vectoriel de rang $n$. Un endomorphisme $\gamma$ est un automorphisme si et seulement si son déterminant est inversible. Comme $H^{0}(X, \mathcal{O}_X)\cong \mathbb{F}_q$, cela équivaut à dire que le terme constant de son polynôme caractéristique est non-nul. 
On en déduit alors que pour tout $T$ tel que $d(T)$ 
soit assez grand, si le terme constant de $p$ est non-nul 
\begin{equation}\label{iio}{J}_{p,e}^T=\tilde{J}_{p,e}^T.\end{equation}

Donc on a
$$J^{ }_{e} = \sum_{p\in \mathbb{F}_q[X]} J^{ }_{p,e} = \sum_{p\in \mathbb{F}_q[X], p(0)\neq 0} \tilde{J}^{ }_{p,e} .$$
Quand $p=X^n$, on note $\tilde{J}_{nilp, e}^{ } = \tilde{J}^{ }_{p,e}$. 
Par le théorème 6.2.1 de \cite{Chau}, comme il y a $q-1$ éléments dans $\mathbb{F}_q^{\times}$, on obtient quand $(n,e)=1$
$$J^{ }_{e}=(q-1)\tilde{J}_{nilp, e}^{ }= \frac{q-1}{q} \sum_{p\in \mathbb{F}_q[X] }\tilde{J}_{p,e}^{ } .$$
Par  le corollaire 5.2.2 et le corollaire 5.2.3 de \cite{Chau}, on a 
$$  J^{ }_{e} = \frac{q-1}{q} q^{-n^2(g-1)} \vol(\mathbf{Higgs}_{n,e}^{st}(X_1)(\mathbb{F}_q )), $$
où $\vol$ est la masse d'un groupoïde (la somme pondérée par l'inverse de l'ordre du groupe des automorphismes  des classes d'isomorphie des objets). Comme  tout  fibré de Higgs semi-stable de rang $n$ de degré $e$ pour $(n,e)=1$ est automatiquement stable, le cardinal de son groupe d'automorphisme est $q-1$. On obtient $$J^{ }_{e} = q^{-n^2(g-1)-1} |\mathrm{Higgs}_{n,e}^{st}(X_1)(\mathbb{F}_q )| . $$
Cela prouve l'assertion par le théorème \ref{Maing}.

\section{Un théorème sur les polynômes universels}
Dans cet appendice,  on démontre un théorème sur les polynômes universels.

\begin{thm}\label{ordinary}
Soit $p$ un nombre premier. 
Soit $P(t, z_1, \ldots, z_g)$ un élément dans $\mathbb{Z}[t^{\pm 1}, z_1^{\pm 1},\cdots, z_g^{\pm 1}]$ tel que $P(q,\sigma_{1}, \ldots, \sigma_{g})\in \mathbb{Z}$ pour toute courbe projective, lisse et géométriquement connexe $C$ de genre $g$ définie sur un corps fini $\mathbb{F}_q$ en caractéristique $p$ (où $\sigma_1, \ldots, \sigma_{2g}$ sont ses $q$-nombres de Weil indexés tels que $\sigma_i\sigma_{2g-i}=q$). 
Alors chaque monôme $t^{m}z_1^{n_1}\cdots z_g^{n_g}$ qui  apparaît dans $P$ vérifie \begin{equation} m+\sum_{i=1}^g \min\{n_i, 0\}\geq 0 . \end{equation}
\end{thm}
\begin{proof}
Le cas où $g=0$ est trivial.

Soit $g\geq 1$. D'abord, on a besoin d'établir l'existence d'une courbe projective, lisse et géométriquement connexe $C$ de genre $g$ définie sur un corps fini $\mathbb{F}_{p^d}$ de cardinal $p^d$ pour un certain $d$ telle que les conditions suivantes soient satisfaites: 
\textit{(1) Les valuations $p$-adiques (la valuation est normalisée de telle sorte que $v_p(p)=1$) des $p^d$-entiers de Weil $\sigma_i$ de $C$ sont soit $d$, soit $0$ pour un isomorphisme $\bar{\mathbb{Q}}_{\ell}\rightarrow \bar{\mathbb{Q}}_p$. 
(2) De plus, si $m_i$ et $m'_i$ ($0\leq i\leq g$) sont des entiers tels que
$$p^{dm_0}\sigma_1^{m_1}\cdots \sigma_g^{m_g}=p^{dm'_0}\sigma_1^{m'_1}\cdots \sigma_g^{m'_g},$$ alors $m_i=m'_i,$ pour tout $0\leq i\leq g$.} 

On utilise des arguments et des résultats dans \cite{Kowalski}. En effet, si la caractéristique $p$ est strictement plus grande que $2$, un exemple qui satisfait les conditions ci-dessus a déjà été donné par \cite[Proposition 4]{Kowalski}. La preuve ci-dessous marche pour tout $p$. En fait, elle démontre que la plupart (en un certain sens) des courbes satisfont ces deux propriétés.

Les conditions $(1)$ et $(2)$ sont satisfaites si 
\textit{$(1')$ la variété jacobienne de $C$ est une variété abélienne ordinaire et $(2')$ le numérateur de la fonction zêta de $C$ est un polynôme irréductible sur $\mathbb{Q}$ dont le groupe de Galois est isomorphe au groupe de Weyl $\mathcal{S}_g\ltimes(\mathcal{S}_2)^{g}$ de $Sp(2g)$.} 
En effet, c'est une conséquence de la preuve de \cite[Proposition 1]{Kowalski} (même si la proposition elle-même semble moins forte que la condition $(2)$ ci-dessus, sa preuve a établi cette assertion plus forte), où les hypothèses de cette proposition sont vérifiées dans le \cite[Lemma 1, Lemma 2]{Kowalski}.

Si $g=1$, l'existence d'une telle courbe elliptique est un corollaire du théorème de Honda-Tate, par exemple on peut appliquer ce théorème au polynôme $X^2+X+p=0$ pour tout nombre premier $p$. 

Soit $g\geq 2$. 
Pour les détails des définitions qui suivent, on renvoie le lecteur à \cite[§10.6]{KS}. 
Soit $\mathcal{M}_{g,3K}$ le schéma de modules fins de courbes stables de genre $g$ avec une structure $3K$ définies sur ${\mathbb{F}}_p$. Soit $\mathcal{M}_{g,3K}^{\circ}\subseteq \mathcal{M}_{g,3K}$ l'ouvert correspondant aux courbes lisses. Soit $\pi: \mathcal{C}\rightarrow \mathcal{M}_{g,3K}$ la courbe universelle avec structure de $3K$. 
Comme l'ordinarité  des fibres est une condition ouverte sur la base, il existe un ouvert $V\subseteq \mathcal{M}_{g,3K}$ tel que les courbes $\mathcal{C}_s$ sont ordinaires pour tous points $s\in V$. L'ensemble $V$ est non-vide car un exemple d'une courbe stable (qui est singulière) est construit par Koblitz \cite[Theorem 5]{Kob}. On sait alors que $V^{\circ}:= V\cap  \mathcal{M}_{g,3K}^{\circ}$ est non-vide puisque, d'après Deligne-Mumford, $\mathcal{M}_{g,3K}$ est irréductible (voir \cite[Theorem 10.6.10]{KS}) et donc $V^{\circ}$ est ouvert et dense dans $\mathcal{M}_{g,3K}^{\circ}$. 

Pour tout $\ell \neq p$, le groupe de monodromie géométrique du système local $\ell$-adique $R^1\pi_{!}\bar{\mathbb{Q}}_{\ell}$ est $Sp(2g)$ tout entier (\cite[Theorem 10.6.11]{KS}). 
Comme a expliqué dans la preuve de la proposition $3$ de \cite{Kowalski} à laquelle on renvoie le lecteur pour des références additionnelles, par un théorème de Larsen, pour une densité $1$ de nombres premiers  $\ell$, le groupe de monodromie mod-$\ell$ est $Sp(2g, \mathbb{F}_{\ell})$. On peut alors utiliser un théorème de Chavdarov \cite[Theorem 2.1]{Chavdarov} qui implique que la plupart des points dans $\mathcal{M}_{g,3K}^{\circ}(\mathbb{F}_{p^k})$ satisfont  la condition $(2')$ quand $k\rightarrow \infty$. On fait juste remarquer que le théorème de Chavdarov est énoncé sous l'hypothèse que pour presque tout nombre premier $\ell$, le groupe de monodromie mod-$\ell$ est $Sp(2g, \mathbb{F}_{\ell})$, mais il suffit d'avoir un nombre infini de tels nombres premiers comme le montre  sa preuve.  

On conclut alors qu'en toute caractéristique $p$, il existe une courbe projective, lisse et géométriquement connexe définie sur le  corps fini $\mathbb{F}_{p^m}$ pour un certain $m$, telle que les conditions $(1)$ et $(2)$ soient satisfaites. Fixons une telle courbe et $m$.
Soient $\sigma_1, \cdots, \sigma_{2g}$ ses $p^m$-entiers de Weil.

Soit $$P(t, z_1, \ldots, z_g)=\sum_{(a,\underline{n})\in \mathbb{Z}\times \mathbb{Z}^{g}} c_{(a,\underline{n})}t^a \underline{z}^{\underline{n}}, $$
où $\underline{z}^{\underline{n}}=z_{1}^{n_1}\cdots z_g^{n_g}$ si $\underline{n}=(n_1, \ldots, z_g)$. 
Supposons par l'absurde qu'il existe un monôme $t^{a'}z_1^{n'_{1}}\cdots z_g^{n'_g}$ avec $c_{(a', n'_1, \ldots, n'_g)}\neq 0$  satisfaisant $$m'+\sum_{i=1}^g \min\{n'_i, 0\}< 0.$$
Soit $I\subseteq \{1, 2, \ldots, g\}$ l'ensemble des indices $i$ pour lesquels $n'_i<0$. 
Selon le signe de $m+\sum_{i\in I} n_i $ de chaque monôme $t^{m}z_1^{n_1}\cdots z_g^{n_g}$, on peut décomposer $P$ en une somme $P=P_{\geq 0}+P_{<0}$. En fixant un isomorphisme $\bar{\mathbb{Q}}_{\ell}\rightarrow \bar{\mathbb{Q}}_p$, par la condition $(1)$, on peut supposer en ré-indexant que $\sigma_i$ a pour valuation $p$-adique $d$ si $i\in I$ et $0$ si $i\in \{1, 2, \ldots, g\}-I$. Alors, comme $$P_{<0}=P-P_{\geq 0},$$ on a $P_{<0}(p^{mk}, \sigma_1^k, \sigma_2^k, \ldots, \sigma_g^k)\in \bar{\ZZZ}_p$ pour tout $k\geq 1$. Par la condition $(2)$, on a une contradiction par le lemme suivant.  
\begin{lemma}
Soient $c_1, \ldots, c_r\in \bar{\mathbb{Z}}_{p}$ non-nuls. Soient $\gamma_1, \ldots, \gamma_r \in \bar{\mathbb{Z}}_{p}$ deux à deux distincts de valuation p-adique $\val_p(\gamma_i)>0$, $i=1, \ldots, r$. Alors il existe $k\in \mathbb{N}^{*}$, tel que
$$ \sum_{i=1}^{r}c_i \gamma_i^{-k} \notin \bar{\mathbb{Z}}_{p}. $$
\end{lemma}
\begin{proof}[Preuve du lemme]
On raisonne par l'absurde. 
On suppose que $\val_p(\gamma_1)$ est le plus petit parmi les $\val_p(\gamma_i)$. Si pour tout $k\geq 1$,
$$\sum_{i=1}^{r}c_i \gamma_i^{-k}= \gamma_1^{-k}(c_1+ \sum_{i=2}^r c_i(\frac{\gamma_i}{\gamma_1})^{-k}    )\in \bar{\mathbb{Z}}_p, $$
alors la suite  $c_1+ \sum_{i=2}^r c_i (\frac{\gamma_i}{\gamma_1})^{-k}   $ converge vers $0$ lorsque $k$ tend vers $\infty$ et $\val_p(\frac{\gamma_i}{\gamma_1})\geq 0$. Donc la suite $$\sum_{i=2}^r c_i (\frac{\gamma_i}{\gamma_1}-1)(\frac{\gamma_i}{\gamma_1})^{-k}  $$ converge vers $0$  lorsque $k$ tend vers $\infty$ aussi.  

Notons que pour un $\gamma$ tel que $v_p(\gamma)\geq 0$, on a $$(\gamma^{-k}a_k \xrightarrow{k\rightarrow +\infty} 0)  \implies (a_k  \xrightarrow{k\rightarrow +\infty} 0).$$
En répétant la procédure ci-dessus, on obtiendra une contradiction. 
\end{proof}
\end{proof}


\printnoidxglossary

\end{document}